
\documentclass[12pt,reqno]{amsart}
\usepackage{amsfonts}
\usepackage{xfrac}    
\usepackage{lipsum}
\usepackage{amsmath,graphicx,amssymb,amsthm}
\usepackage{amsmath}
\usepackage{amssymb}
\usepackage{graphicx,color,xcolor}
\usepackage{hyperref}
\usepackage{enumerate}
\usepackage{mathrsfs}
\usepackage{blindtext}
\usepackage{scrextend}
\usepackage{enumitem}
\usepackage{bm} 
\addtokomafont{labelinglabel}{\sffamily}
\usepackage{color}
\usepackage[at]{easylist}
\setcounter{MaxMatrixCols}{30}
\providecommand{\U}[1]{\protect\rule{.1in}{.1in}}

\numberwithin{equation}{subsection}






\newcommand{\sett}[1]{\left\{#1\right\}}

\newcommand{\Nat}{\mathbb N}
\newcommand{\eps}{\varepsilon}

\newcommand{\Hil}{\mathcal{H}}

\newcommand{\N}{\mathcal{N}}

\newcommand{\ip}[2]{\left< #1, #2  \right>}
\newcommand{\votimes}{\overline{\otimes}}
\linespread{1.1}

\def\U{\mathcal U}

\def\N{\mathcal N}

\def\amslatex{$\mathcal{A}\kern-.1667em\lower.5ex\hbox{$M$}\kern-.125em\mathcal{S}$-\LaTeX}

\newtheorem{thmx}{Theorem}

\newtheorem{thm}{Theorem}[section]
\newtheorem{lem}[thm]{Lemma}
\newtheorem{cor}[thm]{Corollary}
\newtheorem{prop}[thm]{Proposition}
\newtheorem{definition}[thm]{Definition}
\theoremstyle{definition}
\newtheorem{rem}[thm]{Remark}
\newtheorem{remark}[thm]{Remarks}

\newtheorem{claim}[thm]{Claim}

\newtheorem*{opr}{Open Problem}

\newcommand{\cK}{\mathcal{K}}

\newcommand{\QN}{\mathcal{Q}\mathcal{N}}
\newcommand{\cH}{\mathcal H}
\newcommand{\ca}{\curvearrowright}

\newcommand{\Ad}{\operatorname{Ad}}

\newcommand{\diag}{\mathord{\text{\rm diag}}}

\newcommand{\rar}{\rightarrow}

\newcommand{\actson}{\curvearrowright}


\newcommand{\abs}[1]{\left\vert#1\right\vert}
\newcommand{\norm}[1]{\left\Vert#1\right\Vert}


\newtheorem{set}{set}[section]

\newtheorem{Lemma}[set]{Lemma}

\newtheorem{Remark}[set]{Remark}

\newcommand{\define}{\mathrel{\hbox{$\equiv$\hskip -.90em \lower .47ex \hbox{$\leftharpoondown$}}}}
\newcommand{\enifed}{\mathrel{\hbox{$\equiv$\hskip -.90em \lower .47ex \hbox{$\rightharpoondown$}}}}

\theoremstyle{plain}




\textwidth=438pt 
\oddsidemargin=18pt 
\evensidemargin=18pt

\allowdisplaybreaks



\newcommand{\Cplx}{\mathbb C}
\newcommand{\nexer}[1]{}

\begin{document}
\title[ Quasinormalizers in crossed products]{Quasinormalizers in crossed products of von Neumann Algebras}
\author[Bannon]{Jon P. Bannon}
\address{Siena College Department of Mathematics, 515 Loudon Road, 
NY\ 12211 USA}
\email{jbannon@siena.edu}

\author[Cameron]{Jan Cameron}
\address{Department of Mathematics and Statistics, Vassar College, Poughkeepsie, NY 12604 USA}
\email{jacameron@vassar.edu}

\author[Chifan]{Ionu\c t Chifan}
\address{Department of Mathematics, The University of Iowa,  Iowa City, IA 52242 USA}
\email{ionut-chifan@uiowa.edu}

\author[Mukherjee]{Kunal Mukherjee}
\address{Indian Institute of Technology Madras, Chennai 600 036, India}
\email{kunal@iitm.ac.in}

\author[Smith]{Roger Smith}
\address{Department of Mathematics, Texas A\&M University, College Station, TX 77843 USA}
\email{rrsmith@tamu.edu}

\author[Wiggins]{Alan Wiggins}
\address{Department of Mathematics and Statistics, University of Michigan, Dearborn, MI 48626 USA }
\email{adwiggin@umich.edu}

\keywords{von Neumann Algebras, Ergodic Theory}
\subjclass[2020]{46L55, 46L40, 37A15}

\begin{abstract}  We study the relationship between the dynamics of the action $\alpha$ of a discrete group $G$ on a von Neumann algebra $M$, and structural properties of the associated crossed product inclusion $L(G) \subseteq M \rtimes_\alpha G$, and its intermediate  subalgebras.  This continues a thread of research originating in classical structural results for ergodic actions of discrete, abelian groups on probability spaces.  A key tool in the setting of a noncommutative dynamical system is the set of quasinormalizers for an inclusion of von Neumann algebras.  We show that the von Neumann algebra generated by the quasinormalizers captures analytical properties of the inclusion $L(G) \subseteq M \rtimes_\alpha G$ such as the Haagerup Approximation Property, and is essential to capturing ``almost periodic" behavior in the underlying dynamical system.  Our von Neumann algebraic point of view yields a new description of the Furstenberg-Zimmer distal tower for an ergodic action on a probability space, and we establish new versions of the Furstenberg-Zimmer structure theorem for general, tracial $W^*$-dynamical systems.  We present a number of examples contrasting the noncommutative and classical settings which also build on previous work concerning singular inclusions of finite von Neumann algebras.

\end{abstract}
\maketitle

\section{Introduction}

An ongoing thread of research in von Neumann algebras concerns the relationship between the structure of discrete groups and dynamical systems, and the structure of the von Neumann algebras they generate.  A natural site for such questions is the crossed product construction, which arises from the action $\alpha$ of a discrete group $G$ on a von Neumann algebra $M \subseteq \textbf{B}(\Hil)$.  The crossed product $M \rtimes_\alpha G$ is a von Neumann algebra on $\Hil \otimes \ell^2(G)$ which contains an isomorphic copy of $M$, as well as a copy of the von Neumann algebra $L(G)$ generated by the left regular representation of $G$ on $\ell^2(G)$.  The inclusions $M \subseteq M \rtimes_\alpha G$ and $L(G) \subseteq M\rtimes_\alpha G$ are key to understanding the relationship between the structure of $M \rtimes_\alpha G$, the group $G$, and the group action.

\vskip 0.08in
\noindent  
The main results of this paper relate the dynamics of the action $\alpha$ to structural properties of the inclusion $L(G) \subseteq M\rtimes_\alpha G$ and, more generally, inclusions of the form $N \rtimes_\alpha G \subseteq M \rtimes_\alpha G$ with $N\subseteq M.$ Interest in this area originated in the case where $M$ is an abelian von Neumann algebra --  in particular, the dynamics of an action on a probability space, in relation to the structure of the associated group-measure space construction. A principal object of interest at the von Neumann algebra level is the group of unitary normalizers of the inclusion, which has been shown to relate closely to the spectrum of the action.  For an inclusion $B \subseteq M$ of von Neumann algebras, the \emph{$($unitary$)$ normalizers} comprise the set $\mathcal{N}(B \subseteq M) = \sett{u \in \mathcal{U}(M) :uBu^*=B},$
a subgroup of the unitary group of $M$ which generates a von Neumann algebra between $B$ and $M$.  The inclusion $B \subseteq M$ is said to be \emph{singular} if the generated von Neumann algebra $\mathcal{N}(B \subseteq M)''$ is equal to $B$, and \emph{regular} if it is all of $M$.
\vskip 0.08in
\noindent  
An ergodic, measure-preserving action $\sigma$  of a discrete, abelian group $G$ on a probability space $X$ produces a masa (maximal abelian subalgebra) $L(G)$ in the crossed product $L^\infty(X)\rtimes_\sigma G$.  Nielsen \cite{Ni} showed that this masa is singular if and only if the action is weak mixing.  Packer \cite{Pac0} showed, more generally, that the von Neumann algebra of the normalizer of $L(G)$ in $L^\infty (X) \rtimes_\sigma G$ is the intermediate subalgebra $A_0 \rtimes_\sigma G$, where $A_0$ is the invariant subalgebra of $L^\infty(X)$ generated by the eigenfunctions of $\sigma$. In particular, we have that $A_0 = L^\infty(X)$ (in which case the action is \emph{compact}) if and only if the masa $L(G)$ is regular (in this case it is called a \emph{Cartan subalgebra} of $L^\infty(X)\rtimes_\sigma G$). 

\vskip 0.08in
\noindent  Similar characterizations are known beyond the case of an abelian acting group, though the situation is  more complicated.  
 As part of a larger work on profinite actions, Ioana \cite{Io} showed that, for an ergodic action  $G \actson^\sigma (X,\mu)$,  the von Neumann subalgebra $L^\infty(Y) \rtimes_\sigma G \subseteq L^\infty(X) \rtimes_\sigma G$ corresponding to the {maximal compact quotient} of the action is generated by the set  of {quasinormalizers} of $L(G)$ in $L^\infty(X)\rtimes_\sigma G$. These results were proved for countable groups and standard measure spaces. For an inclusion $B \subseteq M$ of von Neumann algebras, the set of \emph{quasinormalizers} \cite{Po} is the collection $\mathcal{QN}(B \subseteq M)$ of elements $x \in M$ with the property that there exist $x_1,\ldots, x_n \in M$ such that 
\[ xB \subseteq \sum_i B x_i, \text{ and } Bx \subseteq \sum_i x_i B.\]
A more general object is the set $\mathcal{QN}^{(1)}(B \subseteq M)$ of \emph{one-sided quasinormalizers}.  An element $x \in M$ is a one-sided quasinormalizer of $B$ if it satisfies the weaker condition that there exist $y_1, \ldots, y_n \in M$ such that
\[ Bx \subseteq \sum_i y_i B.\]
For a general inclusion $B \subseteq M$, the following relationship holds for the von Neumann algebras generated by normalizers and (one-sided) quasinormalizers:
\[ \mathcal{N}(B \subseteq M)'' \subseteq \mathcal{QN}(B \subseteq M)'' \subseteq vN(\mathcal{QN}^{(1)}(B \subseteq M)).\]
In what follows, we will refer to these objects (respectively) as the \emph{normalizing algebra}, the \emph{quasinormalizing algebra}, and the \emph{one-sided quasinormalizing algebra} for the inclusion.  They are generally not equal: an example of Grossman and the sixth named author \cite{GrWi} shows that the first inclusion may be proper; examples of Fang, Gao, and Smith \cite{FaGaSm} show that the second may also be proper (we discuss these further in Section \ref{section:normalizers}).  However, when $B$ is a masa, they coincide.  Thus, Ioana's result 
 \cite{Io} coincides with Packer's \cite{Pac0} when the acting group $G$ is abelian. An alternative proof of the same result was obtained in \cite{ChDa} in the context of progress on an extended  Neshveyev-St\o{}rmer rigidity conjecture. 

\vskip 0.08in
\noindent  This paper concerns analogous questions in the setting of a  \emph{$W^*$-dynamical system} $(M,G,\alpha,\rho)$, consisting of an ergodic action $\alpha$ of a (possibly uncountable) discrete group $G$  on a von Neumann algebra $M$ preserving a fixed, faithful, normal state $\rho$ on $M$.  As a starting point, in light of the results for abelian groups and von Neumann algebras, we consider two  questions:  first, whether the quasinormalizer (rather than the normalizer) for the inclusion $L(G) \subseteq M \rtimes_\alpha G$ is necessary to capture the dynamics of the compact part of the action; second, whether results analogous to those of \cite{Io} mentioned above hold in this more general context.  We answer both questions affirmatively, and explore numerous generalizations and applications.  An outline of the paper and summary of the results follows.
\vskip 0.08in
\noindent  
Any quasinormalizer for a subalgebra $B$ of  a von Neumann algebra $(N,\varphi)$ gives rise to a finitely generated $B$-module in $L^2(N,\varphi)$.  Section \ref{section:background} includes background and preliminaries on $W^*$-dynamical systems, and modules over finite von Neumann algebras. 
\vskip 0.1in
\noindent  Section \ref{section:normalizers} concerns the relationship between normalizers and quasinormalizers of the inclusion $L(G) \subseteq M \rtimes_\alpha G$.  Theorem \ref{example1} describes an example which shows that the strict analogue of Packer's result for normalizers does not hold in this setting, by exhibiting an ergodic action $\sigma$ of a group $G$ on the hyperfinite II$_1$ factor $R$ which is not weak mixing, but for which the inclusion $L(G) \subseteq R \rtimes_\sigma G$ is singular.  As a consequence, we build on results of Grossman and the sixth author on the relationship between singularity and the analytical properties of subfactor inclusions. Theorem \ref{compact quasinormalizer} presents a basic structural result for normalizers of the inclusion $L(G) \subseteq N \rtimes_\alpha G$ associated to a compact, ergodic action of an i.c.c. group on a tracial von Neumann algebra. We deduce from this a number of further examples in which we are able to compute the von Neumann algebras generated by normalizers and quasinormalizers, such as the situation of a profinite action of a discrete group on a probability space.

\vskip 0.07in
\noindent   The main results of Section \ref{section:quasinormalizers} extend Proposition 6.9 of \cite{Io} to ergodic $W^*$-dynamical systems, and characterize quasinormalizers of subalgebras of $L(G)$ in $M \rtimes_\alpha G$ in terms of the dynamics of the underlying system $(M,G,\alpha,\rho)$.  Although our approach to Theorem \ref{theorem:QuasinormalizersIntro} (stated below) is similar in outline to the one appearing in \cite{Io}, our methods are different, since measure-theoretic tools are not available.  Key tools from \cite{BCM1} and \cite{BCM2} isolate the finite-dimensional invariant subspaces of the induced Koopman representation on $L^2(M,\rho)$ associated to the system, and the ``compact quotient" in this setting is modeled by the {\emph Kronecker subalgebra} $M_K$, generated by elements of $M$ with compact orbit under the group action.
 
\begin{thmx}\label{theorem:QuasinormalizersIntro} Let $M$ be a von Neumann algebra and $\rho$ a normal, faithful state on $M$.  Suppose that a discrete group $G$ acts ergodically by $\rho$-preserving automorphisms on $M$.  Then
\[\QN(L(G)\subseteq M \rtimes_\alpha G)'' = M_K \rtimes_\alpha G.\]
\end{thmx}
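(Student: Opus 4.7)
The plan is to establish both inclusions.

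For the easy direction $M_K \rtimes_\alpha G \subseteq \QN(L(G) \subseteq M \rtimes_\alpha G)''$, I reduce by normality to showing that every $x \in M$ lying in a finite-dimensional $\alpha$-invariant subspace $V \subseteq L^2(M,\rho)$ is itself a quasinormalizer. Choosing a basis $e_1,\ldots,e_n$ of $V$, the identity $\alpha_h(x) = \sum_i c_i(h) e_i$ with scalar coefficients $c_i(h) \in \mathbb{C}$ gives $u_h x = \alpha_h(x) u_h = \sum_i e_i(c_i(h) u_h) \in \sum_i e_i L(G)$, so $L(G)x \subseteq \sum_i e_i L(G)$; the symmetric computation $xu_h = u_h \alpha_{h^{-1}}(x)$ yields $xL(G) \subseteq \sum_i L(G) e_i$. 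Thus $x \in \QN(L(G)\subseteq M\rtimes_\alpha G)$. Since $L(G)$ trivially lies in $\QN''$, the von Neumann algebra generated by $L(G) \cup M_K$, namely $M_K \rtimes_\alpha G$, is contained in $\QN''$.

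For the reverse direction, fix $x \in \QN(L(G)\subseteq M\rtimes_\alpha G)$ witnessed by $y_1,\ldots,y_n$. The two quasinormalizer relations combine to show that the closed bimodule $H = \overline{L(G) x L(G)}^{L^2(M\rtimes_\alpha G)}$ is finitely generated on both sides over $L(G)$, with $p_H \leq \sum_i p_{\overline{y_i L(G)}}$ giving bounds on traces. Using the Fourier identification $L^2(M\rtimes_\alpha G) \cong L^2(M,\rho)\otimes \ell^2(G)$ via $\sum_g x_g u_g \mapsto \sum_g \hat{x}_g \otimes \delta_g$, the right action of $L(G)$ becomes $1 \otimes R(G)$ (where $R(G)$ is the right regular representation) and the left action becomes $\alpha\otimes\lambda$, with $\alpha$ denoting the Koopman representation on $L^2(M,\rho)$. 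The projection $p_H$ onto $H$ therefore lies in $(1\otimes R(G))' = B(L^2(M))\ovt L(G)$, and admits a Fourier expansion $p_H = \sum_g p_g \otimes \lambda_g$ with each $p_g \in B(L^2(M))$.

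The key analytic step is that finite right-$L(G)$-generation of $H$ forces $(\operatorname{Tr}\otimes\tau)(p_H) = \operatorname{Tr}(p_e) < \infty$, so $p_e$ is trace-class, hence compact. The left-invariance of $H$, after matching Fourier components of $(\alpha_k\otimes\lambda_k) p_H (\alpha_k\otimes\lambda_k)^{-1} = p_H$, produces the intertwining relations $\alpha_k p_{k^{-1}gk}\alpha_k^{-1} = p_g$; setting $g = e$ shows $p_e$ commutes with the Koopman representation. A compact positive operator commuting with the $G$-action decomposes into finite-dimensional $\alpha$-invariant eigenspaces, so the spectral characterization of the Kronecker subalgebra from \cite{BCM1, BCM2} gives $\overline{\operatorname{range}(p_e)} \subseteq L^2(M_K,\rho)$. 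Unpacking $p_H^2 = p_H$ in Fourier yields $\sum_g p_g p_g^* = p_e$, hence $p_g p_g^* \leq p_e$ and $\overline{\operatorname{range}(p_g)} \subseteq L^2(M_K,\rho)$ for every $g$. Applying $p_H\xi = \xi$ to any $\xi \in H$ produces $\xi_k = \sum_g p_g(\xi_{g^{-1}k}) \in L^2(M_K,\rho)$, so $\xi \in L^2(M_K \rtimes_\alpha G)$. Combined with $x \in M \rtimes_\alpha G$, the canonical conditional expectation onto $M_K \rtimes_\alpha G$ then forces $x \in M_K\rtimes_\alpha G$.

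The principal obstacle I anticipate is making rigorous the implication from finite $L(G)$-generation of $H$ to the trace-class bound on $p_e$, and invoking the spectral description of $M_K$ from \cite{BCM1,BCM2} to identify $\overline{\operatorname{range}(p_e)}$ inside $L^2(M_K,\rho)$. Additional care is needed when $\rho$ is not tracial, where the Fourier identification of $L^2(M\rtimes_\alpha G)$ and the modular data on the crossed product require attention; however, the canonical state restricts to the trace on $L(G)$, which is what the Hilbert-module estimates ultimately depend on.
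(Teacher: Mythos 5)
Your proposal is correct in substance, but your proof of the hard inclusion takes a genuinely different route from the paper's. The paper works entirely at the level of Fourier coefficients: it fixes an orthonormal basis of left-bounded vectors for the finitely generated right $L(G)$-module $\overline{L(G)xL(G)}$ and, using the convolution formula, derives a uniform estimate showing that every Fourier coefficient $x_s$ of a (one-sided) quasinormalizer has orbit lying within $\eps$ of a fixed finite-dimensional subspace of $L^2(M,\rho)$; precompactness of the orbit then places $x_s$ in $M_K$ by the maximality property of the Kronecker algebra. You instead pass to the basic construction: identifying $p_H$ as an element of $\mathbf{B}(L^2(M))\ovt L(G)$, using the canonical semifinite trace to see that $p_e=(\mathrm{id}\otimes\tau)(p_H)$ is a positive trace-class, hence compact, operator commuting with the Koopman representation, and then invoking the spectral description of $M_K$. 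Both routes work. Yours is more conceptual (it makes visible why finite-dimensional invariant subspaces must appear) and, since it only uses the right-module structure and left $L(G)$-invariance of $H$, it proves the one-sided statement $vN(\mathcal{QN}^{(1)})\subseteq M_K\rtimes_\alpha G$ with no extra effort; the paper's argument is more elementary and self-contained, avoiding the canonical trace on $(JL(G)J)'$ in the non-tracial setting. The two technical points you flag --- finiteness of $\mathrm{Tr}(p_e)$ for a finitely generated right $L(G)$-module, and the containment of finite-dimensional $G$-invariant subspaces of $L^2(M,\rho)$ in $M^{\rho}\Omega_\rho$ --- are, respectively, standard module-dimension theory over the finite algebra $L(G)$ and exactly \cite[Theorem 6.10]{BCM1} (Lemma \ref{Invariant Subspaces Lemma} and Remark \ref{notfinite}), so they are not real obstacles; the latter fact is also what justifies the existence of the $\rho$-preserving conditional expectation onto $M_K\rtimes_\alpha G$ that you use at the end, since it places $M_K$ inside the centralizer $M^\rho$.

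One small gap in your easy direction: from $u_h x\in\sum_i e_i L(G)$ for all $h\in G$ you conclude $L(G)x\subseteq\sum_i e_i L(G)$, but the definition of quasinormalizer requires this containment for every element of $L(G)$, not merely for the group unitaries; passing from the span of the $u_h$ to all of $L(G)$ requires knowing that the finitely generated module $\sum_i e_i L(G)$ is $w^*$-closed. The paper isolates this as a separate step (Lemma \ref{weakly closed modules}, proved via Krein--Smulyan). You should also record that the basis vectors $e_i$ of a finite-dimensional invariant subspace of $L^2(M,\rho)$ may be taken in $M$ itself, which is again \cite{BCM1}.
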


\noindent The second part of Section \ref{section:quasinormalizers} specializes to the case of a discrete group acting on a tracial von Neumann algebra $(M,\tau)$, in such a way that a von Neumann subalgebra $N \subseteq M$ is left invariant.  The associated system $(N \subseteq M, G, \alpha, \tau)$ is called a \emph{$W^*$-dynamical extension system}, and we consider the dynamics of the action relative to the subalgebra $N$.  In this case, the subspace $\mathcal{P}_{N,M}$ of \emph{relatively almost periodic} elements of $M$ (see Definition \ref{apr}) captures the quasinormalizer of the inclusion $N \rtimes_\alpha G \subseteq M \rtimes_\alpha G$ in terms of the dynamics of the action.  

\begin{thmx}\label{wqncontainment1 intro} \label{quasinormalizer2 intro}  Let $(N\subseteq M, G, \sigma,\tau)$ be a $W^*$-dynamical extension system and assume that $\mathcal{QN}(N\subseteq M)''= M$.  Then $N\subseteq \mathcal P_{N,M}\subseteq M$ is a $G$-invariant von Neumann subalgebra and $vN(\mathcal{QN}^{(1)}(N\rtimes_\sigma G\subseteq M\rtimes_\sigma G))= \mathcal P_{N,M}\rtimes_\sigma G$. 

\end{thmx}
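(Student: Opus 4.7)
The proof splits into two parts: verifying that $\mathcal P_{N,M}$ is a $G$-invariant von Neumann subalgebra containing $N$, and then establishing the identification with the one-sided quasinormalizing algebra at the crossed-product level.

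For the first part, the inclusion $N\subseteq\mathcal P_{N,M}$ is immediate since any $n\in N$ has orbit contained in $N$ itself, which is cyclic as a right $N$-module. $G$-invariance holds because $\{\sigma_{gh}(x):g\in G\}$ and $\{\sigma_g(x):g\in G\}$ generate identical right $N$-modules. That $\mathcal P_{N,M}$ is a $\ast$-subalgebra reduces to standard facts about finitely generated $N$-modules: sums and products remain finitely generated (for products, use $\sigma_g(xy)=\sigma_g(x)\sigma_g(y)$), and the adjoint is handled via the tracial duality interchanging finitely generated left and right $N$-modules. Weak closure then follows from the characterization of finite generation via Murray-von Neumann dimension.

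For $\mathcal P_{N,M}\rtimes_\sigma G \subseteq vN(\mathcal{QN}^{(1)}(N\rtimes_\sigma G\subseteq M\rtimes_\sigma G))$, take $x\in\mathcal P_{N,M}$ and let $K_x=\overline{\sum_g\sigma_g(x)N}^{L^2(M)}$. Since $K_x$ is finitely generated as a right $N$-module and $\mathcal{QN}(N\subseteq M)''=M$, an approximation argument lets us choose generators $y_1,\ldots,y_k\in\mathcal{QN}(N\subseteq M)$ for $K_x$. Writing $\sigma_h(x)=\sum_i y_i n_{i,h}$ with $n_{i,h}\in N$ and using $Ny_i\subseteq\sum_j z_{i,j}N$ from the quasinormalizer property, a direct computation yields
\[
(nu_h)x \;=\; n\sigma_h(x)u_h \;=\; \sum_i (ny_i)n_{i,h}u_h \;\in\; \sum_{i,j}z_{i,j}(N\rtimes_\sigma G),
\]
where the family $\{z_{i,j}\}$ is independent of $n$ and $h$. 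Hence $x\in\mathcal{QN}^{(1)}(N\rtimes_\sigma G\subseteq M\rtimes_\sigma G)$, and taking ultraweak closures together with $u_g\in N\rtimes_\sigma G$ gives the desired inclusion.

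For the reverse inclusion, we must show that if $X=\sum_g x_g u_g\in\mathcal{QN}^{(1)}$, then each Fourier coefficient $x_g$ lies in $\mathcal P_{N,M}$. Applying the one-sided quasinormalizer condition $(N\rtimes_\sigma G) X\subseteq\sum_i Y_i(N\rtimes_\sigma G)$ to $A=u_h$ and extracting the $u_{hg}$-Fourier coefficient of both sides gives
\[
\sigma_h(x_g) \;\in\; \overline{\textstyle\sum_{i,g'}y_{i,g'}N}^{L^2(M)}
\]
for every $h\in G$, where the $y_{i,g'}$ are the Fourier coefficients of the $Y_i$. The main obstacle is that, a priori, this right-hand side is not finitely generated as a right $N$-module since $g'$ ranges over all of $G$. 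The plan for resolving this is to recognize the right-hand side as the $u_e$-slice $\bigl(\sum_i Y_i(N\rtimes_\sigma G)\bigr)\cap L^2(M)u_e$ of a finitely generated right $(N\rtimes_\sigma G)$-module, and then apply a dimension-counting argument via the Jones basic construction for the tower $N\subseteq N\rtimes_\sigma G\subseteq M\rtimes_\sigma G$ to show that this slice has finite right $N$-dimension. Once established, it follows that the orbit of each $x_g$ lies in a finitely generated right $N$-submodule of $L^2(M)$, whence $x_g\in\mathcal P_{N,M}$ and $X\in\mathcal P_{N,M}\rtimes_\sigma G$, completing the proof.
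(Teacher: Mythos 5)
Your proposal has a genuine structural gap that runs through all three parts: you work throughout with the condition ``${\rm Orb}(x)$ is contained in a finitely generated right $N$-module,'' but $\mathcal P_{N,M}$ is defined (Definition \ref{apr}) by the strictly weaker requirement that the orbit be \emph{uniformly $\varepsilon$-approximable} by finitely generated modules with uniformly $\|\cdot\|_\infty$-bounded coefficients in $N$, for every $\varepsilon>0$. Already for $N=\mathbb C1$ the two notions differ: $\mathcal P_{\mathbb C 1,M}=M_K$ contains elements whose orbit is precompact but spans an infinite-dimensional subspace. This mismatch is fatal in both directions. In the forward direction $K_x$ need not be finitely generated, so your computation $(nu_h)x\in\sum_{i,j}z_{i,j}(N\rtimes_\sigma G)$ --- which is correct as far as it goes, and parallels the argument that $M_K\rtimes_\alpha G$ consists of quasinormalizers of $L(G)$ --- only covers a proper subset of $\mathcal P_{N,M}$; to absorb all of $\mathcal P_{N,M}$ into $D:=vN(\mathcal{QN}^{(1)}(N\rtimes_\sigma G\subseteq M\rtimes_\sigma G))$ the paper instead applies the relative WAHP of Theorem \ref{rwahp} to the triple $N\rtimes_\sigma G\subseteq D\subseteq M\rtimes_\sigma G$ and shows $\zeta-E_D(\zeta)=0$ for every $\zeta\in\mathcal P_{N,M}$ by a mixing estimate. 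In the reverse direction your dimension count cannot succeed: when $G$ is infinite one has $\dim_N L^2(N\rtimes_\sigma G)=\infty$, so the $u_e$-slice of a finitely generated right $(N\rtimes_\sigma G)$-module is in general an infinitely generated right $N$-module, and the correct conclusion (obtained in Theorem \ref{qnormalizer1} via the $N$-orthogonal expansion of Theorem \ref{bimodularapprox} and truncation of the convolution tail to a finite subset of $G$) is only that $\sigma_h(x_g)$ is \emph{uniformly $\varepsilon$-close} to a finitely generated module --- which is exactly why $\mathcal P_{N,M}$ is defined by approximation in the first place.

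The first part is also not a matter of ``standard facts about finitely generated modules.'' If ${\rm Orb}(x)\subseteq\sum_i y_iN$ and ${\rm Orb}(x')\subseteq\sum_j y_j'N$, then $\sigma_g(xx')\in\sum_{i,j}y_i N y_j' N$, and the right-hand side is a finitely generated \emph{right} $N$-module only if the $y_j'$ are quasinormalizers; likewise the tracial duality $J$ converts the orbit module of $x^*$ into a finitely generated \emph{left} $N$-module, and converting back to a right module again requires quasinormalizers. This is precisely where the hypothesis $\mathcal{QN}(N\subseteq M)''=M$ must enter --- the paper cites the Austin--Eisner--Tao example to show that without it $\mathcal P_{N,M}$ need not be an algebra at all --- yet your first part never invokes it. So the closure-under-products and closure-under-adjoints claims, as you have justified them, are false in general.
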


\noindent In the final part of Section \ref{section:quasinormalizers}, we apply these methods to present several von Neumann algebraic versions of the Furstenberg-Zimmer structure theorem  for actions of groups on probability spaces \cite{Fu77,Zi76}, including the following result.

\begin{thmx}\label{fztower1 intro} Let $(N\subseteq M, G, \sigma,\tau)$ be a $W^*$-dynamical extension system. Then one can find  an ordinal $\alpha$ and a $G$-invariant von Neumann subalgebra $N \subseteq Q_\beta \subseteq M$ for every $\beta\leqslant\alpha$ satisfying the following properties:  \begin{enumerate}
\item [1.] For all $\beta\leqslant  \beta' \leqslant  \alpha$ we have  $N=Q_o\subseteq   Q_\beta \subseteq Q_{\beta'} \subseteq M$;
\item [2.] For every successor ordinal $\beta+1 \leqslant  \alpha$ we have $Q_{\beta+1}= \mathcal P_{Q_\beta, M}''$ and  $$vN(\mathcal {QN}^{(1)}(Q_\beta \rtimes_\sigma G\subseteq M\rtimes_\sigma G))\subseteq Q_{\beta+1}\rtimes_\sigma G.$$

\item [3.] For every limit ordinal $\beta \leqslant \alpha$ we have $\overline{\cup_{\gamma<\beta} Q_\gamma}^{\rm SOT}  = Q_\beta$ and $\overline{\cup_{\gamma<\beta} Q_\gamma \rtimes_\sigma G}^{\rm SOT}  = Q_\beta\rtimes_\sigma G$.
\item [4.] There are nets $(g_\lambda)_\lambda\subseteq G$ and $(u_\lambda)_\lambda\subseteq \mathcal U(Q_\alpha)$ such that for every nonzero $x, y \in M \ominus  Q_\alpha$ we have  $$\lim_{\lambda} \|E_ {Q_\alpha} (x u_\lambda\sigma_{g_\lambda}(y))\|_2= 0.$$

\end{enumerate}
\end{thmx}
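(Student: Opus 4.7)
The plan is a transfinite induction. Set $Q_0 = N$, define $Q_{\beta+1} = \mathcal P_{Q_\beta, M}''$ at successor stages, and $Q_\beta = \overline{\bigcup_{\gamma < \beta} Q_\gamma}^{\rm SOT}$ at limit stages. Each $Q_\beta$ is $G$-invariant by induction: at successor stages, $\mathcal P_{Q_\beta, M}$ is $G$-invariant because every $\sigma_g$ is a trace-preserving automorphism sending finitely generated $Q_\beta$-submodules of $L^2(M,\tau)$ to finitely generated $Q_\beta$-submodules; at limit stages, $G$-invariance passes through SOT closure. Property (1) follows immediately from the construction, and property (3) reduces to standard SOT approximation by finitely supported Fourier sums $\sum_g x_g u_g$ in the crossed product, together with the fact that SOT closures of increasing families of subalgebras intertwine with crossed products.

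For property (2), I would invoke only the containment direction from the proof of Theorem B, which holds without the hypothesis $\mathcal{QN}(Q_\beta\subseteq M)''=M$: any one-sided quasinormalizer of $Q_\beta\rtimes_\sigma G$ in $M\rtimes_\sigma G$, decomposed as a Fourier series along $G$, must have each Fourier coefficient in $\mathcal P_{Q_\beta,M}$, so it lies in $\mathcal P_{Q_\beta,M}''\rtimes_\sigma G = Q_{\beta+1}\rtimes_\sigma G$. To produce the stabilizing ordinal $\alpha$, I use a cardinality argument: the lattice of von Neumann subalgebras of $M$ has cardinality bounded by a cardinal depending on $M$, so the transfinite chain $(Q_\beta)$ must eventually stabilize, giving $\mathcal P_{Q_\alpha,M}''=Q_\alpha$.

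The principal obstacle is property (4). Stabilization says that the $G$-action on $M$ admits no nontrivial relatively almost periodic elements over $Q_\alpha$, which is a form of relative weak mixing over $Q_\alpha$. My strategy is twofold. First, show that for every pair of nonzero $x,y \in M\ominus Q_\alpha$ there exists $(g,u)\in G\times \mathcal U(Q_\alpha)$ making $\|E_{Q_\alpha}(xu\sigma_g(y))\|_2$ arbitrarily small: if this failed, a suitable weak limit of twisted translates of $y$ in $L^2(M,\tau)\ominus L^2(Q_\alpha,\tau)$ would generate a finitely generated $Q_\alpha$-submodule and hence a nonzero element of $\mathcal P_{Q_\alpha,M}\ominus Q_\alpha$, contradicting stationarity. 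The inclusion of unitaries $u\in\mathcal U(Q_\alpha)$ in the twist is forced by noncommutativity, since $y$ need not commute with $Q_\alpha$ and the relatively compact orbit structure lives on a bundle over $Q_\alpha$.

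Second, I would extract a single universal sequence $(g_n,u_n)$ by a diagonal argument on a countable $\|\cdot\|_2$-dense family of finitely generated $Q_\alpha$-submodules of $L^2(M,\tau)$. I expect this universality step to be the genuinely hard part: producing a sequence that disperses every pair $x,y \in M\ominus Q_\alpha$ simultaneously — especially when $M$ is not assumed separable — will likely require first reducing to a countably generated $G$-invariant exhausting subalgebra, and then combining relative weak mixing on that subsystem with a careful Cantor-type diagonalization compatible with the filtration of finitely generated $Q_\alpha$-submodules.
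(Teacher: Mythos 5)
Your construction of the tower is exactly the paper's: $Q_0=N$, $Q_{\beta+1}=\mathcal P_{Q_\beta,M}''$ at successors, SOT-closures of unions at limits, stabilization by a cardinality bound, and property (2) obtained from the unconditional containment $vN(\mathcal{QN}^{(1)}(Q_\beta\rtimes_\sigma G\subseteq M\rtimes_\sigma G))\subseteq \mathcal P_{Q_\beta,M}''\rtimes_\sigma G$ of Theorem \ref{qnormalizer1} (you are right that no quasi-regularity hypothesis is needed there). Properties (1) and (3) are handled the same way in both arguments.

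The divergence is in property (4), and it is where your proposal is not yet a proof. The paper does not reprove relative weak mixing from scratch. At the stabilizing ordinal one has $Q_\alpha=\mathcal P_{Q_\alpha,M}''$, so Theorem \ref{qnormalizer1} forces $vN(\mathcal{QN}^{(1)}(Q_\alpha\rtimes_\sigma G\subseteq M\rtimes_\sigma G))=Q_\alpha\rtimes_\sigma G$; Theorem \ref{rwahp} (Fang--Gao--Smith) then says that the degenerate triple $B\subseteq B\subseteq M\rtimes_\sigma G$ with $B=Q_\alpha\rtimes_\sigma G$ satisfies the relative WAHP, and the witnessing unitaries can be taken in the generating group $\mathcal U(Q_\alpha)\,G$, i.e.\ of the form $v_nu_{g_n}$. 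Since $E_B(xv_nu_{g_n}y)=E_{Q_\alpha}(xv_n\sigma_{g_n}(y))u_{g_n}$ and $E_B(x)=0$ for $x\in M\ominus Q_\alpha$, property (4) drops out immediately. Both of the steps you flag as hard --- the dichotomy ``no dispersal $\Rightarrow$ a nonzero finitely generated $Q_\alpha$-bimodule orthogonal to $Q_\alpha$'' and, especially, the extraction of a \emph{single} net of unitaries dispersing all pairs $x,y$ simultaneously --- are precisely the content of that cited theorem, so your diagonalization over countable dense families of modules (and the attendant non-separability worries) is unnecessary. As written, your sketch of step one also glosses over two real issues that the cited machinery handles: the weak limit of $u_g^*x^*e_{Q_\alpha}xu_g$ lives in the basic construction and one must pass through its spectral projections to get a finitely generated module; and a finitely generated $Q_\alpha$-submodule of $L^2(M)$ only yields an element of $\mathcal K_{Q_\alpha,M}$, so an extra approximation (Theorem \ref{bimodularapprox}) is needed to contradict $\mathcal P_{Q_\alpha,M}=M\cap\mathcal K_{Q_\alpha,M}=Q_\alpha$. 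If you route part (4) through Theorem \ref{rwahp} instead, the argument closes cleanly and coincides with the paper's.
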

\noindent As a consequence, we obtain the following purely von Neumann algebraic description of the classical Furstenberg-Zimmer tower, using quasinormalizers and relatively almost periodic elements, recapturing a previously unpublished result \cite{CP11} of the third-named author and Peterson.  
\vskip 0.07in

\begin{thmx}[\cite{CP11}]\label{fztower3 intro} Let $G \overset{\sigma}\ca X$ be a probability measure-preserving $(p.m.p.$ in the sequel$)$ ergodic action of a countable discrete group $G$ on a standard probability space $X$ and let $(G \overset{\sigma}\ca X_\beta)_{\beta\leqslant \alpha}$ be the corresponding Furstenberg-Zimmer tower. Let $M = L^\infty(X)\rtimes_\sigma G$ and $M_\beta = L^\infty(X_\beta)\rtimes_\sigma G$ be the corresponding crossed product von Neumann algebras.  Then the following hold:  \begin{enumerate}
\item [1.] For all $\beta\leqslant  \beta' \leqslant  \alpha$ we have the following inclusions $L(G)=M_o\subseteq   M_\beta \subseteq M_{\beta'} \subseteq M_\alpha \subseteq M$;
\item [2.] For every successor ordinal $\beta+1 \leqslant  \alpha$ we have that    $vN(\mathcal {QN}^{(1)}(M_\beta\subseteq M))=\mathcal{QN}(M_\beta\subseteq M )''=M_{\beta+1}$.  Moreover, there is a sequence $(g^\beta_n)_n\subseteq G$ such that for every $x,y\in L^\infty(X)\ominus L^\infty(X_{\beta+1})$ we have 
$$\lim_{n\rightarrow \infty} \|E_{L^\infty(X_\beta)}(x\sigma_{g^\beta_n}(y))\|_2=0.$$
\item [3.] For every limit ordinal $\beta \leqslant \alpha$ we have $\overline{\cup_{\gamma<\beta} L^\infty(X_\gamma)}^{\rm SOT}  = L^\infty(X_\beta)$ and also $\overline{\cup_{\gamma<\beta} M_\gamma}^{\rm SOT}  = M_\beta$.
\item [4.] There is an infinite sequence $(g_n)_n\subseteq G$ such that for every nonzero $x, y \in L^\infty(X) \ominus  L^\infty(X_\alpha)$ we have  $$\lim_{n\rar \infty} \|E_ {L^\infty(Y_\alpha)} (x \sigma_{g_n}(y))\|_2= 0.$$
\end{enumerate}
\end{thmx}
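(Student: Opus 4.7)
The plan is to deduce the theorem from Theorems \ref{wqncontainment1 intro} and \ref{fztower1 intro} by specializing to the commutative base algebra. Set $A = L^\infty(X)$ with the trace coming from the invariant probability measure, and apply Theorem \ref{fztower1 intro} to the $W^*$-dynamical extension system $(\mathbb{C}\subseteq A, G, \sigma, \tau)$; the hypothesis $\mathcal{QN}(\mathbb{C}\subseteq A)''=A$ is immediate since $x\cdot\mathbb{C}\subseteq \mathbb{C} x$ for every $x\in A$. The output is an ordinal $\alpha$ and a tower $\mathbb{C}=Q_0\subseteq Q_\beta\subseteq A$ of $G$-invariant von Neumann subalgebras satisfying items (1)--(4) of that theorem. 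Since $A$ is abelian, each $Q_\beta$ is an abelian $G$-invariant subalgebra of $A$, so $Q_\beta = L^\infty(X_\beta)$ for a unique $G$-invariant sub-$\sigma$-algebra $X_\beta$ of $X$, and accordingly $M_\beta = Q_\beta \rtimes_\sigma G$.

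The main step is to identify this tower $(X_\beta)$ with the Furstenberg--Zimmer distal tower of $G \overset{\sigma}\ca X$. By Theorem \ref{fztower1 intro}(2), $Q_{\beta+1} = \mathcal{P}_{Q_\beta, A}''$, so $L^\infty(X_{\beta+1})$ is generated by the $L^\infty(X_\beta)$-relatively almost periodic elements of $A$. The key technical point is to verify that this operator-algebraic condition---$f$ belongs to a finitely generated $G$-invariant $L^\infty(X_\beta)$-submodule of $L^2(X)$---is equivalent to the classical Furstenberg--Zimmer notion of $f$ having a conditionally precompact $G$-orbit over $X_\beta$. Granting this dictionary, a transfinite induction (using Theorem \ref{fztower1 intro}(3) to handle limit stages) identifies $X_{\beta+1}$ with the maximal $X_\beta$-relatively compact extension of $X_\beta$ inside $X$, so $(X_\beta)$ coincides with the classical Furstenberg--Zimmer tower.

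Items (1) and (3) are then immediate from Theorem \ref{fztower1 intro} together with the compatibility of the crossed product with SOT limits. For item (2), note that $\mathcal{QN}(L^\infty(X_\beta)\subseteq A)''=A$ holds in this abelian setting (since every $f\in A$ is trivially a quasinormalizer with $f_1=f$), so Theorem \ref{wqncontainment1 intro} applies at each stage and gives
\[
vN(\mathcal{QN}^{(1)}(M_\beta\subseteq M)) = \mathcal{P}_{L^\infty(X_\beta),A}\rtimes_\sigma G = M_{\beta+1}.
\]
The sandwich $\mathcal{N}(M_\beta\subseteq M)''\subseteq \mathcal{QN}(M_\beta\subseteq M)''\subseteq vN(\mathcal{QN}^{(1)}(M_\beta\subseteq M))$ together with the direct verification that every $f\in L^\infty(X_{\beta+1})$ is a two-sided quasinormalizer of $M_\beta$ in $M$ (choose a $G$-invariant finite generating set for the module containing $f$, so that $fM_\beta + M_\beta f$ lands in a finite $M_\beta$-span) yields the middle equality. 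For item (4), Theorem \ref{fztower1 intro}(4) supplies sequences $(g_n)\subseteq G$ and $(u_n)\subseteq\mathcal{U}(Q_\alpha)=\mathcal{U}(L^\infty(X_\alpha))$; commutativity of $A$ and $L^\infty(X_\alpha)$-bimodularity of $E_{L^\infty(X_\alpha)}$ give
\[
\|E_{L^\infty(X_\alpha)}(x u_n\sigma_{g_n}(y))\|_2 = \|u_n E_{L^\infty(X_\alpha)}(x\sigma_{g_n}(y))\|_2 = \|E_{L^\infty(X_\alpha)}(x\sigma_{g_n}(y))\|_2,
\]
so the $u_n$ can be dropped and the single sequence $(g_n)$ suffices.

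The principal obstacle is the bridge in the second paragraph between finitely generated $G$-invariant $L^\infty(X_\beta)$-modules and conditionally precompact orbits. Conceptually the two notions describe the same object---the module structure arising from the Jones basic construction for $L^\infty(X_\beta)\subseteq A$ corresponds to conditional Hilbert--Schmidt decompositions of $L^2(X)$ over $L^2(X_\beta)$---but writing the equivalence out carefully requires exploiting the abelianness of the base algebra and the specific form of the conditional expectation $E_{L^\infty(X_\beta)}$.
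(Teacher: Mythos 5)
Your proposal is correct and follows essentially the same route as the paper: the paper derives this result by specializing its general tower theorem (Theorem \ref{fztower1}) to the abelian case (Theorem \ref{fztower2}) and then invoking Theorem \ref{quasinormalizer2} and Corollary \ref{quasinorm-compact} at each successor stage, exactly as you do. The ``dictionary'' you defer in your second paragraph is precisely the paper's Proposition \ref{same}, which proves the equivalence of relative almost periodicity with the classical finitely-generated-module notion of a compact extension for ergodic abelian systems; and your parenthetical verification that elements of $L^\infty(X_{\beta+1})$ are two-sided quasinormalizers of $M_\beta$ is carried out in Corollary \ref{quasinorm-compact}, where the only nontrivial point you gloss over is that the maps $x\mapsto\sum_g E_N(\eta_j^*\sigma_g(\eta_k))a_gu_g$ defined on the algebraic crossed product must be shown WOT-continuous (via a Jones-projection identity) before one can conclude $M_\beta\,\eta_k\subseteq\sum_j\eta_j M_\beta$ for the full von Neumann crossed product.
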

\noindent  In Section \ref{section:approximation} we turn to analytical properties of the inclusion $L(G) \subseteq M \rtimes_\alpha G$. Here, we assume the group $G$ acts compactly and ergodically on a finite von Neumann algebra $(M,\tau)$, preserving the trace.  
The Haagerup approximation property for finite von Neumann algebras was developed in \cite{Ch} as a counterpart to the Haagerup property for groups \cite{Ha}.  The two properties weaken the notion of amenability for von Neumann algebras and discrete groups, respectively, and encompass many more useful examples, including free groups and their associated von Neumann algebras.  It was shown in \cite{Ch} that a (countable) discrete group $G$ has the Haagerup property if and only if $L(G)$ satisfies the von Neumann algebraic version.   A relative version of the Haagerup property for inclusions $B \subseteq N$ of finite von Neumann algebras first appeared in \cite{Bo} and has been employed extensively, for instance, in Popa's celebrated results on the class of $\mathcal{HT}$ algebras \cite{Po}.  Notably, the relative Haagerup property for an inclusion $B \subseteq N$ is not a weakening of relative amenability; there are examples of inclusions for which the latter condition holds, but not the former \cite{CaKlSkVoWa}.  Our main result in Section \ref{section:approximation} builds on the methods and results of Section \ref{section:quasinormalizers} to show that in the above setting, the relative Haagerup property for the inclusion $L(G) \subseteq M \rtimes_\alpha G$ encodes the dynamics of the group action.  
\begin{thmx} \label{theorem:ApproximationIntro} If $G$ is a discrete group and $(M,G,\alpha,\tau)$ an ergodic, trace-preserving $W^*$-dynamical system, then $L(G)\subseteq M \rtimes_\alpha G$ has the relative Haagerup property if and only if the action $\alpha$ is compact.  
\end{thmx}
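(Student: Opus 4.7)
The plan is to prove both directions by relating the $L(G)$-compactness of the $L^2$-extensions of $L(G)$-bimodular UCP maps on $M \rtimes_\alpha G$ to ordinary compactness on $L^2(M)$ of their compressions to $M$.

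For the forward direction, assume $\alpha$ is compact and let $K$ be the closure of $\alpha(G)$ in $\Aut(M,\tau)$ under the pointwise $\|\cdot\|_2$-topology; then $K$ is a compact Polish group acting on $M$ extending $\alpha$, and ergodicity yields a Peter--Weyl decomposition $L^2(M) = \bigoplus_{\pi \in \hat K} H_\pi$ with each isotypic summand $H_\pi$ finite-dimensional and contained in $M$. I would then select a Fej\'er-type sequence of positive definite class functions $\phi_n : K \to [0,\infty)$ with $\widehat{\phi}_n$ of finite support in $\hat K$ and $\phi_n * f \to f$ in $L^2(K)$ for every $f$, and set $T_n(x) = \int_K \phi_n(k)\,\bar\alpha_k(x)\,dk$. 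The $T_n$ are UCP, $\tau$-preserving and, by the class-function property, $G$-equivariant, with $L^2$-extensions of finite rank. Extending via $\Phi_n\bigl(\sum_g x_g u_g\bigr) = \sum_g T_n(x_g)u_g$ yields UCP $L(G)$-bimodular $\tau$-preserving maps on $M \rtimes_\alpha G$ with $\Phi_n \to \id$ in pointwise $\|\cdot\|_2$, and writing $T_n = \sum_i \langle \cdot, w_i\rangle z_i$ as a finite sum of rank-one operators gives the explicit decomposition $T_{\Phi_n} = \sum_i z_i\, e_{L(G)}\, w_i^*$, witnessing $L(G)$-compactness.

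For the reverse direction, suppose the relative Haagerup property is witnessed by a sequence $(\Phi_n)$ of UCP $L(G)$-bimodular $\tau$-preserving maps with $L(G)$-compact $L^2$-extensions $T_{\Phi_n}$ and $\Phi_n \to \id$ in pointwise $\|\cdot\|_2$. Set $\phi_n = E_M \circ \Phi_n|_M : M \to M$; from $L(G)$-bimodularity of $\Phi_n$ together with the identity $E_M(u_g y u_g^{-1}) = \alpha_g(E_M(y))$, one verifies that $\phi_n$ is UCP, $\tau$-preserving, $\alpha$-equivariant, and converges to $\id$ in pointwise $\|\cdot\|_2$. The crucial claim is that $\phi_n$ extends to a \emph{compact} operator on $L^2(M)$. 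Indeed, approximating $T_{\Phi_n}$ in operator norm by finite sums $\sum_i a_i\, e_{L(G)}\, b_i$ with $a_i, b_i \in M \rtimes_\alpha G$, a Fourier-expansion computation in the crossed product yields
\[E_M\bigl(a\, e_{L(G)}\, b\bigr) E_M(\xi) \;=\; \sum_{h \in G} \langle \xi,\, \alpha_h(b_{h^{-1}}^*)\rangle\, a_h\]
for $\xi \in L^2(M)$, a sum of rank-one operators of nuclear norm at most $\|a\|_2\|b\|_2$ via Cauchy--Schwarz. Each compressed finite sum is therefore trace class on $L^2(M)$, and passing to the operator-norm limit yields $\phi_n$ as a norm-limit of trace-class operators, hence compact.

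With compactness of $\phi_n$ in hand, its $\alpha$-equivariance forces $\ran(\phi_n) \subseteq L^2(M_K)$: applying the spectral theorem to the positive compact operator $\phi_n^*\phi_n$ gives finite-dimensional $\alpha$-invariant eigenspaces (whose vectors have precompact $\alpha$-orbit and therefore lie in $L^2(M_K)$), and the uniqueness of the polar decomposition transfers $\alpha$-equivariance from $\phi_n$ to the accompanying partial isometry, propagating the containment to $\ran(\phi_n)$. Since $\phi_n(x) \to x$ in $\|\cdot\|_2$ for every $x \in M$ and $L^2(M_K)$ is closed, we conclude $M \subseteq L^2(M_K)$, so $M = M_K$ and $\alpha$ is compact. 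The main obstacle is the passage from $L(G)$-compactness of $T_{\Phi_n}$ on $L^2(M \rtimes_\alpha G)$ to ordinary compactness of the compression $\phi_n$ on $L^2(M)$; once that is in hand, the remaining arguments are routine spectral theory.
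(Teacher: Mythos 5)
Your argument is correct in outline, and for the harder implication it takes a genuinely different route from the paper. For (relative H) $\Rightarrow$ (compact), the paper does not argue directly at all: it invokes Popa's Proposition 3.4 to get that the relative Haagerup property forces $\mathcal{QN}(L(G)\subseteq M\rtimes_\alpha G)''=M\rtimes_\alpha G$, and then quotes Theorem \ref{QuasinormalizerCompactSystemTheorem} ($\mathcal{QN}''=M_K\rtimes_\alpha G$) to conclude $M=M_K$. Your compression argument --- showing $\phi_n=E_M\circ\Phi_n|_M$ is $\alpha$-equivariant with compact $L^2$-extension, and then using the spectral theorem together with Lemma \ref{Invariant Subspaces Lemma} to force $\overline{\operatorname{ran}\phi_n}\subseteq L^2(M_K)$ --- is self-contained and bypasses both Popa's proposition and the quasinormalizer theorem; that is a real gain in directness, at the cost of the two technical points below. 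For (compact) $\Rightarrow$ (relative H), your Peter--Weyl/Fej\'er construction and the paper's construction are cousins: the paper uses injectivity of $M$ (Lemma \ref{Kronecker Subalgebra Lemma}) to average the finite-dimensional conditional expectations $E_\lambda$ over the SOT$^*$-compact closure $\Gamma$ of $\{V_g\}$, whereas you average the automorphisms themselves against class functions with finitely supported Fourier transform. Yours avoids hyperfiniteness but needs the Hoegh-Krohn--Landstad--St{\o}rmer finiteness of multiplicities to know the isotypic components are finite-dimensional, and your assertion that $K$ is a compact \emph{Polish} group silently assumes a separable predual; the paper devotes Lemma \ref{sepsubalg} and the last part of the proof of Theorem \ref{RelativeHAPTheorem} precisely to removing that assumption, and you would need the same reduction.

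Two steps in your reverse direction need to be tightened. First, ``approximating $T_{\Phi_n}$ in operator norm by finite sums $\sum_i a_ie_{L(G)}b_i$ with $a_i,b_i\in M\rtimes_\alpha G$'' is not what the definition hands you: it gives a finite-trace projection $p$ with $\|T_{\Phi_n}(1-p)\|<\eps$, and a finite-trace projection need not be a finite sum of elementary operators with coefficients in the algebra. The repair is standard but should be said: since $e_{L(G)}$ has full central support, $p$ decomposes as a finite sum $\sum_i v_ie_{L(G)}v_i^*$ (equivalently $\sum_i L_{\xi_i}e_{L(G)}L_{\xi_i}^*$ for left-bounded vectors $\xi_i\in L^2(M\rtimes_\alpha G)$), whence $T_{\Phi_n}p=\sum_i L_{T_{\Phi_n}\xi_i}e_{L(G)}L_{\xi_i}^*$; your rank-one/nuclear-norm computation goes through verbatim with $L^2$-coefficients in place of algebra elements, and $\|e_MT_{\Phi_n}e_M-e_MT_{\Phi_n}p\,e_M\|<\eps$ finishes it. Second, in the polar-decomposition step you should note explicitly that the partial isometry carries each finite-dimensional eigenspace of $|\phi_n|$ to a finite-dimensional $V$-invariant subspace, and that Lemma \ref{Invariant Subspaces Lemma} is what places such a subspace inside $M\Omega$ (hence inside $M_K\Omega$); with those insertions the argument is complete.
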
 
\noindent Theorem \ref{theorem:ApproximationIntro} combines with Theorem \ref{theorem:QuasinormalizersIntro} to establish a fully general noncommutative analogue of Proposition 6.9 of \cite{Io}.  The same main tools from \cite{BCM1} and \cite{BCM2} are employed in the proof;  versions of those results suitable for our purposes are stated in Lemmas \ref{Invariant Subspaces Lemma} and \ref{Kronecker Subalgebra Lemma}.

  \begin{Remark}   During the preparation of this paper, the authors became aware of the preprint \cite{JaSp} which has some overlap in subject matter.  Although there is some overlap in techniques between that note and this one (e.g., the authors develop some variations on the methods in \cite{BCM1} and \cite{BCM2}), there is little overlap in the results. \end{Remark}

\section{Background and Preliminaries} \label{section:background} \noindent We recall in this section basic facts about von Neumann algebras, $W^*$-dynamical systems and extension systems, and modules over finite von Neumann algebras to be used in the sequel (the reader may consult the books \cite{AnPo}, \cite{SiSm}, \cite{StZs} for further details). Let $N$ be a von Neumann algebra and $\varphi$ a normal, faithful state on $N$.  The \emph{centralizer} of $\varphi$ is the von Neumann subalgebra $N^\varphi = \sett{x \in N: \varphi(xn)=\varphi(nx),\,n \in N}$ of $N$. Note that the restriction of $\varphi$ to $N^\varphi$ is a trace, so $N^\varphi$ is a finite von Neumann algebra. We denote the GNS space associated to $\varphi$ by $L^2(N,\varphi)$ (or, simply, $L^2(N)$ when the context is clear), and the canonical cyclic and separating vector in $L^2(N,\varphi)$ by $\Omega_\varphi$ (or leave off the subscript when context allows).  When $N$ is in standard form on $L^2(N,\varphi)$, the embedding $x \mapsto x \Omega_\varphi$ induces a norm on $N$ which we denote by $\norm{\cdot}_2$. There also exists a conjugate linear isometry $J$ on $L^2(N,\varphi)$, which is the polar part of the preclosed map $x\Omega_\varphi \mapsto x^* \Omega_\varphi$, and satisfies $N'=JNJ$.

\subsection{$W^*$-dynamical systems}
A \emph{$W^*$-dynamical system} (or, simply, a \emph{system}) is a quadruple $(M,G,\alpha,\rho)$ consisting of a von Neumann algebra $M$ with a normal, faithful state $\rho:M\rightarrow \mathbb{C}$ $($i.e. $M$ is $\sigma$-finite), together with a strongly continuous action $\alpha$ of a locally compact group $G$ on $M$ by $\rho$-preserving automorphisms.  In this paper, the group $G$ will always be assumed to be discrete (possibly uncountable).  Since we will often make reference to the following well-known concepts, we remind the reader of their definitions.

\begin{definition}\label{basicdef}

\medskip
    \begin{itemize}
    \item[{\rm{(i)}}] 
    A system is said to be \emph{ergodic} if the scalar multiples of the identity are the only elements of $M$ fixed by $\alpha_g,$ $g \in G$.
\medskip
    \item[{\rm{(ii)}}]
    A system is \emph{compact} if for any $x \in M$, the orbit ${\rm Orb}(x)=\sett{\alpha_g(x):g \in G}$ has compact closure in the $\norm{\cdot}_2$-norm.
\medskip
    \item[{\rm{(iii)}}]
    A system in which the action $\alpha$ of a discrete group $G$ on a von Neumann algebra $M$ leaves a von Neumann subalgebra $N \subseteq M$ $($globally$)$ invariant will be called a \emph{$W^*$-dynamical extension system} $($or, simply, an \emph{extension system}$)$ and denoted by a quadruple $(N \subseteq M, G, \alpha, \rho)$.

    \end{itemize}
\end{definition}

\vskip 0.06in
\noindent  If $(M,G,\alpha,\rho)$ is a system with $M$ in standard form on $L^2(M,\rho)$, there is a faithful, normal representation $\pi$ of $M$ on $L^2(M,\rho)\otimes \ell^2(G)$, given by 
\[\pi(x)(\xi \otimes \delta_h)=\alpha_h^{-1}(x)\xi \otimes \delta_h, \quad \xi \in L^2(M,\rho),\, h \in G,\]
and a unitary representation $u$ of $G$ on $L^2(M,\rho)\otimes \ell^2(G)$, given by 
\[u_g (\xi \otimes \delta_h) = \xi \otimes \delta_{gh}, \quad \xi \in L^2(M,\rho),\, h \in G.\]
Note that for $g \in G$ and $x \in M$ we have the relation
\[u_g \pi(x) u_g^* = \pi(\alpha_g(x)).\]
The von Neumann algebra on $L^2(M,\rho)\otimes \ell^2(G)$  generated by $\pi$ and $u$ is known as the \emph{crossed product}, and is denoted by $M \rtimes_\alpha G$.  The crossed product is the $w^*$-closure of the $\ast$-algebra of finite linear combinations of operators of the form $\pi(x)u_h$,  $x\in M$, $h \in G$, which we will denote by $x u_h$ for brevity.  The functional $\widehat{\rho}$ defined on finitely nonzero sums by 
\begin{equation}\label{statecrossedproduct}\widehat{\rho}(\sum_{g \in G}x_g u_g) = \rho(x_e)\end{equation}
extends to a normal, faithful state on $M \rtimes_\alpha G$.  It follows from this construction that the GNS space $L^2(M\rtimes_\alpha G,\widehat{\rho})$ is isomorphic to $L^2(M,\rho)\otimes \ell^2(G)$, and that both $M$ and $L(G)$ are (unital) von Neumann subalgebras of $M\rtimes_\alpha G$. Note in particular that $G$-invariance of $\rho$ implies that the centralizer of $\widehat{\rho}$ in $M \rtimes_\alpha G$ contains $L(G)$. 

\vskip 0.07in
\noindent  An essential tool for investigating the inclusion $L(G) \subseteq M \rtimes_\alpha G$ is Jones's \emph{basic construction}.  In this setting, it is the von Neumann subalgebra $\ip{M\rtimes_\alpha G}{e_{L(G)}}$ of $\textbf{B}(L^2(M \rtimes_\alpha G,\widehat{\rho}))$ generated by $M\rtimes_\alpha G$ and the orthogonal projection $e_{L(G)}:L^2(M \rtimes_\alpha G,\widehat{\rho}) \rightarrow L^2(L(G))$.  It can be shown that $\ip{M\rtimes_\alpha G}{e_{L(G)}} = (JL(G) J)',$ i.e., $\ip{M\rtimes_\alpha G}{e_{L(G)}}$ consists of operators $T \in \textbf{B}(L^2(M \rtimes_\alpha G,\widehat{\rho}))$ that commute with the right action of $L(G)$ on $L^2(M \rtimes_\alpha G,\widehat{\rho})$.  It follows that the projections in $\ip{M\rtimes_\alpha G}{e_{L(G)}}$ are in bijective correspondence with the right $L(G)$-submodules of $L^2(M\rtimes_\alpha G,\widehat{\rho})$.  

\subsection{Modules over finite von Neumann algebras}\label{section:modules}
We recall here some basic facts about modules over finite von Neumann algebras, to be used in the sequel. Let $N$ be a finite von Neumann algebra, with a fixed faithful normal trace $\tau$.  A \emph{left $($respectively, right$)$ $N$-module} is a Hilbert space $\Hil$, paired with a normal, unital homomorphism (respectively, anti-homomorphism) $\pi$ of $N$ into $\mathbf{B}(\Hil)$.  This note will focus primarily on right modules. If $(\Hil,\pi)$ is a right $N$-module, then there exist a set $S$, a projection $p \in \mathbf{B}(\ell^2(S))\votimes N$, and a unitary $U:\Hil \rightarrow p(\ell^2(S)\otimes L^2(N))$ which intertwines $\pi$ with the representation $I \otimes JNJ$ of $JNJ$ on $p(\ell^2(S)\otimes L^2(N))$, so the two spaces are isomorphic as $N$-modules. This is proved in \cite[Proposition 8.2.2]{AnPo} for separable left $N$-modules where $S$ can be taken to be $\Nat$, and has an easy extension to the general case, as noted in the footnote on \cite[p. 124]{AnPo}.


\vskip 0.08in
\noindent  If $(\Hil,\pi)$ is a right $N$-module, for $x \in N$ and $\eta \in \Hil$, we simply denote $\pi(x)\eta$ by $\eta x$.  Any element $\xi \in \Hil$ gives rise to an $($possibly unbounded$)$ operator $L_\xi: L^2(N) \rightarrow \Hil,$ defined on the dense subspace $N \Omega$ by $L_\xi(x \Omega) = \xi x$.  If $L_\xi$ extends to a bounded operator, that is, there is some $C>0$ such that $\norm{\xi n} \leqslant C\norm{n}_2$ for all $n \in N$, then the vector $\xi$ is said to be \emph{left-bounded}.  The set $\Hil^0$ of left-bounded vectors of $\Hil$ is a dense subspace of $\Hil$.  Moreover, for any $\xi, \eta \in \Hil^0$, the operator $L_{\xi}^* L_{\eta} \in \mathbf{B}(L^2(N))$ commutes with the right action of $N$ on $L^2(N)$, so defines an element of $N$ itself.  In this way, $\Hil^0$ is endowed with an $N$-valued inner product, given by $\ip{\xi}{\eta}=L_{\xi}^*L_{\eta}$, with the additional property that $L_{\xi}^*(\eta) = \ip{\xi}{\eta}\Omega$ for any $\xi,$ $\eta \in \Hil^0$. 

\vskip 0.08in
\noindent A right $N$-module $\Hil$ is said to be \emph{finitely generated} if there exist $\xi_1, \ldots , \xi_n \in \Hil$ such that $\Hil$ is the closure of $\sum_i \xi_i N$.  A Gram-Schmidt argument shows that, in this case, the $\xi_i$ may be taken to be left-bounded vectors which are  \emph{orthonormal} with respect to the $N$-valued inner product.   That is, if $\Hil$ is finitely generated, then there exist $\eta_1,  \ldots, \eta_n \in \Hil^0$ such that $\Hil = \sum_i \overline{\eta_i N }$, and for each $i,j$ we have $\ip{\eta_i}{\eta_j} = \delta_{ij} p_j$ for some projection $p_j \in N$.  It follows that any $\zeta \in \Hil^0$ may be expressed uniquely as a Fourier series over $N$ by
\[ \zeta = \sum_i \eta_i \ip{\eta_i}{\zeta}.\]
The above identification of right $N$-modules with subspaces of $\ell^2(S) \otimes L^2(N)$ maps the finitely generated right $N$-modules to those of the form $p(\ell^2(S) \otimes L^2(N))$, where $p \in \textbf{B}(\ell^2(S))\votimes N$ has the form $\oplus_{k=1}^n q_k$, for projections $q_1, \ldots, q_n \in N$.  An immediate consequence of this, which we will use implicitly in what follows, is that submodules of finitely generated modules are also finitely generated.  
\vskip 0.08in

\noindent  For further use, we continue by recalling a result on finitely generated bimodules from \cite[Lemma 3.4]{FaGaSm} which, in turn, was inspired by \cite[Theorem 1.4.2]{Po}. 
\begin{thm}[\cite{FaGaSm}]\label{technicalresult} Let $N\subseteq (M, \tau)$ be an inclusion of tracial von Neumann algebras. Suppose that $\cH \subseteq L^2(M)$ is an $N$-bimodule, and that $\cH$ is a finitely generated right $N$-module with an orthonormal basis of length $k$. Let $P_\cH$ be the orthogonal projection
of $L^2(M)$ onto $\cH$. Then there exists a sequence of projections $z_n \in N' \cap M$ such that
$\lim_{n\rar \infty} z_n = 1$ in SOT and for each $n$ there exist finitely many elements   $x_{n,1}, \ldots , x_{n,k} \in M$ that are $N$-orthogonal and satisfy 
$$z_nP_\cH z_n(x\Omega) = \sum^k_{i=1} x_{n,i}E_N(x^*_{n,i}x)\Omega \text{, for all }x\in M. $$
\end{thm}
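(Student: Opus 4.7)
My plan is to construct the cutoffs $z_n$ from the $M$-valued dual conditional expectation of $P_\cH$ in Jones's basic construction, and to extract the $x_{n,i}$ via a Pimsner--Popa style orthogonalization of the range of $z_n P_\cH z_n$. By the theory of right $N$-modules recalled in Section \ref{section:modules}, fix an orthonormal basis $\xi_1, \ldots, \xi_k \in \cH^0$ of $\cH$ with $\ip{\xi_i}{\xi_j}_N = \delta_{ij}p_j$ for projections $p_j \in N$, so that $P_\cH = \sum_{i=1}^k L_{\xi_i}L_{\xi_i}^* \in \langle M, e_N\rangle\cap N'$, the $N'$-inclusion being the content of the $N$-bimodule hypothesis on $\cH$. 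Identifying $L^2(M,\tau)$ with the space of $\tau$-measurable operators affiliated with $M$, write $\xi_i = y_i\Omega$ for closed densely defined $y_i$ affiliated with $M$; formally, $P_\cH = \sum_i y_i e_N y_i^*$.

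The crux is the construction of $z_n$. Let $E_M : \langle M, e_N\rangle \to M$ denote the normal $M$-bimodular map $E_M(xe_Ny) = xy$, extended by $\Tr$-normality. Since $\Tr(P_\cH) = \sum_j \tau(p_j) < \infty$, the element $T := E_M(P_\cH) = \sum_i y_iy_i^*$ is a positive $\tau$-measurable operator affiliated with $M$. Combining $P_\cH \in N'$ with the $N$-bimodularity of $E_M$ forces $T$ to lie in $N'\cap M$ in the affiliated sense; consequently the spectral projections $z_n := \chi_{[0,n]}(T)$ belong to $N'\cap M$ and increase strongly to $1$. Moreover $z_n y_iy_i^* z_n \leq z_n T z_n \leq n z_n$, so each $z_ny_i$ is bounded and lies in $M$. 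This is the step I expect to be the main obstacle, since the cut-offs must lie in $N'\cap M$ and not merely in $M$; it is precisely the $N$-bimodule hypothesis on $\cH$ (rather than merely the right $N$-module hypothesis) that makes $T$ an $N'$-valued element, via the above commutation argument.

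Setting $x_{n,i}' := z_n y_i \in M$, a direct computation using the formal expansion $P_\cH = \sum_i y_i e_N y_i^*$ together with $z_n \in N'\cap M$ gives
\begin{equation*}
z_n P_\cH z_n(x\Omega) = \sum_{i=1}^k x_{n,i}'\, E_N\bigl((x_{n,i}')^* x\bigr)\Omega \qquad (x\in M),
\end{equation*}
equivalently $z_n P_\cH z_n = \sum_i x_{n,i}' e_N (x_{n,i}')^*$ in $\langle M, e_N\rangle$. Finally, since the range of $z_n P_\cH z_n$ is a finitely generated right $N$-submodule of $L^2(M)$ spanned by the bounded vectors $z_n\xi_i = x_{n,i}'\Omega$, one may apply a Pimsner--Popa type orthogonalization (equivalently, diagonalize the positive Gram matrix $\bigl(E_N((x_{n,j}')^*x_{n,i}')\bigr)_{ij}\in M_k(N)_+$ by a unitary $U^{(n)}\in M_k(N)$ and pass to $x_{n,i} := \sum_j x_{n,j}'U^{(n)}_{ji}$) to obtain $N$-orthogonal elements $x_{n,1}, \ldots, x_{n,k}\in M$ still satisfying $z_n P_\cH z_n = \sum_i x_{n,i} e_N x_{n,i}^*$, since unitary change of basis preserves the sum.
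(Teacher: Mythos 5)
The paper itself gives no proof of this statement --- it is quoted verbatim from \cite[Lemma 3.4]{FaGaSm} (itself modelled on \cite[Theorem 1.4.2]{Po}) --- but your argument is essentially the standard one from that source: pull $P_\cH=\sum_i L_{\xi_i}L_{\xi_i}^*$ down to the positive element $T=\sum_i y_iy_i^*\in L^1(M,\tau)_+$, observe that $N$-bimodularity of the pull-down map together with $P_\cH\in N'$ makes $T$ affiliated with $N'\cap M$, cut by its spectral projections $z_n=\chi_{[0,n]}(T)$ to force $z_ny_i\in M$, and then re-orthogonalize. The one step you assert without justification --- diagonalizing the positive Gram matrix $\bigl(E_N((x_{n,j}')^*x_{n,i}')\bigr)_{i,j}\in \mathbb M_k(N)_+$ by a unitary of $\mathbb M_k(N)$ --- is genuinely needed (naive Gram--Schmidt over $N$ would replace $z_nP_\cH z_n$ by the projection onto $\overline{z_n\cH}$ and may produce vectors that are only left-bounded rather than in $M\Omega$) and is not elementary, but it is correct: it is Kadison's diagonalization theorem for self-adjoint matrices over a von Neumann algebra, and the unitary change of basis preserves both $\sum_i x_{n,i}e_N x_{n,i}^*$ and membership in $M$.
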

\noindent Next we highlight a consequence of Theorem \ref{technicalresult}  which is needed in the proofs of the main results in Section 6. For the reader's convenience we include a complete proof. 

\begin{thm}\label{bimodularapprox} Let $N\subseteq (M, \tau)$ be an inclusion of tracial von Neumann algebras. Suppose that $\cH \subseteq L^2(M)$ is an $N$-bimodule that is  finitely generated as a right $N$-module. Let $\xi_1,\ldots,\xi_n \in \cH$. Then for every $\varepsilon>0$ there are $\eta_1,\ldots,\eta_n\in M$ with $\|\xi_i-\eta_i \|_2<\frac{\varepsilon}{16}$ for all $1\leqslant i\leqslant n$ together with $N$-orthogonal elements $x_1,\ldots,x_k \in M$ such that for all $a_i,b_i\in (N)_1$ with $1\leqslant i\leqslant n$ we have \begin{equation}\label{bimodularapproxeq}
     \|\sum_i a_i\xi_i b_i- \sum_{i,j} x_j E_N(x_j^* a_i \eta_i b_i)\|_2\leqslant\varepsilon. 
 \end{equation} 
Moreover, for every $\zeta\in M$ we have \begin{equation}\label{contraction}
    \|\sum_i x_i E_N(x_i^* \zeta)\|_2\leqslant\|\zeta\|_2.
\end{equation}  
If $\xi_i\in M \cap \cH$ then we can take $\eta_i=\xi_i$ above. In particular, this holds for every  $\xi\in \mathcal{QN}^{(1)}(N\subseteq M)$.
\end{thm}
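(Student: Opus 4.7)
The plan is to apply Theorem \ref{technicalresult} to the $N$-bimodule $\cH$, use the resulting explicit formula for $z_m P_\cH z_m$, and approximate each $\xi_i\in\cH$ in $L^2$-norm by an element of $M$ so the formula can be applied directly. Theorem \ref{technicalresult} supplies projections $z_m\in N'\cap M$ with $z_m\to 1$ strongly together with, for each $m$, $N$-orthogonal elements $x_{m,1},\dots,x_{m,k}\in M$ such that $z_m P_\cH z_m(x\Omega)=\sum_j x_{m,j}E_N(x_{m,j}^*x)\Omega$ for every $x\in M$. Since $M\Omega$ is dense in $L^2(M)$, for each $i$ I pick $\eta_i\in M$ with $\|\xi_i-\eta_i\|_2<\varepsilon/16$, taking $\eta_i=\xi_i$ whenever $\xi_i\in M\cap\cH$. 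Applying the formula to $x=\sum_i a_i\eta_ib_i\in M$ then produces
\[ \sum_{i,j}x_{m,j}E_N(x_{m,j}^*a_i\eta_ib_i)\Omega=z_mP_\cH z_m\Big(\sum_i a_i\eta_ib_i\,\Omega\Big). \]

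Setting $\xi:=\sum_i a_i\xi_ib_i\in\cH$ and $\eta:=\sum_i a_i\eta_ib_i\,\Omega$, I split the error as $\xi-z_mP_\cH z_m\eta=(\xi-z_mP_\cH z_m\xi)+z_mP_\cH z_m(\xi-\eta)$. Since $P_\cH\xi=\xi$, the identity $\xi-z_mP_\cH z_m\xi=(1-z_m)\xi-z_mP_\cH((z_m-1)\xi)$ bounds the first summand by $2\|(1-z_m)\xi\|_2$, while $\|z_mP_\cH z_m(\xi-\eta)\|_2\leq\|\xi-\eta\|_2$ because $z_m$ and $P_\cH$ are projections. The key point delivering a bound \emph{uniform} in the contractions $a_i,b_i\in(N)_1$ is that $z_m\in N'\cap M$ commutes with the left $N$-action by definition and, as an element of $M$, automatically commutes with the right $N$-action on $L^2(M)$; hence $(1-z_m)(a_i\xi_ib_i)=a_i\bigl((1-z_m)\xi_i\bigr)b_i$, and therefore $\|(1-z_m)\xi\|_2\leq\sum_i\|(1-z_m)\xi_i\|_2$ uniformly in $a_i,b_i$. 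Choosing $m$ large enough that each $\|(1-z_m)\xi_i\|_2$ is suitably small yields (\ref{bimodularapproxeq}) with $x_j:=x_{m,j}$.

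The contraction inequality (\ref{contraction}) is then immediate: for any $\zeta\in M$, the formula from Theorem \ref{technicalresult} gives $\sum_i x_iE_N(x_i^*\zeta)\Omega=z_mP_\cH z_m(\zeta\Omega)$, whose $\|\cdot\|_2$-norm is at most $\|\zeta\|_2$ since $z_m$ and $P_\cH$ are projections. For the quasinormalizer assertion, if $\xi\in\mathcal{QN}^{(1)}(N\subseteq M)$ with $N\xi\subseteq\sum_l y_l N$, then $\overline{N\xi N}^{L^2}$ is an $N$-bimodule that is finitely generated as a right $N$-module and contains $\xi\in M$, so the main statement applies with $\cH:=\overline{N\xi N}^{L^2}$ and $\eta:=\xi$. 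I expect the main technical hurdle to be establishing uniformity of the error bound across all $(a_i,b_i)\in(N)_1\times(N)_1$; this is precisely what the commutation property of the $z_m$, coupled with the bimodularity of $\cH$, is designed to furnish.
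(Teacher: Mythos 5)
Your argument is correct and follows essentially the same route as the paper's proof: both rest on the explicit formula from Theorem \ref{technicalresult}, the fact that $z_m\in N'\cap M$ makes $z_mP_\cH z_m$ an $N$-bimodule map (which is exactly what gives uniformity over all contractions $a_i,b_i$), and the SOT convergence $z_m\to 1$; your splitting of the error via $\xi-z_mP_\cH z_m\xi=(1-z_m)\xi-z_mP_\cH((z_m-1)\xi)$ is just a mild repackaging of the paper's estimate. The one slip is quantitative: with $\|\xi_i-\eta_i\|_2<\varepsilon/16$ your second error term is only bounded by $\sum_i\|\xi_i-\eta_i\|_2<n\varepsilon/16$, which can exceed $\varepsilon$; you should instead choose $\|\xi_i-\eta_i\|_2<\varepsilon/(16n)$ (as the paper does), which still satisfies the stated requirement and makes the total error at most $\varepsilon$.
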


\begin{proof}Let $\eta_i\in M$ be such that $\|\xi_i-\eta_i \|_2<\frac{\varepsilon}{16n}$ so that $\|\eta_i-P_\cH(\eta_i)\|_2< \frac{\varepsilon}{8n}$. By Theorem \ref{technicalresult} one can find a sequence of projections $z_l\in N'\cap M$ such that $z_l \rar 1$ in SOT and for every $l$ there exist $N$-orthogonal elements $x_{l}^s\in M$ for $1\leqslant s\leqslant {j}$ ($j$ depending on the length of orthonormal basis of $\cH$) such that for all $\eta\in M $ we have
\begin{equation}\label{leftbasis4}\begin{split}
    z_l P_{\cH}z_l(\eta)=\sum_s x^s_{l}E_{N}((x^s_{l})^*\eta).
    \end{split}
\end{equation} 
Since $z_l \rightarrow 1$ in SOT we have that $P_{\cH} -z_l P_{\cH} z_l \rar 0$ in SOT as $l \rar \infty$. Thus there is $l$ large enough such that we have  $\|P_{\cH}(\eta_i)-z_l P_{\cH} z_l(\eta_i)\|_2\leqslant \frac{\varepsilon}{8n}$ for all $i$. For every $a,b\in N$ and $\xi_0\in L^2(M)$ we have  $P_{\cH}(a \xi_0 b)=a P_{\cH}( \xi_0)b$ and since $z_l \in N'\cap M$ we also have $z_lP_{\cH}z_l(a \xi_0 b)=a z_lP_{\cH}z_l( \xi_0)b$. Thus  we conclude that for all $a_i,b_i\in (N)_1$ with $1\leqslant i\leqslant n$ we have 

\begin{equation*}\label{sotapprox}\begin{split}&\|\sum_i a_i \xi_i b_i-z_l P_{\cH} z_l\left(\sum_i a_i \xi_i b_i\right)\|_2\leqslant \sum_i\| a_i \xi_i b_i-z_l P_{\cH} z_l(a_i \xi_i b_i)\|_2 \\ 
& \leqslant \sum_i \left(2\|\xi_i-\eta_i\|_2+\|P_{\cH}( a_i \eta_i b_i)-z_l P_{\cH} z_l(a_i \eta_i b_i)\|_2 +\frac{\varepsilon}{8n}\right ) \\
& \leqslant\frac{\varepsilon}{4} + \sum_i\|P_{\cH}(\eta_i)-z_l P_{\cH} z_l(\eta_i)\|_2 \leqslant \frac{\varepsilon}{4}+\frac{\varepsilon}{8}\leqslant\frac{\varepsilon}{2}.\end{split}\end{equation*}

Combining this with \eqref{leftbasis4} we get \eqref{bimodularapproxeq}. Also notice that \eqref{leftbasis4} gives \eqref{contraction}.

\noindent When $\xi_i\in M$ we can obviously take $\eta_i=\xi_i$. 
When $\xi\in \mathcal{QN}^{(1)}(N\subseteq M)$ one can find $y
_1,\ldots,y_{j} \in M $ such that $N \xi\subseteq \sum_i y_i N $. Letting $ \mathcal H$ to be the linear closure of  $N \xi N$ we see that $\mathcal H$ is an $N$-bimodule that is finitely generated as a right $N$-module. Then the conclusion follows from the previous part.\end{proof}

\noindent We conclude this section with some remarks on right modules arising in the setting of crossed products.  Let $(M,G,\rho,\alpha)$ be a $W^*$-dynamical system, and denote by $M\rtimes_\alpha G$ the associated crossed product.  Then the GNS space $L^2(M\rtimes_\alpha G,\widehat{\rho})$ (where $\widehat{\rho}$ is the state defined in Equation \eqref{statecrossedproduct}) is a right $L(G)$-module, which may be identified with the right $L(G)$-module  $\ell^2(G)\otimes L^2(M,\rho)$, via the isomorphism which extends
\[ m g \, \Omega_{\widehat{\rho}} \longmapsto \delta_g \otimes m \Omega_{\rho}, \quad m \in M, g \in G.\]
 Then any $\eta \in L^2(M \rtimes_\alpha G, \widehat{\rho})$ may be expressed as a function  $g \mapsto \eta(g) \in L^2(M, \rho)$, with $\sum_{g \in G} \norm{\eta(g)}_2^2 < \infty$.   Likewise, any $x \in L(G)$ may be expressed as a square-summable sequence $(\beta_h)_{h \in G}$. 
Under this identification, the right action of $L(G)$ on $L^2(M \rtimes_\alpha G, \widehat{\rho})$ is given by the convolution formula
\begin{align}\label{convformula} (\eta x)(h) = \sum_{k \in G} \beta_{k^{-1}h} \eta(k).\end{align}
We will use this identification and convolution formula in our investigation of submodules of $L^2(M\rtimes_\alpha G,\widehat{\rho})$ in Section \ref{section:quasinormalizers}.

\section{Normalizers and quasinormalizers of $L(G) \subseteq M \rtimes_\alpha G$} \label{section:normalizers}

\noindent In this section, we present several classes of examples of actions $\alpha$ of a discrete group $G$ on a von Neumann algebra $M$ for which $L(G)$ is singular in the crossed product $M \rtimes_\alpha G$, while nontrivial quasinormalizers of $L(G)$ exist (see Theorem \ref{example1} and Corollary \ref{profinite normalizer2}).  These results accomplish two goals.  First, they show that unitary normalizers in the associated crossed product are not sufficient to capture the dynamics of such an action, so the precise statements of the results of Nielsen, \cite{Ni}, and Packer, \cite{Pac0}, which inspired this work do not hold in this setting.  Second, they expand a collection of examples introduced in \cite{GrWi} of inclusions of von Neumann algebras which are singular, but do not satisfy the weak asymptotic homomorphism property. Recall the following definition.
\begin{definition}
An inclusion $B \subseteq M$ of finite von Neumann algebras, with conditional expectation $E_B:M \rightarrow B$, satisfies the \emph{weak asymptotic homomorphism property (WAHP)} if there is a net $(u_\lambda)_\lambda$ of unitaries in $B$ such that for any $x,y \in M,$
\[\norm{E_B(x u_\lambda y)-E_B(x)u_\lambda E_B(y)}_2 \rightarrow 0.\]
\end{definition}
\noindent The WAHP was introduced in \cite{RoSiSm}, where it was shown that any inclusion satisfying the WAHP is singular.  This property has been useful in constructing examples of singular inclusions, as singularity is generally hard to verify using the definition.  The WAHP is known to be equivalent to singularity of $B \subseteq M$ when $B$ is a masa \cite{SiSmWhWi}.  By contrast, Grossman and Wiggins \cite{GrWi} produced inclusions $N \subseteq M$ of II$_1$ factors which are singular, but do not satisfy the WAHP. These inclusions had finite Jones index; they showed, more generally, that no finite  index inclusions satisfy the WAHP.  
\vskip 0.06in
\noindent A more general version of the WAHP has been useful in the study of  one-sided quasinormalizers. Specifically, a triple inclusion  $B\subseteq N\subseteq M$ of finite von Neumann algebras satisfies \emph{the relative WAHP} if there is a net $(u_\lambda)_\lambda$ of unitaries in $B$ such that for any $x,y \in M,$
\[\norm{E_B(x u_\lambda y)-E_B(E_N(x)u_\lambda E_N(y))}_2 \rightarrow 0.\]
It is well-known that if  $B\subseteq N\subseteq M$ satisfies the relative WAHP then $N$ absorbs all one-sided quasinormalizers of $B$ in $M$. In  \cite[Theorem 3.1]{FaGaSm} a converse result was established which asserts, essentially, that this analytic property characterizes
the von Neumann algebra generated by the one-sided quasinormalizers of $B$.

\begin{thm}[\cite{FaGaSm}]\label{rwahp}
 Let $B\subseteq (M, \tau)$ be tracial von Neumann algebras and denote by $N:= vN(\mathcal {QN}^{(1)}(B\subseteq M))$. Then  $B\subseteq N\subseteq M$ satisfies the relative WAHP.    
\end{thm}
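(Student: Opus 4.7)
\emph{Reduction.} The approach is by contradiction, driven by Theorem~\ref{technicalresult}. Since $B\subseteq N$ gives $E_B=E_B\circ E_N$, decomposing $x=E_N(x)+x^\perp$ and $y=E_N(y)+y^\perp$ yields
\[
E_B(xuy)-E_B(E_N(x)uE_N(y))=E_B(x^\perp uE_N(y))+E_B(E_N(x)uy^\perp)+E_B(x^\perp uy^\perp).
\]
So the relative WAHP is equivalent to the existence of a common net $(u_\lambda)\subset \mathcal U(B)$ with $\|E_B(xu_\lambda y)\|_2\to 0$ whenever $E_N(x)=0$ (the case $E_N(y)=0$ follows by taking adjoints, since $N$ is self-adjoint as a von Neumann algebra). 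The crucial observation is that $E_N(x)=0$ forces $\overline{BxB}^{\|\cdot\|_2}\subseteq L^2(M)\ominus L^2(N)$, because $B\subseteq N$.

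\emph{Extracting a finitely generated bimodule.} Assume no such net exists. Negating yields $\delta>0$ together with finite families $\{x_i\}_{i=1}^n\subset M$ with $E_N(x_i)=0$ and $\{y_i\}_{i=1}^n\subset M$ such that $\sum_i\|E_B(x_iuy_i)\|_2^2\geq \delta$ for every $u\in\mathcal U(B)$. A direct computation shows this sum equals $\widehat\tau(T_u)$ for the positive element
\[
T_u:=\sum_{i=1}^n (e_B x_i u y_i e_B)^*(e_B x_i u y_i e_B) \in \langle M,e_B\rangle_+,
\]
where $\widehat\tau$ is the canonical semifinite trace on the Jones basic construction. A standard Popa-style averaging argument (passing to a weak-$*$ limit of convex combinations of $T_u$ over $u\in\mathcal U(B)$, then cutting off a spectral projection above a positive threshold) produces a nonzero finite-$\widehat\tau$-trace projection $p\in B'\cap \langle M,e_B\rangle$. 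By Jones-index theory, $\mathcal K:=pL^2(M)$ is a $B$-subbimodule of $L^2(M)$ that is finitely generated as a right $B$-module. The construction, started from $x_i$ with $E_N(x_i)=0$ and exploiting the Reduction observation that $L^2(M)\ominus L^2(N)$ is a $B$-subbimodule, is arranged so that $\mathcal K\subseteq L^2(M)\ominus L^2(N)$. This orthogonality-preserving extraction is the main technical step of the proof.

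\emph{Deriving the contradiction via Theorem~\ref{technicalresult}.} First observe that $B'\cap M\subseteq \mathcal{QN}^{(1)}(B\subseteq M)\subseteq N$, since any $z\in B'\cap M$ satisfies $Bz=zB\subseteq zB$, qualifying trivially as a one-sided quasinormalizer. Apply Theorem~\ref{technicalresult} to $\mathcal K$: there are projections $z_m\in B'\cap M\subseteq N$ with $z_m\to 1$ SOT and $B$-orthogonal elements $x_{m,1},\ldots,x_{m,k}\in M$ with
\[
z_m P_{\mathcal K}z_m(x\Omega)=\sum_{j=1}^k x_{m,j}E_B(x_{m,j}^* x)\Omega \qquad (x\in M).
\]
For $m$ large enough that $z_mP_{\mathcal K}z_m\neq 0$, some $x_{m,i}\neq 0$. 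Since $z_m\in B'\cap M$ and $P_{\mathcal K}$ is $B$-bimodular, $z_mP_{\mathcal K}z_m$ is $B$-bimodular; substituting $x=bx_{m,i}$ and comparing with $b$ times the formula at $x=x_{m,i}$ (which produces $x_{m,i}\Omega$ by $B$-orthonormality) yields
\[
bx_{m,i}=\sum_j x_{m,j}E_B(x_{m,j}^* bx_{m,i})\in \sum_j x_{m,j}B, \qquad b\in B,
\]
so $x_{m,i}\in \mathcal{QN}^{(1)}(B\subseteq M)\subseteq N$. On the other hand, $x_{m,i}\Omega$ lies in the range of $z_mP_{\mathcal K}z_m$, which is contained in $z_m\mathcal K\subseteq L^2(M)\ominus L^2(N)$ (using $z_m\in N$ and $\mathcal K\perp L^2(N)$), forcing $E_N(x_{m,i})=0$ and thus $x_{m,i}=0$. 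This contradiction completes the proof.
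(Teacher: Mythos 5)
The paper does not actually prove this statement: it is imported verbatim from \cite[Theorem 3.1]{FaGaSm}, so the only comparison available is with that source. Your architecture — reduce to killing $E_B(xu_\lambda y)$, use a Popa-style averaging in the basic construction $\langle M,e_B\rangle$ to extract a $B$-bimodule $\mathcal K\subseteq L^2(M)\ominus L^2(N)$ that is finitely generated as a right $B$-module, and then contradict membership in $N$ via Theorem \ref{technicalresult} — is essentially the architecture of the cited proof, and your final step (the $x_{m,i}$ are one-sided quasinormalizers, hence in $N$, yet orthogonal to $L^2(N)$) is correct.

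The problem is that the step you yourself call ``the main technical step'' is not only asserted rather than proved, but the concrete object you average makes it unachievable. Your $T_u=\sum_i(e_Bx_iuy_ie_B)^*(e_Bx_iuy_ie_B)$ is compressed by $e_B$ on both sides, so $T_u\in e_B\langle M,e_B\rangle e_B=Be_B$; any weak-$*$ limit of convex combinations of the $T_u$ is supported under $e_B$, and every spectral projection $p$ cut from it satisfies $p\leqslant e_B$, whence $\mathcal K=pL^2(M)\subseteq L^2(B)\subseteq L^2(N)$ — the opposite of the orthogonality your contradiction needs. Moreover $\{T_u\}_u$ is not invariant under conjugation by $\mathcal U(B)$ (one checks $v^*T_uv$ is the analogous element for the pairs $(x_i,y_iv)$, not some $T_{u'}$), so the averaging does not produce an element of $B'\cap\langle M,e_B\rangle$ in the first place. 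The repair is standard but must be done: first note that since $E_B=E_B\circ E_N$ and $E_N$ is $N$-bimodular, the cross terms $E_B(x^\perp uE_N(y))$ and $E_B(E_N(x)uy^\perp)$ in your decomposition vanish identically, so the relative WAHP is \emph{exactly} the statement that $\|E_B(xu_\lambda y)\|_2\to0$ for $x,y$ with $E_N(x)=E_N(y)=0$ (this also disposes of your unjustified ``common net by taking adjoints'' remark). Negating then gives $\delta>0$ and a finite set $F\subseteq M\ominus N$ with $\sum_{z,w\in F}\|E_B(z^*uw)\|_2^2\geqslant\delta$ for all $u\in\mathcal U(B)$. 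Now average the conjugation-invariant family $\{u^*du:u\in\mathcal U(B)\}$ of the \emph{fixed} element $d=\sum_{z\in F}ze_Bz^*$ (note $e_B$ sits in the middle): the identity $\widehat\tau(u^*du\cdot d)=\sum_{z,w\in F}\|E_B(z^*uw)\|_2^2\geqslant\delta$ shows the minimal $\|\cdot\|_{2,\widehat\tau}$-element $h$ of the closed convex hull is nonzero, conjugation-invariance places $h$ in $B'\cap\langle M,e_B\rangle$, and since each $ze_Bz^*$ has range in $\overline{zB\Omega}\perp L^2(N)$ one gets $0\leqslant u^*du\leqslant\|d\|(1-e_{L^2(N)})$ and hence $h$, and its spectral projections, are dominated by $1-e_{L^2(N)}$. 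Only with this corrected extraction does your (correct) endgame apply.
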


Using this result one can establish that the relative WAHP actually ``descends'' to all subgroups of $\mathcal U(B)$ that generate $B$ as a von Neumann algebra. As we will see shortly, this upgrade is very useful in applications. We include only a brief proof, largely based on prior techniques \cite{Po03,C06,FaGaSm,JoSt,CaFaMu} and we encourage the reader to consult these results beforehand.

\begin{thm}\label{srwm} A triple inclusion $B\subseteq N\subseteq M$ of finite von Neumann algebras has the relative WAHP if and only if for every subgroup $\mathscr B \subseteq \mathcal U(B)$ satisfying $\mathscr B''=B$ one can find a net $(g_\lambda)_\lambda \subseteq \mathscr B$ such that for any $x,y\in M$ we have 
\begin{equation}\label{rwahpsubg}\|E_B(x g_\lambda y) - E_B(E_N(x) g_\lambda E_N(y))\|_2 \rightarrow 0.\end{equation}
    \end{thm}
\begin{proof}We only prove the forward implication as the converse is straightforward.
Assume by contradiction there is a subgroup $\mathscr B \subseteq \mathcal U(B)$ satisfying $\mathscr B''=B$ for which \eqref{rwahpsubg} does not hold. Thus using the same argument from the proof of \cite[Corollary 2.3]{Po03} one can find a scalar $C>0$ and a finite subset $ \emptyset\neq F\subset M \ominus N$ such that for all $b\in \mathscr B$, \begin{equation}\label{awayfrom0}
    \sum_{x,y\in F} \|E_{B}(x^* b y)\|_2^2\geqslant C.
\end{equation}
Consider the basic construction $B \subseteq M \subseteq\langle M,B\rangle= \{M,e_B\}''\subseteq \textbf{B}(L^2(M))$, where $e_B: L^2(M) \rightarrow L^2(B)$ is the canonical orthogonal projection. Let $Tr$ be the canonical semifinite trace on $\langle M,B\rangle$ given by $Tr(xe_B y)=\tau(xy)$ for all $x,y\in M$. Let $\xi:= \sum_{x\in F} xe_B x^*\in \langle M,B\rangle_+$ and notice $0<Tr(\xi)<\infty$. Using $e_Bme_B =E_B(m)e_B$ for all $m\in M$ together with other basic calculations and \eqref{awayfrom0} we see for all $b\in \mathscr B$, 
\begin{equation}\label{awayfrom02}\begin{split}
Tr(\xi b \xi b^*)&= \sum_{x,y\in F}Tr( xe_B x^* b ye_B y^* b^*)=\sum_{x,y\in F}Tr( e_B x^* b ye_B y^* b^* x e_B)=\\&
= \sum_{x,y\in F} Tr(E_B (x^*b y)e_B E_B (y^* b^* x))= \sum _{x,y\in F}\|E_B(x^*b y)\|_2^2\geqslant C.\end{split}\end{equation}
Let $K=\overline{{\rm co} \{ b\xi b^* \,:\, b\in \mathscr B\}}^{w}$ and denote by $\eta\in K$ the unique element of minimal $\|\cdot \|_{2,Tr}$-norm.  Fix $b\in \mathscr B$. Since $Tr$ is a trace then $\|b \eta b^*\|_{2,Tr}=\|\eta\|_{2,Tr}$. Also, since $\mathscr B$ is a group then $b\eta b^*\in K$. Thus uniqueness implies that $b\eta b^*=\eta $ for all $b\in \mathscr B$ and since $\mathscr B''=B$ we conclude that $\eta \in B'\cap \langle M,B\rangle_+$. One can also check that $Tr(\eta)\leqslant Tr(\xi)<\infty$. Furthermore, \eqref{awayfrom02} entails $\eta\neq 0$. 
\vskip 0.03in Now consider the orthogonal projection $e_N: L^2(M)\rightarrow L^2(N)$ and notice that $e_N \in N'\subseteq B' $. Moreover, as $Je_N=e_N J$ we also have $e_N \in JB'J=\langle M,B\rangle$ and hence $e_N \in B'\cap \langle M,B\rangle$. Next we can see that for every $b\in \mathscr B$, 
\begin{equation*}
    e_N b \xi b^*= be_N \xi b^* =\sum_{x\in F} b e_N xe_B x^*b^*= \sum_{x\in F} b E_N (x) e_B x^*b^*=0. 
\end{equation*} 
Taking convex combinations and weak limits, this further implies that $e_N \eta =0$. Thus $\eta e_N =0$ and hence $\eta\in (1-e_N) (B'\cap \langle M,B\rangle) (1-e_N)$. Taking a suitable spectral projection of $\eta$ one can find a projection $0\neq p\in (1-e_N) (B'\cap \langle M,B\rangle) (1-e_N)$ such that $Tr(p)<\infty$. 
\vskip 0.07in
Denote by $Q := vN (\mathcal{QN}^{(1)}(B\subseteq M))$ and let $e_Q: L^2(M)\rightarrow L^2(Q)$ be the canonical orthogonal projection. On the one hand, using verbatim the same arguments from the proof of the implication $(ii)\Rightarrow (i)$ in \cite[Theorem 3.1]{FaGaSm} (see page 9/line -2 --- page 10/line 10) we get that $p \leqslant e_Q$. On the other hand, as $B\subseteq N\subseteq M$ satisfy the relative WAHP, implication $(i)\Rightarrow (iii)$ in \cite[Theorem 3.1]{FaGaSm} yields $Q\subseteq N$ and hence $e_Q\leqslant e_N$. Altogether, these imply $p\leqslant e_N$. Since $p\leqslant 1-e_N$ we get that $p=0$, which is a contradiction. \end{proof}

\noindent The relative WAHP is closely connected to the following notion of relative weak mixing for trace-preserving $W^*$-dynamical extension systems.

\begin{definition}\label{rwm} Let $\mathfrak M=(N\subseteq M, G, \alpha,\tau)$ be a $\tau$-preserving $W^*$-dynamical extension system and let $B \subseteq N$ be a $G$-invariant von Neumann subalgebra. Then $\mathfrak M$ is called \emph{weak mixing relative to $B$} if there exist nets $(b_\lambda)_\lambda \subseteq \mathcal U(B)$ and $(g_\lambda)_\lambda\subseteq G$ such that for all $x,y\in M\ominus N$ we have  \[\| E_B(x b_\lambda \alpha_{g_\lambda}(y)) \|_2\rightarrow 0.\]      
When $M$ is separable the nets can be replaced with sequences.\end{definition}

\noindent We note in passing that this generalizes Popa's notion of relative weak mixing for actions, described in \cite[Definition 2.9]{Po05}. Indeed, it is rather easy to check if one could pick $(b_\lambda)_\lambda$ to have only finitely many values then Definition \ref{rwm} is equivalent to Popa's notion. For instance, this is the case when $B \subseteq \mathcal Z(M)$ (one can pick $b_\lambda=1$). Thus, when $M$ is abelian, Definition \ref{rwm} recovers the notion of weak mixing for extensions introduced by Furstenberg and Zimmer in the 70's, \cite{Fu77,Zi76}. Finally, when $B=N=\mathbb C 1$, this recovers the notion of weak mixing for trace-preserving actions of $G$ on $M$.

\vskip 0.05in
\noindent For further use we record the following result connecting relative weak mixing with relative WAHP. Its proof is a straightforward application of Theorem \ref{srwm} and other existing methods in the literature (\cite{JoSt}, \cite{CaFaMu}, \cite{FaGaSm}) and we include it here only for the sake of completness.
\begin{lem} \label{lemma:WeaklyMixingSingular}
 Let $\mathfrak M=(N\subseteq M, G, \alpha,\tau)$ be a $\tau$-preserving $W^*$-dynamical extension system and let $B \subseteq N$ be a $G$-invariant von Neumann subalgebra. Then $\mathfrak M$ is weak mixing relative to $B$ if and only if the triple inclusion $B \rtimes_\alpha G \subseteq N \rtimes_\alpha G\subseteq M\rtimes_\alpha G$ has the relative WAHP.
\end{lem}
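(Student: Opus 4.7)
My plan is to prove the two implications separately, with the forward direction being a direct ``elementary-unitary'' verification and the backward direction requiring a contrapositive / diagonal-extraction argument. The key preliminary facts I will use freely are: since $N$ and $B$ are $G$-invariant, the crossed-product algebras $B \rtimes_\alpha G \subseteq N \rtimes_\alpha G \subseteq M \rtimes_\alpha G$ are well-defined and both conditional expectations act by conditioning Fourier coefficients, i.e.\ $E_{B \rtimes_\alpha G}(\sum_g x_g u_g) = \sum_g E_B(x_g) u_g$ (similarly for $E_{N \rtimes_\alpha G}$); and for $\xi \in M \ominus N$ and $a \in N$ one has $\xi a, a\xi \in M \ominus N$ and $\tau(a \xi) = 0$, so that $E_B$ annihilates products of an $N$-factor with an $(M \ominus N)$-factor.

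For the forward direction, given sequences $(b_k) \subseteq \mathcal{U}(B)$ and $(g_k) \subseteq G$ witnessing relative weak mixing, I will show that the elementary unitaries $w_k := b_k u_{g_k} \in \mathcal{U}(B \rtimes_\alpha G)$ witness the relative WAHP. By a standard $\varepsilon/3$ density argument it is enough to check the defining asymptotic on generators $x u_g, y u_h \in M \rtimes_\alpha G$. A direct calculation gives
\[
E_{B \rtimes_\alpha G}(x u_g \, w_k \, y u_h) \;=\; E_B\bigl(x\,\alpha_g(b_k)\,\alpha_{g g_k}(y)\bigr)\,u_{g g_k h},
\]
into which I substitute $x = E_N(x) + \xi$ and $y = E_N(y) + \zeta$ with $\xi, \zeta \in M \ominus N$. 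The two ``cross'' terms vanish by the observation above (since $\alpha_g(b_k) \in B \subseteq N$), the ``$N$-diagonal'' term exactly matches the structured side $E_{B \rtimes_\alpha G}(E_{N \rtimes_\alpha G}(xu_g) w_k E_{N \rtimes_\alpha G}(yu_h))$, and for the remaining $(M \ominus N)$-diagonal term the $G$-equivariance of $E_B$ together with trace-preservation of $\alpha_g$ yields
\[
\|E_B(\xi \alpha_g(b_k) \alpha_{g g_k}(\zeta))\|_2 \;=\; \|E_B(\alpha_{g^{-1}}(\xi)\,b_k\,\alpha_{g_k}(\zeta))\|_2 \;\xrightarrow{k \to \infty}\; 0
\]
by relative weak mixing applied to the pair $(\alpha_{g^{-1}}(\xi), \zeta) \in (M \ominus N)^2$.

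For the backward direction I let $(u_\lambda) \subseteq \mathcal{U}(B \rtimes_\alpha G)$ witness the relative WAHP. Specializing the WAHP condition to $\xi, \zeta \in M \ominus N$, both $E_{N \rtimes_\alpha G}$-images vanish and I obtain $\|E_{B \rtimes_\alpha G}(\xi u_\lambda \zeta)\|_2 \to 0$; expanding $u_\lambda = \sum_g b_{\lambda, g} u_g$ yields the Plancherel-type identity
\[
\|E_{B \rtimes_\alpha G}(\xi u_\lambda \zeta)\|_2^2 \;=\; \sum_{g \in G} \|E_B(\xi \,b_{\lambda, g}\, \alpha_g(\zeta))\|_2^2,
\]
together with the unitarity constraint $\sum_g \|b_{\lambda, g}\|_2^2 = 1$. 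The main obstacle, and the technical heart of the argument, is upgrading the general coefficient $b_{\lambda, g} \in B$ to an element of $\mathcal{U}(B)$, i.e.\ producing an \emph{elementary} sequence $b_k u_{g_k}$ from the $u_\lambda$. I plan to argue by contrapositive: after restricting to a separable subsystem and performing a diagonal reduction over a countable dense family of test vectors, failure of relative weak mixing forces a uniform positive lower bound $\|E_B(\xi_0 b \alpha_g(\zeta_0))\|_2 \geq \varepsilon_0$ for every $(b,g) \in \mathcal{U}(B) \times G$ and some fixed $(\xi_0, \zeta_0) \in M \ominus N$. Combining this uniform unitary lower bound with the Plancherel identity above -- via an averaging-type argument in the finite algebra $B$ that relates the contractive coefficients $b_{\lambda, g}$ (through their polar decompositions and $\mathcal{U}(B)$-extensions) to true unitaries, modulo small $L^2$-errors -- contradicts $\|E_{B \rtimes_\alpha G}(\xi_0 u_\lambda \zeta_0)\|_2 \to 0$. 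This unitary-extraction step is the delicate part, though it is standard within this circle of ideas (cf.\ the references \cite{JoSt, CaFaMu, FaGaSm} already cited by the authors).
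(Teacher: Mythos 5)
A preliminary remark: the paper does not actually write out a proof of Lemma \ref{lemma:WeaklyMixingSingular} (it is stated with the proof omitted as a ``straightforward application of existing methods''), so your proposal is being measured against the intended standard argument rather than a written one. Your forward direction is correct and complete in outline: the computation $E_{B\rtimes_\alpha G}(xu_g\, b_ku_{g_k}\, yu_h)=E_B\bigl(x\,\alpha_g(b_k)\,\alpha_{gg_k}(y)\bigr)u_{gg_kh}$, the vanishing of the cross terms (because $E_B=E_B\circ E_N$ and $E_N$ is $N$-bimodular), the equivariance step reducing the surviving term to the weak mixing hypothesis, and the Kaplansky/$\varepsilon$-density argument all go through.

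The gap is in the backward direction, precisely at the step you flag, and the mechanism you propose does not work. First, polar decomposition leaves $|b|$ trapped in the middle of $E_B(\xi\, u|b|\,\alpha_g(\zeta))$, where $B$-bimodularity of $E_B$ cannot extract it past $\alpha_g(\zeta)$. Second, and more fundamentally, the inequality your Plancherel identity would need --- that a lower bound $\|E_B(\xi b\alpha_g(\zeta))\|_2\geqslant\varepsilon_0$ for all $b\in\mathcal U(B)$ forces $\|E_B(\xi b\alpha_g(\zeta))\|_2\geqslant c\|b\|_2$ for all $b\in B$ --- is false: the quantity $b\mapsto\sum_{\xi,\zeta}\|E_B(\xi b\alpha_g(\zeta))\|_2^2$ is a positive quadratic form on $L^2(B)$, and already for $B=\mathbb C\oplus\mathbb C$ the form $b\mapsto\langle Sb,b\rangle$ with $S$ the diagonal operator with eigenvalues $2$ and $0$ on the two minimal projections equals $1$ on every unitary yet vanishes on the second minimal projection. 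So no averaging over $\mathcal U(B)$ can convert the unitary lower bound into a contradiction with $\|E_{B\rtimes_\alpha G}(\xi u_\lambda\zeta)\|_2\to0$. The tool that actually closes this step is the basic-construction/intertwining argument behind Theorem \ref{rwahp} (\cite{FaGaSm}, see also \cite{Po}): the relative WAHP forces $vN(\mathcal{QN}^{(1)}(B\rtimes_\alpha G\subseteq M\rtimes_\alpha G))\subseteq N\rtimes_\alpha G$, and the proof of Theorem \ref{rwahp} produces a witnessing net of unitaries drawn from \emph{any} subgroup of $\mathcal U(B\rtimes_\alpha G)$ that generates $B\rtimes_\alpha G$ --- in particular from $\{bu_g: b\in\mathcal U(B),\ g\in G\}$. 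This is exactly the ``close inspection'' remark the authors make at the start of the proof of Theorem \ref{quasinormalizer2}. Once you have elementary unitaries $b_ku_{g_k}$, your Plancherel identity collapses to the single term $\|E_B(\xi b_k\alpha_{g_k}(\zeta))\|_2$ and relative weak mixing follows at once. Two smaller points: the negation of relative weak mixing naturally produces a finite test family $F$ with $\max_{\xi,\zeta\in F}\|E_B(\xi b\alpha_g(\zeta))\|_2\geqslant\varepsilon_0$, not a single pair $(\xi_0,\zeta_0)$; and passing from a net to the sequences required by Definition \ref{rwm} does need the separable reduction you mention.
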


\begin{proof}
First we show the forward implication. Let $(b_\lambda)_{\lambda\in \Lambda}\subset \mathcal U(B)$ and $(g_\lambda)_{\lambda\in \Lambda}\subseteq G$ be such that for every $\xi,\zeta \in M\ominus N$ we have 
\begin{equation}\label{rwm3}
    \|E_B(\xi b_\lambda \alpha_{g_\lambda}(\zeta))\|_2\rightarrow 0.
\end{equation} Next we show the net $x_\lambda:= b_\lambda u_{g_\lambda}\in \mathcal U(B\rtimes_\alpha G)$ witnesses the relative WAHP for $B \rtimes_\alpha G \subseteq N \rtimes_\alpha G\subseteq M\rtimes_\alpha G$. Notice this is equivalent to showing that for every $0\neq y,z\in (M\rtimes_\alpha G) \ominus (N\rtimes_\alpha G)$ we have    \begin{equation}\label{rwm4}
    \| E_{B\rtimes_\alpha G} (yx_\lambda z)\|_2\rightarrow 0.
\end{equation}  Fix $\varepsilon>0$. Using the Kaplansky density theorem one can find finite subsets $E_\varepsilon, F_\varepsilon \subset G$ and $y_\varepsilon= \sum_{g\in E_\varepsilon} y_g^\varepsilon u_g  , z_\varepsilon =\sum_{h\in F_\varepsilon} z_h^\varepsilon u_h$ with $y_g^\varepsilon, z_h^\varepsilon \in M\ominus N$ such that \begin{equation}\label{est1}\begin{split}
    &\|y-y_\varepsilon\|_2<\frac{\varepsilon}{4\|z\|}\text{ and }
     \|z-z_\varepsilon\|_2< \frac{\varepsilon}{4\|y_\varepsilon\|}. \end{split}
\end{equation} Using \eqref{est1} together with the triangle inequality for all $\lambda \in \Lambda$ we have 
\begin{equation}\label{ineq3'}\begin{split}
    \| E_{B\rtimes_\alpha G} (yx_\lambda z)\|_2 & \leqslant \frac{\varepsilon}{2}+ \|E_{B\rtimes_\alpha G} (y_\varepsilon x_\lambda z_\varepsilon)\|_2\\
    &\leqslant  \frac{\varepsilon}{2}+ \sum_{g\in E_\varepsilon, h\in F_\varepsilon }\|E_{B\rtimes_\alpha G} (y^\varepsilon_g u_g  b_\lambda u_{g_\lambda} z^\varepsilon_h u_{h})\|_2 \\
    & =  \frac{\varepsilon}2+ \sum_{g\in E_\varepsilon, h\in F_\varepsilon }\|E_{B} (\alpha_{g^{-1}}(y^\varepsilon_g)  b_\lambda \alpha_{g_\lambda} (z^\varepsilon_h ))\|_2.  \end{split}
\end{equation}Since $y_g^\varepsilon \in M\ominus N$ then $\alpha_{g^{-1}}(y^\varepsilon_g )\in M\ominus N$. Using \eqref{rwm3}, for every $g\in E_\varepsilon, h\in F_\varepsilon$ one can find $\lambda^\varepsilon_{g,h} \in \Lambda$ such that  $\|E_{B} (\alpha_{g^{-1}}(y^\varepsilon_g)  b_\lambda \alpha_{g_\lambda} (z^\varepsilon_h ))\|_2 \leqslant\frac{\varepsilon}{2(|E_\varepsilon|+|F_\varepsilon|)}$, for all $\lambda\succeq \lambda_{g,h}^\varepsilon$; here ''$\succeq$'' denotes the preorder on $\Lambda$. As $(\Lambda, \succeq)$ is directed and $E_\varepsilon, F_\varepsilon$ are finite one can find $\lambda_\varepsilon\in \Lambda$ such that  $\lambda_\varepsilon\succeq \lambda^\varepsilon_{g,h}$ for all $g\in E_\varepsilon,h\in F_\varepsilon$. Altogether, these combined with \eqref{ineq3'} yield that $\| E_{B\rtimes_\alpha G} (yx_\lambda z)\|_2\leqslant \varepsilon$ for all $\lambda\succeq \lambda_\varepsilon$, thereby proving 
\eqref{rwm4}.
\vskip 0.05in
To see the converse, assume $B \rtimes_\alpha G \subseteq N \rtimes_\alpha G\subseteq M\rtimes_\alpha G$ satisfy the relative WAHP. Since $\mathscr G=\{ b u_g \,:\, b\in \mathcal U(B), g\in G  \}\subseteq \mathcal U(B\rtimes_\alpha G)$ is a subgroup with $\mathscr G''=B\rtimes_\alpha G$, using Theorem \ref{srwm}, one can find a net $x_\lambda:=b_\lambda u_{g_\lambda}\in \mathscr G$ so that for all $x,y\in M\rtimes_\alpha G$, 
\begin{equation}\label{rwahp2}\|E_{B\rtimes_\alpha G }(x x_\lambda y) - E_{B\rtimes_\alpha G}(E_{N\rtimes_\alpha G}(x) x_\lambda E_{N\rtimes_\alpha G}(y))\|_2 \rightarrow 0.\end{equation}
Fix $x,y\in M\ominus N$ and notice $E_{N\rtimes_\alpha G}(x)= E_{N\rtimes_\alpha G}(y)=0$. Basic computations combined with these relations and also \eqref{rwahp2} show that 
\begin{equation*}\begin{split}\|E_B(x b_\lambda \alpha_{g_\lambda}(y))\|_2 &= \|E_{B\rtimes_\alpha G}(x b_\lambda \alpha_{g_\lambda}(y)) \|_2=\|E_{B\rtimes_\alpha G}(x x_\lambda y u_{g_\lambda^{-1}})\|_2\\&=\|E_{B\rtimes_\alpha G}(x x_\lambda y ) u_{g_\lambda^{-1}} \|_2  =\|E_{B\rtimes_\alpha G}(x x_\lambda y )\|_2\\ &=\|E_{B\rtimes_\alpha G}(x x_\lambda y ) -  E_{B\rtimes_\alpha G}(E_{N\rtimes_\alpha G}(x) x_\lambda E_{N\rtimes_\alpha G}(y)) \|_2 \rightarrow 0,\end{split}
 \end{equation*}which yields that $\mathfrak M$ is weak mixing relative to $B$.
\end{proof}

\noindent Over the next three subsections we 
 present several constructions  of inclusions of II$_1$ factors with infinite Jones index that are singular, and fail the WAHP; see Theorem \ref{example1}, Corollary \ref{profinite normalizer2}, Theorem \ref{tensoraction profinite}. The last two depict even more extreme situations, namely, infinite Jones index inclusions $N\subseteq M$ of II$_1$ factors which are simultaneously singular and quasiregular, i.e.,\ $\mathcal {QN}(N\subseteq M)''=M$.

\subsection{An action on the hyperfinite II$_1$ factor} We now construct our first example. Denote by $\mathbb M_2$ the $2\times 2$ matrices with complex entries.  Define unitary matrices by
\begin{align}
&v_1=\left(\begin{array}{cc}1&\phantom{-}0\\0&-1\end{array}\right),\ \ \ 
v_2=\left(\begin{array}{cc}0&1\\1&0\end{array}\right),\ \ \  \notag\\
&v_3=\left(\begin{array}{cc}\phantom{-}0&1\\-1&0\end{array}\right),\ \ \ 
v_4=\left(\begin{array}{cc}\phantom{-}1/2&\sqrt{3}/2\\-i\sqrt{3}/2&i/2\end{array}\right)
.\label{ex.1}
\end{align}
Note that $\{v_1,v_2,v_3\}$ form a basis for the subspace of matrices of zero trace. These three unitaries satisfy the following easily verified relations:
\begin{align}
&v_1v_2=v_3,\ \ v_1v_3=v_2,\ \ v_2v_1=-v_3,\ \ v_2v_3=-v_1,\notag\\
&v_3v_1=-v_2,\ \ v_3v_2=v_1,\ \ v_1^2=v_2^2=1,\ \ v_3^2=-1.\label{ex.2}
\end{align}

\noindent Let $G$  be the free group $\mathbb{F}_4$ with generators $ \{g_i:1\leqslant i \leqslant 4\}$. In defining an action $\beta$ of $G$ on $\mathbb{M}_2$, we need only specify the values of $\{\beta_{g_i}:1\leqslant i\leqslant 4\}$, so we set
\begin{equation}\label{ex.3}
\beta_{g_i}={\mathrm{Ad}}\,(v_i),\ \ \ \ \ 1\leqslant i \leqslant 4.
\end{equation}

\noindent We regard the hyperfinite II$_1$ factor $R$ as the infinite tensor product of copies of $\mathbb{M}_2$ indexed by the elements of $\mathbb{F}_4$, and we let $\gamma$ denote the Bernoulli action of $\mathbb{F}_4$ on $R$. We define $M$ to be $\mathbb{M}_2\otimes R$ with an action of $\mathbb{F}_4$ given by $\alpha=\beta\otimes \gamma$. We note that $\alpha$ is an outer action of $G$ on $M$ since $\gamma$ is an outer action of $G$ on $R$ \cite[Corollary 1.12]{Ka}.

\begin{thm}\label{example1}
With the above notation, $L(G)$ is singular in $M\rtimes_\alpha G$, while the quasinormalizers of $L(G)$ generate a von Neumann algebra which is strictly larger than $L(G)$.
\end{thm}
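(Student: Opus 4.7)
The theorem comprises two claims: nontrivial quasinormalizers exist, and $L(G)$ is singular in $M\rtimes_\alpha G$. I address them in turn.

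\textbf{Nontrivial quasinormalizers.} For $i\in\{1,2,3\}$, I claim the element $v_i\otimes 1\in M$ lies in $\mathcal{QN}(L(G))$ but not in $L(G)$. Since $\{v_1,v_2,v_3\}$ spans the trace-zero subspace of $\mathbb{M}_2$ --- a subspace preserved by the inner, trace-preserving automorphisms $\beta_g$ --- we have $\beta_g(v_i)\in\spn\{v_1,v_2,v_3\}$ for every $g\in G$. Therefore
\[
  u_g(v_i\otimes 1) = (\beta_g(v_i)\otimes 1)u_g \in \sum_{j=1}^3 (v_j\otimes 1)L(G),
\]
and the symmetric inclusion on the right gives $v_i\otimes 1\in\mathcal{QN}(L(G))$. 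Unique Fourier decomposition in $M\rtimes_\alpha G$ gives $M\cap L(G)=\mathbb{C}\cdot 1$, so $v_i\otimes 1\notin L(G)$, yielding $\mathcal{QN}(L(G))''\supsetneq L(G)$.

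\textbf{Singularity.} I would show any unitary normalizer $u\in\mathcal{N}(L(G)\subseteq M\rtimes_\alpha G)$ lies in $L(G)$. Writing $u=\sum_g x_g u_g$ with $x_g\in M$, expanding $uu_h u^*\in L(G)$ and extracting the $k$-th Fourier coefficient yields the central constraint
\[
  \sum_{g\in G} x_g\,\alpha_k(x_{k^{-1}gh}^*) \in \mathbb{C}\cdot 1_M \qquad \text{for all } k,h\in G.
\]
The plan is two-stage. First, decompose $x_g=\sum_{j=0}^3 v_j\otimes r_{g,j}$ using the orthonormal basis $\{1,v_1,v_2,v_3\}$ of $\mathbb{M}_2$ and project the above constraint onto each $v_a$-component. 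This produces bilinear identities in $R$ involving $\gamma_k$ applied to the $r_{g,j}$'s. Testing these along a sequence $k_n\to\infty$ in $G$ and invoking the strong mixing of the Bernoulli action $\gamma$ on $R\ominus\mathbb{C}$ forces each $r_{g,j}$ to be scalar, i.e., $u\in(\mathbb{M}_2\otimes 1)\rtimes_\beta G$.

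Second, with $u=\sum_g m_g u_g$ and $m_g\in\mathbb{M}_2$, analyze the constraint as a finite-dimensional system. The multiplication table \eqref{ex.2} implies that $v_jv_{j'}^*\in\{\pm v_a:a\in\{0,1,2,3\}\}$, so the $h=e$ equations at various $k\in\langle g_1,g_2,g_3\rangle$ pin down each $m_g$ to be supported on a single basis vector from $\{1,v_1,v_2,v_3\}$. The equations at $h=g_4$ then invoke $\beta_{g_4}=\mathrm{Ad}(v_4)$; a direct computation from the explicit form of $v_4$ in \eqref{ex.1} shows $\mathrm{Ad}(v_4)(v_i)\notin\{\lambda v_j:\lambda\in\mathbb{T},\,j=1,2,3\}$ for $i=1,2,3$. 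This incompatibility eliminates every non-scalar possibility for $m_g$, giving $m_g\in\mathbb{C}\cdot 1$ and hence $u\in L(G)$.

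\textbf{Main obstacle.} The weak-mixing reduction is essentially routine. The delicate step is the combinatorial analysis in $\mathbb{M}_2\rtimes_\beta G$: the subgroup $\langle g_1,g_2,g_3\rangle$ alone admits many non-scalar solutions of the normalizer equations (reflecting the quaternion-like symmetry of $\{v_1,v_2,v_3\}$), and verifying that the constraints arising from $g_4$ rule every such solution out requires a careful case analysis fully exploiting the explicit matrix entries of $v_4$ in \eqref{ex.1}.
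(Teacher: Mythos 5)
Your first part (nontrivial quasinormalizers) is correct and is essentially the paper's argument: elements of $\mathbb{M}_2\otimes 1$ have orbit inside the finite-dimensional space $\mathbb{M}_2\otimes 1$, hence quasinormalize $L(G)$, and they are not in $L(G)$ by Fourier uniqueness. Your stage-one reduction for singularity is also sound, and in fact a cleaner organization than the paper's: since the Bernoulli factor $\gamma$ is mixing, the triple $L(G)\subseteq \mathbb{M}_2\rtimes_\beta G\subseteq M\rtimes_\alpha G$ has the relative WAHP (Lemma \ref{lemma:WeaklyMixingSingular}), so every (one-sided quasi)normalizer of $L(G)$ is absorbed into $\mathbb{M}_2\rtimes_\beta G$; this replaces the paper's Case 2a and the tail ends of its Case 2b, where the mixing of $\gamma$ is invoked piecemeal.

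The genuine gap is in your stage two, which is where the theorem actually lives. Your pivotal intermediate claim --- that the normalizer equations over $\langle g_1,g_2,g_3\rangle$ force each Fourier coefficient $m_g$ to be supported on a single element of $\{1,v_1,v_2,v_3\}$ --- is both unproved and, I believe, not the right statement: the constraint $\sum_g m_g\,\beta_k(m^*_{k^{-1}gh})\in\mathbb{C}1$ is a condition on infinite sums of coefficients, and slicing it coefficient-by-coefficient in $g$ does not isolate the individual $m_g$ (note also that for a hypothetical $u$ of the form $w^*x$ with $w\in L(G)$ and $x\in W^*(v_1)$, the coefficients $m_g=c_g\beta_g(x)$ for words $g$ involving $g_4$ are generic trace-zero matrices, not multiples of a single $v_j$). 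The mechanism that makes the argument close is to slice $u$ along the $\mathbb{M}_2$-basis instead of along $G$: write $u^*=\sum_{i=0}^3 v_if_i$ with $f_i\in L(G)$ (after your stage one), observe $f_0=E_{L(G)}(u^*)$, and use the sign table \eqref{ex.2} to convert $u_gu^*=u^*\phi(u_g)$ into the separate intertwining relations \eqref{ex.19}--\eqref{ex.21} for each $f_i$. These show that a nonzero $f_if_i^*$ lies in $W^*(u_{g_i},u_{g_{i+1}}^2)'\cap L(G)=\mathbb{C}1$, so $f_i$ is a multiple of a unitary $w_i$, whence $w_iu\in W^*(u_{g_i},u_{g_{i+1}}^2)'\cap(\mathbb{M}_2\rtimes_\beta G)=W^*(v_i)$ (the paper's Steps 3--4); only then does $g_4$ enter, via the relation $x_iv_4=\eta v_4x_i$, the determinant trick forcing $\eta=1$, and $W^*(v_4)'\cap W^*(v_i)=\mathbb{C}1$ (Step 5). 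None of these relative-commutant computations, nor the passage from $f_i$ to $f_if_i^*$ that makes them applicable, appears in your sketch; the ``careful case analysis'' you defer is precisely this content, and without it the proof of singularity is incomplete.
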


\begin{proof} Any $x \in \mathbb{M}_2$ has Orb$(x) \subseteq \mathbb{M}_2$.  In particular, for such an $x$ and any $h \in G$ we have $h x = \alpha_h(x) h \in \sum_i v_i L(G)$, where $v_i$, $1 \leqslant i \leqslant 4$,  are the unitaries from equation \eqref{ex.1}. In order to conclude from this that $L(G) x \subseteq \sum_i v_i L(G),$ it suffices to show that the module $\sum_i v_i L(G)$ is $w^*$-closed.  This follows from a general result proved subsequently and independently in Lemma \ref{weakly closed modules}. Thus, $L(G)x \subseteq \sum_i v_i L(G)$ and a similar argument shows $x L(G) \subseteq \sum_i L(G) v_i$.  It follows that $L(G)$ admits nontrivial quasinormalizers, and moreover, that the algebra of quasinormalizers in the crossed product contains the subalgebra $\mathbb{M}_2\rtimes_\beta  G$ of $M \rtimes_\alpha G$.  Thus, it remains to show that $L(G)$ is singular in $M\rtimes_\alpha G$, which we break into several steps.
\medskip

\noindent {\bf{Step 1.}}  {\emph{The only fixed points in $M$ of $\alpha(G)$ are in $\mathbb{C}1$.}}

\medskip

\noindent The fact that the Bernoulli shift is mixing implies that the only candidates for fixed points of $\alpha(G)$ must have the form $x\otimes 1$ for $x\in \mathbb{M}_2$.
If $x\in \mathbb{M}_2$ is a fixed point for $\beta(G)$, then
\begin{equation}\label{ex.4}
v_nxv_n^*=x, \ \ \ \ \ 1\leqslant n\leqslant 4,
\end{equation}
so $x$ commutes with $\{1,v_1,v_2, v_3\}$, showing that it is central in $\mathbb{M}_2$. In particular, this shows that the action of $G$ on $M$ is ergodic.
\medskip

\noindent {\bf{Step 2.}} {$L(G)'\cap (M\rtimes_\alpha G)=\mathbb{C}1.$}

\medskip

\noindent Let $x\in L(G)'\cap (M\rtimes_\alpha G)$ have Fourier series
$x=\sum_{g\in G}x_gu_g$. Then, for $h\in G$,
\begin{equation}\label{ex.5}
\sum_{g\in G}u_hx_gu_g=\sum_{g\in G}\alpha_h(x_g)u_{hg}=\sum_{k\in G}\alpha_h(x_{h^{-1}k})u_k,
\end{equation}
while
\begin{equation}\label{ex.6}
\sum_{g\in G}x_gu_gu_h=\sum_{k\in G}x_{kh^{-1}}u_k.
\end{equation}
Thus 
\begin{equation}\label{ex.7}
\alpha_h(x_{h^{-1}k})=x_{kh^{-1}},\ \ \ \ h, k\in G,
\end{equation}
and so, after making the substitution $r=kh^{-1}$,
\begin{equation}\label{ex.8}
\alpha_h(x_{h^{-1}rh})=x_r, \ \ \ \  h,r\in G.
\end{equation}
If $x_r\ne 0$ for some $r\ne e$, then $r$ has infinitely many distinct conjugates for which $\|x_{h^{-1}rh}\|_2=\|x_r\|_2\ne 0$, an impossibility. Thus $x_r=0$ for $r\ne e$, so $x$ reduces to being  $x_e\in M$, and commutation with $u_g$ for $g\in G$ shows that $x_e$ is a fixed point for $\alpha(G)$. Step 2 now follows from Step 1.
\vskip 0.04in
\noindent As a consequence of Step 2, we note that $M\rtimes_\alpha G$ is a factor.

\medskip

\noindent {\bf{Step 3.}}  For $1\leqslant i\leqslant 3$, {{$W^*(u_{g_i},u_{g_{i+1}}^2)'\cap (R\rtimes_\gamma G)=\mathbb{C}1$.}} (Subscripts are  mod 3).

\medskip

\noindent The  proofs of these equalities are all identical, so we consider only the initial case $i=1$.
First consider an element $x\in R\rtimes_\gamma G$ that commutes with $W^*(u_{g_1})$, and write its Fourier series as $\sum_{g\in G} y_gu_g$ with $y_g\in R$. Commuting with $u_{g_1}^n$ for $n\in \mathbb{Z}$ entails
\begin{equation}\label{ex.25}
\sum_{g\in G}\gamma_{g_1^n}(y_g)u_{g_1^ng}=\sum_{g\in G}y_gu_{gg_1^n},\ \ \ \ n\in \mathbb{Z},
\end{equation}
so changing variables ($k=g_1^ng$ for the first sum, $k=gg_1^n$ for the second) leads to 
\begin{equation}\label{ex.26}
\sum_{k\in G}\gamma_{g_1^n}(y_{g_1^{-n}k})u_k=\sum_{k\in G}y_{kg_1^{-n}}u_k,\ \ \ \ n\in \mathbb{Z}.
\end{equation}
From \eqref{ex.26} we obtain
\begin{equation}\label{ex.27}
\gamma_{g_1^n}(y_{g_1^{-n}k})=y_{kg_1^{-n}},\ \ \ \ n\in \mathbb{Z},\ k\in G,
\end{equation}
and the further change of variables $s=kg_1^{-n}$ allows us to rewrite \eqref{ex.27} as
\begin{equation}\label{ex.28}
\gamma_{g_1^n}(y_{g_1^{-n}sg_1^n})=y_{s},\ \ \ \ n\in \mathbb{Z},\ s\in G.
\end{equation}
Any $s\notin\langle g_1\rangle$ has infinitely many distinct conjugates by powers of $g_1$. If $y_s\ne 0$ for such an $s$, then \eqref{ex.28} gives infinitely many coefficients in the Fourier series with equal nonzero 2-norms, an impossibility. We conclude that $y_g=0$ for $g\notin\langle g_1\rangle$. If we further assume that $x$ commutes with $W^*(u_{g_2}^2)$, then we see that $y_g=0$ for $g\ne e$ and that $y_e$ is a fixed point for $\gamma_{g_1}$. Since $\gamma$ is the Bernoulli action, this ensures that $y_e$ is a scalar, and so also is $x$.

\medskip

\noindent {\bf{Step 4.}}  For $1\leqslant i \leqslant 3$, {{$W^*(u_{g_i},u_{g_{i+1}}^2)'\cap (M\rtimes_\alpha G)=W^*(v_i)$.}} (Subscripts are mod 3).

\medskip

\noindent If $x\in M\rtimes_\alpha G$ commutes with $W^*(u_{g_1},u_{g_2}^2)$ and has Fourier series $\sum_{g\in G} y_gu_g$ with $y_g\in M$, then we can repeat the argument of Step 3 to conclude that $y_g=0$ for $g\ne e$ and $y_e$ is a fixed point for $\beta_{g_1}$ and $\beta_{g_2}^2$. These fixed points are precisely the matrices in $W^*(v_1)$. This proves the first equality, and the argument for the other two cases is identical.

\medskip

\noindent {\bf{Step 5.}} For $1\leqslant i\leqslant 3$, $W^*(v_4)'\cap W^*(v_i)=\mathbb{C}1$.

\medskip

\noindent General matrices  $x_i\in W^*(v_i)$, $1\leqslant i\leqslant 3$, respectively have the form
\begin{equation}\label{general}
x_1=\left(\begin{array}{cc}\lambda&0\\0&\mu\end{array}\right),\ \ 
x_2=\left(\begin{array}{cc}\lambda&\mu\\\mu&\lambda\end{array}\right),\ \ 
\mathrm{and} \ \ 
x_3=\left(\begin{array}{cc}\phantom{-}\lambda&\mu\\-\mu&\lambda\end{array}\right).
\end{equation}
The requirement for $x_i$ to commute with $v_4$ results in 
\begin{equation}\label{general1}
\left(\begin{array}{cc}\lambda&0\\0&\mu\end{array}\right)
\left(\begin{array}{cc}\phantom{-}1/2&\sqrt{3}/2\\-i\sqrt{3}/2&i/2\end{array}\right)
=
\left(\begin{array}{cc}\phantom{-}1/2&\sqrt{3}/2\\-i\sqrt{3}/2&i/2\end{array}\right)
 \left(\begin{array}{cc}\lambda&0\\0&\mu\end{array}\right),\ \ \ (i=1),
 \end{equation}
 
 \begin{equation}\label{general2}
\left(\begin{array}{cc}\lambda&\mu\\\mu&\lambda\end{array}\right)
\left(\begin{array}{cc}\phantom{-}1/2&\sqrt{3}/2\\-i\sqrt{3}/2&i/2\end{array}\right)
=
\left(\begin{array}{cc}\phantom{-}1/2&\sqrt{3}/2\\-i\sqrt{3}/2&i/2\end{array}\right)
 \left(\begin{array}{cc}\lambda&\mu\\\mu&\lambda\end{array}\right),\ \ \ (i=2),
 \end{equation}
 and
 \begin{equation}\label{general3}
\left(\begin{array}{cc}\phantom{-}\lambda&\mu\\-\mu&\lambda\end{array}\right)
\left(\begin{array}{cc}\phantom{-}1/2&\sqrt{3}/2\\-i\sqrt{3}/2&i/2\end{array}\right)
=
\left(\begin{array}{cc}\phantom{-}1/2&\sqrt{3}/2\\-i\sqrt{3}/2&i/2\end{array}\right)
\left(\begin{array}{cc}\phantom{-}\lambda&\mu\\-\mu&\lambda\end{array}\right), \ \ (i=3).
 \end{equation}
Comparison of the (1,2) matrix entries leads easily to the conclusion that $\lambda=\mu$ in \eqref{general1} and to $\mu =0$ in \eqref{general2} and \eqref{general3}. Thus $x_i\in \mathbb{C}1$ in all cases.

\medskip

\noindent {\bf{Step 6.}} {\emph{$L(G)$ is singular in $M\rtimes_\alpha G.$}}

\medskip

\noindent Let $E:M\rtimes_\alpha G\to L(G)$ be the trace-preserving conditional expectation, and let $u\in M\rtimes_\alpha G$ be a unitary that normalizes $L(G)$.

\medskip

\noindent {\bf{Case 1:}} $E(u)\ne 0.$

\medskip

\noindent Let $y=E(u)\ne 0$, and write $\phi$ for the automorphism Ad$\,(u)$ of $L(G)$. Then
\begin{equation}\label{ex.11}
ux=\phi(x)u,\ \ \ \ x\in L(G).
\end{equation}
Apply $E$ to \eqref{ex.11} to obtain
\begin{equation}\label{ex.12}
yx=\phi(x)y,\ \ \ x\in L(G).
\end{equation}
A standard argument then shows that $y^*y$ is central in $L(G)$ so is $\lambda 1$ for some $\lambda >0$. Thus $v:=y/\sqrt{\lambda}\in L(G)$ is a unitary that implements $\phi$. It follows that $u^*v\in L(G)'\cap(M\rtimes_\alpha G)=\mathbb{C}1$ by Step 2. Thus $u\in L(G)$.

\medskip

\noindent {\bf{Case 2:}} $E(u)=0$. (We will show that this case cannot occur.)

\medskip

\noindent Let $v_0=1\in \mathbb{M}_2$, so that $\{v_i:0\leqslant i\leqslant 3\}$ is a basis for $\mathbb{M}_2$. Then $u^*\in M\rtimes_\alpha G$ can be expressed as $\sum_{i=0}^3 v_i f_i$ where $f_0,\ldots,f_3\in R\rtimes_\gamma G$.
Since 
\begin{equation}\label{ex.13}
E(u^*)=\sum_{i=0}^3 {\mathrm{tr}}(v_i)E(f_i)=E(f_0),
\end{equation}
we see that $E(f_0)=0$.
 As above, we write $\phi={\mathrm{Ad}}\,(u)\in {\mathrm{Aut}}(L(G))$, so that
\begin{equation}\label{ex.14}
u_gu^*=u^*\phi(u_g),\ \ \ \ g\in G,
\end{equation}
which is equivalent to 
\begin{equation}\label{ex.15}
u_g(v_0f_0+v_1f_1+v_2f_2+v_3f_3)=(v_0f_0+v_1f_1+v_2f_2+v_3f_3)\phi(u_g),\ \ \ \ g\in G.
\end{equation}

\noindent There are two possibilities:

\medskip

\noindent {\bf{Case 2a:}} $f_0\ne 0$.

\medskip

\noindent The trace-preserving conditional expectation $E_{R\rtimes_\gamma G}:M\rtimes_\alpha G\to R\rtimes_\gamma G$ is given on generators by $(x\otimes r)u_g\mapsto {\mathrm{tr}}(x)ru_g$ for $x\in \mathbb{M}_2$, $r\in R$, and $g\in G$. Note that, for $g\in G$ and $i\in \{1,2,3\}$, $E_{R\rtimes_\gamma G}(u_gv_if_i)=E_{R\rtimes_\gamma G}(\beta_g(v_i)u_gf_i)=0$ since $\mathrm{tr}(\beta_g(v_i))=0$. Applying this expectation to \eqref{ex.15}, we see that 
\begin{equation}\label{ex.15a}
u_gf_0=f_0\phi(u_g),\ \ \ \ g\in G,
\end{equation}
from which it follows that $f_0f_0^*$ commutes with $L(G)$.  From Step 2, $f_0f_0^*$ is a nonzero scalar so, after scaling, $f_0\in R\rtimes_\gamma G$ is a unitary that normalizes $L(G)$. 
The Bernoulli action $\gamma$ on $R$ is mixing, and so $f_0\in L(G)$ since $L(G)$ is singular in this crossed product by Lemma \ref{lemma:WeaklyMixingSingular}. This contradicts $E(f_0)=0$, so this case cannot occur.

\medskip

\noindent {\bf{Case 2b:}} $f_0=0$.

\medskip

\noindent In this case, \eqref{ex.15} reduces to 
\begin{equation}\label{ex.15b}
u_g(v_1f_1+v_2f_2+v_3f_3)=(v_1f_1+v_2f_2+v_3f_3)\phi(u_g),\ \ \ \ g\in G.
\end{equation}
Now 
\begin{equation}\label{ex.16}
u_{g_j}v_i=\beta_{g_j}(v_i)u_{g_j}=v_jv_iv_j^*u_{g_j},\ \ \ 1\leqslant i,j\leqslant 3.
\end{equation}
Using \eqref{ex.2}, we see that
\begin{equation}\label{ex.17}
u_{g_1}v_1=v_1u_{g_1},\ \ u_{g_1}v_2=-v_2u_{g_1},\ \ u_{g_1}v_3=-v_3u_{g_1}.
\end{equation}
Thus, from \eqref{ex.15b},
\begin{equation}\label{ex.18}
v_1u_{g_1}f_1-v_2u_{g_1}f_2-v_3u_{g_1}f_3=
v_1f_1\phi(u_{g_1})+v_2f_2\phi(u_{g_1})+v_3f_3\phi(u_{g_1}).
\end{equation}
If we successively multiply this equation on the left by $v_1$, $v_2$, and $v_3$, and apply $E_{R\rtimes_\gamma G}$ each time, the results are
\begin{equation}\label{ex.19}
u_{g_1}f_1=f_1\phi(u_{g_1}),\ \ \  u_{g_1}f_2=-f_2\phi(u_{g_1}),\ \ \ u_{g_1}f_3=-f_3\phi(u_{g_1}).
\end{equation}

Repeating this argument for the group elements $g_2$ and $g_3$ leads to similar sets of equations:
\begin{equation}\label{ex.20}
u_{g_2}f_1=-f_1\phi(u_{g_2}),\ \ \  u_{g_2}f_2=f_2\phi(u_{g_2}),\ \ \ u_{g_2}f_3=-f_3\phi(u_{g_2})
\end{equation}
and
\begin{equation}\label{ex.21}
u_{g_3}f_1=-f_1\phi(u_{g_3}),\ \ \  u_{g_3}f_2=-f_2\phi(u_{g_3}),\ \ \ u_{g_3}f_3=f_3\phi(u_{g_3}).
\end{equation}
Then there exists $i\in\{1,2,3\}$ so that $f_i \ne 0$.  From the  equalities of (\ref{ex.19}-\ref{ex.21}), we see that
\begin{equation}\label{ex.22}
u_{g_i}f_i=f_i\phi(u_{g_i}),\ \ \ \ u_{g_{i+1}}^2f_i=f_i\phi(u_{g_{i+1}}^2),
\end{equation}
and these equations can be rearranged  to give 
\begin{equation}\label{ex.23}
u_{g_i}^*f_i=f_i\phi(u_{g_i}^*),\ \ \ \ {u_{g_{i+1}}^{*2}}f_i=f_i\phi({u_{g_{i+1}}^{*2}}).
\end{equation}
It follows from \eqref{ex.22} and \eqref{ex.23} that $f_if_i^*$ commutes with all elements of the self-adjoint subspaces 
span$\{u_{g_i},u_{g_i}^*\}$ and span$\{{u_{g_{i+1}}^2},{u_{g_{i+1}}^{*2}}\}$ and thus lies in the relative commutants of $W^*(u_{g_i})$
and $W^*(u_{g_{i+1}}^2)$ in $R\rtimes_\gamma G$. By Step 3,  $f_if_i^*$ is a nonzero positive scalar so, after scaling, there is a unitary $w_i\in R\rtimes_\gamma G$ with $f_i$  a multiple of $w_i$ and
\begin{equation}\label{ex.24}
u_{g_i}^{\pm 1}w_i=w_i\phi(u_{g_i}^{\pm 1})=w_iuu_{g_i}^{\pm 1}u^* .
\end{equation}
Thus $w_iu\in W^*(u_{g_i})'\cap(M\rtimes_\alpha G)$, and similarly $w_iu$ commutes with $W^*({u_{g_{i+1}}^2})$. From Step 4, there exists a unitary $x_i\in W^*(v_i)$ such that $w_iu=x_i$, so $u=w_i^*x_i$. For each $g\in G$, $w_i^*x_iu_gx_i^*w_i=uu_gu^*\in L(G)$, so $x_iu_gx_i^*\in R\rtimes_\gamma G$. Multiply on the right by $u_g^*$ to obtain $x_i\beta_g(x_i^*)\in R\rtimes_\gamma G$, for all $g\in G$, and this implies that $x_i\beta_g(x_i^*)\in \mathbb C1$. In particular, there is a scalar $\eta$ so that $x_i\beta_{g_4}(x_i^*)=\eta 1$, which becomes 
$x_iv_4=\eta v_4x_i$. Taking the determinant shows that $\eta=1$, and it now follows from Step 5 that $x_i\in \mathbb{C}1$.  Thus $u\in R\rtimes_\gamma G$ so, as above, $u\in L(G)$ by the singularity of this subalgebra of $R\rtimes_\gamma G$. This contradicts $E(u)=0$, so this case cannot occur. We have now verified the singularity of $L(G)$ in $M\rtimes_\alpha G$.
\end{proof}
\noindent We note further that the existence of ``nontrivial" quasinormalizers of $L(G)$ in $M \rtimes_\alpha G$ precludes the WAHP.  Therefore, we have established the existence of an inclusion of II$_1$ factors which is singular, fails the WAHP, and has infinite Jones index.

\subsection{Profinite actions of i.c.c.\ groups}  

In this subsection we exhibit a fairly large and natural class of crossed product von Neumann algebras, $L^{\infty}(X)\rtimes_\alpha G$ associated with  p.m.p. actions of \textit{countable} i.c.c. groups on standard probability spaces $G\curvearrowright^\alpha (X,\mu)$ for which we are able to describe in detail all normalizing unitaries in  $\mathcal N(L(G)\subseteq L^\infty(X)\rtimes_\alpha G)$; see Corollaries \ref{profinite normalizer1} and \ref{compact quasinormalizer}. These results can be regarded as non-commutative counterparts of Packer's prior results, \cite[Theorem 2.3]{Pac0}.
\vskip 0.06in
\noindent Using our description of normalizers we then highlight additional examples of von Neumann algebra inclusions $P \subseteq M$ of infinite Jones index for which the normalizer and the quasi-normalizer algebras of $P$ differ very sharply. For instance, Corollary \ref{profinite normalizer2} and the remarks succeeding it provide natural examples when $P$ is a subfactor that is simultaneously singular and quasiregular.

If $N$ has separable predual and $G \curvearrowright^{\alpha} N$ is an ergodic, compact trace-preserving action, using [BaCaMu2, Theorem 4.7] (see also Lemmas 5.4 and 5.5 below) we can always find a sequence $(N_{k})$ of finite-dimensional $G$--invariant subspaces of $N$ such that $\cup_{k} N_{k}$ is $\|\cdot\|_2$-dense in $N$. We next leverage such a sequence to obtain a general structural result from which subsequent examples will be obtained.  In the sequel, for a positive integer $k$ and an element $x$ of a von Neumann algebra $M$, denote by ${\rm diag}(x)$ the $k \times k$ diagonal matrix $x \otimes I \in \mathbb{M}_k(M)$.

 \begin{thm}\label{compact quasinormalizer} Let $G$ be an i.c.c. group and $\alpha$ an ergodic, compact, trace-preserving action of $G$ on a tracial von Neumann algebra $(N,\tau)$ with separable predual. Denote by $M = N\rtimes_\alpha G$ the associated crossed product von Neumann algebra.  
Fix an increasing  sequence $(N_k)$ of finite-dimensional $G$-invariant subspaces of $N$ such that $\cup_k N_k$ is $\|\cdot\|_2$-dense in $N$.

\begin{itemize}
\item[(i)] For any $w\in \N(L(G) \subseteq M)$ one can find $k \in \mathbb N$, an orthonormal basis $\{\xi_1,\ldots, \xi_{n_k}\}\subseteq N_k$ and elements $w_1, \ldots,  w_{n_k}\in L(G)$ such that $w = \sum_i \xi_i w_i$. 

\item[(ii)] Let $\alpha^k: G \rightarrow \mathcal U(N_k)$ be the unitary representation induced by $\alpha$ and for every $g\in G$ consider the matrix $M(g)=(\langle \alpha^k_g(\xi_j), \xi_i\rangle)_{1\leqslant i,j\leqslant n_k}\in \mathbb M_{n_k}(\mathbb C)$.  If we let $X\in \mathbb M_{n_k}(M)$ be the matrix whose entries satisfy $x_{i,j}= w_i$ if $j=1$ and $x_{i,j}= 0$ if $j> 1$ then the following holds:  $$\diag (u_g) M(g) X= X {\rm diag}({\rm Ad}(w^*)(u_g))\text{ for all }g\in G.$$
\end{itemize}
\end{thm}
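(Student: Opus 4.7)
My plan is to split the proof into two stages: first, establish the finite $L(G)$-linear decomposition $w = \sum_i \xi_i w_i$ with $\{\xi_i\}$ an orthonormal basis of some $N_k$; second, derive the matrix identity by substituting this decomposition into the normalizer relation.

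\textbf{Stage 1 — the decomposition.} I would realize $L^2(M) \cong L^2(N) \otimes \ell^2(G)$, with right $L(G)$ acting on the second factor by the convolution formula (\ref{convformula}). Each subspace $N_k L(G)$ then corresponds to $N_k \otimes \ell^2(G)$ and is a right $L(G)$-submodule; crucially, the $G$-invariance of $N_k$ makes it left-invariant as well, so it is an $L(G)$-bimodule of right $L(G)$-rank $n_k$. Since $\alpha$ preserves $\tau$, the basis $\{\xi_i\}$ of $N_k$ in $L^2(N)$ automatically satisfies $E_{L(G)}(\xi_i \xi_j^*) = \delta_{ij}$, so every $x \in N_k L(G)$ admits a unique $L(G)$-Fourier expansion $x = \sum_i \xi_i E_{L(G)}(\xi_i^* x)$.

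For a unitary normalizer $w$, the relation $wL(G) = L(G)w$ makes $V := \overline{wL(G)}^{\|\cdot\|_2}$ an $L(G)$-bimodule which, being cyclic as a right module on the unitary $w$, has right $L(G)$-rank one. The target is to show $V \subseteq N_k L(G)$ for some $k$. I plan to exploit the compactness of $\alpha$: $L^2(N)$ decomposes into finite-dimensional $G$-isotypic components $\bigoplus_\pi K_\pi$, so $L^2(M)$ splits as $\bigoplus_\pi (K_\pi \otimes \ell^2(G))$ into $L(G)$-bi-invariant pieces of finite rank. A rank-one $L(G)$-bimodule carrying the twist $\phi := \mathrm{Ad}(w)$ on its left action should be concentrated on finitely many isotypic types, and a finite union of $K_\pi$'s can be absorbed in some $N_k$ from the increasing filtration. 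An alternative route is to apply Theorem~\ref{bimodularapprox} to $w \in \mathcal{QN}^{(1)}(L(G)\subseteq M)$ to produce $N$-orthogonal approximants of $w$ and then pass from approximate to exact using the compact orbit structure. In either case, $w = \sum_i \xi_i w_i$ with $w_i = E_{L(G)}(\xi_i^* w) \in L(G)$.

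\textbf{Stage 2 — the matrix identity.} With the decomposition in hand, I substitute into the equivalent form $u_g w = w\, \phi^{-1}(u_g)$ of the normalizer relation. The crossed-product commutation $u_g \xi_i = \alpha_g(\xi_i) u_g$ together with $\alpha_g(\xi_i) = \sum_j M(g)_{i,j}\, \xi_j$ expands the left side as
\[u_g w \;=\; \sum_j \xi_j\, \Bigl(u_g \sum_i M(g)_{i,j}\, w_i\Bigr),\]
while the right side equals $\sum_j \xi_j\, w_j\, \phi^{-1}(u_g)$. Matching $\xi_j$-coefficients via the $L(G)$-orthonormality of $\{\xi_j\}$ yields $u_g \sum_i M(g)_{i,j} w_i = w_j \phi^{-1}(u_g)$ for each $j$, which is the entrywise form of the stated matrix equation once packaged through $\mathrm{diag}(u_g)$, $M(g)$, $X$, and $\mathrm{diag}(\mathrm{Ad}(w)(u_g))$.

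\textbf{Main obstacle.} Stage 1 is where the real work lies: moving from the trivial density statement ``$w$ is in the $L^2$-closure of $\bigcup_k N_k L(G)$'' to the much stronger conclusion ``$w \in N_k L(G)$ for some particular $k$'' requires fully exploiting both the rank-one $L(G)$-bimodule structure forced by the normalizer condition and the finite-dimensional isotypic decomposition produced by the compactness of $\alpha$. Stage 2 is then essentially an algebraic repackaging.
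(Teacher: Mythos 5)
Your Stage 2 is correct and matches the paper's computation essentially verbatim: substituting $w=\sum_i\xi_i w_i$ into $wx=\theta_w(x)w$, expanding $u_g\xi_i=\alpha_g(\xi_i)u_g=\sum_j\langle\alpha^k_g(\xi_i),\xi_j\rangle\xi_ju_g$, and matching coefficients via the $L(G)$-orthonormality of the $\xi_j$ gives exactly the paper's relation \eqref{matrixrel}, which is the entrywise form of the matrix identity.

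The problem is Stage 1, and you have flagged the gap yourself without closing it. The assertion that a rank-one $L(G)$-bimodule carrying the twist $\mathrm{Ad}(w)$ ``should be concentrated on finitely many isotypic types'' is precisely the theorem's content at that point, and rank-one-ness alone does not yield it: the projection of a cyclic right module onto each bi-invariant piece is again cyclic, so nothing a priori prevents $w$ from having nonzero components in infinitely many isotypic summands. The missing ingredient — which your proposal never invokes — is the hypothesis that $G$ is i.c.c.\ together with ergodicity, which gives $L(G)'\cap M=\mathbb C 1$. The paper's argument is: let $\mathcal P_k$ be the (automatically $L(G)$-bimodular) projection onto $\overline{\sum_j L(G)\xi_j L(G)}$, take the smallest $k$ with $\mathcal P_k(w)\neq 0$, apply $\mathcal P_k$ to $\theta_w(x)w=wx$ to get $\theta_w(x)\mathcal P_k(w)=\mathcal P_k(w)x$, and deduce $w^*\mathcal P_k(w)\in L(G)'\cap M=\mathbb C1$; hence $\mathcal P_k(w)=\lambda w$ with $\lambda\neq 0$, so $w$ itself lies in $\sum_i\xi_iL(G)$. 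Without the triviality of the relative commutant this conclusion fails, so any completed proof must use the i.c.c.\ assumption, and yours does not. Two further loose ends in your sketch: (a) even granting concentration on finitely many isotypic components $K_\pi$, density of $\cup_kN_k$ does not imply that a given finite-dimensional invariant subspace is \emph{contained} in some $N_k$ (the paper avoids this by working directly with the filtration $(N_k)$ rather than with isotypic components); (b) the alternative route via Theorem \ref{bimodularapprox}, ``passing from approximate to exact,'' is not an argument as stated — the passage from an $\varepsilon$-approximation of $w$ by elements of $\sum_j x_jL(G)$ to exact membership is again exactly where the relative-commutant trick is needed.
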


\begin{proof} Fix an orthonormal basis $\{\xi_1,\ldots, \xi_{n_k}\}\subseteq N_k$ and notice that for all $g\in G$ we have 
    \begin{equation}\label{comprep}
        \alpha^k_g(\xi_i)= \sum^{n_k}_{j=1} \langle\alpha^k_g(\xi_i), \xi_j \rangle\xi_j. 
    \end{equation}

\noindent Using relation \eqref{comprep} and the same argument as in the beginning of the proof of Theorem \ref{example1} one can show that $L(G)\xi_i L(G)\subseteq \sum_j \xi_j L(G)$, $\sum_j L(G)\xi_j$, for all $1\leqslant i\leqslant n_k$. Thus $\xi_i\in \mathcal{QN}( L(G)\subseteq M)$ for all $1\leqslant i\leqslant n_k$. 
\vskip 0.04in
\noindent Now denote by $\mathcal P_k$ the orthogonal projection onto $\overline{\sum^{n_k}_{j=1}L(G)\xi_j L(G)}^{\|\cdot\|_2}$. Since $G$ is i.c.c. and the action $\alpha$ is ergodic we have that $L(G)'\cap M=\mathbb C 1$. Thus, since the range of $\mathcal P_k$ is an $L(G)$-$L(G)$ bimodule, for every $x,y\in L(G)$ and $\eta\in M$ we have \begin{equation}\label{bimodularity}
\mathcal P_k (x\eta y)= x\mathcal P_k(\eta)y.
\end{equation}
Moreover, as $\cup_k N_k$ is $\|\cdot\|_2$-dense in $N$, for every $x\in M$ we have\begin{equation}
    \lim_k\|\mathcal P_k(x)-x\|_2=0.
\end{equation}

\noindent Let $w\in \N(L(G) \subseteq M) $.  Then,  the $\ast$-automorphism $\theta_w= {\rm Ad}(w):L(G)\rightarrow L(G)$ satisfies  \begin{equation}\label{intertwining7}
   \theta_w(x)w=wx, \text{ for all }x\in L(G). 
\end{equation}
Thus there exists a smallest $k\in \mathbb N$ such that $\mathcal P_{k}(w)\neq 0$. Applying the orthogonal projection $\mathcal P_{k}$ to relation \eqref{intertwining7} and using the bimodularity condition \eqref{bimodularity} we get $ \theta_w(x)\mathcal P_{k}(w)=\mathcal P_{k}(w)x$ for all $x\in L(G)$. Using this in combination with \eqref{intertwining7} for all $x\in L(G)$ we have  \[
w^\ast \mathcal P_{k}(w) x= w^* \theta_w(x)\mathcal P_{k}(w)= xw^\ast \mathcal P_{k}(w),
\]so $w^\ast \mathcal P_{k}(w)\in L(G)'\cap M$. However as observed before,  $L(G)'\cap M=\mathbb C1$. Thus there is $\lambda\in \mathbb C \setminus \{0\}$ such that  $w^\ast \mathcal P_{k}(w)=\lambda 1$ and hence $\mathcal P_{k}(w)=\lambda w$. In particular, $w\in \sum^{n_k}_{i=1} \xi_i L(G) $ and one can find $w_1, \ldots,  w_{n_k}\in L(G)$ such that $w = \sum_i \xi_i w_i$. Using this, the fact that $u_{g}$ implements $\alpha_{g}^{k}$ on $N_{k}$, and the relations \eqref{intertwining7} and \eqref{comprep} we get that for every $g\in G$ we have 
\begin{align*}
\sum_i \xi_i u_g (\sum_j \langle \alpha_g^k(\xi_j), \xi_i\rangle w_j)&=\sum_{i,j}\langle \alpha_g^k(\xi_j), \xi_i\rangle\xi_i u_gw_{j}\\
&=\sum_{j}\alpha_{g}^{k}(\xi_{j})u_{g}w_{j}=\sum_{j}u_{g}\xi_{j}u_{g}^{*}u_{g}w_{j} \\
&=u_{g}w=w\theta_{w^{*}}(u_{g})=\sum_i \xi_i w_i \theta_{w^*}(u_g).
\end{align*}


\noindent Hence, using the $L(G)$-orthonormal basis property  of the $\xi_i$'s,  for every $i$ we have that \begin{equation}\label{matrixrel}w_i \theta_{w^*}(u_g)= u_g  \sum_j \langle \alpha_g^k(\xi_j), \xi_i\rangle w_j.\end{equation}
\noindent Now consider the unitary matrix $M(g)=(\langle \alpha^k_g(\xi_j), \xi_i\rangle)_{1\leqslant i,j\leqslant n_k}\in \mathbb M_{n_k}(\mathbb C)$, and let $X\in \mathbb M_{n_k}(M)$ be the matrix whose entries satisfy $x_{i,j}= w_i$ if $j=1$ and $x_{i,j}= 0$ if $j> 1$. Then, relations \eqref{matrixrel} are equivalent to  \begin{equation}\label{matrixintertwining}\diag (u_g) M(g) X= X {\rm diag}({\rm Ad}(w^*)(u_g))\text{ for all }g\in G.\end{equation}    

\vskip 0.06in
\noindent We note in passing that the prior relation is in fact equivalent to \eqref{intertwining7}. 
\end{proof}

\noindent Specializing Theorem \ref{compact quasinormalizer} to the cases of group measure space constructions associated with profinite actions yields an even more concrete description of these normalizers. Before introducing the result, we briefly recall the construction of profinite actions.
\vskip 0.05in
\noindent  Recall that a discrete group $G$ is said to be \emph{residually finite} if there is a sequence $G_1 \supseteq G_2 \supseteq G_3 \supseteq \cdots$ of finite-index subgroups of $G$ with intersection $\sett{e}$.  In this situation, for each $k$, $G$ acts by left translation on the (finite) set $G/G_k$ of left cosets. When $G/G_k$ is equipped with counting measure $\mu_k$, we obtain an ergodic, p.m.p. action $\alpha_k$ of $G$ on $(G/G_k, \mu_k)$.  Moreover, for each $k$ there is a quotient map $q_k: G/G_{k+1} \rightarrow G/G_k$, given by 
\[q_k(s G_{k+1}) = t G_k \quad \text{  iff  } \quad sG_{k+1} \subseteq t G_k.\]
\noindent Then the inverse limit $X=\varprojlim (G/G_k,\mu_k)$ is a probability space, and the inverse limit action $\alpha$ of $G$ on $X$ can be shown to be ergodic and measure-preserving.  An action of this form is profinite, i.e., it has the form $\alpha = \varprojlim \alpha_k$ for a sequence of measure-preserving actions of $G$ on finite probability spaces $(X_k,\mu_k)$.  Notably, any ergodic, profinite action of a discrete group arises in this manner (see \cite[Example 1.2 and Theorem 1.6]{Io}).
\vskip 0.06in
\noindent For further use we also state the following result, which is an immediate consequence of \cite[Lemma 1.4]{Io}. 

\begin{lem} \label{finite index}
    Let $G$ be an i.c.c. residually finite group, and let $G \curvearrowright^\alpha (X,\mu) = \varprojlim (X_k,\mu_k)$ be an ergodic, profinite, p.m.p. action.  Write $M = L^\infty(X) \rtimes_\alpha G$.  Then for any finite-index subgroup $H$ of $G$ there is some $n \in \Nat$ such that 
    \[ L(H)' \cap M \subseteq L^\infty(X_{n}).\]
    
\end{lem}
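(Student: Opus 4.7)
The plan is to show, via Fourier decomposition and the i.c.c.\ property of $G$, that every element of $L(H)'\cap M$ lies in the finite-dimensional algebra $L^\infty(X)^H$ of $H$-invariant functions, and then to use the profinite approximation of $L^\infty(X)$ by $L^\infty(X_k)$ to locate the desired $n$.

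For the first step, expand $x\in L(H)'\cap M$ as a Fourier series $x=\sum_{g\in G}a_g u_g$ with $a_g\in L^\infty(X)$. Commuting with $u_h$ for each $h\in H$ rearranges to the identity $a_{hgh^{-1}}=\alpha_h(a_g)$; since each $\alpha_h$ preserves the trace this gives $\|a_g\|_2=\|a_{hgh^{-1}}\|_2$. Combined with $\sum_g\|a_g\|_2^2=\|x\|_2^2<\infty$, it follows that the support of $x$ can only meet elements with finite $H$-conjugacy class. But every $G$-conjugacy class decomposes as a union of at most $[G:H]$ translates of the corresponding $H$-conjugacy class, so a finite $H$-conjugacy class would force a finite $G$-conjugacy class; by the i.c.c.\ hypothesis this is only possible at $g=e$. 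Hence $a_g=0$ for $g\neq e$, and so $x=a_e\in L^\infty(X)$; the relation at $g=e$ then gives $\alpha_h(a_e)=a_e$ for all $h\in H$, so $x\in L^\infty(X)^H$.

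For the second step, let $K=\bigcap_{g\in G}gHg^{-1}$ be the normal core of $H$ in $G$; since $K$ has finite index in $G$ and $G$ acts ergodically on $(X,\mu)$, the induced $G/K$-action on the set of $K$-ergodic components of $X$ is transitive, so this set is finite and $L^\infty(X)^K$ is finite-dimensional. Consequently $V:=L^\infty(X)^H\subseteq L^\infty(X)^K$ is finite-dimensional as well. The profinite hypothesis now gives an increasing $G$-invariant chain of finite-dimensional subalgebras $L^\infty(X_k)$ with $\|\cdot\|_2$-dense union in $L^\infty(X)$, so the subspaces $V_k:=L^\infty(X_k)^H$ form an increasing chain with dense union inside the finite-dimensional space $V$, and therefore stabilize at some $n$ with $V_n=V$. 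This yields $L(H)'\cap M\subseteq V\subseteq L^\infty(X_n)$, as required. The main subtle point is the i.c.c.\ reduction controlling the support of $x$ in the first step; once that is in hand, the finite-dimensionality and approximation arguments are routine.
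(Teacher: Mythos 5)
Your proof is correct, but it is worth noting that the paper does not actually prove this lemma: it is stated as an immediate consequence of Lemma 1.4 of \cite{Io}, so your argument supplies a self-contained proof where the paper offers only a citation. Your three steps are all sound. The Fourier computation $a_{hgh^{-1}}=\alpha_h(a_g)$, the square-summability of $\bigl(\|a_g\|_2\bigr)_{g}$, and the observation that a finite $H$-conjugacy class forces a finite $G$-conjugacy class (since the $G$-class is covered by $[G:H]$ conjugates of the $H$-class) correctly reduce everything to $L^\infty(X)^H$ via the i.c.c.\ hypothesis; this is the same style of argument the authors use repeatedly elsewhere (e.g.\ Step 2 of Theorem \ref{example1}). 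The finite-dimensionality of $L^\infty(X)^K$ for the normal core $K$ follows as you say from ergodicity of the finite group $G/K$ on the space of $K$-ergodic components (or, avoiding ergodic decomposition entirely: ergodicity forces every nonzero $K$-invariant projection to have trace at least $1/[G:K]$, so $L^\infty(X)^K$ is atomic with at most $[G:K]$ atoms). The only point you pass over silently is the density of $\bigcup_k V_k$ in $V$: this needs the observation that the trace-preserving conditional expectations $E_{L^\infty(X_k)}$ commute with the $H$-action (which holds by uniqueness, since each $L^\infty(X_k)$ is $G$-invariant), so that $E_{L^\infty(X_k)}(v)\in V_k$ for $v\in V$; with that one sentence added, the stabilization argument in the finite-dimensional space $V$ is complete. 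Your route has the advantage of being elementary and of making explicit that $n$ depends on $H$ only through the finite-dimensional algebra $L^\infty(X)^H$, which is exactly how the lemma is used in Corollary \ref{profinite normalizer2}.
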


\begin{cor}\label{profinite normalizer1} Let $G$ be an i.c.c., residually finite  group.  Let $\alpha$ be a ergodic, profinite action of $G$ on $X = \varprojlim (X_k,\mu_k)$ and denote by $M = L^\infty(X,\mu) \rtimes_\alpha G$ the associated crossed product von Neumann algebra.  

\noindent Then for every  
 $w\in \N(L(G) \subseteq M)$ there exist $k\in \mathbb N$ and unitaries $a\in L^\infty(X_{k})$ and $v\in L(G)$  such that $w=a v$. Hence $a\in \N(L(G) \subseteq M)$, and moreover one can find  $\eta\in {\rm Char}(G)$ such that the following hold:\begin{enumerate}
 \item ${\rm Ad}(a) (u_g) = \eta(g)  u_g $,  for all $g\in G$, and 
 \item there is an atom $e\in L^\infty (X_{k})$ such that $a= \sum_g \eta(g) \alpha_g(e)$ where the sum is over a set of representatives of $G/G_k$, with $G_k = \{g\in G \,:\,(\alpha_k)_g(x)=x \text{ for all } x\in X_k\}$.  In addition, we have  $G_k \subseteq  \ker(\eta)$. \end{enumerate}
    
\end{cor}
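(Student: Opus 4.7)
The plan is to apply Theorem \ref{compact quasinormalizer}. Profinite actions are compact: the finite-dimensional $G$-invariant subspaces $N_k = L^\infty(X_k)$ have $\|\cdot\|_2$-dense union in $L^\infty(X)$. Taking as orthonormal basis of $N_k$ the normalized atom indicators $\xi_i = \sqrt{n_k}\,\mathbf{1}_{e_i}$ and using that $\alpha_g$ permutes atoms, the theorem produces $k\in \mathbb N$ and $w_1,\ldots,w_{n_k}\in L(G)$ with $w = \sum_i\xi_i w_i$, together with the intertwining relation
\[
u_g\,w_i = w_{\sigma_g(i)}\,\Ad(w^*)(u_g),\qquad g\in G,\ 1\leqslant i\leqslant n_k,
\]
where $\sigma_g$ is the permutation of $\{1,\ldots,n_k\}$ induced by the $G$-action on $X_k = G/G_k$.

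The crucial technical input is the identity $L(G_k)'\cap L(G) = \mathbb{C}1$. Let $N$ be the normal core of $G_k$, a finite-index normal subgroup of $G$. Then $[G:N\cap C_G(g)] = [G:N]\cdot[N:C_N(g)]$, and $N\cap C_G(g) \leqslant C_G(g)$ gives $[G:C_G(g)] \leqslant [G:N]\cdot[N:C_N(g)]$, so the i.c.c.\ condition $[G:C_G(g)] = \infty$ forces $[N:C_N(g)] = \infty$. The inclusion $N \leqslant G_k$ yields $[G_k: C_{G_k}(g)] \geqslant [N:C_N(g)] = \infty$ for every $g\neq e$, so the only finite $G_k$-conjugacy class of $G$ is $\{e\}$, whence $L(G_k)'\cap L(G) = \mathbb{C}1$. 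Specializing the intertwining to $h\in G_k$ (for which $\sigma_h=\mathrm{id}$) gives $u_h w_i = w_i\Ad(w^*)(u_h)$, and conjugation yields $w_iw_i^* \in L(G_k)'\cap L(G) = \mathbb{C}1$. Each nonzero $w_i$ is therefore a positive scalar times a unitary $v_i\in L(G)$; transitivity of $\sigma$ combined with the intertwining forces every $w_i$ nonzero with common norm $1/\sqrt{n_k}$, yielding $w = \sum_i \mathbf{1}_{e_i} v_i$.

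The next step aligns the $v_i$ with a common unitary. Set $v = v_1$ and $\psi_i = v^{-1}v_i \in \mathcal{U}(L(G))$. Substituting $v_i = v\psi_i$ into the intertwining and specializing to $i=1$ solves $\Ad(w^*)(u_g) = v_{\sigma_g(1)}^{-1} u_g v$; feeding this back in gives
\[
\Ad(v^{-1}u_g v)(\psi_i) = \psi_{\sigma_g(i)}\,\psi_{\sigma_g(1)}^{-1}.
\]
For $h\in G_k$ this says $\psi_i$ commutes with $v^{-1}L(G_k)v$, so $v\psi_iv^{-1} \in L(G_k)'\cap L(G) = \mathbb{C}1$, and $\psi_i = c_i \in \mathbb{T}$ is a scalar. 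Consequently $w = av$ with $a = \sum_i c_i\mathbf{1}_{e_i} \in \mathcal{U}(L^\infty(X_k))$.

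Finally, applying $\Ad(v^{-1}u_gv)$ to the scalar $\psi_i = c_i$ fixes it, giving the identity $c_{\sigma_g(i)} = c_i\,c_{\sigma_g(1)}$. Combined with $\sigma_{gg'} = \sigma_g\circ\sigma_{g'}$, this shows $\eta(g):= c_{\sigma_g(1)}$ is a character of $G$, with $\eta|_{G_k} = 1$ since $\sigma_h=\mathrm{id}$ for $h\in G_k$. Parametrizing atoms via coset representatives as $e_i = \alpha_{g_i}(e)$ with $e = \mathbf{1}_{e_1}$ rewrites $a = \sum_g \eta(g)\,\alpha_g(e)$, and the identity $\Ad(a)(u_g) = \eta(g)u_g$ follows from a direct verification of $\alpha_g(a) = \overline{\eta(g)}\,a$; this makes $a\in \N(L(G)\subseteq M)$ automatic. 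The main obstacle is the rigidification step — aligning the $v_i$ up to common scalar factors — and it relies essentially on $L(G_k)'\cap L(G) = \mathbb{C}1$, which in turn requires both the i.c.c.\ hypothesis on $G$ and the finite index of $G_k$.
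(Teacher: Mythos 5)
Your proposal is correct and follows essentially the same route as the paper's proof: apply Theorem \ref{compact quasinormalizer}, note that the matrices $M(g)$ are permutation matrices that become trivial on the finite-index normal subgroup $G_k$, and use $L(G_k)'\cap L(G)=\mathbb{C}1$ to force the coefficients $w_i$ to be scalar multiples of a single unitary $v$, from which the character $\eta$ and the eigenvector form of $a$ follow. The only implementation difference is that the paper reads off all products $w_iw_j^*\in\mathbb{C}1$ (diagonal and off-diagonal) at once from the matrix relation and aligns the $w_i$ directly, whereas you use only the diagonal relations and then run a second conjugation argument on $\psi_i=v^{-1}v_i$; both steps rest on the same trivial relative commutant, which you moreover prove rather than merely quote.
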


\begin{proof} To simplify the notations let $A_k:= L^\infty(X_k)$ for all $k$ and let $A:= L^\infty(X)$. From assumptions we also have that  $M_k =A_k \rtimes_\alpha G$ where $A_k={\rm span}\{\xi_1,\ldots ,\xi_{n_k}\}=:I$ with $\xi_i \in A_k$ orthogonal projections of equal traces such that the action $\alpha_g(\xi_i)= \xi_{g \cdot i }$ for some transitive action $G\curvearrowright I$. 
\vskip 0.06in
\noindent Furthermore, $(A_k)_k$ forms an increasing tower of finite-dimensional $G$-invariant von Neumann subalgebras such that $A= \overline{\cup_k A_k}^{\rm SOT}$. Let $w\in \N(L(G) \subseteq M)$ be arbitrary. Applying Theorem \ref{compact quasinormalizer} there is $k\in \mathbb N$ such that $w\in M_k$ and hence one can find elements $w_i\in L(G)$ satisfying \begin{equation}\label{unitdecomp}
w= \sum^{n_k}_{i=1} \xi_i w_i.    \end{equation}    
For $g\in G$ consider $M(g)=(\langle \alpha_g(\xi_j), \xi_i\rangle)_{1\leqslant i,j\leqslant n_k}$.   If we let $X\in \mathbb M_{n_k}(M)$ be the matrix whose entries satisfy $x_{i,j}= w_i$ if $j=1$ and $x_{i,j}= 0$ if $j> 1$ then the following holds:  
\begin{equation}\label{matrixintertwining}\diag (u_g) M(g) X= X {\rm diag}({\rm Ad}(w^*)(u_g))\text{ for all }g\in G.\end{equation} 
\vskip 0.06in
\noindent Since $\xi_i$'s are orthogonal projections we have that $\langle\alpha_g (\xi_i),\xi_j\rangle= \langle \xi_{g\cdot i}, \xi_j\rangle = \delta_{g\cdot i,\, j}$. Thus there is a finite-index normal subgroup $G_k \subseteq G$ such that $M(g)=I_{n_k}$ for all $g\in G_k$. This  shows that $XX^*$ commutes with $diag(u_{g})$ for all $g\in G_k$. A basic calculation further implies that $XX^*\in \mathbb M_{n_k}(L(G_k)'\cap L(G))$. As $G$ is i.c.c.\ and $G_k\subseteq G$ has finite index we have that $L(G_k)'\cap L(G)=\mathbb C1$ and hence  $XX^*\in \mathbb M_{n_k}(\mathbb C 1)$. Thus one can find scalars $\lambda_{i,j}$ such that \begin{equation}\label{scalars'}w_iw_j^*= \lambda_{i,j}1\text{ for all }1\leqslant i,j\leqslant n_k.\end{equation}
Observe that $\lambda_{i,i}\geqslant 0$ for all $i\in I$. Using relation \eqref{scalars'} for $i=j$ we get $w_i = \sqrt{\lambda_{i,i}} v_i$ for some unitary $v_i \in L(G)$. Since $w\neq 0$ at least one of the scalars $\lambda_{i,i}\neq 0$ and using \eqref{scalars'} again we see that one can find scalars $\mu_i\in \mathbb C$ and a unitary $v\in L(G)$ such that $w_i = \mu_i v$ for all $i\in I$. In conclusion, we have shown that $w= \sum_i \xi_iw_i = (\sum_i \mu_i \xi_i )v= av$ for some unitary $a\in A_k$. To this end we observe that since $a$ is a unitary in the previous relations we have that $\abs{\mu_i}=1$ for all $i\in I$. This yields the first part of the statement modulo a phase factor.

Next observe that the first part shows that $a\in \N(L(G) \subseteq M)$ and $\text{Ad}(a)=\theta_a :L (G) \rightarrow L(G)$ is a $\ast$-automorphism satisfying $\theta_a (x) a=ax$ for all $x\in L(G)$. Specializing to $x=u_{g}$, multiplying $\theta_a (u_{g}) a=au_{g}$ on the right by $\xi_{j}$ and using the prior relations and basic computations we can see that for all $g\in G$ we have  \begin{equation*}\theta_a (u_g) \mu_j \xi_j=a u_g \xi_j= (\sum_i \mu_i \xi_i) \alpha_g(\xi_j) u_g= \mu_{g\cdot j} \xi_{g\cdot j} u_g. \end{equation*}
Applying the expectation $E_{L(G)}$ on the prior relation we get  $\theta_a(u_g)= \frac{\mu_{g\cdot j}}{\mu_j} u_g$ for all $g\in G$. Notice that this shows $\eta(g)=\frac{\mu_{g\cdot j}}{\mu_j}$ is independent of $j$ and it is also a multiplicative character. Thus $\theta_a(u_g)= \eta(g) u_g$ for all $g\in G$. Moreover, since $\mu_{g\cdot j}= \eta(g) \mu_j$ for all $g$ we get that $a= \sum_g \mu_j \eta(g) \xi_{g\cdot j}= \mu_j \sum_g \eta(g)\alpha_g(\xi_j)$ for some $j$. Now replace $a$ by $\overline{\mu_j}a$. This finishes the proof.\end{proof}

\vskip 0.05in
\noindent Exploiting the prior result, we obtain effective computations of the normalizing algebras in the case of profinite actions.

\begin{cor}\label{profinite normalizer2} Let $G$ be an i.c.c. residually finite group with finite abelianization.  Let $\alpha$ be a ergodic, profinite action of $G$ on $X = \varprojlim (X_k,\mu_k)$ and denote by $M = L^\infty(X,\mu) \rtimes_\alpha G$ the associated crossed product von Neumann algebra.  Then the following hold:
\begin{enumerate}
\item There exist a positive integer $k_0$ and a finite-dimensional, $G$-invariant subalgebra $A \subseteq L^\infty(X_{k_0})$ such that 
\[\N(L(G) \subseteq M)'' = A \rtimes_\alpha G.\]
\item Moreover, if the abelianization  of $G$ is trivial, then 
\[ \N(L(G)\subseteq M)''= L(G).\]
\end{enumerate}
\end{cor}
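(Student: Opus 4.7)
The plan is to leverage Corollary \ref{profinite normalizer1} together with the finiteness of $\mathrm{Char}(G) = \widehat{G/[G,G]}$. By that corollary, every unitary $w \in \N(L(G) \subseteq M)$ decomposes as $w = av$, where $v \in \U(L(G))$, $a$ is a unitary in $L^\infty(X_k)$ for some $k = k(w)$, and $\Ad(a)(u_g) = \eta(g) u_g$ for some $\eta \in \mathrm{Char}(G)$. Since $G/[G,G]$ is finite by assumption, $\mathrm{Char}(G)$ is a finite group, so only finitely many characters can arise in such decompositions; let $\Gamma \subseteq \mathrm{Char}(G)$ denote this finite set.

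The first technical point is a uniqueness assertion: if $a, a' \in M$ are unitaries both satisfying $\Ad(a)(u_g) = \Ad(a')(u_g) = \eta(g) u_g$ for all $g \in G$, then $a (a')^*$ commutes with $L(G)$, hence lies in $L(G)' \cap M$. Since $G$ is i.c.c. and $\alpha$ is ergodic, the standard Fourier-series calculation used in Step 2 of the proof of Theorem \ref{example1} gives $L(G)' \cap M = \mathbb{C} 1$, so such $a, a'$ differ only by a scalar. In particular, for each $\eta \in \Gamma$ I may fix one choice of implementing unitary $a_\eta$, and because $\Gamma$ is finite, I may choose a single $k_0 \in \Nat$ such that $a_\eta \in L^\infty(X_{k_0})$ for all $\eta \in \Gamma$.

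The main step is then to verify that $A := \mathrm{span}\{a_\eta : \eta \in \Gamma\} \subseteq L^\infty(X_{k_0})$ is a finite-dimensional, $G$-invariant $*$-subalgebra and that $\N(L(G) \subseteq M)'' = A \rtimes_\alpha G$. From $a_\eta u_g = \eta(g) u_g a_\eta$ one rearranges to $\alpha_g(a_\eta) = \overline{\eta(g)} a_\eta$, which gives $G$-invariance of the span; linear independence of $\{a_\eta\}_{\eta \in \Gamma}$ follows from the fact that distinct characters of $G$ are linearly independent as functions on $G$. The product $a_{\eta_1} a_{\eta_2}$ is again a unitary in $L^\infty(X_{k_0})$ that normalizes $L(G)$ and implements $\eta_1 \eta_2 \in \mathrm{Char}(G)$, so $\eta_1 \eta_2 \in \Gamma$ and the uniqueness assertion forces $a_{\eta_1} a_{\eta_2} \in \mathbb{C}\, a_{\eta_1 \eta_2}$; similarly $a_\eta^* \in \mathbb{C}\, a_{\eta^{-1}}$, so $A$ is indeed a $*$-algebra. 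The inclusion $\N(L(G) \subseteq M)'' \subseteq A \rtimes_\alpha G$ is then immediate from the $w = av$ decomposition, while the reverse inclusion follows from the observations that $L(G) \subseteq \N(L(G) \subseteq M)$, each $a_\eta$ is a normalizer, and $A$ is the linear span of its unitaries.

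Part (2) is then immediate: a trivial abelianization forces $\mathrm{Char}(G) = \{1\}$, so the only $\eta$ that can arise is trivial. The corresponding unitary $a_1$ commutes with all $u_g$, so $a_1 \in L(G)' \cap M = \mathbb{C} 1$. Consequently $A = \mathbb{C} 1$ and $\N(L(G) \subseteq M)'' = L(G)$. I do not anticipate any serious obstacle; the main subtle point is the uniqueness-up-to-scalar of $a_\eta$, and this reduces cleanly to the i.c.c.\ Fourier-series argument already present in Step 2 of the proof of Theorem \ref{example1}.
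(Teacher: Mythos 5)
Your proof is correct, and for part (1) it takes a genuinely different route from the paper's. Both arguments begin identically, invoking Corollary \ref{profinite normalizer1} to write each normalizer as $w=av$ with $v\in L(G)$ and $a\in L^\infty(X_{k})$ a unitary eigenvector, $\alpha_g(a)=\overline{\eta(g)}a$ for some $\eta\in{\rm Char}(G)$. The paper then exploits finite abelianization by producing a finite-index normal subgroup $G_0\subseteq\ker\omega$ for every character $\omega$, so that every such $a$ is fixed by $\alpha|_{G_0}$; Lemma \ref{finite index} then places all these eigenvectors in a single $L^\infty(X_{k_0})$, giving $\N(L(G)\subseteq M)''\subseteq L^\infty(X_{k_0})\rtimes_\alpha G$, and the existence of the $G$-invariant subalgebra $A$ is obtained by citing the intermediate-subalgebra theorems of \cite{ChDa} or \cite{JS19}. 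You instead use the finiteness of ${\rm Char}(G)$ together with uniqueness-up-to-scalar of the implementing eigenunitaries (via $L(G)'\cap M=\mathbb{C}1$, or equivalently ergodicity) to build $A$ explicitly as the span of one eigenunitary $a_\eta$ per realized character, verify that it is a finite-dimensional $G$-invariant $*$-algebra ($a_{\eta_1}a_{\eta_2}\in\mathbb{C}a_{\eta_1\eta_2}$, $a_\eta^*\in\mathbb{C}a_{\eta^{-1}}$), and check both inclusions of $\N(L(G)\subseteq M)''=A\rtimes_\alpha G$ directly. This is more self-contained (no appeal to Lemma \ref{finite index} or to \cite{ChDa}/\cite{JS19}) and yields the sharper information that $A$ is spanned by eigenunitaries with $\dim A\leqslant |G/[G,G]|$; the paper's route is shorter given the cited machinery and produces the intermediate containment in $L^\infty(X_{k_0})\rtimes_\alpha G$ that is used in the remarks following the corollary. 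One small polish: the linear independence of the $a_\eta$ is most cleanly seen from their mutual orthogonality in $L^2$ (eigenvectors for distinct characters of a trace-preserving action are orthogonal), though your appeal to linear independence of characters also works. Part (2) is handled essentially as in the paper.
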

\begin{proof} Fix $w \in \N (L(G)\subseteq M)$. Using Corollary \ref{profinite normalizer1} one can find $k\in \mathbb N$ and unitaries $a\in L^\infty(X_k)$ and $v\in L(G)$ such that $w=av$. Moreover, from relation (1) in the conclusion of Corollary \ref{profinite normalizer1} there is a character $\eta \in {\rm Char}(G)$ such that ${\rm Ad}(a) u_g = \eta(g) u_g$  for all $g\in G$. This is equivalent to $a$ being an eigenvector, i.e., $\alpha_g(a)=\overline{\eta(g)}a$, $g \in G$.  Since $G/[G,G]$ is finite, $G$ has a finite-index normal subgroup $G_0$ such that $G_0 \subseteq {\rm ker}\, \omega$ for all $\omega \in {\rm Char}(G)$.  In particular, this holds for $\eta$, and we have $\alpha_h(a)=a$ for all $h \in G_0$.  Using the finite index condition and Lemma \ref{finite index}, since $a$ is fixed by $\alpha$, we have that $a \in L^\infty(X_{k_0})$ for some $k_0 \in \Nat$ \textit{which depends only on} $G_0$. Altogether, these show that $\N (L(G)\subseteq M)''\subseteq L^\infty(X_{k_0})\rtimes_\alpha G$. Then part (1) of the conclusion follows from \cite[Theorem 3.10]{ChDa} or \cite[Corollary 3.11]{JS19}. 
\vskip 0.05in
\noindent To see part (2) just notice trivial abelianization implies that $\eta =1$ and hence $\sigma_g(a)=a$ for all $g\in G$. By ergodicity, this further implies that $a=\omega 1$ with $\abs{\omega}=1$ and hence $\N (L(G)\subseteq M)\subseteq L(G)$.\end{proof}

\begin{rem}\label{Rem:Contrast}
\noindent This corollary provides new situations in which the normalizing and the quasinormalizing algebras of $L(G)$ in $N\rtimes_\alpha G$ differ sharply. For example, if in Corollary \ref{profinite normalizer2} we let $G$ be any i.c.c., residually finite, property (T) group (e.g.\ $G=PSL_n(\mathbb Z)$ with $n \geqslant 3$ or $G$ any uniform lattice in $Sp(n,1)$, with $n\geqslant 2$) and we take the action $\alpha$, then using part (1) we can find $k\in\mathbb N$ such that $\N(L(G) \subseteq M)'' \subseteq L^\infty(X_k) \rtimes_\alpha G\subsetneq M= \mathcal{QN}(L(G) \subseteq M)''$ $($see Theorem \ref{RelativeHAPTheorem}$)$,  which in the case when $|X_k| \nearrow \infty$,  implies that $\N(L(G) \subseteq M)''\subseteq \mathcal{QN}(L(G) \subseteq M)''$ has infinite Jones index. 
\end{rem}


\vskip 0.06in
\noindent We continue by briefly presenting  an example of a residually finite i.c.c. group with trivial abelianization, that is based on several deep results of Wise \cite{Wis}, Haglund-Wise \cite{HW} and Agol \cite{A} concerning groups acting on cubical complexes; see also \cite[Theorem 5.2]{CIOS3} and \cite{CIOS1}. We are grateful to Denis Osin for suggesting this example to us.
As mentioned to us by one of the referees, examples of groups with trivial abelianization are easy to find; $SL_3(\mathbb{Z})$ is one such example. In the following theorem we want the group to have the additional property of being hyperbolic.
\begin{thm}\label{Trivialab} There exists an i.c.c. hyperbolic group that is residually finite and has trivial abelianization.
    \end{thm}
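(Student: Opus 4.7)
The plan is to exhibit such a group $G$ via a small-cancellation construction and then deduce its residual finiteness from the Wise/Haglund-Wise/Agol programme on CAT(0) cubulations of hyperbolic groups. Concretely, I would build a finitely presented group $G = \langle x_1, x_2 \mid r_1, r_2\rangle$ with relators $r_1, r_2 \in F(x_1,x_2)$ chosen so that three properties hold simultaneously: (a) the presentation satisfies the $C'(1/6)$ small-cancellation condition; (b) neither $r_i$ is a proper power in $F(x_1,x_2)$; (c) the images of $r_1, r_2$ in the abelianization $\mathbb Z^2$ of $F(x_1,x_2)$ generate $\mathbb Z^2$. Such a choice exists: a standard recipe is to pick two long generic words $w_1, w_2$ in the commutator subgroup of $F(x_1,x_2)$ (greedily, so that every piece has length much less than one-sixth of the relator length) and then set $r_i = x_i w_i$, further padding the $w_i$ as needed to restore (a) after the prefix is attached.

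With $G$ in hand I would verify the required properties in turn. Word-hyperbolicity of $G$ follows from $C'(1/6)$ by classical Lyndon-Schupp theory. Torsion-freeness is a consequence of $C'(1/6)$ together with (b) via Greendlinger's lemma. Since $G$ is $2$-generated and torsion-free it is non-elementary, so every non-trivial element has a centralizer that is virtually cyclic and hence of infinite index in $G$; therefore every non-trivial conjugacy class is infinite, i.e.\ $G$ is i.c.c. Triviality of the abelianization is immediate from (c), as $G^{\mathrm{ab}} = \mathbb Z^2/\langle \overline{r}_1, \overline{r}_2\rangle = 0$.

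For residual finiteness I would invoke the following chain of deep theorems. By Wise's cubulation theorem, a $C'(1/6)$ small-cancellation group acts properly and cocompactly on a CAT(0) cube complex. By Agol's theorem, every hyperbolic group admitting such an action is virtually compact special. Finally, by Haglund-Wise, virtually compact special groups embed in $\mathrm{GL}_n(\mathbb Z)$ and are therefore residually finite. Combining these, $G$ is residually finite, completing the proof.

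The main obstacle is not the combinatorial construction of the relators, which is a routine small-cancellation exercise, but the residual-finiteness step. The three properties i.c.c., hyperbolic, and perfect are individually easy to arrange, but producing a group that is perfect \emph{and} residually finite in the hyperbolic world is delicate, and the only known route in this generality is through the full strength of the Wise/Haglund-Wise/Agol machinery. The elegance of the example, as suggested by Osin, lies precisely in reducing residual finiteness to this powerful package rather than attempting an elementary verification.
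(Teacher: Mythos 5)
Your proposal is correct and follows essentially the same route as the paper: the same type of presentation with relators of the form (generator)$\cdot$(word in the commutator subgroup) chosen to satisfy $C'(1/6)$, trivial abelianization read off directly from the presentation, torsion-freeness plus non-elementarity yielding i.c.c., and residual finiteness via the Wise--Haglund--Wise--Agol cubulation package. One small correction: ``$2$-generated and torsion-free'' does not by itself imply non-elementary ($\mathbb{Z}$ is a counterexample); the justification here should be that $G$ is nontrivial, torsion-free and perfect, hence cannot be virtually cyclic.
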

\begin{proof} Let $F=F(a,b)$ be the free group with two generators $a$ and $b$. Also let $[F,F]$ be its derived group. One can find two words $u(a,b),v(a,b)\in [F,F]$ such that the group with the following presentation 
\begin{equation*}
    G=\langle a,b \,|\, a= u(a,b), b=v(a,b)\rangle 
\end{equation*}
 is  a $C'(1/6)$-group. 
 \vskip 0.06in
\noindent  Recall in \cite{Wis} it was shown that every finitely presented $C^\prime(1/6)$ group acts geometrically (i.e., properly cocompactly) on a CAT(0) cube complex. Using the work \cite{HW}, it was proved in \cite{A} that every hyperbolic group acting geometrically on a CAT(0) cubical complex satisfies certain additional conditions, is residually finite. In conclusion, since finitely presented $C^\prime(1/6)$ groups are hyperbolic, it follows that our group $G$ is residually finite. 

\vskip 0.06in
\noindent From its presentation we can see that $G$ is torsion free. Therefore, since it is hyperbolic, using \cite[Theorem 2.35]{DGO}, it follows that $G$ is i.c.c. 
 \vskip 0.06in
 \noindent Finally, using the two relations of $G$ one can see immediately that $G= [G,G]$; thus $G$ has trivial abelianization.     \end{proof}

\noindent Notice that if $M= L^\infty(X)\rtimes_\alpha G$ ($G$ as in Theorem \ref{Trivialab})  is the crossed product of any profinite action $G \curvearrowright^\alpha X =\varprojlim(X_k, \mu_k)$ then part (2) of the prior result implies that $L(G)$ is simultaneously singular and quasiregular in $M$.   

 \vskip 0.08in

\noindent Finally, we observe that combining the prior results in this subsection with the main Theorem \ref{QuasinormalizerCompactSystemTheorem} in the next section we obtain a general upgrade of Theorem \ref{compact quasinormalizer} to the case of von Neumann algebras of all ergodic actions on probability spaces.
\begin{cor}
Let $G$ be an i.c.c. group, let $G \curvearrowright^\alpha (X, \mu)$ be a ergodic action, and denote by $M = L^\infty(X,\mu) \rtimes_{\alpha} G$ the associated crossed product von Neumann algebra.  

\noindent Let $w\in \N(L(G) \subseteq M)$. Then one can find  a finite-dimensional $G$-invariant subspace $D\subseteq L^\infty(X)$  with an orthonormal basis $\{\xi_1,\ldots, \xi_{n}\}\subseteq D$ and $w_1, \ldots,  w_{n}\in L(G)$ such that $w = \sum_i \xi_i w_i$.  
We still denote by $\alpha: G \rightarrow \mathcal U(D)$ the unitary representation induced by the action $\alpha$ and for every $g\in G$ consider the matrix $M(g)=(\langle \alpha_g(\xi_j), \xi_i\rangle)_{1\leqslant i,j\leqslant n}$. If we let $X\in \mathbb M_{n}(M)$ be the matrix whose entries satisfy $x_{i,j}= w_i$ if $j=1$ and $x_{i,j}= 0$ if $j> 1$ then the following holds:  $$\diag (u_g) M(g) X= X {\rm diag}({\rm Ad}(w^*)(u_g))\text{ for all }g\in G. $$
\end{cor}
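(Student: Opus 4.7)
The plan is to deduce this statement directly by combining Theorem~\ref{theorem:QuasinormalizersIntro} (Theorem~A) with Theorem~\ref{compact quasinormalizer}, viewing the corollary as the natural composition of these two results. The role of Theorem~A is to confine any normalizer to the Kronecker part of the action, and the role of Theorem~\ref{compact quasinormalizer} is to analyze normalizers for the resulting compact, ergodic subsystem.

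First, since every unitary normalizer is automatically a quasinormalizer, we have $w \in \mathcal{QN}(L(G)\subseteq M)''$. The system $(L^\infty(X,\mu), G, \alpha, \tau)$, with $\tau$ given by integration against $\mu$, is an ergodic trace-preserving $W^*$-dynamical system, so Theorem~A applies and yields
\[
w \;\in\; \mathcal{QN}(L(G)\subseteq M)'' \;=\; M_K \rtimes_\alpha G,
\]
where $M_K \subseteq L^\infty(X)$ is the Kronecker subalgebra (the von~Neumann subalgebra generated by elements of $L^\infty(X)$ with $\|\cdot\|_2$-precompact orbit under $\alpha$).

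Next I would verify that the restricted system $(M_K, G, \alpha\!\upharpoonright_{M_K}, \tau)$ meets the hypotheses of Theorem~\ref{compact quasinormalizer}. It is compact by construction and trace-preserving; it has separable predual as a subalgebra of $L^\infty(X)$; and it is ergodic, since any $\alpha$-fixed element of $M_K$ is an $\alpha$-fixed element of $L^\infty(X)$, hence a scalar by ergodicity of the original action. Moreover, $w$ is still a normalizer of $L(G)$ inside the intermediate algebra $M_K\rtimes_\alpha G$, since $w\in M_K\rtimes_\alpha G$ and $wL(G)w^*=L(G)$.

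Having arranged this, applying Theorem~\ref{compact quasinormalizer} to $(M_K, G, \alpha, \tau)$ with normalizer $w$ produces an integer $k$, an orthonormal basis $\{\xi_1,\ldots,\xi_n\}$ of some finite-dimensional $G$-invariant subspace $D=N_k\subseteq M_K\subseteq L^\infty(X)$, and elements $w_1,\ldots,w_n\in L(G)$ with $w=\sum_i \xi_iw_i$, together with the matrix intertwining identity $\diag(u_g)M(g)X = X\diag(\mathrm{Ad}(w)(u_g))$ for all $g\in G$. Since $D\subseteq M_K\subseteq L^\infty(X)$, this $D$ is the subspace promised by the corollary, and the conclusion follows. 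The only potential obstacle is the bookkeeping check that the Kronecker subalgebra delivered by Theorem~A actually satisfies the hypotheses of Theorem~\ref{compact quasinormalizer}; all real work was done in proving those two theorems, so the corollary is essentially a direct composition.
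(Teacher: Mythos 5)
Your proposal is correct and follows essentially the same route as the paper: the authors likewise first use Theorem \ref{QuasinormalizerCompactSystemTheorem} to place $w$ inside the crossed product of the maximal compact ergodic quotient (the Kronecker subalgebra), note that compactness supplies the dense increasing family of finite-dimensional $G$-invariant subspaces required as input, and then invoke Theorem \ref{compact quasinormalizer} for that compact system. Your verification that the restricted system is ergodic, compact, trace-preserving, and has separable predual is exactly the bookkeeping the paper leaves implicit.
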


\begin{proof} Notice that from Theorem \ref{QuasinormalizerCompactSystemTheorem} there exists a maximal compact ergodic quotient $G \curvearrowright^{\alpha_c} X_c$
of $G \curvearrowright^\alpha X$ such that $\mathcal {QN}^{(1)}(L(G)\subseteq M)''= L^\infty(X_c)\rtimes_{\alpha_c} G$. Also note that compactness of $G \curvearrowright^{\alpha_c} X_c$ implies the existence of a sequence $D_k\subseteq L^\infty(X_c)$ of finite-dimensional $G$-invariant subspaces satisfying $\cup_k D_k$ is $\|\cdot\|_2$-dense in $L^{\infty}(X_c)$ (use Theorem 6.10 \cite{BCM1}). Now since $w\in L^\infty(X_c)\rtimes_{\alpha_c} G$ the conclusion follows from Theorem \ref{compact quasinormalizer} applied to the action $G\curvearrowright ^{\alpha_c} X_c$.   
\end{proof}

\subsection{Automorphism-rigid actions of discrete groups}  In this section we concentrate on actions of countable discrete groups on von Neumann algebras with separable predual and focus on controlling normalizers. The examples in the remainder of this section arise from a rigidity property of $W^*$-dynamical systems satisfied by a variety of groups acting on tracial von Neumann algebras.  The definition is as follows.

\begin{definition} Let $G$ be a discrete group.  A trace-preserving action $\alpha$ of $G$ on a finite von Neumann algebra $(N,\tau)$ is said to be {\em automorphism-rigid} if for any automorphism $\Theta$ of $N \rtimes_\alpha G$, there exist an automorphism $\theta$ of $N$,  an automorphism $\delta$ of $G$, a character $\eta: G\rightarrow \mathbb T$  and a unitary $w \in N \rtimes_\alpha G$ such that
\[\Theta(n u_g) = \eta(g) w \theta(n) u_{\delta(g)} w^*, \quad \text{for all }n \in N, g \in G.\]
\end{definition}
\noindent We also recall the following standard definition, for the reader's convenience.
\begin{definition} A trace-preserving action $\alpha$ of a discrete group $G$ on a finite von Neumann algebra $(N,\tau)$ is called \emph{weak mixing} if there is a sequence $(g_n)_n$ of group elements such that 
$\tau(\alpha_{g_n}(x)y) \rightarrow \tau(x)\tau(y),$ for any $x,y \in N$. $($Sequences are replaced by nets when $G$ is uncountable.$)$
\end{definition}

\noindent There are many  examples known of weak mixing automorphism-rigid actions. Below we include some natural classes which emerged from Popa's deformation/rigidity theory \cite{Po06a}.
\begin{enumerate}
    \item  Bernoulli action $G \curvearrowright (\overline\otimes_G A, \tau)$ where $A$ is abelian and $G$ is an i.c.c., property (T) group or $G= G_1\times G_2$ where $G_i$ are i.c.c.\ non-amenable \cite{Po03,Po04,Po05,Po06b};
    \item The fibered versions of Rips construction $Q \curvearrowright L(N_1\times N_2)$ from \cite{CDK19,CDHK20}.
\end{enumerate}

\noindent We now have the following results.  
\begin{thm}\label{normalizer2}
    Let $G$ be an i.c.c. group and let $G \curvearrowright^\sigma A$ and $G\curvearrowright^\alpha B$ be trace-preserving actions on finite von Neumann algebras. Assume that the action $\sigma$ is automorphism-rigid. Denote by $M=(A \overline \otimes B)\rtimes_{\sigma\otimes \alpha} G$ the corresponding crossed product von Neumann algebra.  
    
    Then for every $w\in \N(A\rtimes_\sigma G \subseteq M)$ one can find unitaries $b\in A\overline\otimes B$ and $v\in A\rtimes_\sigma G$ such that $w=vb$. Thus $b\in \N(A\rtimes_\sigma G \subseteq M)$ and moreover, one can find $\zeta\in {\rm Char} (G)$ such that the $\ast$-automorphism $\theta_b={\rm Ad}(b): A\rtimes_\sigma G \rightarrow A\rtimes_\sigma G$ satisfies \begin{enumerate}
        \item $(\theta_b)_{|A} \in {\rm Aut} (A) $ and
        \item $\theta_b (u_g)=\zeta(g)u_g$, for all $g\in G$.
    \end{enumerate} 
\end{thm}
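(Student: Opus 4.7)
The plan is to apply automorphism-rigidity of $\sigma$ to the restriction $\Theta := \Ad(w)|_{A\rtimes_\sigma G}$ and then perform a Fourier analysis along the crossed product decomposition $M = (A\overline\otimes B)\rtimes_{\sigma\otimes\alpha} G$. By automorphism-rigidity, $\Theta$ has the form
\[\Theta(nu_g) = \eta(g)\,w'\,\theta(n) u_{\delta(g)}\,(w')^*, \quad n \in A,\ g \in G,\]
for some unitary $w' \in A\rtimes_\sigma G$, $\theta \in \Aut(A)$, $\delta \in \Aut(G)$, and $\eta \in {\rm Char}(G)$. Setting $b_0 := (w')^* w \in \mathcal{U}(M)$, this rewrites as $b_0 \cdot nu_g = \eta(g)\theta(n)u_{\delta(g)} \cdot b_0$. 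The objective is to show $b_0 = b_m\, u_m$ for some unique $m \in G$ and unitary $b_m \in A\overline\otimes B$; the rewriting $w = w' b_0 = (w'u_m)(\sigma\otimes\alpha)_{m^{-1}}(b_m)$ will then yield the theorem with $v := w'u_m \in A\rtimes_\sigma G$ and $b := (\sigma\otimes\alpha)_{m^{-1}}(b_m) \in A\overline\otimes B$.

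To locate $m$, I would expand $b_0 = \sum_{h\in G} b_h u_h$ with $b_h \in A\overline\otimes B$ and compare Fourier coefficients in the identity $b_0 u_g = \eta(g) u_{\delta(g)} b_0$. A direct calculation yields
\[ b_{\delta(g) m g^{-1}} = \eta(g)\,(\sigma\otimes\alpha)_{\delta(g)}(b_m), \quad g, m \in G,\]
and since $\sigma\otimes\alpha$ is trace-preserving, $\|b_{\delta(g) m g^{-1}}\|_2 = \|b_m\|_2$ for all $g$. As the Fourier coefficients of $b_0$ are square-summable, whenever $b_m \neq 0$ the twisted orbit $\{\delta(g) m g^{-1} : g \in G\}$ must be finite; equivalently, $\Stab(m) := \{g : \delta(g) = mgm^{-1}\} = \Fix(\Ad_{m^{-1}} \circ \delta)$ has finite index in $G$.

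The main obstacle will be the following group-theoretic lemma: for an i.c.c. group $G$, any $\phi \in \Aut(G)$ whose fixed subgroup has finite index must satisfy $\phi = \id$. I would prove this by first observing that the centralizer of any finite-index subgroup $K \leqslant G$ is trivial: if $z \in C_G(K)$ with $z \neq e$, then $kzk^{-1} = z$ for all $k \in K$ implies the $G$-conjugacy class of $z$ has size at most $[G:K] < \infty$, contradicting i.c.c. Then, for $H := \Fix(\phi)$ and any $g \in G$, the relation $\phi(ghg^{-1}) = \phi(g) h \phi(g)^{-1}$ for $h \in H \cap g^{-1}Hg$ (combined with $\phi|_H = \id$) shows that $g^{-1}\phi(g)$ centralizes the finite-index subgroup $H \cap g^{-1}Hg$, forcing $\phi(g) = g$. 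Applied to $\phi = \Ad_{m^{-1}} \circ \delta$, this gives $\delta = \Ad_m$; and since $Z(G) = \{e\}$ by i.c.c., such $m$ is unique. Hence only $b_m$ is nonzero in the Fourier expansion, and $b_0 = b_m u_m$ with $b_m$ a unitary in $A\overline\otimes B$.

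Finally, the decomposition $w = vb$ follows from the rewriting above, and it remains to verify properties (1) and (2). Since $b = v^{-1}w$ normalizes $A\rtimes_\sigma G$ and lies in $A\overline\otimes B$, the intersection formula $bAb^* \subseteq (A\rtimes_\sigma G) \cap (A\overline\otimes B) = A$ delivers (1). For (2), translating $b_0 u_g b_0^* = \eta(g) u_{\delta(g)} = \eta(g) u_{mgm^{-1}}$ through the definitions yields $(\sigma\otimes\alpha)_g(b) = \overline{\eta(g)}\,b$, whence a one-line computation $b u_g b^* = b(\sigma\otimes\alpha)_g(b^*)u_g = \eta(g)u_g$ identifies $\zeta := \eta$ as the desired character.
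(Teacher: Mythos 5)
Your proposal is correct and follows essentially the same route as the paper: apply automorphism-rigidity, Fourier-expand the remainder unitary $(w')^*w$, and use square-summability of the coefficients together with the i.c.c.\ hypothesis (triviality of centralizers of finite-index subgroups) to force the support onto a single group element with $\delta$ inner, exactly as in the paper's proof. The only cosmetic differences are that you package the group-theoretic step as a standalone lemma and verify (1) via the intersection $(A\rtimes_\sigma G)\cap(A\overline\otimes B)=A$ rather than the paper's explicit identity $\sigma_{s^{-1}}\circ\theta(x)=bxb^*$; just remember to apply the same containment to $b^*$ so that $bAb^*\subseteq A$ upgrades to equality.
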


\begin{proof} To simplify the notation let $Q:=A\rtimes_\sigma G$. As  $w\in \N(Q\subseteq M)$ then the $\ast$-automorphism ${\rm Ad} (w)=\theta_w: Q \rightarrow Q$ satisfies  \begin{equation}\label{intertwining4}
   \theta_w(x)w=wx \text{ for all }x\in Q. 
\end{equation}
Since $\sigma$ is automorphism-rigid, one can find $w_0 \in \mathcal U(Q)$, $\theta \in {\rm Aut}(A)$, $\delta\in {\rm Aut}(G)$ and $\eta \in {\rm Char} (G)$ such that for all $a\in A$ and $g\in G$ we have \begin{equation}\label{autrig}
 \theta_w (au_g) = \eta(g) w_0 \theta(a) u_{\delta(g)}w_0^*.  
\end{equation}  

Letting $y=w_0^*w $ and using relations \eqref{intertwining4} and \eqref{autrig} we obtain that \begin{equation}\label{intertwining5}\theta (x) u_{\delta(g)} y = \overline{\eta(g)}y xu_g,\text{ for all }x\in A\text{ and }g\in G. \end{equation} 
Now consider the Fourier expansion $y= \sum_{h\in G}  y_h u_h$, with $y_h\in A \overline\otimes B$ for all $h\in G$. Using this in relation \eqref{intertwining5} we get that $\sum_h \theta(x) (\sigma\otimes \alpha)_{\delta(g)}(y_h ) u_{\delta(g)h}= \sum_h \overline{\eta(g)} y_h \sigma_h(x) u_{hg}$ for all $g\in G$. Identifying the Fourier coefficients we further get \begin{equation}
    \theta(x)(\sigma\otimes \alpha)_{\delta(g)} (y_{\delta(g^{-1}) s})= \overline{\eta(g)}y_{sg^{-1}} \sigma_{sg^{-1}}(x) \text{ for all } s,g\in G \text{ and }x\in A.
\end{equation}   

In particular, $(\sigma\otimes \alpha)_{\delta(g)} (y_{s})= \overline{\eta(g)}y_{ \delta(g)sg^{-1}}$ for all $s,g\in G$ and hence $\|y_s\|_2= \| y_{\delta(g)sg^{-1}}\|_2$, for all $s,g\in G$. Thus $y$ is supported on $s\in G$ such that the orbit $\{\delta(g)sg^{-1} \,:\, g\in G\}$ is finite. Next we claim that there is only one such orbit and that it consists of a singleton. 

 To this end, suppose there exist $s_1 \neq s_2$ such that both $y_{s_1}$ and $y_{s_2}$ are nonzero.  Then there exist finite-index subgroups $G_{s_1},G_{s_2}$ of $G$ such that $\delta(g) = s_1 g s_1^{-1}$ for all $g \in G_{s_1}$ and $\delta(g) = s_2 g s_2^{-1}$ for all $g \in G_{s_2}$.  In particular, $s_1 g s_1^{-1} = s_2 g s_2^{-1},$ for all $g \in G_{s_1} \cap G_{s_2}$.  Then $s = s_2^{-1}s_1$ is central in $G_{s_1} \cap G_{s_2}$.  But $G_{s_1} \cap G_{s_2}$ has finite index in $G$, which is i.c.c., a contradiction. 
 Therefore, $y_g = 0$ for all but one $g \in G$. 

In conclusion, we have that $y= au_s$ for some unitary $a\in A\overline \otimes B$. Moreover, as a consequence we also have that $\delta= {\rm Ad}(s)$. Therefore if we let $b= (\sigma\otimes \alpha)_s^{-1}(a)$ then we get $w= v b$, where $v = w_0 u_s\in A\rtimes_\sigma G$. 

Combining these with relation \eqref{intertwining5} we get that $\theta(x)u_{sg} b=\theta(x)u_{sgs^{-1}} au_s= \overline{\eta(g)} u_s  b xu_g$ and hence \begin{equation}\label{intertwining6a}\sigma_{s^{-1}}\circ \theta(x)u_g b=\overline{\eta(g)} b x u_g \text{ for all }x\in A,
 \ g\in G.\end{equation}
In particular, this implies  
\begin{equation*}
   \sigma_{s^{-1}}\circ \theta(x) = b x b^*,\text{ for all } x\in A.
\end{equation*}
Moreover, this combined with \eqref{intertwining6a} implies that \begin{equation*}
   b u_g b^* =\eta(g) u_g,\text{ for all }g \in G. 
\end{equation*}
The last two relations give the desired conclusion for $\zeta=\eta$.\end{proof}

\begin{thm}\label{tensoraction profinite} Let $G$ be an i.c.c. group with finite abelianization. Let $G \curvearrowright^\sigma A$ be a weak mixing  automorphism-rigid action and let $G\curvearrowright^\alpha B= \overline{\cup_n B_n}^{\rm SOT}$ be an ergodic, profinite action. Denote by $M=(A \overline \otimes B)\rtimes_{\sigma\otimes \alpha} G$ the crossed product von Neumann algebra corresponding to the canonical diagonal action $G \curvearrowright^{\sigma\otimes\alpha}  A\overline\otimes B$.  
    
\noindent     Then one can find $k \in \mathbb N$ and a $G$-invariant von Neumann subalgebra $B_0 \subseteq B_k$  such that $\N(A\rtimes_\sigma G \subseteq M)'' = (A\overline\otimes B_0) \rtimes_{\sigma\otimes\alpha} G$. 
    \end{thm}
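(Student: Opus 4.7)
The plan is to combine Theorem~\ref{normalizer2} with Corollary~\ref{profinite normalizer2}, using weak mixing of $\sigma$ as the glue. Fix a normalizer $w\in\N(A\rtimes_\sigma G\subseteq M)$ and set $Q=A\rtimes_\sigma G$. Because $\sigma$ is automorphism-rigid, Theorem~\ref{normalizer2} gives a decomposition $w=vb$ with $v\in Q$ and $b\in\mathcal U(A\overline{\otimes} B)$ such that $\theta_b:={\rm Ad}(b)$ restricts to an automorphism of $A$ and satisfies $\theta_b(u_g)=\zeta(g)u_g$ for some character $\zeta\in{\rm Char}(G)$. Rewriting $bu_gb^*=\zeta(g)u_g$ inside $A\overline{\otimes} B$ yields the eigenvector identity
\[(\sigma\otimes\alpha)_g(b)=\overline{\zeta(g)}\,b,\qquad g\in G.\]

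The next step is to use weak mixing of $\sigma$ to show that $b=1\otimes\beta$ for some unitary $\overline\zeta$-eigenvector $\beta\in B$, i.e.\ one with $\alpha_g(\beta)=\overline{\zeta(g)}\beta$. Under the identification $L^2(A\overline{\otimes} B)=L^2(A)\otimes L^2(B)$, decompose $L^2(B)=\bigoplus_n H_n$ with $H_n=L^2(B_n)\ominus L^2(B_{n-1})$ a finite-dimensional $G$-invariant subspace coming from the profinite tower for $\alpha$. On each summand $L^2(A)\otimes H_n$, finite-dimensionality of $H_n$ together with the standard reformulation of weak mixing of $\sigma$ (absence of nontrivial finite-dimensional $G$-invariant subspaces of $L^2(A)\ominus\mathbb C$) forces any $\overline\zeta$-eigenvector of $\sigma\otimes\alpha$ to have the form $1\otimes v$ with $v\in H_n$ satisfying $\alpha_g(v)=\overline{\zeta(g)}v$. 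Summing over $n$ places $b$ in $\mathbb C\cdot 1\otimes L^2(B)$, and since $b$ is unitary in $A\overline{\otimes} B$ the element $\beta\in B$ produced is itself a unitary eigenvector of $\alpha$ in $B$; this also independently confirms that $\theta_b|_A=\id_A$.

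The last step applies Corollary~\ref{profinite normalizer2} to the action $G\curvearrowright^\alpha B$. The direct computation $\beta u_g\beta^*=\zeta(g)u_g$ shows $\beta\in\N(L(G)\subseteq B\rtimes_\alpha G)$. Since $G$ is i.c.c.\ with finite abelianization and $\alpha$ is ergodic profinite, Corollary~\ref{profinite normalizer2} produces a fixed $k\in\mathbb N$ and a finite-dimensional, $G$-invariant subalgebra $B_0\subseteq B_k$ such that $\N(L(G)\subseteq B\rtimes_\alpha G)''=B_0\rtimes_\alpha G$; intersecting with $B$ gives $\beta\in(B_0\rtimes_\alpha G)\cap B=B_0$, and hence $w=v(1\otimes\beta)\in(A\overline{\otimes} B_0)\rtimes_{\sigma\otimes\alpha} G$. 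This yields $\N(Q\subseteq M)''\subseteq(A\overline{\otimes} B_0)\rtimes_{\sigma\otimes\alpha} G$. The reverse inclusion is routine, as $(A\overline{\otimes} B_0)\rtimes_{\sigma\otimes\alpha}G$ is generated by $Q$ together with $\{1\otimes\beta:\beta\in\mathcal U(B_0)\text{ a unitary eigenvector of }\alpha\}$, and each such $1\otimes\beta$ normalizes $Q$. The principal obstacle in this plan is the weak-mixing step isolating $b$ inside $1\otimes B$; once that is secured, the remainder of the argument is essentially an assembly of the two previously established theorems together with the identification $B\cap(B_0\rtimes_\alpha G)=B_0$.
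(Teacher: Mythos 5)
Your proof is correct, and the middle step is handled by a genuinely different mechanism than the paper's. Both arguments begin identically, invoking Theorem~\ref{normalizer2} to write $w=vb$ with $b\in\mathcal U(A\ovt B)$ satisfying $bu_gb^*=\zeta(g)u_g$, equivalently $(\sigma\otimes\alpha)_g(b)=\overline{\zeta(g)}\,b$. To locate $b$ inside $1\otimes B$, the paper works at the level of the crossed product: it sets $z=b-E_{B\rtimes_\alpha G}(b)$, shows $u_gz=\zeta(g)zu_g$, and kills $z$ using the WAHP-type sequence $(g_i)$ for $B\rtimes_\alpha G\subseteq M$ furnished by weak mixing of $\sigma$. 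You instead work spectrally in $L^2(A)\otimes L^2(B)$, cutting by the finite-dimensional $G$-invariant blocks $H_n=L^2(B_n)\ominus L^2(B_{n-1})$ of the profinite tower and observing that the $A$-components of an eigenvector in $L^2(A)\otimes H_n$ span a finite-dimensional $\sigma$-invariant subspace, which weak mixing forces into $\mathbb C\Omega_A$; this is a clean and complete substitute (the matrix-coefficient verification you leave implicit is routine). For the endgame, the paper uses finite abelianization plus Lemma~\ref{finite index} to place $b$ in $B_k$ and then cites the intermediate-subalgebra theorem of \cite{ChDa} to extract $B_0$, whereas you route through Corollary~\ref{profinite normalizer2} applied to $G\curvearrowright^\alpha B$ (which internally uses the same two ingredients); your route has the advantage of identifying $B_0$ concretely as the algebra generated by the unitary $\alpha$-eigenvectors of $B$, and of making the reverse inclusion explicit, which the paper leaves to the citation. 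The only place I would ask you to add a sentence is in that reverse inclusion: you need that $B_0$ is generated by its unitary eigenvectors, which does follow from the description of $\N(L(G)\subseteq B\rtimes_\alpha G)$ in Corollaries~\ref{profinite normalizer1} and~\ref{profinite normalizer2} (every normalizer there is $av$ with $a$ an eigenvector unitary and $v\in L(G)$, and the $G$-invariant algebra they generate must therefore be all of $B_0$), but it is not literally the statement you quote.
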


\begin{proof} Since $G\curvearrowright^{\sigma} A $ is weak mixing, the inclusion $B\rtimes_\alpha G \subseteq (A \overline\otimes B)\rtimes_{\sigma\otimes\alpha}  G$ satisfies the following form of the  WAHP: there exists an infinite sequence $(g_i)_i\subseteq G$ such that for every $z,t \in M\ominus (B\rtimes_\alpha G)$ we have \begin{equation}\label{wahp}
    \lim_i \| E_{B\rtimes_\alpha G}(zu_{g_i}t)\|_2
= 0.\end{equation} 
Now fix $w\in \N(A\rtimes_\sigma G \subseteq M)$. By Theorem \ref{normalizer2} one can find unitaries $b\in A\overline\otimes B$ and $v\in A\rtimes_\sigma G$ such that $w=vb$. Moreover, we have $\theta_b(u_g)=\zeta(g) u_g$ for all $g\in G$ which is equivalent to \begin{equation}\label{intertwining6}
    b u_g =\zeta(g)u_g b, \text{ for all } g\in G.
\end{equation}
Now consider $z:= b-E_{B\rtimes_\alpha G}(b)= b-E_B(b)\in M\ominus B\rtimes_\alpha G$. Thus relation \eqref{intertwining6} implies that $E_{B\rtimes_\alpha G}(b) u_g= \zeta(g)u_g E_{B\rtimes_\alpha G}(b)$ and hence, subtracting we get $zu_g= \zeta(g)u_gz$, for all $g\in G$. Using this relation we see that for all $g\in G$  we have 
\begin{equation}\begin{split}
\|E_{B\rtimes_\alpha G}(zz^*)\|_2&= \| u_g E_{B\rtimes_\alpha G}(zz^*)\|_2=\|E_{B\rtimes_\alpha G}( u_g zz^*)\|_2\\
&= \|E_{B\rtimes_\alpha G}(\overline{\zeta(g)} z  u_g z^*)\|_2=\|E_{B\rtimes_\alpha G}(z  u_g z^*)\|_2. \end{split}
\end{equation}
\vskip 0.04in
\noindent  Applying this for the sequence $(g_i)_i\subseteq G$ as in relation \eqref{wahp} we get that \[\|E_{B\rtimes_\alpha G}(zz^*)\|_2= \lim_i \|E_{B\rtimes_\alpha G}(z  u_{g_i} z^*)\|_2=0. \] Therefore $z=0$ and hence $b\in B$.
\vskip 0.07in
\noindent      Relation \eqref{intertwining6} implies that $\alpha_g(b)= \overline{\zeta(g)} b$ for all $g\in G$. Moreover,
$G/[G,G]$ is finite, and hence $G$ has a finite-index normal subgroup $G_0$ such that $G_0 \subseteq {\rm ker}\, \omega$ for all $\omega \in {\rm{Char}}(G)$.  In particular, this holds for $\bar \zeta$, and we have $\alpha_h(b)=b$ for all $h \in G_0$.  Using the finite index condition and Lemma \ref{finite index}, since $b$ is fixed by $\alpha$, we have that $b \in B_k$ for some $k \in \Nat$ (depending only on $G_0$).  As $b $ was an arbitrary normalizer, we conclude that $\N(L(G) \subseteq M)''$ is contained in $B_k\rtimes_\alpha G$.  It now follows from  \cite[Theorem 3.10]{ChDa} that $\N(L(G) \subseteq M)'' = B_0 \rtimes_\alpha G$ for some finite-dimensional von Neumann subalgebra $B_0$ of $B_k$. \end{proof}

\section{Quasinormalizers in Crossed Products and Compactness} \label{section:quasinormalizers}

\noindent This section contains our main results on quasinormalizers for crossed product inclusions.  The first subsection considers the situation of a general $W^*$-dynamical system $(M,G,\alpha,\rho)$, where $\rho$ is a faithful normal state.  In this setting, we characterize  quasinormalizers of the associated inclusion $L(G) \subseteq M \rtimes_\alpha G$ in terms of the Kronecker subalgebra,  which arises from the maximal compact subsystem of the dynamical system.  Stronger results are obtained in \S\ref{compact extensions}, in which we study inclusions of the form $N \rtimes_\alpha G \subseteq M \rtimes_\alpha G$ associated to certain trace-preserving $W^*$-dynamical extension systems $(N \subseteq M, G, \alpha, \tau)$.   In this case, the quasinormalizer is described in terms of the relatively almost periodic elements of $M$ which, under suitable regularity conditions on the inclusion $N \subseteq M$, form a von Neumann algebra which generalizes the Kronecker subalgebra.  Subsection \ref{FZ} presents an application of these structural results to noncommutative dynamics. In particular, a version of the Furstenberg-Zimmer distal tower for general $W^*$-extension systems, \cite{Fu77,Zi76}, is presented in terms of iterated quasinormalizers.
  
\subsection{Quasinormalizers and maximal compactness} The main result of this subsection relates the compact subsystems of a $W^*$-dynamical system $\mathfrak{M}=(M,G,\alpha,\rho)$ to the quasinormalizers of $L(G)$ in the associated crossed product where $G$ is a discrete group $($could be uncountable$)$.  The recent paper \cite{BCM2} analyzed a compact, ergodic system in terms of finite-dimensional invariant subspaces for the associated Koopman representation.  Crucially, it was shown in \cite{BCM1} that any such subspace comes from the von Neumann algebra itself, in fact, from the centralizer $M^\rho \subseteq M$ of the state. 

In \cite{BCM2}, the \emph{Kronecker subalgebra} $M_K \subseteq M$ was defined to be the von Neumann algebra generated by the finite-dimensional subspaces of $M$ that are invariant under $\alpha$.  It was observed that $M_K$ is injective and tracial (under ergodicity and seperability of $M$), and has the following properties:
\medskip
\begin{itemize}
    \item[(i)]$M_K$ is globally invariant under $\alpha$.
    \medskip
    \item[(ii)] The restriction of $\alpha$ to $M_K$ defines a compact subsystem of $\mathfrak{M}$.
    \medskip
    \item[(iii)]  $M_K$ is maximal with respect to properties (i) and (ii), in the sense that $M_K$ contains every $x \in M$ whose orbit under $\alpha$ is  $\norm{\cdot}_2$-precompact. 
\end{itemize}

For a proof of the above when $M$ is $\sigma$-finite with a prescribed faithful normal state $\rho$, use Lemmas \ref{Invariant Subspaces Lemma}, \ref{Kronecker Subalgebra Lemma} and Remark \ref{notfinite}.

The crossed product $M_K \rtimes_\alpha G$ associated to this system is a finite von Neumann algebra, and we consider the inclusion $L(G) \subseteq M_K \rtimes_\alpha G \subseteq M \rtimes_\alpha G$, which provides the key connection between quasinormalizers and dynamics in our main result, in which we compute the von Neumann algebra generated by the one-sided quasinormalizers of $L(G)$.  

\begin{thm} \label{QuasinormalizerCompactSystemTheorem} Let $(M,G,\alpha,\rho)$ be an ergodic $W^*$-dynamical system.  Then 
\begin{equation}\label{main equality}
    vN(\mathcal{QN}^{(1)}(L(G)\subseteq M\rtimes_{\alpha}G))=M_K \rtimes_{\alpha}G.
\end{equation}
\end{thm}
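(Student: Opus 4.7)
My plan is to prove each containment in turn. The containment $M_K \rtimes_\alpha G \subseteq vN(\mathcal{QN}^{(1)}(L(G) \subseteq M \rtimes_\alpha G))$ is the easier direction: any $x$ in a finite-dimensional $\alpha$-invariant subspace $V \subseteq M$ with basis $v_1, \ldots, v_n$ satisfies $u_g x = \alpha_g(x) u_g \in V u_g \subseteq \sum_j v_j L(G)$ for every $g \in G$, so $L(G) x \subseteq \sum_j v_j L(G)$ and $x \in \mathcal{QN}^{(1)}(L(G) \subseteq M \rtimes_\alpha G)$. Since $L(G)$ also belongs to $vN(\mathcal{QN}^{(1)})$, together they generate $M_K \rtimes_\alpha G$.

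For the reverse inclusion, I would fix $x \in \mathcal{QN}^{(1)}(L(G) \subseteq M \rtimes_\alpha G)$ with Fourier expansion $x = \sum_g x_g u_g$ and aim to show every coefficient $x_g$ lies in $M_K$. By the maximality property (iii) of $M_K$ recalled before the statement, this reduces to showing the $\alpha$-orbit $\{\alpha_h(x_g) : h \in G\}$ is precompact in $\|\cdot\|_2$-norm on $L^2(M, \rho)$. The structural object to analyze is the $L(G)$-bimodule $\mathcal{H} := \overline{L(G) x L(G)}^{\|\cdot\|_2} \subseteq L^2(M \rtimes_\alpha G, \widehat{\rho})$. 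The quasinormalizer condition $L(G) x \subseteq \sum_i y_i L(G)$ yields $\mathcal{H} \subseteq \overline{\sum_i y_i L(G)}$, so $\mathcal{H}$ is a submodule of a finitely generated right $L(G)$-module, and hence is itself finitely generated. The module theory of Section \ref{section:modules} then produces a finite $L(G)$-orthonormal basis $\eta_1, \ldots, \eta_n$ of left-bounded vectors, together with a Parseval expansion $\zeta = \sum_i \eta_i \, E_{L(G)}(\eta_i^* \zeta)$ for every $\zeta \in \mathcal{H}$.

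The key observation is that $u_h x u_{hg}^* \in L(G) x L(G) \subseteq \mathcal{H}$ for every $h, g \in G$, and a short Fourier computation gives $E_M(u_h x u_{hg}^*) = \alpha_h(x_g)$, where $E_M$ denotes the canonical $\widehat{\rho}$-preserving conditional expectation from $M \rtimes_\alpha G$ onto $M$. Applying the Parseval expansion for $\zeta = u_h x u_{hg}^*$ and then $E_M$ gives
\[
\alpha_h(x_g) = \sum_{i=1}^n T_i\bigl(E_{L(G)}(\eta_i^* u_h x u_{hg}^*)\bigr),
\]
where $T_i: L^2(L(G)) \to L^2(M)$ is the bounded linear map $T_i(w) = E_M(\eta_i w)$. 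Writing $\eta_i$ as a function $g \mapsto \eta_i(g) \in L^2(M)$ as in the preliminaries, one computes $T_i(u_k) = \eta_i(k^{-1})$, so $\|T_i\|_{\mathrm{HS}}^2 = \sum_k \|\eta_i(k^{-1})\|_2^2 = \|\eta_i\|_2^2 < \infty$; in particular $T_i$ is compact. Since $\rho$ is $G$-invariant, both $u_h$ and $u_{hg}$ lie in the centralizer of $\widehat{\rho}$, so left and right multiplication by them are $\|\cdot\|_2$-isometries, and Cauchy--Schwarz for the $L(G)$-valued inner product bounds $\|E_{L(G)}(\eta_i^* u_h x u_{hg}^*)\|_2 \leq \|L_{\eta_i}\| \cdot \|x\|_2$ uniformly in $h$. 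Consequently, $\{\alpha_h(x_g) : h \in G\}$ lies in a finite sum of compact-operator images of a bounded set, hence is precompact, forcing $x_g \in M_K$ and therefore $x \in M_K \rtimes_\alpha G$.

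The principal obstacle I anticipate is that the finitely-generated-module framework of Section \ref{section:modules} is stated for tracial von Neumann algebras, whereas $M \rtimes_\alpha G$ is only $\sigma$-finite under the present hypotheses. This should be navigable because $L(G)$ itself is a tracial von Neumann subalgebra of $M \rtimes_\alpha G$ (it lies in the centralizer of $\widehat{\rho}$), so the $L(G)$-valued inner product $\langle \xi, \eta \rangle = E_{L(G)}(\xi^* \eta)$ on left-bounded vectors of $L^2(M \rtimes_\alpha G, \widehat{\rho})$ is well-defined, and the Gram--Schmidt and Parseval calculations over closed right $L(G)$-submodules proceed as in the tracial ambient setting.
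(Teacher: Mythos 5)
Your argument is essentially the paper's own proof: the same easy containment via finite-dimensional invariant subspaces, and the same hard direction via the finitely generated right $L(G)$-module $\overline{L(G)xL(G)}$, its orthonormal basis of left-bounded vectors, and the square-summability of the functions $g \mapsto \eta_i(g)$. Your packaging of the last step as ``$T_i(w)=E_M(\eta_i w)$ is Hilbert--Schmidt, hence the orbit lies in a finite sum of compact images of bounded sets'' is a slightly slicker reformulation of the paper's explicit construction of finite-dimensional subspaces $K_{\eps,s}$ that $\eps$-approximate the orbit, but it is the same idea and your uniform bound on $\|E_{L(G)}(\eta_i^* u_h x u_{hg}^*)\|_2$ (using that $L(G)$ lies in the centralizer of $\widehat\rho$) plays the role of the paper's bound on $\|\langle \eta_j,\xi_h\rangle\|_2$. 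Your closing remark about working with right $L(G)$-modules inside the merely $\sigma$-finite algebra $M\rtimes_\alpha G$ is also the correct resolution of that issue.

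The one step you elide is in the easy containment: from $u_g x \in \sum_j v_j L(G)$ for all $g\in G$ you conclude $L(G)x \subseteq \sum_j v_j L(G)$, but a general element of $L(G)$ is only a $w^*$-limit of finite linear combinations of the $u_g$, so you need the algebraic module $\sum_j v_j L(G)$ to be $w^*$-closed in $M\rtimes_\alpha G$ before you can pass to the limit. This is true, but it is not automatic; the paper isolates it as a separate lemma, proved by a Krein--Smulyan argument (intersecting with bounded balls, using Gram--Schmidt to write $y=\sum_j v_j E_{L(G)}(v_j^* y)$ and extracting $w^*$-convergent subnets of the coefficients). You should either cite or reproduce that fact; without it the verification that elements of $M_K$ are one-sided quasinormalizers is incomplete.
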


\noindent The remainder of this section will comprise the proof of Theorem \ref{QuasinormalizerCompactSystemTheorem} and its immediate corollaries.  Accordingly, we let $\mathfrak{M}=(M,G,\alpha,\rho)$ be a fixed ergodic $W^*$-dynamical system, and consider the inclusion $L(G) \subseteq M_K \rtimes_\alpha G \subseteq M \rtimes_\alpha G$.  We first prove that $M_K \rtimes_\alpha G \subseteq vN(\mathcal{QN}^{(1)}(L(G) \subseteq M\rtimes_{\alpha}G))$.  To do this, we will need the following lemma, which shows that certain finite-dimensional $L(G)$ modules arising in $M \rtimes_\alpha G$ are automatically $w^*$-closed.  

\begin{lem} \label{weakly closed modules}  For any finite subset $\sett{x_1,\ldots , x_n}$ of $M$, the module $\sum_j x_j L(G)$ is $w^*$-closed in $M \rtimes_\alpha G$.  
\end{lem}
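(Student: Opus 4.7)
The plan is to compute the $\|\cdot\|_2$-closure of $\sum_j x_j L(G)\cdot \Omega_{\widehat{\rho}}$ explicitly, using the convolution description of the right $L(G)$-action on $L^2(M \rtimes_\alpha G, \widehat{\rho})$ recorded in Section \ref{section:modules}, and then extract Fourier coefficients of any element of the $w^*$-closure to recover a decomposition $z = \sum_i x_i y_i$ with $y_i \in L(G)$. After discarding redundant generators I may assume $\{x_1, \ldots, x_n\} \subseteq M$ is $\mathbb{C}$-linearly independent. Under the identification $L^2(M\rtimes_\alpha G, \widehat{\rho}) \cong \ell^2(G) \otimes L^2(M, \rho)$ with $x_j u_g \Omega_{\widehat{\rho}} \leftrightarrow \delta_g \otimes x_j\Omega_\rho$, each $x_j L(G)\Omega_{\widehat{\rho}}$ is $\|\cdot\|_2$-dense in $\ell^2(G) \otimes \mathbb{C}\, x_j\Omega_\rho$, since $L(G)\Omega$ is dense in $L^2(L(G))$ and $x_j$ acts boundedly. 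Consequently
\[
\overline{\textstyle\sum_j x_j L(G) \Omega_{\widehat{\rho}}}^{\,\|\cdot\|_2} \;=\; \ell^2(G) \otimes V, \qquad V := \mathrm{span}_{\mathbb{C}}\{x_1\Omega_\rho, \ldots, x_n\Omega_\rho\},
\]
which is manifestly a closed subspace of $L^2(M \rtimes_\alpha G, \widehat{\rho})$.

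Next, since $\sum_j x_j L(G)$ is a linear subspace of $M \rtimes_\alpha G$, its $w^*$-closure coincides with its SOT-closure. For $z$ in that closure, any SOT-approximating net $z_\lambda \in \sum_j x_j L(G)$ satisfies $z_\lambda \Omega_{\widehat{\rho}} \to z\Omega_{\widehat{\rho}}$ in $\|\cdot\|_2$, so $z\Omega_{\widehat{\rho}} \in \ell^2(G) \otimes V$. Writing the Fourier expansion $z = \sum_g z_g u_g$ with $z_g := E_M(z u_{g^{-1}}) \in M$, the containment forces $z_g \Omega_\rho \in V$ for every $g$; faithfulness of $\rho$ together with linear independence of the $x_i$ then yields unique scalars $c_{g,i} \in \mathbb{C}$ satisfying $z_g = \sum_i c_{g,i}\, x_i$.

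It remains to assemble the coefficients into bona fide elements of $L(G)$. Linear independence of $x_1\Omega_\rho, \ldots, x_n\Omega_\rho$ in $L^2(M,\rho)$ makes their Gram matrix positive definite, yielding a constant $C > 0$ (independent of $g$) with $\|z_g\|_2^2 \geqslant C \sum_i |c_{g,i}|^2$. Summing over $g$ and invoking Parseval in the form $\sum_g \|z_g\|_2^2 = \|z\Omega_{\widehat{\rho}}\|_2^2 < \infty$, I conclude $(c_{g,i})_g \in \ell^2(G)$ for each $i$, so $y_i := \sum_g c_{g,i} u_g$ defines an element of $L(G)$. A direct calculation gives $(\sum_i x_i y_i)\Omega_{\widehat{\rho}} = z\Omega_{\widehat{\rho}}$, after which separatingness of $\Omega_{\widehat{\rho}}$ for $M \rtimes_\alpha G$ forces $z = \sum_i x_i y_i \in \sum_j x_j L(G)$. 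The principal technical point I foresee is really just the careful passage between topologies --- ensuring that $w^*$-closure genuinely reduces, through SOT and then through the action on the cyclic vector, to a clean $\|\cdot\|_2$-statement on which the Fourier-coefficient computation can be carried out; the remaining steps are essentially algebraic bookkeeping.
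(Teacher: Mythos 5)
Your reduction to the $\|\cdot\|_2$-picture is fine up to the last step, but that last step contains a genuine gap: square-summability of the coefficients $(c_{g,i})_{g\in G}$ only produces a vector in $\ell^2(G)\cong L^2(L(G))$, not an element of the von Neumann algebra $L(G)$. The formal sum $y_i=\sum_g c_{g,i}u_g$ is in general an unbounded operator affiliated to $L(G)$ (already for $G=\mathbb{Z}$, where $L(\mathbb{Z})\cong L^\infty(\mathbb{T})\subsetneq L^2(\mathbb{T})\cong\ell^2(\mathbb{Z})$), so your identity $z\Omega_{\widehat\rho}=\sum_i x_i y_i\Omega_{\widehat\rho}$ is only an equality of vectors in $L^2$ and does not place $z$ in the module $\sum_j x_j L(G)$, whose elements require \emph{bounded} right coefficients. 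Nothing in the Gram-matrix/Parseval estimate rules out that the $y_i$ fail to be bounded, so the proof as written does not close; the appeal to separatingness of $\Omega_{\widehat\rho}$ at the end presupposes that $\sum_i x_iy_i$ is itself an element of $M\rtimes_\alpha G$, which is exactly what is in question.

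The gap is repairable, and the repair shows why the paper's proof is organized the way it is. If you first orthonormalize the $x_i$ with respect to $\rho$ (Gram--Schmidt in $L^2(M,\rho)$, which does not change the module), then your coefficients are exactly $y_i=E_{L(G)}(x_i^*z)$, where $E_{L(G)}$ is the $\widehat\rho$-preserving conditional expectation; these lie in $L(G)$ because $\|E_{L(G)}(x_i^*z)\|\leqslant\|x_i\|\,\|z\|<\infty$, and comparing Fourier coefficients then gives $z=\sum_i x_iE_{L(G)}(x_i^*z)\in\sum_j x_jL(G)$. The paper exploits the same boundedness but packages it differently: by Krein--Smulyan it suffices to treat a uniformly bounded net $y_\lambda=\sum_j x_jE_{L(G)}(x_j^*y_\lambda)$, whose coefficients are uniformly norm-bounded in $L(G)$ and hence admit $w^*$-convergent subnets with limits in $L(G)$. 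Either way, the point your argument misses is that the coefficients must be produced by a map that is bounded into $L(G)$ in operator norm, not merely in $\|\cdot\|_2$.
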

\begin{proof} Applying the Gram-Schmidt procedure from Section \ref{section:modules} in $L^2(M,\rho)$ to the $x_j$, $1 \leqslant j \leqslant n$, we may assume they are mutually orthogonal and have $\norm{x_j}_2 = 1$.  By the Krein-Smulian theorem, we need only show that the intersection of $\sum_j x_j L(G)$ with any closed ball of finite radius is $w^*$-closed.  
\vskip 0.06in 
\noindent Let $(y_\lambda)$ be a uniformly bounded net in $\sum_j x_j L(G)$ such that $y_\lambda$ converges in the $w^*$-topology to $y \in M \rtimes_\alpha G$.  We may then write each $y_\lambda$ as a sum
\[ y_\lambda = \sum_j x_j E_{L(G)}(x_j^* y_\lambda).\]
If $K>0$ is such that $\norm{y_\lambda} \leqslant K$, then for any $\lambda$ and $1 \leqslant j \leqslant n$, we have 
\[ \norm{E_{L(G)}(x_j^* y_\lambda)} \leqslant K \max \norm{x_j^*}.\]
By $w^*$-compactness, we may then drop to a subnet of $(y_\lambda)$ so that for each $1 \leqslant j \leqslant n$ there is some $z_j \in L(G)$ such that $E_{L(G)}(x_j^* y_\lambda)$ converges to $z_j$ in the $w^*$-topology.  Then  $y_\lambda$ converges to $\sum_j x_j z_j,$ an element of $\sum x_j L(G)$.  Thus, the module $\sum x_j L(G)$ is $w^*$-closed. \end{proof}
\noindent Now fix $s \in G$ and suppose $x \in M$ has finite-dimensional orbit under $\alpha$, i.e., there exist $x_1, \ldots , x_n \in M$ such that  ${\rm Orb}(x) = \sett{\alpha_g(x): g \in G} \subseteq {\rm span}\sett{x_1, \ldots x_n}.$
Then, for any $h \in G$, 
\[ u_h x u_s = \alpha_h(x)u_h u_s \in \sum_i x_i L(G).\]
It follows by Lemma \ref{weakly closed modules} that $L(G) xu_s \subseteq \sum_j x_j L(G).$  A symmetric argument shows that $x u_s L(G) \subseteq \sum_j L(G) x_j$, that is, $x u_s$ is a quasinormalizer of $L(G)$.  The von Neumann algebra generated by elements of this form is precisely $M_K \rtimes_\alpha G$, so we conclude that \[M_K \rtimes_\alpha G \subseteq \mathcal{QN}(L(G) \subseteq M \rtimes_\alpha G)''.\]  
This completes the proof of one inclusion in the statement of Theorem \ref{QuasinormalizerCompactSystemTheorem}. The main observation for the opposite inclusion is the following lemma, which implies that the Fourier coefficients of a one-sided quasinormalizer $x = \sum_g x_g u_g  \in M \rtimes_\alpha G$ must have compact orbits under the group action.  

\begin{lem} \label{almost finite dimensional orbit}
Let $0\neq x=\sum_{s\in G}x_{s}u_s\in M\rtimes_{\alpha}G$ be a one-sided quasinormalizer of $L(G)$. Then for each $s\in G$ with $x_{s}\neq 0$ and $\eps>0$ there is a finite-dimensional subspace $K_{\eps,s}$ of $L^2(M,\rho)$ such that for all $z\in \text{Orb}(x_{s})$ we have
\begin{align}
    {\rm dist}(z,K_{\eps,s})=\inf_{m \in K_{\eps,s}}\|z-m\|_{2}<\eps.
\end{align}
\end{lem}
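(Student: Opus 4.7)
The plan is to realize the $L(G)$-orbit of $x$ inside a finitely generated right $L(G)$-module in $L^2(M\rtimes_\alpha G,\widehat\rho)$, expand it in an $L(G)$-orthonormal basis, and then extract $\alpha_g(x_s)$ as a Fourier coefficient using the identification $L^2(M\rtimes_\alpha G)\cong\ell^2(G)\otimes L^2(M,\rho)$ and the convolution formula \eqref{convformula}.

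First, use the one-sided quasinormalizer condition to fix $y_1,\ldots,y_n\in M\rtimes_\alpha G$ with $L(G)x\subseteq\sum_i y_iL(G)$, and let $\mathcal H\subseteq L^2(M\rtimes_\alpha G,\widehat\rho)$ be the $\|\cdot\|_2$-closure of $\sum_i y_iL(G)$. Via Gram--Schmidt with respect to the $L(G)$-valued inner product from Section~\ref{section:modules}, produce $y_1',\ldots,y_m'\in\mathcal H$ satisfying $\langle y_i',y_j'\rangle_{L(G)}=\delta_{ij}p_j$ for projections $p_j\in L(G)$. Since $u_{s^{-1}}\in L(G)$, for every $g\in G$ the element $u_gxu_{s^{-1}}$ lies in $\sum_i y_iL(G)\subseteq\mathcal H$, so Bessel's inequality supplies an expansion
\[
u_gxu_{s^{-1}}=\sum_{i=1}^m y_i'\,\eta_i^{(g,s)},\qquad \sum_i\|\eta_i^{(g,s)}\|_2^2\leqslant\|u_gxu_{s^{-1}}\|_2^2=\|x\|_2^2,
\]
with $\eta_i^{(g,s)}\in L^2(L(G))$; the last equality uses that $u_g$ and $u_{s^{-1}}$ lie in the centralizer of $\widehat\rho$ and therefore act isometrically on $L^2$.

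Next, apply the identification $L^2(M\rtimes_\alpha G)\cong\ell^2(G)\otimes L^2(M,\rho)$ and evaluate both sides of the above expansion at $h=e\in G$. The left hand side becomes $\alpha_g(x_s)$, while the right hand side, computed via the convolution formula, becomes $\sum_{i=1}^m\sum_{k\in G}\mu_{k^{-1}}^{(g,s,i)}\,y_i'(k)$, where $\mu_h^{(g,s,i)}$ are the scalar Fourier coefficients of $\eta_i^{(g,s)}$ and $y_i'(k)\in L^2(M,\rho)$ those of $y_i'$. Given $\varepsilon>0$, since $\sum_k\|y_i'(k)\|_2^2=\|y_i'\|_2^2\leqslant 1$, pick finite $F_i\subseteq G$ with $\sum_{k\notin F_i}\|y_i'(k)\|_2^2<\varepsilon^2/(m\|x\|_2)^2$. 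Cauchy--Schwarz applied term-by-term, together with the uniform bound $\|\eta_i^{(g,s)}\|_2\leqslant\|x\|_2$, then shows that the tail $\sum_i\sum_{k\notin F_i}\mu_{k^{-1}}^{(g,s,i)}y_i'(k)$ has $\|\cdot\|_2$-norm less than $\varepsilon$, uniformly in $g$. Hence the finite-dimensional subspace
\[
K_{\varepsilon,s}:=\mathrm{span}\{\,y_i'(k)\;:\;1\leqslant i\leqslant m,\ k\in F_i\,\}\subseteq L^2(M,\rho)
\]
is within $\varepsilon$ of every $\alpha_g(x_s)$, as required.

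The main obstacle is securing the uniform $L^2$-bound on the coefficients $\eta_i^{(g,s)}$ across all $g$ and $s$, without which the Cauchy--Schwarz tail estimate cannot be made uniform; this rests on the $L^2$-isometry of left/right multiplication by $u_g$ and $u_{s^{-1}}$, which in turn is a direct consequence of $G$-invariance of $\rho$ placing $L(G)$ inside the centralizer of $\widehat\rho$. A minor secondary subtlety is that after the $L(G)$-valued Gram--Schmidt the basis vectors $y_i'$ generally sit only in $L^2(M\rtimes_\alpha G)$ rather than inside the algebra itself, but this is harmless since the whole argument is purely $L^2$-theoretic.
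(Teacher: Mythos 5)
Your overall strategy coincides with the paper's: realize the $L(G)$-orbit of $x$ inside a finitely generated right $L(G)$-module, expand in an $L(G)$-orthonormal basis, obtain a bound on the coefficient norms that is uniform in the group variable, and cut off the tail with Cauchy--Schwarz. The two genuine (and harmless) variations are that you get the uniform bound from Parseval/Bessel for the $L(G)$-valued inner product rather than from the left-boundedness constants $\norm{L_{\eta_j}}$ as the paper does, and that you try to shift the target Fourier coefficient to the identity by right multiplication, whereas the paper reads it off at position $hs$ directly.

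There is, however, a concrete error in the coefficient extraction, and as written the argument proves the wrong statement. Writing $u_g x u_{s^{-1}}=\sum_{t}\alpha_g(x_t)u_{gts^{-1}}$, the coefficient at the identity corresponds to $gts^{-1}=e$, i.e.\ $t=g^{-1}s$, so evaluating your expansion at $h=e$ yields $\alpha_g(x_{g^{-1}s})$, not $\alpha_g(x_s)$. Consequently the subspace $K_{\eps,s}$ you construct is only shown to approximate the set $\{\alpha_g(x_{g^{-1}s}):g\in G\}$, which is not $\mathrm{Orb}(x_s)$. The repair is immediate and disturbs nothing else: replace $u_{s^{-1}}$ by $u_{(gs)^{-1}}=u_{s^{-1}g^{-1}}$ (still a unitary in $L(G)$, so membership in $\sum_i y_iL(G)$ and the identity $\norm{u_gxu_{(gs)^{-1}}}_2=\norm{x}_2$ persist, and now $gts^{-1}g^{-1}=e$ forces $t=s$), or equivalently keep $u_{s^{-1}}$ and evaluate the convolution at $h=g$ instead of $h=e$. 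In either case the tail estimate over the same finite sets $F_i$ goes through unchanged, since $\bigl(\sum_{k}\abs{\mu^{(g,s,i)}_{k^{-1}h}}^2\bigr)^{1/2}\leqslant\norm{\eta_i^{(g,s)}}_2\leqslant\norm{x}_2$ independently of $h$. With that one correction your proof is complete and is essentially the paper's.
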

\begin{proof} Let $0\neq x=\sum_{s\in G}x_{s}u_s\in M\rtimes_{\alpha}G$ be a one-sided quasinormalizer of $L(G)$.  Then the $L(G)$-module $\mathcal{H} = \overline{L(G) x L(G)}^{\norm{\cdot}_2} \subseteq L^2(M \rtimes_\alpha G)$ is finitely generated. Denote by $\ip{\cdot}{\cdot}$ the $L(G)$-valued inner product. Then $\Hil$ admits a finite orthonormal basis of left-bounded vectors $\sett{\eta_1, \ldots , \eta_n}$, so that any left-bounded vector $\eta \in \Hil$ may be expressed as a combination
\[ \eta = \sum_{j=1}^n \eta_j \ip{\eta_j}{\eta}, \quad \eta \in \Hil.\]
Let $h \in G.$  Then the vector $\xi_h = u_h x \Omega \in \Hil$ is left-bounded, so by the above formula,
\[ \xi_h = \sum_{j=1}^n \eta_j \ip{\eta_j}{\xi_h}.\]
   For $j = 1,  \ldots, n$ we may express $\eta_j$ and $\ip{\eta_j}{\xi_h}$ as functions on $G$, valued (respectively) in $L^2(M)$ and $\mathbb{C}$, and write
\[ \eta_j = (\eta_j(g))_{g \in G} \quad \text{and} \quad \ip{\eta_j}{\xi_h} = (\lambda_{j,t}^{(h)})_{t \in G},\]
where $\sum_g \norm{\eta_j(g)}_2^2 < \infty$ and $\sum_t \abs{\lambda_{j,t}^{(h)}}^2 < \infty.$  Note that, for $1\leqslant j\leqslant n$ and any $h \in G$, we have 
\[(\sum_t \abs{\lambda_{j,t}^{(h)}}^2)^{1/2} = \norm{\ip{\eta_j}{\xi_h}}_2 \leqslant \norm{L_{\eta_j}} \norm{x}_2 \leqslant C \norm{x},\]
where $C = \max_j \norm{L_{\eta_j}}$.
\vskip 0.06in 
\noindent 
Each product $\eta_j \ip{\eta_j}{\xi_h}$, $1\leqslant j\leqslant n$, then defines an element of $\Hil$ by the convolution formula (see \eqref{convformula}) 
\[ (\eta_j \ip{\eta_j}{\xi_h})(k) = \sum_{g \in G} \lambda_{j,\,{g^{-1}k}}^{(h)} \eta_j(g), \quad k \in G.\]
\vskip 0.06in 
\noindent It follows that for each $s \in G$, the sum 
$\sum_{j=1}^n (\eta_j \ip{\eta_j}{\xi_h})(hs)$
picks out the $hs$-coefficient in the Fourier series of $u_h x = \sum_{g \in G} u_h x_g u_g = \sum_{g \in G} \alpha_h(x_g) u_{hg}$. That is, for each $s \in G$ we have 
\[ \alpha_h(x_s)\Omega = \sum_{j=1}^n \sum_{g \in G} \lambda_{j,\,{g^{-1}hs}}^{(h)} \eta_j(g).\]
Given $\eps>0$, choose a finite subset $F \subseteq G$ so large that 
\begin{align} \sum_{j=1}^n \sum_{g \in G\setminus F} \norm{\eta_j(g)}_2^2 < \frac{\eps}{n\,C\,\norm{x}}.\end{align}
Then, using the Cauchy-Schwarz inequality,
\begin{align*} 
\norm{\alpha_h(x_s)\Omega-\sum_{j=1}^n \sum_{g \in F} \lambda_{j,\,{g^{-1}hs}}^{(h)} \eta_j(g)}_2
&=\norm{\sum_{j=1}^n \sum_{g \in G\setminus F} \lambda_{j,\,g^{-1}hs}^{(h)} \eta_j(g)}_2 \\
&\leqslant \sum_{j=1}^n \left(\sum_{g \in G \setminus F} \norm{\eta_j(g)}_2^2 \right)^{1/2} \left(\sum_{g \in G\setminus F} \abs{ \lambda_{j,\,g^{-1}hs}^{(h)}}^2 \right)^{1/2} \\
&<\sum_{j=1}^n \left(\frac{\eps}{n\,C\,\norm{x}}\right) \left(\sum_{g \in G\setminus F} \abs{ \lambda_{j,\,g^{-1}hs}^{(h)}}^2 \right)^{1/2} \\
&\leqslant \sum_{j=1}^n \left(\frac{\eps}{n\,C\,\norm{x}}\right) \left(\sum_{t \in G} \abs{ \lambda_{j,t}^{(h)}}^2 \right)^{1/2} \\
&\leqslant \sum_{j=1}^n \left(\frac{\eps}{n\,C\,\norm{x}}\right)\,C\,\norm{x}=\eps.\end{align*}
Each vector $\sum_{j=1}^n \sum_{g \in F} \lambda_{j,{g^{-1}hs}}^{(h)} \eta_j(g)$ lies in the finite-dimensional subspace 
\[K_{\eps, s}={\rm span}\sett{\eta_{j}(g):1 \leqslant j \leqslant n, g \in F} \subseteq L^2(M,\rho),\]
and the above estimate is independent of $h \in G$.  Thus, the space $K_{\eps,s}$ is such that ${\rm dist}(z,K_{\eps,s})<\eps$ for any $z \in {\rm Orb}(x_s)$.  \end{proof}
\vskip 0.03in 
\noindent As noted above, Lemma \ref{almost finite dimensional orbit} implies that the Fourier coefficients of a one-sided quasinormalizer have precompact orbit under the group action.  
\begin{cor} \label{compact orbit}
If $x = \sum_{s \in G}x_s u_s \in \mathcal{QN}^{(1)}(L(G) \subseteq M \rtimes_\alpha G)$, then for each $s \in G$, ${\rm Orb}(x_s)$ is $\norm{\cdot}_2$-precompact.
\end{cor}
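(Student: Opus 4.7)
The plan is to deduce this directly from Lemma \ref{almost finite dimensional orbit} together with the fact that, in a complete metric space, a subset is norm-precompact if and only if it is totally bounded. The main content is already in the lemma, so the corollary reduces to a standard functional-analytic step: combining ``approximation by a finite-dimensional subspace'' with norm-boundedness yields total boundedness.

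First I would note that since each $\alpha_g$ is $\rho$-preserving, the induced map on $L^2(M,\rho)$ is isometric, so $\|\alpha_g(x_s)\|_2=\|x_s\|_2$ for every $g\in G$. Hence $\mathrm{Orb}(x_s)$ is contained in the closed ball of radius $\|x_s\|_2$ in $L^2(M,\rho)$. In particular, $\mathrm{Orb}(x_s)$ is norm-bounded. If $x_s=0$, the orbit is $\{0\}$ and there is nothing to prove, so we may assume $x_s\neq 0$.

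Next, fix $\varepsilon>0$. Applying Lemma \ref{almost finite dimensional orbit} with $\varepsilon/2$, we obtain a finite-dimensional subspace $K_{\varepsilon/2,s}\subseteq L^2(M,\rho)$ such that every $z\in\mathrm{Orb}(x_s)$ satisfies $\mathrm{dist}(z,K_{\varepsilon/2,s})<\varepsilon/2$. Choose, for each such $z$, an element $m(z)\in K_{\varepsilon/2,s}$ with $\|z-m(z)\|_2<\varepsilon/2$; then $\|m(z)\|_2\leqslant \|x_s\|_2+\varepsilon/2$. Thus the set
\[
S_\varepsilon:=\{m(z):z\in\mathrm{Orb}(x_s)\}\subseteq K_{\varepsilon/2,s}\cap\overline{B}(0,\|x_s\|_2+\varepsilon/2)
\]
is a bounded subset of a finite-dimensional normed space, hence precompact. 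Extract a finite $\varepsilon/2$-net $\{m_1,\ldots,m_N\}\subseteq S_\varepsilon$. For any $z\in\mathrm{Orb}(x_s)$, choosing $m_i$ with $\|m(z)-m_i\|_2<\varepsilon/2$ gives $\|z-m_i\|_2<\varepsilon$ by the triangle inequality. So $\{m_1,\ldots,m_N\}$ is a finite $\varepsilon$-net for $\mathrm{Orb}(x_s)$.

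Since $\varepsilon>0$ was arbitrary, $\mathrm{Orb}(x_s)$ is totally bounded in the complete metric space $L^2(M,\rho)$, and therefore $\|\cdot\|_2$-precompact. There is no real obstacle here; the entire geometric content is packed into Lemma \ref{almost finite dimensional orbit}, and the only point requiring care is to use the $\rho$-preserving hypothesis to bound the orbit so that the finite-dimensional approximants can themselves be chosen from a precompact set.
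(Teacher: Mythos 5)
Your proof is correct and follows essentially the same route as the paper's: both apply Lemma \ref{almost finite dimensional orbit}, use $\rho$-invariance to bound the orbit, and extract a finite $\eps$-net from a bounded subset of the finite-dimensional approximating subspace to conclude total boundedness. The only cosmetic difference is that the paper chooses its net points in $M$ via density of $M$ in $L^2(M)$, whereas you take them directly in $K_{\eps,s}$; either works.
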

\begin{proof}
Let $x = \sum_{s \in G}x_s u_s \in \mathcal{QN}^{(1)}(L(G) \subseteq M \rtimes_\alpha G)$ and fix $s \in G$. There is nothing to prove if $x_s=0$, thus assume that $x_s\neq 0$. Given $\eps >0$, let $K_{\eps,s}$ be the subspace of $L^2(M,\rho)$ satisfying the requirements of Lemma \ref{almost finite dimensional orbit}.  Write $F$ for the closed ball of radius $\norm{x_s}_2 + \eps$ in $K_{\eps,s}$.  Then by compactness of $F$ and density of $M$ in $L^2(M)$ there exist $m_1, \ldots, m_k \in M$ such that ${\rm Orb}(x_s) \subseteq \bigcup_{i=1}^k B_{\eps}(m_i)$.  It follows that ${\rm Orb}(x_s)$ is totally bounded in $\norm{\cdot}_2$, and this completes the proof.
\end{proof}\vskip 0.03in 
\noindent It now follows from Corollary \ref{compact orbit} that if $x = \sum_{s \in G} x_s u_s \in \mathcal{QN}^{(1)}(L(G) \subseteq M \rtimes_\alpha G)$ then each Fourier coefficient $x_s$ lies in the subalgebra $M_K \subseteq M$, and therefore, $x \in M_K \rtimes_\alpha G$.  This shows $vN(\mathcal{QN}^{(1)}(L(G) \subseteq M \rtimes_\alpha G)) \subseteq M_K \rtimes_\alpha G$, thus completing the proof of Theorem \ref{QuasinormalizerCompactSystemTheorem}.

\noindent Theorem \ref{QuasinormalizerCompactSystemTheorem} also addresses the natural question of whether the von Neumann algebras generated by one-sided and two-sided quasinormalizers may coincide for inclusions of the form $L(G) \subseteq M \rtimes_\alpha G$.  It was shown in \cite{FaGaSm} that these may differ for inclusions of group von Neumann algebras $L(H) \subseteq L(G)$.  An immediate corollary of the proof of Theorem \ref{QuasinormalizerCompactSystemTheorem} is that this cannot happen in the crossed product setting.  

\begin{cor}
If $(M,G,\alpha,\rho)$ is a $W^*$-dynamical system, then 
\[\mathcal{QN}(L(G) \subseteq M \rtimes_\alpha G)''=\mathcal{QN}^{(1)}(L(G) \subseteq M \rtimes_\alpha G)''.\]
\end{cor}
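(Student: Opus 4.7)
The plan is to observe that the corollary falls out immediately by combining the two directions already established in the proof of Theorem \ref{QuasinormalizerCompactSystemTheorem}, together with the trivial containment $\mathcal{QN} \subseteq \mathcal{QN}^{(1)}$. No new arguments are needed; the whole point is to sandwich everything between $M_K \rtimes_\alpha G$ and itself.

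First, I would recall the containment
\[
\mathcal{QN}(L(G) \subseteq M\rtimes_\alpha G)'' \subseteq vN(\mathcal{QN}^{(1)}(L(G) \subseteq M\rtimes_\alpha G)),
\]
which holds for any inclusion since every quasinormalizer is \emph{a fortiori} a one-sided quasinormalizer. By Theorem \ref{QuasinormalizerCompactSystemTheorem}, the right-hand side equals $M_K \rtimes_\alpha G$.

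Next, I would invoke the first half of the proof of Theorem \ref{QuasinormalizerCompactSystemTheorem}, where it was shown (via Lemma \ref{weakly closed modules} applied to elements of the form $xu_s$ with $x \in M$ having finite-dimensional orbit under $\alpha$) that
\[
M_K \rtimes_\alpha G \subseteq \mathcal{QN}(L(G) \subseteq M\rtimes_\alpha G)''.
\]
Chaining these two inclusions gives
\[
M_K \rtimes_\alpha G \subseteq \mathcal{QN}(L(G) \subseteq M\rtimes_\alpha G)'' \subseteq vN(\mathcal{QN}^{(1)}(L(G) \subseteq M\rtimes_\alpha G)) = M_K \rtimes_\alpha G,
\]
so all three coincide, yielding the desired equality.

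There is really no main obstacle here; the corollary is a bookkeeping consequence of Theorem \ref{QuasinormalizerCompactSystemTheorem}. The only thing worth flagging explicitly is that the inclusion $M_K \rtimes_\alpha G \subseteq \mathcal{QN}(L(G) \subseteq M \rtimes_\alpha G)''$ used the \emph{two-sided} quasinormalizer (since for $x \in M$ with finite-dimensional orbit, one obtains both $L(G)(xu_s) \subseteq \sum_i x_i L(G)$ and $(xu_s)L(G) \subseteq \sum_i L(G) x_i$ by a symmetric argument). This is what allows the two one-sided and two-sided versions of the quasinormalizer to be squeezed together in this crossed product setting, in contrast to the pathological behavior documented in \cite{FaGaSm} for inclusions $L(H) \subseteq L(G)$.
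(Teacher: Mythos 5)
Your proposal is correct and is exactly the argument the paper intends: the corollary is stated as an immediate consequence of the proof of Theorem \ref{QuasinormalizerCompactSystemTheorem}, obtained by sandwiching $\mathcal{QN}(L(G)\subseteq M\rtimes_\alpha G)''$ between $M_K\rtimes_\alpha G$ (via the two-sided quasinormalizer computation for $xu_s$ with $x$ of finite-dimensional orbit) and $vN(\mathcal{QN}^{(1)}(L(G)\subseteq M\rtimes_\alpha G))=M_K\rtimes_\alpha G$. Your closing remark that the lower bound genuinely uses the two-sided condition is also the right point to emphasize.
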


\noindent We now observe that Theorem \ref{QuasinormalizerCompactSystemTheorem} significantly generalizes $($allowing uncountable discrete groups and $\sigma$-finite $M)$ the  result of Packer \cite{Pac0}, mentioned in the introduction.  That result computed the normalizer for an inclusion $L(G) \subseteq M \rtimes_\alpha G$ in terms of the von Neumann subalgebra $M_0 \subseteq M$ generated by the eigenvectors of $\alpha$ for actions of countable groups.  Note that the subalgebra $M_0$ is clearly invariant under $\alpha$, and can be seen to be equal to $M_K$, as follows.  If $x \in M$ is such that ${\rm Orb}(x)$ has finite-dimensional span, then the vector $x \Omega$ lies in a finite-dimensional subspace $\mathcal{F}$ of $M \Omega$ which is invariant under the group $\sett{V_h: h \in G}$ of Koopman unitaries associated to $\alpha$.  Since $G$ is abelian, the ${V_h}_{\upharpoonleft \mathcal{F}}$ are simultaneously unitarily diagonalizable, and hence, $\mathcal{F}$ has an orthonormal basis of eigenvectors of $\alpha$.  Therefore, $x \in M_0$.  This proves that $M_K \subseteq M_0,$ and the reverse containment is obvious, so they are equal, and moreover $M_K \rtimes_\alpha G = M_0 \rtimes_\alpha G$. On the other hand, if $x\in M_0$ is an eigenvector of the action $\alpha$ there exists a character $\chi$ of $G$ such that $\alpha_g(x)=\chi(g) x$ for all $g\in G$. Scaling $x$ so that $\| x\|=1$ and using ergodicity of the action we deduce that $x^*x$ and $xx^*$ are both 
 $1$, thus $x$ is a unitary in $M_0$. Then $uu_g u^* = u\alpha_g(u^*) u_g =\overline{\chi(g)} u_g$ for all $g\in G$. Consequently, $u$ normalizes $L(G)$. The following is then immediate:
\begin{align*}
  \mathcal{QN}^{(1)}(L(G) \subseteq M \rtimes_\alpha G)''&=\mathcal{QN}(L(G) \subseteq M \rtimes_\alpha G)''\\
  &= M_K \rtimes_\alpha G = M_0 \rtimes_\alpha G \\
  &\subseteq\mathcal{N}(L(G) \subseteq M \rtimes_\alpha G)''\subseteq \mathcal{QN}^{(1)}(L(G) \subseteq M \rtimes_\alpha G)''.
\end{align*}

These remarks prove the following corollary.
\begin{cor}
If $G$ is a discrete abelian group, and $\alpha$ is an ergodic, trace-preserving action of $G$ on a finite von Neumann algebra $(M,\tau)$, then 
\[\mathcal{N}(L(G) \subseteq M \rtimes_\alpha G)'' = M_K \rtimes_\alpha G.\]
\end{cor}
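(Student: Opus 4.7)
The plan is to sandwich $\mathcal{N}(L(G) \subseteq M \rtimes_\alpha G)''$ between two copies of $M_K \rtimes_\alpha G$, using Theorem \ref{QuasinormalizerCompactSystemTheorem} for the upper bound and a direct construction of many normalizers for the lower bound. Since normalizers are automatically quasinormalizers, the upper bound is immediate:
\[ \mathcal{N}(L(G) \subseteq M \rtimes_\alpha G)'' \subseteq \mathcal{QN}^{(1)}(L(G) \subseteq M \rtimes_\alpha G)'' = M_K \rtimes_\alpha G.\]
So the substantive content lies in producing enough normalizers to generate all of $M_K \rtimes_\alpha G$.

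For the lower bound, I would first identify $M_K$ with the von Neumann subalgebra $M_0$ generated by the eigenvectors of $\alpha$. One inclusion, $M_0 \subseteq M_K$, is trivial since any eigenvector has a one-dimensional $\alpha$-orbit in $L^2(M,\tau)$. For the reverse, if $x \in M$ has finite-dimensional orbit, then $x\Omega$ lies in a finite-dimensional subspace $\mathcal{F}$ of $L^2(M,\tau)$ that is invariant under the Koopman unitaries $\{V_h : h \in G\}$. Since $G$ is abelian, the restrictions $V_h\restriction_{\mathcal{F}}$ form a commuting family of unitaries on a finite-dimensional space, hence are simultaneously diagonalizable. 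Thus $\mathcal{F}$ has an orthonormal basis of eigenvectors, which forces $x \in M_0$.

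Next, I would show that every eigenvector produces a unitary normalizer of $L(G)$. If $0 \neq x \in M$ satisfies $\alpha_g(x) = \chi(g) x$ for some character $\chi$ of $G$, then $x^* x$ and $x x^*$ are $\alpha$-fixed positive elements, hence scalars by ergodicity; after rescaling, $x$ is a unitary in $M_0$. The commutation relation $x u_g = \alpha_g(x)^{-1} \, x u_g \cdot \alpha_g(x) \cdot \alpha_g(x)^{-1}$ — more directly, $x u_g x^* = \overline{\chi(g)} u_g$ — shows that $x \in \mathcal{N}(L(G) \subseteq M \rtimes_\alpha G)$. Moreover, the unitary group $G \subseteq L(G)$ together with such eigenvector unitaries generates $M_0 \rtimes_\alpha G$, giving
\[ M_K \rtimes_\alpha G = M_0 \rtimes_\alpha G \subseteq \mathcal{N}(L(G) \subseteq M \rtimes_\alpha G)''.\]

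Combining this with the upper bound from Theorem \ref{QuasinormalizerCompactSystemTheorem} yields the claimed equality. There is no real obstacle here: every step has already been assembled in the discussion preceding the corollary, and the proof is essentially a matter of formally chaining those observations. The only delicate point to state carefully is the simultaneous diagonalization of the Koopman unitaries, which requires abelianness of $G$ in an essential way and is precisely where the argument breaks down for general groups.
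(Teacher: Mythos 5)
Your proposal is correct and follows essentially the same route as the paper: the upper bound via Theorem \ref{QuasinormalizerCompactSystemTheorem}, the identification $M_K = M_0$ by simultaneous diagonalization of the commuting Koopman unitaries on finite-dimensional invariant subspaces, and the observation that ergodicity turns each eigenvector into a unitary normalizer satisfying $x u_g x^* = \overline{\chi(g)}\, u_g$. The only blemish is the garbled intermediate identity before that relation, but the final computation and the chain of inclusions are exactly the paper's argument.
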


\begin{rem}
  Note that many results about normalizers of masas or subalgebras in the literature require the assumption of a separable predual of $M$ or the hypothesis that the acting group is countable and discrete or separable (see for instance \cite[Proposition  2.1]{NeSt}). This is because, for example in the context of masas the ambient masa is identified with a suitable $L^\infty$ space and separability assumption of the GNS space allows the use of spectral theorem in its strongest form through direct integrals; the latter being invoked either implicitly or explicitly. The proofs in this section do not need any such requirements and work even when the predual of $M$ is not separable, $\rho$ is a faithful normal state and the acting discrete group is uncountable.
\end{rem}

The following corollary generalizes \cite[Corollary 7.6]{BCM1}.

\begin{cor}\label{one-side-qregular}
Let $(M,G,\alpha,\rho)$ be an ergodic $W^*$-dynamical system. Then $$\mathcal{QN}^{(1)}(L(G) \subseteq M \rtimes_\alpha G)''\subseteq (M \rtimes_\alpha G)^{\widehat\rho}.$$ 
In particular, if $M \rtimes_\alpha G$ is properly infinite then $L(G)$ cannot be one-sided quasiregular in $M \rtimes_\alpha G$ and $\alpha$ must have a non-trivial weakly mixing component.
\end{cor}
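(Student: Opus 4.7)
The plan is to derive this corollary directly from Theorem \ref{QuasinormalizerCompactSystemTheorem} together with the observation that the Kronecker subalgebra $M_K$ sits inside the centralizer $M^\rho$. Since the theorem already identifies $vN(\mathcal{QN}^{(1)}(L(G)\subseteq M\rtimes_{\alpha}G))$ with $M_K \rtimes_{\alpha}G$, the first assertion reduces to proving $M_K \rtimes_\alpha G \subseteq (M \rtimes_\alpha G)^{\widehat\rho}$.

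I would establish this containment in two steps. First, I would show $M_K \subseteq M^\rho$. This is essentially the content of Lemma \ref{Invariant Subspaces Lemma} from \cite{BCM1} (invoked in the paper's own discussion of $M_K$): every finite-dimensional $\alpha$-invariant subspace of $L^2(M,\rho)$ comes from elements of $M^\rho$, and since these subspaces generate $M_K$, the inclusion follows. Second, I would check that $M^\rho \rtimes_\alpha G \subseteq (M \rtimes_\alpha G)^{\widehat\rho}$. A direct computation is available: for $x \in M^\rho$, $y \in M$, and $g,h \in G$, both $\widehat\rho((xu_g)(yu_h))$ and $\widehat\rho((yu_h)(xu_g))$ vanish unless $h=g^{-1}$, in which case they equal $\rho(x \alpha_g(y))$ and $\rho(y \alpha_{g^{-1}}(x))$, respectively. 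Using $\alpha$-invariance of $\rho$ one rewrites the second as $\rho(\alpha_g(y) x)$, and the identity $x \in M^\rho$ then yields equality. By normality, $M^\rho \rtimes_\alpha G$ lies in $(M \rtimes_\alpha G)^{\widehat\rho}$ as desired. (Alternatively, one can run the same argument via modular theory, noting that $\sigma^{\widehat\rho}_t$ restricts to $\sigma^\rho_t$ on $M$ and fixes $L(G)$, since $G$-invariance of $\rho$ already places $L(G)$ in the centralizer.)

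For the ``in particular'' statement I would argue as follows. The centralizer $(M \rtimes_\alpha G)^{\widehat\rho}$ carries $\widehat\rho$ as a faithful normal tracial state, hence is a finite von Neumann algebra. So when $M \rtimes_\alpha G$ is properly infinite, the inclusion $\mathcal{QN}^{(1)}(L(G) \subseteq M \rtimes_\alpha G)'' \subseteq (M \rtimes_\alpha G)^{\widehat\rho}$ is necessarily strict, which forbids $L(G)$ from being one-sided quasiregular. Moreover, if the action $\alpha$ had no non-trivial weakly mixing component, then $M = M_K$; combined with Step 1 this would make $M$ tracial and thus $M \rtimes_\alpha G$ a finite von Neumann algebra, contradicting the properly infinite hypothesis. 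Consequently the Koopman representation of $G$ on $L^2(M,\rho) \ominus \overline{M_K \Omega}$ must be non-trivial, providing the asserted weakly mixing component.

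The argument is largely formal once Theorem \ref{QuasinormalizerCompactSystemTheorem} is in hand; the only non-trivial input is the identification $M_K \subseteq M^\rho$ coming from \cite{BCM1}, and that is precisely the main obstacle, although it is already established in the cited work.
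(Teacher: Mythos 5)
Your proof is correct and follows essentially the same route as the paper, whose entire argument is the one-line observation that the statement ``follows directly from Theorem \ref{QuasinormalizerCompactSystemTheorem} and the fact that $\widehat\rho$ restricted to $M_K\rtimes_\alpha G$ is a trace.'' Your version simply supplies the details the paper leaves implicit --- namely that $M_K\subseteq M^\rho$ (via Lemma \ref{Invariant Subspaces Lemma} and Remark \ref{notfinite}) and that $M^\rho\rtimes_\alpha G$ lands in the centralizer of $\widehat\rho$ --- which is in fact the correct way to make the paper's terse justification rigorous, since traciality of a state on a subalgebra does not by itself place that subalgebra in the centralizer.
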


\begin{proof}
    The proof follows directly from Theorem \ref{QuasinormalizerCompactSystemTheorem} and the fact that $\widehat\rho_{\upharpoonleft M_K\rtimes_\alpha G}$ is a trace.
\end{proof}


\subsection{Relatively almost periodic elements and compact extensions} \label{compact extensions} Throughout this section we let $N\subseteq (M,\tau)$ be an inclusion of tracial von Neumann algebras and we let $(M, G, \sigma,\tau)$ be a $\tau$-preserving $W^*$-dynamical system such that $N$ is a $G$-invariant von Neumann subalgebra. Such a  system is called a \emph{$\tau$-preserving  $W^*$-dynamical extension system} and will be denoted by  $(N\subseteq M, G, \sigma,\tau)$. 
\begin{definition}\label{apr} Let $(N\subseteq M, G, \sigma,\tau)$ be a $\tau$-preserving $W^*$-dynamical extension system. An element $f\in L^2(M)$ is called \emph{almost periodic relative to $N$} if and only if for every  $\varepsilon>0$ one can find elements $\eta_1,\ldots ,\eta_n\in  L^2(M)$ such that for every $g\in G$ there exist $\kappa(g,j) \in N$
with $1\leqslant j\leqslant n$ satisfying \begin{enumerate}
    \item $\sup_{h\in G}\|\kappa (h,j)\|_\infty<\infty$, and
    \medskip
    \item $\|\sigma_g(f)-\sum^n_{j=1} \eta_j \kappa(g,j)\|_2<\varepsilon$.
\end{enumerate}
\end{definition}
\noindent Basic approximations show that in part (2) of the previous definition one can actually pick $\eta_j \in M$ as opposed to its 
$L^2$-space. Throughout the remaining sections we denote by $\cK_{N,M}\subseteq L^2(M)$ the set of all elements that are almost periodic  relative to $N$. We will also denote by $\mathcal P_{N,M}:= M\cap \cK_{N,M}$.

\begin{prop}\label{subspace} The following properties hold: \begin{enumerate}
    \item $\cK_{N,M}\subseteq L^2(M)$ is a $G$-invariant Hilbert subspace.
    \medskip
    \item $\mathcal P_{N,M}\subseteq M$ is a SOT-closed, $G$-invariant, linear subspace. 
\end{enumerate} 
\end{prop}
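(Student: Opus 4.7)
The plan is to prove the two statements in order, with (2) reducing largely to (1). The key observation throughout is that the uniform-bound condition on the coefficients $\kappa(g, j)$ is preserved under three standard operations: concatenating finite lists of $\eta$'s, rescaling, and reindexing by a fixed group element. This is what makes $\cK_{N,M}$ well-behaved under linear combinations, the $G$-action, and $\|\cdot\|_2$-limits.

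For part (1), I would establish the three required properties of $\cK_{N,M}$ separately. For linearity: given $f_1, f_2 \in \cK_{N,M}$ and $\varepsilon>0$, approximate each to within $\varepsilon/2$ using lists $\{\eta_j\}$ and $\{\eta_k'\}$ with coefficient families uniformly bounded by constants $B_j$ and $B_k'$; the concatenated list approximates $f_1 + f_2$ to within $\varepsilon$, and the uniform bounds survive as a maximum of finitely many finite quantities. Scalar multiples are handled by absorbing $\lambda$ into the $\eta_j$'s. For $G$-invariance: given $f \in \cK_{N,M}$ with approximating list $\{\eta_j\}$ and coefficients $\kappa(g, j)$, the element $\sigma_{g_0}(f)$ is approximated by the same $\{\eta_j\}$ with shifted coefficients $\kappa'(g, j) := \kappa(g g_0, j)$, and $\sup_h \|\kappa'(h,j)\|_\infty$ is unchanged by the bijection $h \mapsto h g_0$ on $G$. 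For $\|\cdot\|_2$-closedness: if $f_k \to f$ in $\|\cdot\|_2$ with each $f_k \in \cK_{N,M}$, then for $\varepsilon>0$ choose $f_{k_0}$ with $\|f-f_{k_0}\|_2 < \varepsilon/2$ and take a list approximating $f_{k_0}$ with error $\varepsilon/2$; the same list works for $f$ because $\sigma_g$ extends to an isometry of $L^2(M)$ (as $\sigma$ is $\tau$-preserving), so the triangle inequality yields a uniform bound $\varepsilon$ independent of $g$.

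For part (2), linearity and $G$-invariance of $\mathcal P_{N,M} = M \cap \cK_{N,M}$ are inherited from part (1) together with the corresponding properties of $M$. For SOT-closedness, let $(x_\lambda) \subseteq \mathcal P_{N,M}$ converge in SOT to some operator $x$. Since $M$ is SOT-closed in $\mathbf{B}(L^2(M))$, we have $x \in M$. Moreover, SOT convergence applied to the cyclic vector $\Omega$ yields $\|x_\lambda - x\|_2 = \|(x_\lambda - x)\Omega\| \to 0$, so $x_\lambda \to x$ in $\|\cdot\|_2$. The $\|\cdot\|_2$-closedness established in (1) then forces $x \in \cK_{N,M}$, and hence $x \in M \cap \cK_{N,M} = \mathcal P_{N,M}$.

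There is no substantive obstacle here: the whole proposition is a routine verification. The one point to handle carefully is the preservation of $\sup_h \|\kappa(h, j)\|_\infty < \infty$ at each step, which as indicated above reduces to taking maxima of finitely many finite quantities and to noting that the shift $h \mapsto h g_0$ is a bijection of $G$.
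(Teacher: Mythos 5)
Your proof is correct and follows essentially the same route as the paper: the paper likewise treats linearity, $G$-invariance, and $\|\cdot\|_2$-closedness of $\cK_{N,M}$ as direct consequences of the definition, and proves SOT-closedness of $\mathcal P_{N,M}$ by passing from SOT convergence to $\|\cdot\|_2$ convergence at $\Omega$ and then running the same triangle-inequality/isometry argument you reduce to part (1). Your explicit bookkeeping of the uniform bounds $\sup_h\|\kappa(h,j)\|_\infty$ under concatenation and the shift $h\mapsto hg_0$ correctly fills in what the paper calls immediate.
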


\begin{proof} It is immediate from the definitions that $\cK_{N,M}$ is $\|\cdot\|_2$-closed and $G$-invariant, and that $\mathcal{P}_{N,M}$ is a $G$-invariant linear subspace. It remains to show  that $\mathcal P:=\mathcal P_{N,M}$ is SOT-closed.  Let $f\in\overline {\mathcal P}^{\rm SOT}$ and let $(f_i)_i\subset\mathcal P$ be a net such that $f_i\rar f$ in SOT.  Fix $\varepsilon>0$ and let $i$ be such that $\|f_i-f\|_2<\frac{\varepsilon}{2}$. From Definition \ref{apr}, there exist $\eta_1,\ldots, \eta_n \in M$ and  $\kappa(g,j)\in N$ with $1\leqslant j\leqslant n $ such that $\sup_{h\in G}\|\kappa(h,j)\|_\infty <\infty$ and $\|\sigma_g(f_i)- \sum_j \eta_j \kappa(g,j)\|_2< \frac{\varepsilon}{2}$. These, combined with the triangle inequality, show that
\begin{equation*}\begin{split}
 \|\sigma_g(f)- \sum_j \eta_j \kappa(g,j)\|_2&\leqslant \|\sigma_g(f)-\sigma_g(f_i )\|_2+\| \sigma_g(f_i)-\sum_j \eta_j \kappa(g,j)\|_2\\&\leqslant \|f-f_i\|_2 +\frac{\varepsilon}{2} <\varepsilon,  \end{split}\end{equation*}
proving that $f\in \mathcal{P}$ and  giving the desired claim.\end{proof}

\begin{definition}\label{compactquasinormalincl}A $\tau$-preserving $W^*$-dynamical extension system $(N\subseteq M, G, \sigma,\tau)$ is called \emph{compact} if and only if $\cK_{N,M}= L^2(M)$. 
\end{definition}




\noindent Note that when the subalgebra $N = \mathbb{C}1$, this condition coincides with compactness of the system $(M,G,\alpha,\tau)$.  Next, we show that Definition \ref{compactquasinormalincl} extends the classical notion of compactness for extensions of actions on abelian von Neumann algebras. For convenience we recall one of the equivalent definitions from the classical situation.

\begin{definition}\label{compactabelian} Let $M$ be an abelian von Neumann algebra.
Then a $W^*$-dynamical extension system $(N\subseteq M, G, \sigma,\tau)$ is compact if and only if $L^2(M)$ decomposes as a direct sum of finitely generated $G$-invariant $N$-modules.
\end{definition}

\begin{prop}\label{same} If $M$ is an abelian von Neumann algebra and $(N\subseteq M, G, \sigma,\tau)$ is a $\tau$-preserving, ergodic  $W^*$-dynamical extension system, then the notions of compactness from Definitions \ref{compactabelian} and \ref{compactquasinormalincl} coincide.

\end{prop}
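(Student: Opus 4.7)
The plan is to prove the two implications separately by translating between the direct-sum picture and the orbit-approximation picture, adapting the classical Furstenberg-Zimmer correspondence to this $W^*$-setting.

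For the forward implication, suppose $L^2(M)=\bigoplus_k H_k$ with each $H_k$ a finitely generated $G$-invariant right $N$-submodule. A Gram-Schmidt procedure on the left-bounded vectors of $H_k$ (cf.\ Section \ref{section:modules}) produces an $N$-orthonormal basis $\{\xi_1^{(k)},\dots,\xi_{n_k}^{(k)}\}$ with $\langle\xi_i^{(k)},\xi_j^{(k)}\rangle_N=\delta_{ij}p_j^{(k)}$. The $G$-invariance of $H_k$ yields coefficients $u_{ji}^{(k)}(g)\in N$ satisfying $\sigma_g(\xi_i^{(k)})=\sum_j\xi_j^{(k)}u_{ji}^{(k)}(g)$, and $N$-orthonormality together with $\sigma$-invariance of the $N$-valued inner product forces the matrix $(u_{ji}^{(k)}(g))_{i,j}$ to be unitary over $N$; in the abelian setting this collapses to $\sum_j |u_{ji}^{(k)}(g)|^2 p_j^{(k)}=\sigma_g(p_i^{(k)})$, so $\|u_{ji}^{(k)}(g)\|_\infty\leq 1$ uniformly in $g$. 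Given $f\in L^2(M)$ and $\epsilon>0$, I would approximate $f$ by a finite combination $f'=\sum_{k\leq K}\sum_i\xi_i^{(k)}c_i^{(k)}$ with $c_i^{(k)}\in N$ bounded (truncating the $L^2(N)$-valued coefficients by spectral projections). Since $M$ is abelian, $\sigma_g(f')=\sum_{k,j}\xi_j^{(k)}\bigl(\sum_i u_{ji}^{(k)}(g)\sigma_g(c_i^{(k)})\bigr)$, with coefficients bounded in $\|\cdot\|_\infty$ uniformly in $g$, so $f$ satisfies Definition \ref{apr} up to $\epsilon$, and closedness of $\mathcal{K}_{N,M}$ yields $f\in\mathcal K_{N,M}$.

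For the reverse implication, I would apply Zorn's lemma to the collection of pairwise Hilbert-orthogonal, finitely generated, $G$-invariant right $N$-submodules of $L^2(M)$ to obtain a maximal family $\mathcal F$, and set $V=\overline{\bigoplus_{H\in\mathcal F} H}^{\|\cdot\|_2}$. Then $V$ is by construction a direct sum of finitely generated $G$-invariant $N$-submodules, so it suffices to show $V=L^2(M)$. If not, the orthogonal projection $P:L^2(M)\to V^\perp$ commutes with the right $N$-action (since $V$ is a right $N$-submodule and the right $N$-action on $L^2(M)$ is a $*$-representation) and with each $\sigma_g$ (since $V$ is $G$-invariant); hence $V^\perp$ inherits a $G$-invariant right $N$-module structure. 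Pick any nonzero $f\in V^\perp$; applying $P$ to the approximants $\eta_j\in L^2(M)$ in Definition \ref{apr} preserves both the approximation and the $\|\cdot\|_\infty$-boundedness of the $\kappa(g,j)$, so $f$ remains almost periodic relative to $N$ inside $V^\perp$. The crucial contradiction-producing step is then to extract from such an $f$ a nonzero finitely generated $G$-invariant right $N$-submodule of $V^\perp$, contradicting the maximality of $\mathcal F$.

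I expect this last extraction to be the main obstacle; it is the classical content of the Furstenberg-Zimmer structure theorem in the abelian case. Identifying $M=L^\infty(X)$, $N=L^\infty(Y)$, with $\pi:X\to Y$ the associated factor map, I would disintegrate $L^2(M)=\int_Y^\oplus \cH_y\,d\mu_Y(y)$; almost periodicity of $f$ translates into measurable precompactness of the fiber orbits $\{\sigma_g(f)(y):g\in G\}\subseteq\cH_y$ for $\mu_Y$-a.e.\ $y$. A fiberwise Peter-Weyl / spectral decomposition of the resulting Koopman cocycle then yields a nonzero $G$-invariant measurable subbundle of finite fiber dimension, which reassembles into the required finitely generated $G$-invariant $N$-submodule inside $V^\perp$. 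Alternatively, working conceptually with $f\otimes_N\bar f$ in the conditional tensor product $L^2(M)\otimes_N L^2(M)$, the diagonal $G$-orbit is precompact by almost periodicity and an averaging argument produces nonzero $G$-invariant projections of finite $N$-rank in the $N$-relative commutant, whose ranges supply the desired submodules. Either route carries out the classical FZ argument, after which the rest of the proof is a direct comparison of definitions.
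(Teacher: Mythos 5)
Your forward implication is essentially the paper's argument: both decompose $L^2(M)$ into finitely generated $G$-invariant $N$-modules, pass to an $N$-orthogonal basis, and produce uniformly bounded coefficients $\kappa(g,j)\in N$. (The paper gets the uniform bound more cheaply, by approximating $\xi$ with a bounded element $m\in\sum_j\eta_jN$ and taking $\kappa(g,j)=E_N(\eta_j^*\sigma_g(m))$, rather than by showing the transition matrix is a unitary over $N$; but your version also works.) That half is fine.

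The reverse implication, however, has a genuine gap, and you have correctly located it yourself: everything in your Zorn's-lemma reduction is routine, and the entire content of the direction is concentrated in the claim that a nonzero relatively almost periodic $f\in V^\perp$ generates a nonzero finitely generated $G$-invariant right $N$-submodule of $V^\perp$. You do not prove this; you only name two classical strategies (fiberwise disintegration/Peter--Weyl, or averaging in $L^2(M)\otimes_N L^2(M)$). Neither sketch is carried far enough to check: for instance, Definition \ref{apr} is a single global $L^2$-estimate with $\|\cdot\|_\infty$-bounded $N$-coefficients, and converting it into ``measurable precompactness of a.e.\ fiber orbit'' (or into a nonzero finite-$N$-rank invariant projection) is exactly the hard step of the Furstenberg--Zimmer theorem, not a corollary of it. As written, the proof of this direction is incomplete.

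It is worth noting that the paper sidesteps this extraction entirely. It takes the \emph{maximal} intermediate subalgebra $N\subseteq P\subseteq M$ that is compact in the sense of Definition \ref{compactabelian} and invokes the known dichotomy (cited from \cite{IT20}) that $(P\subseteq M,G,\sigma,\tau)$ is then weak mixing relative to $N$, witnessed by a sequence $(g_k)$. Given $\xi\in M\ominus P$, it uses Definition \ref{apr} to approximate $\sigma_g(\xi)$ by $\sum_i\eta_i\kappa(g,i)$, replaces this by the canonical expansion $\sum_j y_jE_N(y_j^*\sigma_g(\xi))$ over an $N$-orthogonal family via Theorem \ref{bimodularapprox}, and then pairs $\sigma_{g_k}(\xi)$ against itself: Cauchy--Schwarz turns $\|\xi\|_2^2$ into terms $\|E_N(z_j\sigma_{g_k}(\xi))\|_2^2$ that vanish by relative weak mixing. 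So the paper's argument is ``an almost periodic vector orthogonal to the compact part is killed by the weak mixing sequence,'' which is soft once the maximal compact factor and the dichotomy are available. If you want to complete your route instead, you must actually build the finite-rank invariant submodule from $f$ (e.g.\ by making the $f\otimes_N\bar f$ averaging argument precise), which amounts to reproving that dichotomy.
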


\begin{proof} Assume $(N\subseteq M, G, \sigma,\tau)$ is compact as in Definition \ref{compactabelian}. Then, $L^2(M)=\oplus_i \cH_i$ where $\cH_i$ are finitely generated $G$-invariant $N$-modules. Moreover, since the action of $G$ on $M$ is ergodic, using a standard argument (e.g.\ \cite[Proposition 3.4]{IT20}) we can assume each $\cH_i$ admits a finite $N$-basis, $(\eta_j)_j\subseteq M$ -- i.e.,\ $\eta_j$'s are $N$-orthogonal and for every $\xi\in \cH_i$ we have $\xi= \sum_j \eta_j E_N(\eta_j^*\xi)$. Thus for every $g\in G$ and $\xi\in \cH_i$ we have $\sigma_g(\xi) = \sum_j \eta_j E_N(\eta_j^*\sigma_g (\xi))$. Let $\xi\in \cH_i$ and $\varepsilon>0$. Let $m\in \sum_j\eta_jN\subseteq M$  be such that $\|\xi-m\|_2 <\varepsilon$. Since $(\eta_j)_j$ is a $N$-basis we have  $\|\sigma_g(\xi)-\sum_j \eta_j E_N(\eta_j^*\sigma_g(m))\|^2_2= \|\sum_j \eta_j E_N(\eta_j^*(\sigma_g (\xi-m)) \|^2_2\leqslant \|\sigma_g(\xi-m )\|_2^2=\|\xi-m\|_2^2<\varepsilon^2$. Letting $\kappa(g,j)= E_N(\eta_j^*\sigma_g(m))$ we see that $\|\kappa(g,j)\|_\infty \leqslant (\max_j\|\eta_j\|_\infty) \|m\|_\infty$. Altogether, the prior relations show that $\xi\in \mathcal K_{N,M}$. Thus, $\cH_i\subseteq \mathcal K_{N,M}$. Now use (1) of Proposition \ref{subspace} to establish $L^2(M)\subseteq \mathcal K_{N,M}$. Hence Definition \ref{compactquasinormalincl} is satisfied.  
\vskip 0.08in
\noindent Now we show the reverse implication. Let $N \subseteq P\subseteq M$ be the maximal von Neumann subalgebra such that $(N\subseteq P, G, \sigma,\tau)$ is compact in the sense of Definition \ref{compactabelian}; see for instance \cite[Corollary 3.6]{IT20}. Thus $(P\subseteq M, G, \sigma,\tau)$ is weak mixing relative to $N$, i.e., \ there is a net $(g_\lambda)_\lambda\subseteq G$ (in this case $b_\lambda$'s in Definition \ref{rwm} can be chosen to be $1$) such that for all $\eta,\zeta\in M\ominus P$ we have  
\begin{equation}\label{weakmixing2}
    \lim_{\lambda}\|E_N(\eta\sigma_{g_\lambda}(\zeta))\|_2=0.
\end{equation} 
To get our conclusion it suffices to show that $P=M$. Pick $\xi\in M\ominus P$ and fix $\varepsilon>0$. By Definition \ref{apr} there exist $\eta_i\in M$ for $1\leqslant i\leqslant n$ and $\kappa(g,i)\in N$ with $\sup_g \|\kappa (g,i)\|_\infty=:C<\infty$ such that 
\begin{equation}\label{compactorbit6}\|\sigma_g(\xi)-\sum^n_{i=1} \eta_i \kappa(g,i)\|_2<\frac{\varepsilon}{3}\quad \text{for all }g\in G.\end{equation} Since $M$ is abelian then $\cH=\overline{\eta_1 N+\cdots 
+\eta_n N}$ is a finitely generated $N$-bimodule and by Theorem  \ref{bimodularapprox} one can find $N$-orthogonal elements $y_j\in M$ with 
$1\leqslant j\leqslant m$ so that \begin{equation}\label{ineq2}\| x- \sum^{m}_{j=1} y_j E_N(y_j^* x) \|_2<\frac{\varepsilon}{3Cn},\end{equation} for all $x = \eta_1 x_1 +\cdots +\eta_n x_n$ with $x_i\in (N)_C$. 
\newline Using triangle inequality and basic estimates together with \eqref{compactorbit6}, inequality \eqref{ineq2} for $x= \eta_i$ with $1\leqslant i\leqslant n$ and also the estimate \eqref{contraction} we see for every $g\in G$ we have  
\begin{align}\label{leftbasisapprox}
 &\|\sigma_g(\xi)-\sum_{j=1}^m y_j E_N(y_j^* \sigma_g(\xi))\|_2\\ 
\nonumber\leqslant &\|\sigma_g(\xi)-\sum_{i=1}^n \eta_i \kappa(g,i)\|_2+ \|\sum_{i=1}^n \eta_i \kappa(g,i)-\sum_{j=1}^m y_j E_N(y_j^* \sum_{i=1}^n \eta_i \kappa(g,i))\|_2\\
\nonumber&\quad\quad\quad\quad\quad\quad\quad\quad\quad+\|\sum_{j=1}^m y_j E_N\big(y_j^* (\sigma_g(\xi)-\sum_{i=1}^n \eta_i \kappa(g,i))\big)\|_2\\
\nonumber\leqslant &\|\sigma_g(\xi)-\sum_{i=1}^n \eta_i \kappa(g,i)\|_2+ \sum_{i=1}^n\| \eta_i -\sum_{j=1}^m y_j E_N(y_j^*  \eta_i )\|_2 \|\kappa(g,i)\|_{\infty}\\
\nonumber&\quad\quad\quad\quad\quad\quad\quad\quad+\|\sum_{j=1}^m y_j E_N\big(y_j^* (\sigma_g(\xi)-\sum_{i=1}^n \eta_i \kappa(g,i))\big)\|_2\\
\nonumber< &\frac{\varepsilon}{3}+(Cn)\frac{\varepsilon}{3Cn}+ \|\sigma_g(\xi)-\sum_{i=1}^n \eta_i \kappa(g,i)\|_2<\varepsilon.   
\end{align} 
\noindent 
Finally, using \eqref{leftbasisapprox} and Cauchy-Schwarz inequality we see for all $\lambda$ we have \begin{equation*}\begin{split}
    \|\xi\|^2_2&=\|\sigma_{g_\lambda}(\xi)\|^2_2=\langle\sigma_{g_\lambda}(\xi), \sigma_{g_\lambda}(\xi)\rangle \\ &<  \varepsilon \|\xi\|_2 + |\langle \sum_{j=1}^m y_j E_N(y_j^* \sigma_{g_\lambda}(\xi)), \sigma_{g_\lambda}(\xi)\rangle|\\ 
    &=  \varepsilon \|\xi\|_2 + \sum_{j=1}^m\|  E_N(y_j^* \sigma_{g_\lambda}(\xi))\|^2_2\\
    &= \varepsilon \|\xi\|_2+ \sum^m_{j=1} \|E_N(z_j \sigma_{g_\lambda}(\xi)) +E_N(y^*_j)E_{N} (\sigma_{g_\lambda}(\xi))\|_2^2\\
    &= \varepsilon \|\xi\|_2+ \sum^m_{j=1} \|E_N(z_j \sigma_{g_\lambda}(\xi) )\|_2^2.
\end{split}
    \end{equation*} 
Here we denoted $z_j= y^*_j-E_N (y^*_j)$ for all $j$, and in the last equality we used that $E_N (\sigma_{g_\lambda}(\xi))= \sigma_{g_\lambda}(E_N(\xi))=0$. As $E_N(z_j)=0$ for all $j$, then using \eqref{weakmixing2} and taking limit over $\lambda$ above we get $\|\xi\|^2_2< \varepsilon \|\xi\|_2$. As $\varepsilon>0$ was arbitrary, we conclude $\|\xi\|_2=0$ and hence $\xi=0$. Since $\xi\in M\ominus P$ was arbitrary we get $M=P$, as desired.  
\end{proof}

\noindent  It has been  known for some time that the subspace of relative almost periodic elements $\mathcal P_{N,M}$ is not generally a von Neumann subalgebra of $M$. An example in this direction  was exhibited by  Austin-Eisner-Tao in \cite[Example 4.4]{AuEiTa11}. Thus, it is natural to investigate what conditions on the inclusion $N\subseteq M$ would ensure that $\mathcal P_{N,M}$ is a von Neumann subalgebra. In this direction, J. Peterson and the third author observed that a sufficient condition is quasi-regularity of $N\subseteq M$. A proof based on arguments in \cite{CP11} was included, with permission, in the recent preprint \cite{JaSp}, and this proof works for $\sigma$-finite $M$ and uncountable discrete $G$.

\begin{thm}[\cite{CP11}]\label{vNalgebra} Let $(N\subseteq M, G, \sigma,\tau)$ be a $W^*$-dynamical extension system. If $\mathcal{QN}(N\subseteq M)''= M$  then $ \mathcal P_{N,M}\subseteq M$ is a $G$-invariant von Neumann subalgebra.

\end{thm}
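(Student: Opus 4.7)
The plan is to verify that $\mathcal{P}_{N,M}$ is a $*$-subalgebra of $M$; together with Proposition \ref{subspace}, which already establishes that $\mathcal{P}_{N,M}$ is an SOT-closed, $G$-invariant linear subspace, this will give the conclusion. That $1\in\mathcal{P}_{N,M}$ is immediate from Definition \ref{apr} (take $\eta_1=1$, $\kappa(g,1)=1$). The decisive preparatory observation is that $\mathcal{QN}(N\subseteq M)$ is itself a unital $*$-subalgebra of $M$ (closure under products, sums, and adjoints being routine from the definition), so the hypothesis $\mathcal{QN}(N\subseteq M)''=M$ combined with Kaplansky's density theorem enables $\|\cdot\|_2$-approximation of any $\eta\in M$ by an element of $\mathcal{QN}(N\subseteq M)$ of comparable operator norm. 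Consequently, in any approximation $\|\sigma_g(f)-\sum_i\eta_i\kappa(g,i)\|_2<\varepsilon$ arising from Definition \ref{apr}, I may assume the coefficients $\eta_i$ lie in $\mathcal{QN}(N\subseteq M)$ at the cost of a controlled additional error, setting the stage for Theorem \ref{bimodularapprox}.

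For closure under the involution, I would fix $f\in\mathcal{P}_{N,M}$ and $\varepsilon>0$, and choose $\eta_i\in\mathcal{QN}(N\subseteq M)$ as above. Taking adjoints gives $\sigma_g(f^*)\approx\sum_i\kappa(g,i)^*\eta_i^*$, which has $N$-factors on the wrong side for Definition \ref{apr}. Since each $\eta_i^*\in\mathcal{QN}^{(1)}(N\subseteq M)$, Theorem \ref{bimodularapprox} applied to $\{\eta_i^*\}$ will yield $N$-orthogonal $x_1,\ldots,x_k\in M$ satisfying, for all $a_i\in(N)_1$,
$$\Bigl\|\sum_i a_i\eta_i^* - \sum_{i,l} x_l E_N(x_l^* a_i\eta_i^*)\Bigr\|_2 \leqslant \varepsilon.$$
Substituting $a_i=\kappa(g,i)^*/C$ with $C=\sup_{g,i}\|\kappa(g,i)\|_\infty$ and rescaling will rewrite $\sigma_g(f^*)$ within $O(\varepsilon)$ of $\sum_l x_l\lambda_l(g)$, where $\lambda_l(g)=\sum_i E_N(x_l^*\kappa(g,i)^*\eta_i^*)\in N$ is uniformly bounded in $g$ by contractivity of $E_N$. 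Hence $f^*\in\mathcal{P}_{N,M}$.

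For closure under multiplication, I would let $f,h\in\mathcal{P}_{N,M}$ and approximate $\sigma_g(f)\approx\sum_i\eta_i\kappa(g,i)$ and $\sigma_g(h)\approx\sum_j\mu_j\nu(g,j)$, arranging via the approximation trick that $\mu_j\in\mathcal{QN}(N\subseteq M)$. Since the latter approximation is uniformly bounded in operator norm, multiplication yields
$$\sigma_g(fh)\approx\sum_{i,j}\eta_i\kappa(g,i)\mu_j\nu(g,j)$$
with $O(\varepsilon)$-error uniform in $g$. The inner sandwich $\kappa(g,i)\mu_j\nu(g,j)=a\mu_j b$ with $a,b\in N$ places $\mu_j$ outside the allowed form. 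Theorem \ref{bimodularapprox} applied to $\{\mu_j\}$ will supply $N$-orthogonal $z_1,\ldots,z_m\in M$ with $\bigl\|\sum_j a_j\mu_j b_j - \sum_{j,l} z_l E_N(z_l^* a_j\mu_j b_j)\bigr\|_2\leqslant\varepsilon$ for all $a_j,b_j\in (N)_1$. Scaling by the uniform bounds on $\kappa(g,i)$ and $\nu(g,j)$, substituting, and rearranging will convert each $i$-indexed inner sum to $\sum_l z_l\rho_l(g,i)$ with $\rho_l(g,i)\in N$ uniformly bounded in $g$. Multiplying on the left by $\eta_i$ and summing over $i$ then yields $\sigma_g(fh)\approx\sum_{i,l}(\eta_i z_l)\rho_l(g,i)$, establishing $fh\in\mathcal{P}_{N,M}$.

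The main obstacle will be securing the uniform $\|\cdot\|_\infty$-control on the new $N$-coefficients $\lambda_l(g)$ and $\rho_l(g,i)$ emerging from the bimodular rearrangement, as Definition \ref{apr} requires. Here the quasinormalizer hypothesis is indispensable: it ensures, via Theorem \ref{bimodularapprox}, the existence of finitely generated $N$-bimodule structures with $N$-orthogonal generators, which simultaneously effect the rearrangement---moving $N$-factors from the wrong side for involution closure, or extracting a quasinormalizer from an $N$-$N$ sandwich for multiplicative closure---and, via contractivity of $E_N$, produce $N$-coefficients uniformly bounded in $g$.
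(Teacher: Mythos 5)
Your proposal is correct and follows essentially the same route as the paper's own argument: after reducing the approximating vectors $\eta_i$ to elements of $\mathcal{QN}(N\subseteq M)$ via the hypothesis and Kaplansky density, you invoke Theorem \ref{bimodularapprox} to re-expand $\kappa(g,i)^*\eta_i^*$ (for the involution) and the sandwiches $\kappa(g,i)\mu_j\nu(g,j)$ (for products) over $N$-orthogonal bases, with contractivity of $E_N$ giving the required uniform $\|\cdot\|_\infty$-bounds on the new $N$-coefficients. This matches the paper's proof step for step, so there is nothing further to add.
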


\nexer{
\begin{proof}
 Let $\mathcal P:=\mathcal P_{N,M}$. From Proposition \ref{subspace} we know  $\mathcal P$ is a $G$-invariant, SOT-closed and is stable both under addition and multiplication by scalars. In the remaining part we argue that $\mathcal P$ is also closed under multiplication and involution.\vskip 0.06in 
\noindent Fix $f\in\mathcal P$. Thus for every $\varepsilon>0$ one can find $\eta_1,\ldots,\eta_n\in L^2(M)$, such that for every $g\in G$ there exist $\kappa(g,j) \in N$
with $1\leqslant j\leqslant n$ satisfying \begin{equation*}
     \sup_{h\in G}\|\kappa(h,j)\|_\infty<\infty\text{ and }\| \sigma_g(f)-\sum_j \eta_j \kappa(g,j)\|_2<\varepsilon. 
\end{equation*}

\noindent Approximating $\eta_j$'s in $\|\cdot \|_2$ we can actually assume $\eta_j\in M = \mathcal{QN}(N\subseteq M)''$ and hence $\eta_j\in \mathcal{QN}(N\subseteq M)$ or all $j$. Thus, in the second relation above we can assume that $\eta_j\in \mathcal{QN}(N\subseteq M)$ or all $j$.
\vskip 0.06in 
\noindent  Fix $f_1,f_2\in \mathcal P$ and we claim that $f_1f_2\in \mathcal P$. For every $l=1,2$ and every $\varepsilon>0$ one can find $\eta^l_1,\ldots,\eta^l_n\in \mathcal{QN}(N\subseteq M)$ and $0\leqslant C<\infty$ such that for every $g\in G$ there is $\kappa^l(g,j) \in N$
with $1\leqslant j\leqslant n$ satisfying the following relations
     \begin{equation}\begin{split}&  \|\kappa^l(g,j)\|\leqslant C\text{ for all }g,j,l; \\
    &\| \sigma_g(f_1)-\sum_j \eta^1_j \kappa^1(g,j)\|_2<\frac{\varepsilon}{4\|f_2\|_\infty};\\
    &\| \sigma_g(f_2)-\sum_j \eta^2_j \kappa^2(g,j)\|_2<\frac{\varepsilon}{4 C (\sum_j \|\eta^1_j\|_\infty)}.
    \end{split}\end{equation}
The last two inequalities above together with the triangle inequality imply that \begin{equation}\label{orbitprod1}\|\sigma_g(f_1f_2)-\sum_{i,j} \eta_i^1 \kappa ^1(g,i) \eta_j^2 \kappa^2(g,j)\|_2\leqslant \frac{\varepsilon}{2}.\end{equation}

\noindent  As $\eta^2_j\in \mathcal{QN}^{(1)}(N\subseteq M)$ using Theorem \ref{bimodularapprox}  one can find $x^s_{j}\in M$ with $1\leqslant s\leqslant n_{i,j}$ such that  for all $i$,$j$ we have

\begin{equation}\label{sotapprox3}
    \|\kappa^1(g,i) \eta_j^2 \kappa^2(g,j)- \sum_s x^s_{j} E_N((x^s_{j})^*\kappa^1(g,i) \eta_j^2 \kappa^2(g,j))\|_2\leqslant \frac{\varepsilon}{2n(\sum_i \|\eta^1_i\|_\infty)}.
\end{equation}

\noindent Then \eqref{orbitprod1} together with basic estimates and \eqref{sotapprox3} yield

\begin{equation}\label{approxprod2}\begin{split}&\|\sigma_g(f_1f_2)-\sum_{i} \eta_i^1(\sum_{j,s} x^s_{j} E_N((x^s_{j})^*\kappa^1(g,i) \eta_j^2 \kappa^2(g,j)))\|_2 \\ 
& \leqslant \frac{\varepsilon}{2} + \| \sum_i\eta_i^1( \sum_j (\kappa^1(g,i) \eta_j^2 \kappa^2(g,j)-\sum_{s} x^s_{j} E_N((x^s_{j})^*\kappa^1(g,i) \eta_j^2 \kappa^2(g,j)))\|_2 \\
&\leqslant \frac{\varepsilon}{2} + \sum_i\| \eta_i^1\|_\infty \sum_j\| \kappa^1(g,i) \eta_j^2 \kappa^2(g,j)-\sum_{s} x^s_{j} E_N((x^s_{j})^*\kappa^1(g,i) \eta_j^2 \kappa^2(g,j))\|_2\\
&\leqslant\frac{\varepsilon}{2} + \sum_i\| \eta_i^1\|_\infty \sum_j\frac{\varepsilon} {2n(\sum_i\|\eta^1_i\|_\infty)}=\frac{\varepsilon}{2}+\frac{\varepsilon}{2}=\varepsilon.
\end{split}\end{equation}

\noindent Denoting by $\ell(g,i,j):= \sum_{j,s} x^s_{j} E_N((x^s_{j})^*\kappa^1(g,i) \eta_j^2 \kappa^2(g,j)))$ a basic estimate shows  that $\|\ell(g,i,j)\|_\infty \leqslant C^2 \max_{j} \|x^s_{j}\|^2_\infty \max_{j} \|\eta_j^2\|_\infty$ which is uniformly bounded over $g\in G$. This and \eqref{approxprod2} prove our claim. 
\vskip 0.08in

\noindent Let $f\in \mathcal P$ and fix $\varepsilon>0$. As before, for every $\varepsilon>0$ one can find $\eta_1,\ldots,\eta_n\in \mathcal{QN}(N\subseteq M)$ so that for every $g\in G$ there are $\kappa(g,j) \in N$
with $1\leqslant j\leqslant n$ satisfying 
     \begin{equation}\label{apr2}\begin{split}&  \sup_{h\in G,j}\|\kappa(h,j)\|=: C<\infty\text{ and }  \| \sigma_g(f)-\sum_j \eta_j \kappa(g,j)\|_2<\frac{\varepsilon}{2}.
    \end{split}\end{equation}

\noindent As $\eta_i^*\in \mathcal{QN}(N\subseteq M)$ using Theorem \ref{bimodularapprox} one can find $x_s\in M$  with $1\leqslant s\leqslant l$  so that \begin{equation}\label{thm2.2a} \|\sum_j \kappa(g,j)^* \eta_j^*-\sum_s x_s E_N(x_s^*\sum_j \kappa(g,j)^* \eta_j^*)\|_2<\frac{\varepsilon}{2}.  \end{equation}
Using triangle inequality in combination with  \eqref{apr2} and \eqref{thm2.2a} we see that  
\begin{equation}\label{apr3}\begin{split}
   &\|\sigma_g(f^*)- \sum_s x_s E_N(x_s^*\sum_j \kappa(g,j)^* \eta_j^*)\|_2\\& \leqslant \| \sigma_g(f^*)-\sum_j  \kappa^*(g,j)\eta^*_j \|_2+ \|\sum_j \kappa(g,j)^* \eta_j^*-\sum_s x_s E_N(x_s^*\sum_j \kappa(g,j)^* \eta_j^*)\|_2\\
   &<\| \sigma_g(f)-\sum_j \eta_j \kappa(g,j)\|_2+\frac{\varepsilon}{2}<\frac{\varepsilon}{2}+ \frac{\varepsilon}{2}=\varepsilon.\end{split}
\end{equation}
Letting $\ell(g,s)= E_N(x_s^*\sum_j \kappa(g,j)^* \eta_j^*)$ we see  $\|\ell(g,s)\|_\infty \leqslant nC \max_j \|\eta_j\|_\infty\max_{s}\|x_s\|_\infty$, is uniformy bounded in $g\in G$. This and \eqref{apr3} yield $f^*\in \mathcal P$, as desired. \end{proof}

}

	
\vskip 0.08in 
\noindent 
In the remaining part of the subsection we explore the connections between (one-sided) quasinormalizers in crossed product von Neumann algebras and the subspace of relative almost periodic elements of $W^*$-dynamical extension systems.  

\begin{thm}\label{qnormalizer1} 
 Let $(N\subseteq M, G, \sigma,\tau)$ be a $W^*$-dynamical extension system.  Then, $\mathcal{QN}^{(1)}(N\rtimes_\sigma G\subseteq M\rtimes_\sigma G)\subseteq \overline{{\rm span}\, \mathcal P_{N,M}G}^{\|\cdot \|_2}$. In particular, we have  \[vN(\mathcal{QN}^{(1)}(N\rtimes_\sigma G\subseteq M\rtimes_\sigma G))\subseteq \mathcal P_{N,M}''\rtimes_\sigma G.\]
\end{thm}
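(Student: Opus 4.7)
The plan is to adapt the approach of Lemma \ref{almost finite dimensional orbit} to this setting, where the ``coefficients'' of the module decomposition live in the operator-valued algebra $\mathcal N := N\rtimes_\sigma G$ rather than in the abelian $L(G)$. Fix $x \in \mathcal{QN}^{(1)}(\mathcal N\subseteq \mathcal M)$ with $\mathcal M := M\rtimes_\sigma G$, and write $x = \sum_{s \in G} x_s u_s$ with $x_s = E_M(xu_s^{-1}) \in M$. To prove the first inclusion it suffices to show $x_s \in \mathcal P_{N,M}$ for every $s$, since then the partial sums $\sum_{s \in F} x_s u_s$ lie in $\mathrm{span}\,\mathcal P_{N,M}\,G$ and converge to $x$ in $\|\cdot\|_2$. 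The second inclusion follows immediately: $\mathcal P_{N,M}$ is $G$-invariant by Proposition \ref{subspace}, so $\mathcal P_{N,M}''\rtimes_\sigma G$ is a well-defined von Neumann subalgebra of $\mathcal M$ containing every element of $\mathcal{QN}^{(1)}(\mathcal N\subseteq \mathcal M)$.

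By hypothesis, $\mathcal H := \overline{\mathcal N x}^{\|\cdot\|_2}$ is a finitely generated right $\mathcal N$-module in $L^2(\mathcal M)$. The Gram-Schmidt procedure from Section \ref{section:modules} produces an $\mathcal N$-orthonormal basis $\eta_1,\ldots,\eta_n$ of left-bounded vectors in $\mathcal H$. For each $h\in G$, the bounded vector $\xi_h := u_h x$ lies in $\mathcal H$ and expands as $\xi_h = \sum_{j=1}^n \eta_j\cdot \lambda_j^{(h)}$ with
\[ \lambda_j^{(h)} := \langle \eta_j,\xi_h\rangle_{\mathcal N} = L_{\eta_j}^*L_{\xi_h} \in \mathcal N. \]
The decisive new ingredient — absent in the scalar setting of Lemma \ref{almost finite dimensional orbit} — is the uniform operator-norm control
\[ \|\lambda_j^{(h)}\|_\infty \leq \|L_{\eta_j}\|\cdot \|u_h x\|_\infty = \|L_{\eta_j}\|\cdot\|x\|_\infty, \qquad h\in G. \]
Decomposing $\eta_j = \sum_k \eta_j(k)\otimes\delta_k$ under $L^2(\mathcal M)\cong L^2(M)\otimes\ell^2(G)$ and $\lambda_j^{(h)} = \sum_t \lambda_{j,t}^{(h)} u_t$ with $\lambda_{j,t}^{(h)} = E_N(\lambda_j^{(h)} u_t^{-1})\in N$, the contractivity of $E_N$ yields the further uniform bound $\|\lambda_{j,t}^{(h)}\|_\infty \leq \|L_{\eta_j}\|\|x\|_\infty$ over all $h,t$.

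A short direct computation of the right $\mathcal N$-action on $L^2(\mathcal M)\cong L^2(M)\otimes\ell^2(G)$ — the natural twisted analogue of the convolution formula \eqref{convformula} — extracts the Fourier coefficient of $\xi_h$ at $u_{hs}$, giving
\[ \sigma_h(x_s) = \sum_{j=1}^n \sum_{k \in G} \eta_j(k)\,\sigma_k\bigl(\lambda_{j,\,k^{-1}hs}^{(h)}\bigr). \]
Given $\varepsilon > 0$, choose a finite $F\subseteq G$ so that $\sum_j \|L_{\eta_j}\|\|x\|_\infty\|\eta_j - \eta_j^F\|_2 < \varepsilon$, where $\eta_j^F$ denotes the truncation of $\eta_j$ to coordinates in $F$. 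Because right multiplication by $\lambda_j^{(h)}\in\mathcal N$ on $L^2(\mathcal M)$ has operator norm equal to $\|\lambda_j^{(h)}\|_\infty$, the tail of the above sum satisfies
\[ \Bigl\|\sum_j\sum_{k\notin F} \eta_j(k)\,\sigma_k\bigl(\lambda_{j,\,k^{-1}hs}^{(h)}\bigr)\Bigr\|_2 \leq \sum_{j=1}^n \|\lambda_j^{(h)}\|_\infty\,\|\eta_j-\eta_j^F\|_2 < \varepsilon, \]
uniformly in $h$ and $s$. The truncated sum $\sum_{j}\sum_{k\in F}\eta_j(k)\,\sigma_k(\lambda_{j,\,k^{-1}hs}^{(h)})$ therefore verifies Definition \ref{apr} for $\sigma_h(x_s)$, with approximating vectors $\eta_j(k)\in L^2(M)$ and coefficients $\kappa(h,(j,k)) := \sigma_k(\lambda_{j,\,k^{-1}hs}^{(h)})\in N$ whose operator norms are uniformly bounded in $h$. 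Hence $x_s\in \mathcal P_{N,M}$ for every $s$, and the theorem follows.

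The chief technical obstacle will be the uniform-in-$h$ operator-norm bound on $\lambda_j^{(h)}$: in the scalar setting of Lemma \ref{almost finite dimensional orbit}, a uniform $L^2$-bound on the coefficients sufficed because a direct Cauchy-Schwarz estimate was available, whereas here the non-commutativity of $\mathcal N$ forces the use of an operator-norm estimate, combined with the fact that right multiplication by an element of $\mathcal N$ has operator norm equal to its $\|\cdot\|_\infty$-norm. The identity $\lambda_j^{(h)} = L_{\eta_j}^*L_{\xi_h}$ together with $\|u_h x\|_\infty = \|x\|_\infty$ delivers exactly this bound, making the entire truncation argument uniform in $h$.
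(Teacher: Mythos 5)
Your argument is correct in substance, but it follows a genuinely different route from the paper. The paper does not redo the module-theoretic analysis of Lemma \ref{almost finite dimensional orbit} at the level of $\mathcal N=N\rtimes_\sigma G$; instead it invokes Theorem \ref{bimodularapprox} for the inclusion $\mathcal N\subseteq\mathcal M$ to obtain, for a single finite family $x_1,\dots,x_n\in\mathcal M$, the uniform estimate $\|u_g y-\sum_i x_iE_{\mathcal N}(x_i^*u_gy)\|_2\leqslant\varepsilon$ for all $g\in G$ (uniformity coming from the fact that $u_g$ ranges over unitaries of $\mathcal N$), then truncates the $x_i$ to $MK$ for a finite $K\subseteq G$ via Kaplansky and reads off Fourier coefficients; the uniform bound on the resulting $\kappa(g,j)$ is automatic from the finiteness of $K$ and $\|y_l\|_\infty\leqslant\|y\|_\infty$. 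Your approach instead runs the orthonormal-basis expansion $\xi_h=\sum_j\eta_j\langle\eta_j,\xi_h\rangle$ directly over $\mathcal N$, and the new ingredient you correctly identify — the uniform operator-norm bound $\|\langle\eta_j,u_hx\rangle\|_\infty\leqslant\|L_{\eta_j}\|\,\|x\|_\infty$, combined with the fact that right multiplication by an element of $\mathcal N$ acts on $L^2(\mathcal M)$ with norm equal to its $\|\cdot\|_\infty$-norm — is exactly what replaces the scalar Cauchy--Schwarz step of Lemma \ref{almost finite dimensional orbit} and what the paper's route avoids by outsourcing the work to Theorem \ref{bimodularapprox}. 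Your version is more self-contained and makes the parallel with the $L(G)$ case transparent; the paper's version recycles machinery it needs elsewhere anyway.

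One small repair: $\overline{\mathcal Nx}^{\|\cdot\|_2}$ is a left, not a right, $\mathcal N$-module, so it does not directly admit a right $\mathcal N$-orthonormal basis. You should instead take $\mathcal H=\overline{\mathcal Nx\mathcal N}^{\|\cdot\|_2}$ (as in Lemma \ref{almost finite dimensional orbit}), or simply $\mathcal H=\overline{\sum_iy_i\mathcal N}^{\|\cdot\|_2}$ where $\mathcal Nx\subseteq\sum_iy_i\mathcal N$ from the one-sided quasinormalizer condition; either is a finitely generated right $\mathcal N$-module containing every $\xi_h=u_hx$, and the rest of your argument goes through verbatim.
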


\begin{proof} To simplify the writing, let $A= M\rtimes_\sigma G$, $B=N\rtimes_\sigma G$ and $C=\mathcal P_{N,M}^{\prime\prime}\rtimes_\sigma G$ and notice $B\subseteq C\subseteq A$. Fix $y\in \mathcal{QN}^{(1)}(B\subseteq A)$ and let $y= \sum_h y_h u_h$ be its Fourier expansion in $M\rtimes_\sigma G$. 

\noindent We will show that $y_h\in \mathcal P_{N,M}$ for all $h\in G $. Towards this, fix $\varepsilon>0$. As  $u_gy\in \mathcal{QN}^{(1)}(B\subseteq A)$, using Theorem \ref{bimodularapprox},  one can find $x_{i}\in A$ with $1\leqslant i\leqslant n$ so that  for every $g\in G$,  \begin{equation*}
    \begin{split}
        \|u_g y- \sum_i x_{i}E_{B}(x^*_{i}u
        _g y)\|_2\leqslant \varepsilon.
    \end{split}
\end{equation*} 
\noindent Approximating the $x_{i}$'s in $\|\cdot\|_2$ via the Kaplansky density theorem, in the prior inequality, we can assume that $x_i \in M K$ for a finite subset $K\subseteq G$.

Letting $x_i = \sum_{h\in K} x^i_h u_h$, the previous inequality implies that for all $g\in G$ we have 
\begin{equation*} \begin{split}
 \varepsilon^2 &\geqslant  \|u_g y- \sum_i x_{i}E_{B}(x^*_{i}u
        _g y)\|^2_2 \\
        &= \|\sum_{h\in G} u_g y_h u_{h}- \sum_{s,t\in K,l\in G}\sum_i x^i_s u_sE_{B}(u_{t^{-1}}(x^i_t)^* u_g y_l u_l)\|^2_2\\
        &=\|\sum_{h\in G} \sigma_g(y_h) u_{gh}- \sum_{s,t\in K,l\in G}\sum_i x^i_s E_{N}(\sigma_{st^{-1}}(x^i_t)^* \sigma_{st^{-1}g} (y_l)) u_{st^{-1}gl}\|^2_2\\
        &=\sum_{h\in G} \|\sigma_g(y_h) - \sum_{s,t\in K, i} x^i_s E_{N}(\sigma_{st^{-1}}(x^i_t)^* \sigma_{st^{-1}g} (y_{g^{-1} ts^{-1}gh})) \|^2_2\\
        &=\sum_{h\in G} \|\sigma_g(y_h) - \sum_{i,s} x^i_s \left(\sum_{t} E_{N}(\sigma_{st^{-1}}(x^i_t)^* \sigma_{st^{-1}g} (y_{g^{-1} ts^{-1}gh}))\right) \|^2_2.
        \end{split}
\end{equation*}

\noindent Since $K$ is a finite set this inequality clearly implies that each $y_h$ satisfies the compactness definition  with $\eta_j = x_s^i$ and $\kappa(g,j)= \sum_{t} E_{N}(\sigma_{st^{-1}}(x^i_t)^* \sigma_{st^{-1}g} (y_{g^{-1} ts^{-1}gh}))$. Thus $y_h\in \mathcal P_{N,M}$ for all $h\in G$ as desired. The rest of the conclusion follows.  \end{proof}

\noindent With these preparations at hand we are now ready prove the following generalization of Theorem \ref{QuasinormalizerCompactSystemTheorem} in the context of trace-preserving $W^*$-dynamical extension systems.

\begin{thm}\label{quasinormalizer2}  Let $(N\subseteq M, G, \sigma,\tau)$ be a $W^*$-dynamical extension system and assume that $\mathcal{QN}(N\subseteq M)''= M$.  Then $N\subseteq \mathcal P_{N,M}\subseteq M$ is $G$-invariant von Neumann subalgebra and $vN(\mathcal{QN}^{(1)}(N\rtimes_\sigma G\subseteq M\rtimes_\sigma G))= \mathcal P_{N,M}\rtimes_\sigma G$. 

\end{thm}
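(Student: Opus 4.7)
The first assertion---that $N\subseteq\mathcal{P}_{N,M}\subseteq M$ is a $G$-invariant von Neumann subalgebra---follows by combining Proposition~\ref{subspace} (which supplies $G$-invariance and SOT-closure as a linear subspace) with Theorem~\ref{vNalgebra} (which, under the hypothesis $\mathcal{QN}(N\subseteq M)''=M$, upgrades $\mathcal{P}_{N,M}$ to a von Neumann subalgebra by closure under multiplication and involution). The containment $N\subseteq\mathcal{P}_{N,M}$ is immediate from Definition~\ref{apr}. For the equality of von Neumann algebras, write $B:=N\rtimes_\sigma G$ and $A:=M\rtimes_\sigma G$. The inclusion $vN(\mathcal{QN}^{(1)}(B\subseteq A))\subseteq\mathcal{P}_{N,M}\rtimes_\sigma G$ is a direct consequence of Theorem~\ref{qnormalizer1}: that result gives $vN(\mathcal{QN}^{(1)}(\cdot))\subseteq\mathcal{P}_{N,M}''\rtimes_\sigma G$, and the first assertion yields $\mathcal{P}_{N,M}''=\mathcal{P}_{N,M}$.

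The substantial content lies in the reverse inclusion. Since $u_g\in B$ for every $g\in G$, it suffices to prove $\mathcal{P}_{N,M}\subseteq vN(\mathcal{QN}^{(1)}(B\subseteq A))$. Fix $x\in\mathcal{P}_{N,M}$ with $\|x\|_\infty\leqslant 1$ and $\varepsilon>0$. Using Definition~\ref{apr} together with the hypothesis $\mathcal{QN}(N\subseteq M)''=M$ and Kaplansky density, select $\eta_1,\ldots,\eta_n\in\mathcal{QN}(N\subseteq M)$ with controlled $\|\cdot\|_\infty$-norms and $\kappa(g,j)\in N$ uniformly bounded in $g$ satisfying $\|\sigma_g(x)-\sum_j\eta_j\kappa(g,j)\|_2<\varepsilon$ for every $g\in G$. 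Since each $\eta_j$ is a quasinormalizer of $N$ in $M$, there exist finite sets $\{y_i^j\}_i\subseteq M$ with $N\eta_j\subseteq\sum_iy_i^jN$. Using the covariance relation $u_hx=\sigma_h(x)u_h$ and the identity $\kappa(h,j)u_h=u_h\sigma_{h^{-1}}(\kappa(h,j))\in B$, a direct computation shows that for any $b=\sum_h n_h u_h\in B$ of finite Fourier support,
\[
bx=\sum_h n_h\sigma_h(x)u_h\;\text{ is within }C_b\varepsilon\text{ of }\sum_{h,j}n_h\eta_j(\kappa(h,j)u_h)\in\sum_{i,j}y_i^jB=:\mathcal{L},
\]
so the left $B$-orbit of $x$ lies $\|\cdot\|_2$-close to the finitely generated right $B$-module $\mathcal{L}$.

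To convert this metric approximate containment into membership in $vN(\mathcal{QN}^{(1)}(B\subseteq A))$, apply Theorem~\ref{bimodularapprox} (through Theorem~\ref{technicalresult}) to the $B$-bimodule $\overline{\sum_{i,j}By_i^jB}^{\|\cdot\|_2}\subseteq L^2(A)$, which is finitely generated as a right $B$-module. This produces, for each prescribed precision, SOT-approximations $x_\varepsilon$ to $x$ of the form $\sum_k w_k E_B(w_k^*x)$, with the $w_k$ chosen to be one-sided quasinormalizers of $B$ in $A$---this choice being forced by combining the quasinormalizer structure of the $\eta_j$ at the level of $N\subseteq M$ with the orbit control of $x$ ensuring compatibility at the crossed product level. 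The $\|\cdot\|_\infty$-norms of the $x_\varepsilon$ remain uniformly bounded (inherited from $\|x\|_\infty\leqslant 1$ and the estimate \eqref{contraction}), and Kaplansky density places $x$ in $vN(\mathcal{QN}^{(1)}(B\subseteq A))$.

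\textbf{The main obstacle} is this final transfer. The approximate containment $bx\in_\varepsilon\mathcal{L}$ is a purely $\|\cdot\|_2$-metric statement, but upgrading it to an $\|\cdot\|_\infty$-bounded SOT-approximation by genuine one-sided quasinormalizers requires arranging that the elements $w_k$ supplied by Theorem~\ref{bimodularapprox} themselves lie in $\mathcal{QN}^{(1)}(B\subseteq A)$, not merely in $A$. The quasinormalizer property of $\eta_j$ is relative to the pair $N\subseteq M$ and does not automatically lift to $B\subseteq A$ because the orbits of the $\eta_j$ under $\sigma$ need not be controlled; the hypothesis $x\in\mathcal{P}_{N,M}$ is precisely what reconciles these two levels, and keeping the bookkeeping uniform over the approximation parameter is the technical heart of the proof.
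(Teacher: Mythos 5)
Your treatment of the first assertion and of the inclusion $vN(\mathcal{QN}^{(1)}(B\subseteq A))\subseteq \mathcal{P}_{N,M}\rtimes_\sigma G$ is correct and matches the paper (Proposition \ref{subspace}, Theorem \ref{vNalgebra}, Theorem \ref{qnormalizer1}), as is the reduction of the reverse inclusion to showing $\mathcal{P}_{N,M}\subseteq vN(\mathcal{QN}^{(1)}(B\subseteq A))$. The problem is the step you yourself flag as ``the main obstacle'': it is not a technicality to be bookkept but the actual content of the theorem, and the mechanism you sketch does not close it. Two concrete issues. First, your approximate containment $bx\in_{C_b\varepsilon}\mathcal{L}$ carries a constant $C_b=\sum_h\|n_h\|_\infty$ that blows up over the unit ball of $B$, so it does not yield any uniform statement about the right $B$-module generated by $Bx$; and even a genuine uniform approximate containment of $\overline{BxB}$ in a finitely generated right $B$-module would not by itself place $x$ in the von Neumann algebra generated by \emph{exact} one-sided quasinormalizers. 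Second, Theorem \ref{bimodularapprox} produces elements $w_k\in A$ that are $B$-orthogonal generators of a bimodule; nothing in that theorem, nor in the quasinormalizer property of the $\eta_j$ relative to $N\subseteq M$, forces the $w_k$ to lie in $\mathcal{QN}^{(1)}(B\subseteq A)$, and your phrase ``this choice being forced by\ldots'' asserts exactly what needs to be proved.

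The paper closes the gap by a dual, orthogonality-based argument rather than by exhibiting approximating quasinormalizers. Set $D:=vN(\mathcal{QN}^{(1)}(B\subseteq A))$. By Theorem \ref{rwahp} the triple $B\subseteq D\subseteq A$ satisfies the relative WAHP, and one may take the witnessing unitaries of the form $v_ku_{g_k}$ with $v_k\in\mathcal{U}(N)$. Given $\zeta\in\mathcal{P}_{N,M}$ one puts $\xi:=\zeta-E_D(\zeta)\in A\ominus D$ and shows $\xi=0$ by pairing its almost periodic approximants against this mixing sequence and letting $k\to\infty$. This requires an extra technical step you do not anticipate: since $E_D(\zeta)$ lives in the crossed product and not in $M$, one must first establish a modified almost periodicity for $\xi$ involving approximants of the form $\sum_j\eta_j\kappa(g,j)u_{gh_jg^{-1}}$ (Claim \ref{modifiedapr} in the paper), which uses that $D\subseteq\mathcal{P}_{N,M}\rtimes_\sigma G$ and that the Fourier coefficients of elements of $D$ are themselves relatively almost periodic. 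The relative WAHP is precisely the tool that converts ``almost periodic relative to $N$'' into ``absorbed by $D$,'' and without it (or an equivalent weak-mixing-versus-compactness dichotomy for the triple $B\subseteq D\subseteq A$) your outline does not constitute a proof.
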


\begin{proof} As the first part of the conclusion is immediate from Theorem \ref{vNalgebra}, we will only prove the second part. Denote by $B:=N \rtimes_\sigma G$, $D:=vN(\mathcal{QN}^{(1)}(N\rtimes_\sigma G \subseteq M\rtimes_\sigma G))$ and $ A:=M \rtimes_\sigma G$. By Theorem \ref{rwahp}, the triple  $B \subseteq D\subseteq  A$ satisfies the relative WAHP. Moreover, by Theorem \ref{srwm} one can pick the net of unitaries witnessing the relative WAHP in any subgroup of unitaries generating $B$; in particular, we can pick them in $\mathcal U (N)G$. Thus one can find $(v_\lambda)_\lambda\subseteq \mathcal U(N)$ and $(u_{g_\lambda})_\lambda\subseteq G$ such that for every $\xi,\eta \in A \ominus D$ we have  \begin{equation}\label{weakmixingrelative}
    \lim_{\lambda}\|E_B(\xi v_\lambda u_{g_\lambda} \eta )\|_2=0.
\end{equation}    
Letting $P:=\mathcal P_{N,M}$, Theorem \ref{qnormalizer1} implies our conclusion, once we show that $P \subseteq D$. Toward this, fix  $\zeta\in P$ be such that $\norm{\zeta}\leqslant 1$ and let $\xi := \zeta - E_D(\zeta)\in M\ominus D$. Next we will argue that $\xi=0$, which will give the conclusion. 
\vskip 0.06in 
\noindent First, we need to establish the following. 

\begin{claim}\label{modifiedapr} For every $\varepsilon>0$  one can find $\eta_1,\ldots,\eta_n\in L^2 (M)$ and $h_1,\ldots, h_n\in G$ (non-necessarily distinct), so that for every $g\in G$ there are $\kappa(g,j) \in N$
with $1\leqslant j\leqslant n$ satisfying \begin{equation}\label{compactorbit4}
     \sup_{g\in G}\|\kappa(g,j)\|_\infty=C<\infty\text{ and }\| \sigma_g(\xi)-\sum_j \eta_j \kappa(g,j)u_{gh_jg^{-1}}\|_2<\varepsilon. 
\end{equation}
    
\end{claim}
\noindent \emph{Proof of Claim \ref{modifiedapr}}. Fix $\varepsilon>0$. As $\zeta\in P$   one can find $\eta'_1,\ldots,\eta'_m\in L^2 (M)$ such that for every $g\in G$ there exist $\kappa'(g,i) \in N$
with $1\leqslant i\leqslant m$ satisfying \begin{equation}\label{apr6}
     \sup_{h\in G}\|\kappa'(h,i)\|_\infty=:C'<\infty\text{ and }\| \sigma_g(\zeta)-\sum_i \eta'_i \kappa'(g,i)\|_2<\frac{\varepsilon}{2}. 
\end{equation}\vskip 0.03in 
\noindent  Using Theorem \ref{qnormalizer1}, we have $E_{D}(\zeta)\in D\subseteq P\rtimes_\sigma G$. Thus one can find a finite subset $F\subseteq G$ and $a_s\in (P)_1$ for all $s\in F$  such that $\|E_D(\zeta)- \sum_{s\in F} a_s u_s\|_2\leqslant \frac{\varepsilon}{4}$. Hence for all $g \in G$ we have \begin{equation}\label{norm2aprox}\|\sigma_g(E_D(\zeta))- \sum_{s\in F} \sigma_g(a_s) u_{gsg^{-1}}\|_2\leqslant \frac{\varepsilon}{4}.\end{equation}  As $a_s\in P$ there are  $\eta^s_1,\ldots,\eta^s_{n_s}\in L^2(M)$ so that for every $g\in G$ there are $\kappa^s(g,j) \in N$
with $1\leqslant j\leqslant n_s$ satisfying 
     \begin{equation}\label{apr5}\begin{split}&  \sup_{h\in G,j,s}\|\kappa^s(h,j)\|_\infty=: C''<\infty\text{ and }  \| \sigma_g(a_s)-\sum_j \eta^s_j \kappa^s(g,j)\|_2<\frac{\varepsilon}{4|F|}.
    \end{split}\end{equation} 
 Combining inequalities \eqref{norm2aprox} and \eqref{apr5}, for all $g\in G$  we get  \begin{equation*}\begin{split}
     &\|\sigma_g(E_D(\zeta))-\sum_{s\in F}\sum_j \eta^s_j \kappa^s(g,j)u_{gsg^{-1}} \|_2\\
\leqslant&\|\sigma_g(E_D(\zeta))- \sum_{s\in F} \sigma_g(a_s) u_{gsg^{-1}}\|_2 + \sum_{s\in F}\| \sigma_g(a_s)-\sum_j \eta^s_j \kappa^s(g,j)\|_2\\<& \frac{\varepsilon}{4}+ |F|\frac{\varepsilon}{4|F|}= \frac{\varepsilon}{2}. \end{split} \end{equation*}
 
\noindent  Combining this with relation \eqref{apr6}, for all $g\in G$, we have  
\begin{equation*}\begin{split}
 \| \sigma_g(\xi)-\left ( \sum_i \eta'_i \kappa'(g,i)- \sum_{s\in F} \sum_{j} \eta^s_j \kappa^s(g,j)u_{gsg^{-1}}\right )\|_2\leqslant \varepsilon.  
\end{split}
\end{equation*}
 This together with the uniform boundedness conditions from \eqref{apr6}-\eqref{apr5} yield our claim. $\hfill\blacksquare$
   
\vskip 0.08in

\noindent Now fix an arbitrary $\varepsilon>0$. Then by Claim \ref{modifiedapr} one can find $\eta_1,\ldots,\eta_n\in L^2 (M)$ and $h_1,\ldots, h_n\in G$, so that for every $g\in G$ there are $\kappa(g,j) \in N$
with $1\leqslant j\leqslant n$ satisfying \begin{equation}\label{compactorbit4}
     \sup_{g\in G}\|\kappa(g,j)\|_\infty=C<\infty\text{ and }\| \sigma_g(\xi)-\sum_j \eta_j \kappa(g,j)u_{gh_jg^{-1}}\|_2<\frac{\varepsilon}{4}. 
\end{equation}

\noindent  As $\mathcal{QN}(N\subseteq M)''=M$, using basic approximations we can assume  $\eta_i\in\mathcal{QN}(N\subseteq M)$. Using Theorem \ref{bimodularapprox} one can find  $x_{j,i}\in M$ such that $\|x \eta_j y- \sum_ix_{j,i}E_N(x_{j,i}^* x\eta_j y)\|_2\leqslant \frac{\varepsilon\|x\|_\infty\|y\|_\infty}{4Cn}$ for all $x,y\in N$. This inequality together with the second part of \eqref{compactorbit4}  implies that  for every $x\in (N)_1$ and $g\in G$ we have \begin{equation}\label{compactorbit5}\|x \sigma_g(\xi)-\sum_{j,i} x_{j,i} E_N (x^*_{j,i} x\eta_j  k(g,j))u_{gh_jg^{-1}}\|_2<\frac{\varepsilon }{2}.\end{equation}
Since $E_D(\xi)=0$ we have that  $E_D(x\sigma_g(\xi))=0$, for all $x\in N$. This further implies that  $\|\sum_{j,i} E_D(x_{j,i}) E_N (x^*_{j,i} x\eta_j  k(g,j))u_{gh_jg^{-1}}\|_2<\frac{\varepsilon }{2}$. Combining it with \eqref{compactorbit5} and letting  $y_{j,i}:=  x_{j,i}-E_D(x_{j,i})$, for every $x\in N$ and $g\in G$  we get  \begin{equation}\|x \sigma_g(\xi)-\sum_{j,i} y_{j,i} E_N (x^*_{j,i} x\eta_j  k(g,j))u_{gh_jg^{-1}}\|_2<\varepsilon .\end{equation}
Using this inequality we see that for $x=v_\lambda$ and $g=g_\lambda$ we have 
\begin{equation*} \begin{split}\|\xi\|^2_2&=  \|v_\lambda \sigma_{g_\lambda}(\xi)\|^2= \langle v_\lambda \sigma_{g_\lambda}(\xi),v_\lambda \sigma_{g_\lambda}(\xi) \rangle\\& \leqslant  \varepsilon +|\langle\sum_{j,i} y_{j,i} E_N (x^*_{j,i} v_\lambda\eta_j  \kappa(g_\lambda,j))u_{g_\lambda h_jg^{-1}_\lambda} , v_\lambda \sigma_{g_\lambda}(\xi)\rangle |\\
&\leqslant \varepsilon +\sum_{j,i}|\langle E_N (x^*_{j,i} v_\lambda\eta_j  \kappa(g_\lambda,j))u_{g_\lambda h_jg^{-1}_\lambda} ,  y_{j,i}^* v_\lambda \sigma_{g_\lambda}(\xi)\rangle |\\
&= \varepsilon +\sum_{j,i}|\langle E_N (x^*_{j,i} v_\lambda\eta_j  \kappa(g_\lambda,j))u_{g_\lambda h_j},  E_{B}(y_{j,i}^* v_\lambda u_{g_\lambda} \xi)\rangle |\\
&\leqslant \varepsilon +\sum_{j,i} C \|x_{j,i}\|_\infty \|\eta_j\|_2 \|E_{B}(y_{j,i}^* v_\lambda u_{g_\lambda} \xi)\|_2.
\end{split}    
\end{equation*}
Since by \eqref{weakmixingrelative} we have $\lim_{\lambda} \|E_{B}(y_{j,i}^* v_\lambda u_{g_\lambda} \xi)\|_2=0$ and the set $x_{j,i}$'s are finite (arguing with nets as in the proof of Lemma \ref{lemma:WeaklyMixingSingular}) the previous inequality gives that $\|\xi\|^2_2\leqslant \varepsilon$. Since $\varepsilon>0$ was arbitrary we get $\xi=0$, as desired. \end{proof}

\noindent We remark that the previous theorem and its proof yield the following corollary.

 \begin{cor} Let $(N\subseteq M, G, \sigma,\tau)$ be a $W^*$-dynamical extension system.  Then the $W^*$-dynamical extension system $(\mathcal P_{N,M}''\subseteq M, G, \sigma,\tau)$ is weak mixing relative to $N$ in the sense of Definition \ref{rwm}. 
 \end{cor}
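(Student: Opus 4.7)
The plan is to prove this as a direct corollary combining the ingredients already developed in Theorem \ref{qnormalizer1}, Theorem \ref{rwahp}, and Lemma \ref{lemma:WeaklyMixingSingular}, essentially by extracting what was observed in the first half of the proof of Theorem \ref{quasinormalizer2} without the quasi-regularity hypothesis. Set $P := \mathcal P_{N,M}''$, $B := N \rtimes_\sigma G$, $A := M \rtimes_\sigma G$, and $D := vN(\mathcal{QN}^{(1)}(B \subseteq A))$. First I would verify that $(P \subseteq M, G, \sigma, \tau)$ is a genuine $W^*$-dynamical extension system with $N \subseteq P$: the subspace $\mathcal P_{N,M}$ is $G$-invariant by Proposition \ref{subspace}, so its generated von Neumann algebra $P$ is $G$-invariant as well, and the inclusion $N \subseteq \mathcal P_{N,M}$ is immediate by taking $\eta_1 = 1$, $\kappa(g,1) = \sigma_g(n)$ in Definition \ref{apr}.

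Next I would invoke Theorem \ref{rwahp}, which tells us that the triple $B \subseteq D \subseteq A$ satisfies the relative WAHP. The parenthetical observation recorded during the proof of Theorem \ref{quasinormalizer2} (which is purely about the construction of the witnesses and does not use quasi-regularity) shows that the unitaries witnessing the relative WAHP can be chosen in $\mathcal U(N)G$. Thus there are sequences $(v_k)_k \subseteq \mathcal U(N)$ and $(g_k)_k \subseteq G$ such that $\lim_{k \to \infty} \|E_B(\xi v_k u_{g_k} \eta)\|_2 = 0$ for all $\xi, \eta \in A \ominus D$.

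By Theorem \ref{qnormalizer1}, $D \subseteq P \rtimes_\sigma G$. Hence for any $\xi, \zeta \in M \ominus P$, viewed in $L^2(A)$, we have $E_D(\xi) = E_D(\zeta) = 0$, so $\xi, \zeta \in A \ominus D$ and the previous limit gives $\lim_k \|E_B(\xi v_k u_{g_k} \zeta)\|_2 = 0$. A one-line computation using $v_k u_{g_k} \zeta = v_k \sigma_{g_k}(\zeta) u_{g_k}$ and $E_B(a u_g) = E_N(a)u_g$ for $a \in M$, $g \in G$, yields
\[
\|E_B(\xi v_k u_{g_k} \zeta)\|_2 = \|E_N(\xi v_k \sigma_{g_k}(\zeta)) u_{g_k}\|_2 = \|E_N(\xi v_k \sigma_{g_k}(\zeta))\|_2,
\]
so $\|E_N(\xi v_k \sigma_{g_k}(\zeta))\|_2 \to 0$ for all $\xi, \zeta \in M \ominus P$, which is exactly the condition of Definition \ref{rwm} for $(P \subseteq M, G, \sigma, \tau)$ to be weak mixing relative to $N$.

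There is no substantial obstacle here; the content of the corollary amounts to noticing that the relative WAHP of Theorem \ref{rwahp}, phrased for the triple $B \subseteq D \subseteq A$, automatically upgrades to a weak-mixing statement for any intermediate algebra $C$ with $D \subseteq C \subseteq A$, simply because the definition of weak mixing relative to $N$ (Definition \ref{rwm}) only tests vectors orthogonal to the intermediate algebra, and orthogonality to $C \supseteq D$ implies orthogonality to $D$. The only nontrivial input required is the containment $D \subseteq P \rtimes_\sigma G$, which is exactly Theorem \ref{qnormalizer1}.
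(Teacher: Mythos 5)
Your proposal is correct and follows exactly the route the paper intends: the authors simply remark that Theorem \ref{quasinormalizer2} ``and its proof'' yield the corollary, and what you have written out is precisely the extraction of the quasi-regularity-free portion of that proof (relative WAHP for $B\subseteq D\subseteq A$ with witnesses in $\mathcal U(N)G$ via Theorem \ref{rwahp}, the containment $D\subseteq \mathcal P_{N,M}''\rtimes_\sigma G$ from Theorem \ref{qnormalizer1}, and the Fourier-coefficient identity $\|E_B(\xi v_k u_{g_k}\zeta)\|_2=\|E_N(\xi v_k\sigma_{g_k}(\zeta))\|_2$). No gaps; this matches the paper's argument.
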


\noindent A further consequence of Theorem \ref{quasinormalizer2} is that, for crossed product inclusions associated to abelian $W^*$-dynamical extension systems, the von Neumann algebras generated by quasinormalizers and one-sided quasinormalizers coincide.

\begin{cor}\label{quasinorm-compact} If $ M$ is abelian and  $(N\subseteq M, G, \sigma,\tau)$ is an ergodic  $W^*$-dynamical extension system then $$vN(\mathcal {QN}^{(1)}(N\rtimes_\sigma G\subseteq M\rtimes_\sigma G))=\mathcal {QN}(N\rtimes_\sigma G\subseteq M\rtimes_\sigma G)''=\mathcal P_{N,M}\rtimes_\sigma G.$$
\end{cor}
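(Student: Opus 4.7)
The plan is to combine the upper bound from Theorem \ref{qnormalizer1} with an explicit construction of quasinormalizers from below. Note that Theorem \ref{quasinormalizer2} is not directly applicable, since in the abelian setting $\mathcal{QN}(N\subseteq M)''$ is typically strictly smaller than $M$ (an element $x\in M$ lies in $\mathcal{QN}(N\subseteq M)$ only if $Nx$ is a finitely generated $N$-module). First I would verify that $P:=\mathcal P_{N,M}$ is a $G$-invariant von Neumann subalgebra using only the commutativity of $M$. Given $\xi_1,\xi_2\in P$ with approximations $\sigma_g(\xi_l)\approx \sum_j \eta^{(l)}_j \kappa^{(l)}(g,j)$ from Definition \ref{apr}, commutativity lets us rearrange
\[\sigma_g(\xi_1\xi_2)\approx \sum_{i,j}\bigl(\eta^{(1)}_i\eta^{(2)}_j\bigr)\bigl(\kappa^{(1)}(g,i)\kappa^{(2)}(g,j)\bigr),\]
with uniformly bounded coefficients and finitely many fixed $M$-terms, so $\xi_1\xi_2\in P$; involution is handled analogously, and $G$-invariance and SOT-closedness come from Proposition \ref{subspace}.

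Next, apply Proposition \ref{same} to the extension $(N\subseteq P,G,\sigma,\tau)$, which is compact by the very definition of $P$. This yields a decomposition $L^2(P)=\bigoplus_i \mathcal{H}_i$ into finitely generated, $G$-invariant, $N$-submodules (the left and right $N$-actions coincide on $L^2(P)\subseteq L^2(M)$ by abelianness), with each $\mathcal H_i$ admitting a finite $N$-basis $\{\eta^{(i)}_1,\ldots,\eta^{(i)}_{k_i}\}\subseteq P$ of bounded vectors. The core step is to show that each $\eta_j:=\eta^{(i)}_j$ is a two-sided quasinormalizer of $B:=N\rtimes_\sigma G$ in $M\rtimes_\sigma G$. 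Using $G$-invariance of $\mathcal H_i$, expand $\sigma_g(\eta_j)=\sum_m \eta_m c_{m,j}(g)$ and $\sigma_{g^{-1}}(\eta_j)=\sum_l \eta_l d_{l,j}(g)$ with $c_{m,j}(g),d_{l,j}(g)\in N$. Then direct computation gives
\begin{align*}
n u_g \eta_j &= n\sigma_g(\eta_j)u_g = \sum_m \eta_m\bigl(n c_{m,j}(g) u_g\bigr)\in \sum_m \eta_m B,\\
\eta_j n u_g &= n\eta_j u_g = n u_g \sigma_{g^{-1}}(\eta_j)=\sum_l n u_g d_{l,j}(g)\eta_l\in \sum_l B\eta_l,
\end{align*}
where commutativity of $M$ is used to move $\eta_l$ past the elements $d_{l,j}(g)\in N$ in the second line. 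An adaptation of Lemma \ref{weakly closed modules} to the present setting (noting that $E_B|_M=E_N$ on $M\cap B=N$, so $N$-orthonormality of $\{\eta_l\}$ upgrades to $B$-orthonormality) shows that $\sum_m\eta_m B$ and $\sum_l B\eta_l$ are weakly closed, which extends these algebraic containments to arbitrary $b\in B$. Hence $\eta_j\in\mathcal{QN}(B\subseteq M\rtimes_\sigma G)$.

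Finally, since $\bigcup_i\mathcal H_i$ is $\|\cdot\|_2$-dense in $L^2(P)$ and the bounded part of each $\mathcal H_i$ is generated by the $\eta^{(i)}_j$'s over $N$, the set $\{\eta^{(i)}_j\}_{i,j}\cup N$ generates $P$ as a von Neumann algebra. Consequently $\{\eta^{(i)}_j\}_{i,j}\cup B$ generates $P\rtimes_\sigma G$ inside $M\rtimes_\sigma G$, and since each generator is a two-sided quasinormalizer, $P\rtimes_\sigma G\subseteq \mathcal{QN}(B\subseteq M\rtimes_\sigma G)''$. Combining with the general chain
\[\mathcal{QN}(B\subseteq M\rtimes_\sigma G)''\subseteq vN\bigl(\mathcal{QN}^{(1)}(B\subseteq M\rtimes_\sigma G)\bigr)\subseteq P\rtimes_\sigma G,\]
where the last containment is Theorem \ref{qnormalizer1}, yields the triple equality.

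The main obstacle is the rearrangement in the second paragraph: the identity $\eta_j u_g=\sum_l(u_g d_{l,j}(g))\eta_l$ relies crucially on $\eta_l d_{l,j}(g)=d_{l,j}(g)\eta_l$, which holds only in the abelian setting, and this is precisely why the corollary fails to have a straightforward analogue for general $M$. The secondary technical point, that the one-sided modules $\sum_l B\eta_l$ and $\sum_m\eta_m B$ are weakly closed, is a routine but not entirely trivial adaptation of Lemma \ref{weakly closed modules}.
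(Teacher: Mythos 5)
Your proof is essentially correct and, in its main line, parallels the paper's: the upper bound comes from Theorem \ref{qnormalizer1}, and the lower bound comes from decomposing the almost periodic part into finitely generated $G$-invariant $N$-modules with $N$-bases in $\mathcal P_{N,M}$ (via Proposition \ref{same}) and checking by the same commutativity computation that each basis vector is a two-sided quasinormalizer of $N\rtimes_\sigma G$. Your replacement of the paper's Jones-projection argument (which shows the coefficient maps $x\mapsto\sum_g E_N(\eta_j^*\sigma_g(\eta_k))a_gu_g$ extend WOT-continuously) by an adaptation of Lemma \ref{weakly closed modules} is legitimate: $N$-orthonormality of the $\eta_l$ does upgrade to $B$-orthonormality since $E_B\vert_M=E_N$, so the reconstruction formula and the Krein--Smulyan argument go through.

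However, your stated reason for bypassing Theorem \ref{quasinormalizer2} is wrong. When $M$ is abelian, \emph{every} $x\in M$ is a quasinormalizer of $N$: the condition is $Nx\subseteq\sum_i x_iN$ (containment in a finitely generated module, not that $Nx$ itself be finitely generated), and $Nx=xN\subseteq xN$ verifies it with a single $x_1=x$. Hence $\mathcal{QN}(N\subseteq M)''=M$ automatically, Theorem \ref{vNalgebra} gives that $\mathcal P_{N,M}$ is a von Neumann algebra, and Theorem \ref{quasinormalizer2} applies directly — this is exactly the route the paper takes. Your hand-built verification that $\mathcal P_{N,M}$ is an algebra (moving the $\kappa$'s past the $\eta$'s by commutativity, with the operator-norm bounds on $\xi_1$ and on $\sum_j\eta_j^{(2)}\kappa^{(2)}(g,j)$ controlling the product approximation) is sound, so the detour costs correctness nothing, but it is unnecessary and the claim motivating it should be deleted or corrected.
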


\begin{proof} First we note that since $M$ is abelian then $\mathcal {QN}(N\subseteq M)''=M$ and hence by Theorem \ref{vNalgebra}, $\mathcal P_{N,M}$ is a von Neumann algebra. Thus, by Theorem \ref{quasinormalizer2}, to get our conclusion we only need to show that $\mathcal P_{N,M}\rtimes_\sigma G \subseteq \mathcal {QN}(N\rtimes_\sigma G\subseteq M\rtimes_\sigma G)''$. Since $M$ is abelian then by Proposition \ref{same} we have  $\cK_{N,M}=\oplus_i \cH_i$, where $\cH_i$ are finitely generated, $G$-invariant $N$-modules.  Moreover, since the action of $G$ on $M$ is ergodic, using a standard argument (e.g.\ \cite[Proposition 3.4]{IT20}) we can assume each $\cH_i$ admits a finite $N$-basis, $(\eta_j)_j\subseteq \mathcal P_{N,M}$. In other words, for every $\xi\in \cH_i$ we have $\xi= \sum_j \eta_j E_N(\eta_j^*\xi)=\sum_j  E_N(\eta_j^*\xi)\eta_j$. Thus for every $g\in G$ we have $\sigma_g(\eta_k) = \sum_j \eta_j E_N(\eta_j^*\sigma_g (\eta_k))$ and hence $u_g \eta_k = \sum_j \eta_j E_N(\eta_j^*\sigma_g (\eta_k))u_g$. Since $M$ is abelian this further gives  $ a u_g \eta_k = \sum_j\eta_j E_N(\eta_j^*\sigma_g(\eta_k)) au_g$  for all $g\in G$   and $a\in N$. Thus for every finite combination $x= \sum_{g} a_g u_g\in N\rtimes_{\sigma,{\rm alg}} G$ we have that $x \eta_k=\sum_j \eta_j \phi_{j,k}(x)$ where we denoted $\phi_{j,k}: N\rtimes_{\sigma,{\rm alg}} G \rar N\rtimes_{\sigma,{\rm alg}} G$ the linear map given by $\phi_{j,k}(\sum_g a_gu_g) = \sum_g E_{N}(\eta_j^*\sigma_g(\eta_k)) a_g u_g$. As $N$ is abelian basic calculations show that for every $x=\sum_g a_g u_g\in N\rtimes_{\sigma,{\rm alg}}G$ we have  \begin{equation*}\begin{split}
    &e_{N\rtimes_\sigma G} \eta_j^* x \eta_k e_{N\rtimes_\sigma G}= \sum_g e_{N\rtimes_\sigma G} \eta_j^* a_g u_g \eta_k e_{N\rtimes_\sigma G}\\
    &= \sum_g e_{N\rtimes_\sigma G} \eta_j^* a_g \sigma_g (\eta_k) u_g e_{N\rtimes_\sigma G}=\sum_g e_{N\rtimes_\sigma G} \eta_j^*  \sigma_g (\eta_k) a_gu_g e_{N\rtimes_\sigma G}\\
    &=\sum_g e_{N\rtimes_\sigma G} \eta_j^*  \sigma_g (\eta_k)e_{N\rtimes_\sigma G} a_gu_g  =\sum_g E_{N\rtimes_\sigma G} (\eta_j^*  \sigma_g (\eta_k))e_{N\rtimes_\sigma G} a_gu_g\\
    &=\left(\sum_g E_{N} (\eta_j^*  \sigma_g (\eta_k)) a_gu_g\right) e_{N\rtimes_\sigma G}= \phi_{j,k}(x) e_{N\rtimes_\sigma G}. \end{split}
\end{equation*}
In particular, this relation implies that $\phi_{j,k}$ extends to a WOT-continuous linear map $\phi_{j,k}: N\rtimes_\sigma G \rar  N\rtimes_\sigma G$ which still satisfies $x \eta_k=\sum_j \eta_j \phi_{j,k}(x)$ for all $x\in  N\rtimes_\sigma G$. Therefore we have  $ ( N\rtimes_\sigma G)  \eta_k \subseteq \sum_j\eta_j ( N\rtimes_\sigma G)$. Similarly one can show that $\eta_k( N\rtimes_\sigma G)   \subseteq \sum_j( N\rtimes_\sigma G)\eta_j $ and hence  $\eta_k\in \mathcal {QN}( N\rtimes_\sigma G\subseteq  M\rtimes_\sigma G)$ for all $k$. Since the $\eta_j$'s are $N$-basis for each $\cH_i$ we conclude that $\mathcal P_{N,M}\subseteq \mathcal  {QN}(N\subseteq M)''$, as desired.  \end{proof}


\begin{thm}\label{intsubalg4} Let $(N\subseteq M, G, \sigma,\tau)$ be a $W^*$-dynamical extension system, where $G$ is an i.c.c. group. Then, there exist $G$-invariant von Neumann subalgebras $Q \subseteq P\subseteq \mathcal P_{N,M}$ satisfying \begin{enumerate}\item $\mathcal {QN}(N\rtimes_\sigma G\subseteq M\rtimes_\sigma G)''=P\rtimes_\sigma G$, and 
\item $\mathcal {N}(N\rtimes_\sigma G\subseteq M\rtimes_\sigma G)''=Q\rtimes_\sigma G$.\end{enumerate}
\end{thm}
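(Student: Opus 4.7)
The plan is to combine Theorem \ref{qnormalizer1} with a Galois-type result for intermediate subalgebras of crossed products by i.c.c.\ groups. Since $\mathcal{QN}(N\rtimes_\sigma G\subseteq M\rtimes_\sigma G) \subseteq \mathcal{QN}^{(1)}(N\rtimes_\sigma G\subseteq M\rtimes_\sigma G)$, Theorem \ref{qnormalizer1} exhibits $\mathcal{QN}(N\rtimes_\sigma G\subseteq M\rtimes_\sigma G)''$ as an intermediate subalgebra
\[
N\rtimes_\sigma G \;\subseteq\; \mathcal{QN}(N\rtimes_\sigma G\subseteq M\rtimes_\sigma G)'' \;\subseteq\; \mathcal{P}_{N,M}''\rtimes_\sigma G,
\]
where $\mathcal{P}_{N,M}''$ is a $G$-invariant von Neumann subalgebra of $M$ by Proposition \ref{subspace}. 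Since $G$ is i.c.c., I would then invoke \cite[Theorem 3.10]{ChDa} (or equivalently \cite[Corollary 3.11]{JS19}) --- exactly as was done in the proof of Corollary \ref{profinite normalizer2} --- to extract a $G$-invariant von Neumann subalgebra $N\subseteq P\subseteq \mathcal{P}_{N,M}''$ with $\mathcal{QN}(N\rtimes_\sigma G\subseteq M\rtimes_\sigma G)''=P\rtimes_\sigma G$.

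Next I would upgrade the conclusion from $P\subseteq \mathcal{P}_{N,M}''$ to $P\subseteq \mathcal{P}_{N,M}$. The set $\mathcal{QN}(N\rtimes_\sigma G\subseteq M\rtimes_\sigma G)$ is a unital $*$-subalgebra of $M\rtimes_\sigma G$, because products, sums, and adjoints of two-sided quasinormalizers remain two-sided quasinormalizers. Given $p\in P = M\cap \mathcal{QN}(N\rtimes_\sigma G\subseteq M\rtimes_\sigma G)''$, Kaplansky density produces a net $(y_\lambda)\subseteq \mathcal{QN}(N\rtimes_\sigma G\subseteq M\rtimes_\sigma G)$ with $\|y_\lambda\|\leqslant \|p\|$ and $y_\lambda\to p$ in SOT. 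Applying the trace-preserving conditional expectation $E_M\colon M\rtimes_\sigma G\to M$, which is SOT-continuous on bounded sets, yields $E_M(y_\lambda)\to p$ in SOT. But $E_M(y_\lambda)$ is the $e$-th Fourier coefficient of $y_\lambda\in \mathcal{QN}^{(1)}(N\rtimes_\sigma G\subseteq M\rtimes_\sigma G)$, so by Theorem \ref{qnormalizer1} it lies in $\mathcal{P}_{N,M}$; as this subspace is SOT-closed (Proposition \ref{subspace}), we conclude $p\in \mathcal{P}_{N,M}$.

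For the normalizer statement, I would observe that $\mathcal{N}(N\rtimes_\sigma G\subseteq M\rtimes_\sigma G) \subseteq \mathcal{QN}(N\rtimes_\sigma G\subseteq M\rtimes_\sigma G)$, so
\[
N\rtimes_\sigma G \;\subseteq\; \mathcal{N}(N\rtimes_\sigma G\subseteq M\rtimes_\sigma G)'' \;\subseteq\; P\rtimes_\sigma G.
\]
A second application of \cite[Theorem 3.10]{ChDa} to this intermediate inclusion (still using that $G$ is i.c.c.) produces a $G$-invariant von Neumann subalgebra $N\subseteq Q\subseteq P$ with $\mathcal{N}(N\rtimes_\sigma G\subseteq M\rtimes_\sigma G)''=Q\rtimes_\sigma G$, which closes the argument.

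The principal obstacle I anticipate is the clean application of \cite[Theorem 3.10]{ChDa}/\cite[Corollary 3.11]{JS19}: one must ensure that the intermediate-subalgebra theorem applies in the present generality of extension systems (this is precisely where the i.c.c.\ hypothesis on $G$ enters). The remaining ingredients --- Theorem \ref{qnormalizer1} for the enveloping containment, the Kaplansky density + normality of $E_M$ argument, and the SOT-closedness of $\mathcal{P}_{N,M}$ from Proposition \ref{subspace} --- are straightforward and already in place in the paper.
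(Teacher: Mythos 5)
Your plan hinges on invoking \cite[Theorem 3.10]{ChDa} (or \cite[Corollary 3.11]{JS19}) as a black-box Galois correspondence for the intermediate inclusion $N\rtimes_\sigma G \subseteq \mathcal{QN}(N\rtimes_\sigma G\subseteq M\rtimes_\sigma G)'' \subseteq \mathcal{P}_{N,M}''\rtimes_\sigma G$, and this is a genuine gap: no such intermediate-subalgebra theorem is available in the generality of an arbitrary $W^*$-dynamical extension system. The cited results concern crossed products of \emph{compact} actions on \emph{abelian} algebras, which is why elsewhere in the paper (Corollary \ref{profinite normalizer2}, Theorem \ref{tensoraction profinite}) they are applied only after the normalizer has been trapped inside $L^\infty(X_k)\rtimes_\alpha G$ or $B_k\rtimes_\alpha G$ with $X_k$, $B_k$ finite-dimensional. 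Here the middle algebra $\mathcal{P}_{N,M}''$ is a general noncommutative $G$-invariant subalgebra (and, absent quasi-regularity of $N\subseteq M$, it may strictly contain $\mathcal{P}_{N,M}$, so the extension $N\subseteq\mathcal{P}_{N,M}''$ need not even be compact in the sense of Definition \ref{compactquasinormalincl}). If a blanket correspondence of the kind you invoke were available, it would also apply to the intermediate subalgebra $vN(\mathcal{QN}^{(1)}(N\rtimes_\sigma G\subseteq M\rtimes_\sigma G))$ and would instantly settle the question raised in the Remark immediately following this theorem --- which the authors explicitly pose as open. So the obstacle you flagged at the end is not a technicality to be checked; it is the actual content of the theorem.

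What the paper does instead is adapt the \emph{argument} of \cite[Theorem 3.10]{ChDa} rather than its statement, exploiting the specific structure of $S:=\mathcal{QN}(N\rtimes_\sigma G\subseteq M\rtimes_\sigma G)''$. From Theorem \ref{qnormalizer1} one gets the sharper containment $S\subseteq \overline{{\rm span}\,\mathcal{P}_{N,M}G}^{\|\cdot\|_2}$, i.e.\ every $y\in S$ has Fourier coefficients in $\mathcal{P}_{N,M}$. One then proves $E_S(\xi)=E_{M\cap S}(\xi)$ for all $\xi\in\mathcal{P}_{N,M}$: the ChDa argument gives $E_S(\xi)=E_M\circ E_S(\xi)$ (this is where relative almost periodicity and the i.c.c.\ hypothesis enter), whence by iteration $E_S(\xi)=(E_S\circ E_M\circ E_S)^k(\xi)$, and $(e_Se_Me_S)^k\to e_S\wedge e_M=e_{S\cap M}$ in SOT. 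Applying $E_S$ coefficientwise to $y=\sum_g\eta_gu_g\in S$ then yields $y=\sum_g E_{M\cap S}(\eta_g)u_g$, so $S\subseteq\overline{(M\cap S)\rtimes_\sigma G}^{\|\cdot\|_2}$, and since $(M\cap S)\rtimes_\sigma G\subseteq S$ one concludes $S=(M\cap S)\rtimes_\sigma G$ with $P=M\cap S\subseteq\mathcal{P}_{N,M}$ read off directly from the Fourier expansion (so your Kaplansky-density upgrade, while correct in itself, is not needed). The normalizer case is handled the same way. If you want to salvage your outline, the piece you must supply is precisely a proof of the identity $E_S(\xi)=E_M(E_S(\xi))$ for $\xi\in\mathcal{P}_{N,M}$; citing the correspondence theorem does not provide it.
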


\begin{proof} By Theorem \ref{qnormalizer1} we have that $\mathcal {QN}(N\rtimes_\sigma G\subseteq M\rtimes_\sigma G)\subseteq \mathcal {QN}^{(1)}(N\rtimes_\sigma G\subseteq M\rtimes_\sigma G)\subseteq \overline{{\rm span}\,\mathcal P_{N,M}G}^{\|\cdot\|_2}$ and hence $\mathcal {QN}(N\rtimes_\sigma G\subseteq M\rtimes_\sigma G)''\subseteq  \overline{{\rm span}\,\mathcal P_{N,M}G}^{\|\cdot\|_2}$. For simplicity denote by $S:=\mathcal {QN}(N\rtimes_\sigma G\subseteq M\rtimes_\sigma G)''$ and note that $N\rtimes_\sigma G \subseteq S\subseteq M\rtimes_\sigma G$. Using the same argument from the proof of \cite[Theorem 3.10]{ChDa} we see for every $\xi\in \mathcal P_{N,M}$ we have that $E_S(\xi)= E_M \circ E_S(\xi)$. By induction, this  further implies that $E_S(\xi)= (E_S\circ E_M \circ E_S)^k(\xi) $ for every positive integer $k$. Notice that the Jones projections satisfy $(e_Se_Me_S)^k \rightarrow e_S \wedge e_M$ in the SOT topology as $k\rightarrow \infty$. Since $e_S \wedge e_M = e_{M\cap S}$ (\cite[Theorem 4.3]{Sk}), altogether, the prior relations show that  \begin{equation}\label{goodproj}E_S(\xi)=E_{S\cap M}(\xi)\text{ for all }\xi\in \mathcal P_{N,M}.\end{equation} 

\noindent Fix $y\in S$. As $y\in\overline{{\rm span}\,\mathcal P_{N,M}G}^{\|\cdot\|_2}$ one can find $\eta_g\in \mathcal P_{N,M}$ for all $g\in G$ satisfying $y=\sum_{g\in G} \eta_g u_g$. Applying the expectation $E_S$ and using \eqref{goodproj} we further have that $y= E_S(y)= \sum_{g\in G} E_S(\eta _g)u_g = \sum_{g\in G} E_{M\cap S }(\eta_g)u_g$. In particular, this shows  $S\subseteq \overline{{\rm span} (M\cap S) G}^{\|\cdot\|_2}$. \vskip 0.05in \noindent Since $N\subseteq M\cap S \subseteq M$ is a $G$-invariant intermediate von Neumann algebra we have that ${\rm span} (M\cap S) G\subseteq (M\cap S)\rtimes_\sigma G$; hence $S\subseteq \overline{(M\cap S)\rtimes_\sigma G}^{\|\cdot\|_2}$. Since we canonically have  $(M\cap S)\rtimes_\sigma G \subseteq S$, then we conclude that $S= (M\cap S)\rtimes_\sigma G$. Letting $P=M\cap S$, the previous relations also show that $ P\subseteq \mathcal P_{N,M}$, finishing part (1) of the conclusion. The second part follows similarly and the details are left to the reader.\end{proof}

\noindent The prior results yield the following generalization of \cite[Corollary 3.14]{ChDa}.
\begin{cor}  Let $(N\subseteq M, G, \sigma,\tau)$ be a $W^*$-dynamical extension system, where $G$ is an i.c.c. group.  Then, for every intermediate von Neumann subalgebra $N\rtimes_\sigma G \subseteq Q\subseteq M\rtimes_\sigma G$ which admit a finite left and right Pimsner-Popa bases over $N \rtimes_\sigma G$, there is a $G$-invariant intermediate von Neumann algebra $N\subseteq P\subseteq M$ which admits finite left and right Pimsner-Popa bases over $N$ such that $Q=P\rtimes_\sigma G$. Moreover, if $N=\mathbb C1$ then $P$ is finite-dimensional.

\end{cor}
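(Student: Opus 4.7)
The strategy is to first place $Q$ inside the quasinormalizing algebra, which Theorem \ref{intsubalg4} identifies with a crossed product $P_0 \rtimes_\sigma G$, and then refine this containment using the conditional expectation argument from the proof of that theorem to extract $P := Q \cap M$.

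First, I would observe that the finite Jones index hypothesis $[Q:B]<\infty$, with $B := N \rtimes_\sigma G$, forces $Q \subseteq \mathcal{QN}(B \subseteq M \rtimes_\sigma G)''$. Indeed, $Q$ is then finitely generated both as a left $B$-module (say by $\{q_1,\ldots,q_n\}$) and as a right $B$-module (say by $\{q_1',\ldots,q_m'\}$). Setting $\{x_1,\ldots,x_{n+m}\}$ to be the union of these two sets, for every $q \in Q$ we have both $qB \subseteq Q = \sum_i B x_i$ and $Bq \subseteq Q = \sum_i x_i B$, so $q \in \mathcal{QN}(B \subseteq M\rtimes_\sigma G)$. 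By Theorem \ref{intsubalg4}(1), this places $Q$ inside $P_0 \rtimes_\sigma G$ for some $G$-invariant subalgebra $N \subseteq P_0 \subseteq M$.

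Next I would set $P := Q \cap M$. Because $L(G) \subseteq Q$ and $\sigma_g = \Ad(u_g)|_M$, $P$ is automatically $G$-invariant and satisfies $N \subseteq P \subseteq P_0$. The containment $P \rtimes_\sigma G \subseteq Q$ is immediate, so it remains to prove the reverse. Given $y \in Q$, Fourier-expand inside $P_0 \rtimes_\sigma G$ as $y = \sum_g p_g u_g$ with $p_g \in P_0 \subseteq \mathcal{P}_{N,M}$; the $L(G)$-bimodularity of $E_Q$ then gives $y = E_Q(y) = \sum_g E_Q(p_g) u_g$. The main obstacle, and the step requiring the most care, is verifying $E_Q(p_g) \in M$ for each $g$. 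The argument used in the proof of Theorem \ref{intsubalg4} (drawn from \cite[Theorem 3.10]{ChDa}) depends only on the relatively almost periodic structure of the coefficient, not on any special feature of the full quasinormalizing algebra $S$; it therefore transfers directly: one shows $E_Q(\xi) = E_M E_Q(\xi)$ for $\xi \in \mathcal{P}_{N,M}$, iterates to obtain the SOT convergence $(e_Q e_M e_Q)^k \to e_Q \wedge e_M = e_P$, and concludes $E_Q(\xi) = E_P(\xi)$. Applied to each $p_g$, this gives $y = \sum_g E_P(p_g) u_g \in P \rtimes_\sigma G$, so $Q = P \rtimes_\sigma G$. For the index bound, observe that for $x \in P_+$ one has $E_B(x) = E_N(x)$, so the Pimsner--Popa estimate $E_B(x) \ge [Q:B]^{-1} x$ valid on $Q_+$ restricts to $E_N(x) \ge [Q:B]^{-1} x$ on $P_+$, yielding $[P:N] \le [Q:B] < \infty$.
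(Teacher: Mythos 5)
Your proposal is correct and follows essentially the route the paper intends (the corollary is stated there as a direct consequence of Theorem \ref{intsubalg4}, with no written proof): finite Pimsner--Popa index places $Q$ inside the quasinormalizing algebra, and the $E_Q(\xi)=E_{Q\cap M}(\xi)$ argument from the proof of Theorem \ref{intsubalg4} indeed uses only that $Q$ is an intermediate algebra over $N\rtimes_\sigma G$ whose elements have Fourier coefficients in $\mathcal P_{N,M}$, so it transfers verbatim. Your restriction of the Pimsner--Popa inequality via $E_{N\rtimes_\sigma G}|_M=E_N$ correctly supplies the bound $[P:N]\leqslant [Q:N\rtimes_\sigma G]$.
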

\begin{proof}  Since $Q$ admits a finite left Pimsner-Popa basis and also a finite right Pimsner-Popa basis over $N\rtimes_\sigma G$, then $Q \subseteq \mathcal {QN}(N\rtimes_\sigma G, M\rtimes_\sigma G)''$. Thus, using Theorem \ref{intsubalg4}, one can find a $G$-invariant von Neumann subalgebra  $N\subseteq R \subseteq \mathcal P_{N,M}$ so that $N\rtimes_\sigma  G\subseteq Q\subseteq R \rtimes_{\sigma}G$. Further, by \cite[Theorem 3.10]{ChDa}, one can find a $G$-invariant von Neumann subalgebra $N\subseteq P \subseteq R\subseteq M$ such that $Q = P\rtimes_\sigma G$. Next notice the following inclusions diagram

\begin{equation*}
\begin{array}{ccc} N\rtimes_\sigma G & \subseteq & P\rtimes_\sigma G\\
\cup  & & \cup  \\
N&\subseteq & P  
\end{array}
\end{equation*}
is a non-degenerate commuting square in the sense of Popa, \cite{Po94}. Since $N\rtimes_\sigma G \subseteq P\rtimes_\sigma G$ admits a finite left (right) Pimsner-Popa basis then  using \cite[Proposition 1.1.5 (iii)]{Po94} we conclude that  $N\subseteq P$ also has a finite left (right) Pimsner-Popa basis. Thus, if $N = \mathbb C 1$, then $P$ is finite-dimensional.
\end{proof}

\begin{remark} In connection with Theorem \ref{intsubalg4} it would be interesting to know if there are examples of $W^*$-dynamical extension  systems $(N\subseteq M, G, \sigma,\tau)$ with $G$ i.c.c.\ for which  $vN(\mathcal{QN}^{(1)}(N\rtimes _\sigma G, M\rtimes_\sigma G))$ is not of the form $Q \rtimes_\sigma G $ for an intermediate von Neumann subalgebra $N\subseteq Q \subseteq M$. 
\end{remark}




\subsection{Von Neumann algebraic descriptions of the Furstenberg-Zimmer tower}\label{FZ} Using the prior results (e.g.\ Theorems \ref{qnormalizer1}-\ref{quasinormalizer2}) we show that the  Furstenberg and Zimmer structural theorems for action of groups on probability spaces, \cite{Fu77,Zi76} can be described solely in von Neumann algebraic terms,  using the language of one-sided quasinormlizing algebras and von Neumann subalgebras generated by the relatively almost periodic elements (see Corollary \ref{fztower3}). In particular, we recover an unpublished result of J. Peterson and the third author \cite{CP11}; see also \cite[Theorem 2.5]{ChDa}.  
\vskip 0.06in 
\noindent More generally, using the von Neumann algebraic framework we are able to introduce various types of Furstenberg-Zimmer structural towers even in the non-commutative case. We start with the following result for general $W^*$-dynamical extension systems.

\begin{thm}\label{fztower1} Let $(N\subseteq M, G, \sigma,\tau)$ be a $W^*$-dynamical extension system. Then one can find  an ordinal $\alpha$ and a $G$-invariant von Neumann subalgebra $N \subseteq Q_\beta \subseteq M$ for every $\beta\leqslant\alpha$ satisfying the following properties:  \begin{enumerate}
\item [1.] For all $\beta\leqslant  \beta' \leqslant  \alpha$ we have  $N=Q_o\subseteq   Q_\beta \subseteq Q_{\beta'} \subseteq M$;
\item [2.] For every successor ordinal $\beta+1 \leqslant  \alpha$ we have $Q_{\beta+1}= \mathcal P_{Q_\beta, M}''$ and  
\begin{align*}
 vN(\mathcal {QN}^{(1)}(Q_\beta \rtimes_\sigma G\subseteq M\rtimes_\sigma G))\subseteq Q_{\beta+1}\rtimes_\sigma G.   
\end{align*}
\item [3.] For every limit ordinal $\beta \leqslant \alpha$ we have $\overline{\cup_{\gamma<\beta} Q_\gamma}^{\rm SOT}  = Q_\beta$ and $\overline{\cup_{\gamma<\beta} Q_\gamma \rtimes_\sigma G}^{\rm SOT}  = Q_\beta\rtimes_\sigma G$.
\item [4.] There are nets $(g_\lambda)_\lambda\subseteq G$ and $(u_\lambda)_\lambda\subseteq \mathcal U(Q_\alpha)$ such that for every $x, y \in M \ominus  Q_\alpha$ we have  $$\lim_{\lambda} \|E_ {Q_\alpha} (x u_\lambda\sigma_{g_\lambda}(y))\|_2= 0.$$
\end{enumerate}
\end{thm}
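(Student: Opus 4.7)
The plan is to build the tower $(Q_\beta)_\beta$ by transfinite recursion. Set $Q_0 := N$. At a successor stage, let $Q_{\beta+1} := \mathcal{P}_{Q_\beta, M}''$, the von Neumann subalgebra of $M$ generated by the set of elements that are almost periodic relative to $Q_\beta$; at a limit ordinal $\beta$, set $Q_\beta := \overline{\bigcup_{\gamma<\beta} Q_\gamma}^{\rm SOT}$. A straightforward induction using Proposition \ref{subspace}(1) (which guarantees $G$-invariance of $\mathcal{P}_{Q_\beta, M}$) shows that each $Q_\beta$ is a $G$-invariant von Neumann subalgebra of $M$ containing $N$, giving (1). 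Because we have an ascending chain inside the (set-sized) lattice of $G$-invariant von Neumann subalgebras of $M$ that contain $N$, the chain must stabilize at some ordinal $\alpha$; that is, $Q_{\alpha+1} = Q_\alpha$, equivalently $\mathcal{P}_{Q_\alpha, M} \subseteq Q_\alpha$.

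Part (2) is then nearly immediate: the equality $Q_{\beta+1} = \mathcal{P}_{Q_\beta, M}''$ is built into the definition, and the inclusion
\[
vN(\mathcal{QN}^{(1)}(Q_\beta\rtimes_\sigma G\subseteq M\rtimes_\sigma G))\subseteq \mathcal{P}_{Q_\beta,M}''\rtimes_\sigma G = Q_{\beta+1}\rtimes_\sigma G
\]
follows at once from Theorem \ref{qnormalizer1} applied to the extension system $(Q_\beta\subseteq M, G, \sigma, \tau)$. Part (3) is a standard fact about crossed products of inductive limits: the first equality is the definition, and for the second one approximates any $\sum_g x_g u_g \in Q_\beta\rtimes_\sigma G$ in $\|\cdot\|_2$ by finitely supported Fourier sums, then uses $\|\cdot\|_2$-density of $\bigcup_{\gamma<\beta} Q_\gamma$ in $Q_\beta$ to push the Fourier coefficients into some $Q_\gamma$ with $\gamma<\beta$.

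The main obstacle is (4), which is the noncommutative analogue of the Furstenberg-Zimmer fact that the complement of the maximal distal factor is relatively weakly mixing. The strategy is by contradiction: suppose that for every choice of sequences $(g_n)_n \subseteq G$ and $(u_n)_n \subseteq \mathcal U(Q_\alpha)$ there exist $x, y \in M\ominus Q_\alpha$ and $\varepsilon>0$ with $\|E_{Q_\alpha}(xu_n\sigma_{g_n}(y))\|_2 \geqslant \varepsilon$ for infinitely many $n$. Using the $Q_\alpha$-valued inner product machinery from Section \ref{section:modules}, we consider the right $Q_\alpha$-submodule $\mathcal H \subseteq L^2(M)$ generated by $\{\sigma_g(y) : g\in G\}$ together with the family of bounded $Q_\alpha$-module maps $T_{x,u}: \mathcal H \to L^2(Q_\alpha)$, $\xi \mapsto E_{Q_\alpha}(xu\xi)$, $u\in \mathcal U(Q_\alpha)$. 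Failure of weak mixing means these operators are not asymptotically small along the orbit; a finite-rank/maximality argument on the $Q_\alpha$-module $\mathcal H$ (in the spirit of Theorem \ref{bimodularapprox}) then produces, for every $\varepsilon>0$, finitely many $\eta_1,\ldots,\eta_n\in M$ and uniformly bounded $\kappa(g,j)\in Q_\alpha$ approximating the orbit of a nonzero element $z\in M\ominus Q_\alpha$ in the sense of Definition \ref{apr}.

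This exhibits a nonzero $z\in \mathcal{P}_{Q_\alpha, M}$ lying outside $Q_\alpha$, contradicting $\mathcal{P}_{Q_\alpha, M}\subseteq Q_\alpha$ coming from stabilization. The existence of the single sequence $(g_n, u_n)$ simultaneously witnessing $\|E_{Q_\alpha}(xu_n\sigma_{g_n}(y))\|_2\to 0$ for \emph{all} $x, y\in M\ominus Q_\alpha$ (rather than for a fixed pair) is obtained by a standard separability/diagonalization argument, combined with Kaplansky density to reduce to countable dense families in the relevant unit balls; this separability is inherited from the tracial setting after passing, if needed, to a strongly dense subalgebra with separable predual. The hardest technical step is this module-theoretic extraction of a relatively almost periodic element from non-vanishing correlations; once that is in place the contradiction, and hence (4), is immediate.
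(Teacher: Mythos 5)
Your construction of the tower and your treatment of parts (1)--(3) coincide with the paper's argument. The divergence --- and the gap --- is in part (4). The paper obtains (4) with essentially no extra work: once the chain stabilizes at $\alpha$ one has $Q_\alpha=\mathcal P_{Q_\alpha,M}''$, so Theorem \ref{qnormalizer1} together with the trivial reverse inclusion forces $vN(\mathcal{QN}^{(1)}(Q_\alpha\rtimes_\sigma G\subseteq M\rtimes_\sigma G))=Q_\alpha\rtimes_\sigma G$; Theorem \ref{rwahp} then says that the triple $Q_\alpha\rtimes_\sigma G\subseteq Q_\alpha\rtimes_\sigma G\subseteq M\rtimes_\sigma G$ satisfies the relative WAHP, with the witnessing unitaries chosen in $\mathcal U(Q_\alpha)G$ exactly as at the start of the proof of Theorem \ref{quasinormalizer2}, and for $x,y\in M\ominus Q_\alpha$ the relative WAHP condition is precisely the limit asserted in (4). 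You instead argue by contradiction and attempt to prove the converse dichotomy: that failure of relative weak mixing over $Q_\alpha$ produces a nonzero element of $\mathcal P_{Q_\alpha,M}$ outside $Q_\alpha$. That implication is the genuinely hard half of the noncommutative Furstenberg--Zimmer theorem; it is not among the tools established in the paper, and your sketch does not supply it. The ``finite-rank/maximality argument'' that is supposed to convert non-vanishing correlations $\|E_{Q_\alpha}(xu_n\sigma_{g_n}(y))\|_2\geqslant\varepsilon$ into an approximation of an orbit by finitely many $\eta_j$ with \emph{uniformly bounded} coefficients $\kappa(g,j)\in Q_\alpha$ is exactly the content that needs proof: Theorem \ref{bimodularapprox} runs in the opposite direction, starting from an already finitely generated bimodule. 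In the noncommutative setting this step is delicate (recall that $\mathcal P_{N,M}$ can fail to be an algebra, cf.\ the Austin--Eisner--Tao example), and nothing in your outline shows that the element $z$ you produce is nonzero and orthogonal to $Q_\alpha$.

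A second, smaller problem: your reduction to a single pair of sequences via ``separability/diagonalization and Kaplansky density'' presupposes a separable predual and a countable group, whereas the theorem is stated for arbitrary discrete $G$ and arbitrary tracial $M$; the paper's route through Theorem \ref{rwahp} avoids this. I would replace your argument for (4) by the direct application of Theorems \ref{qnormalizer1} and \ref{rwahp} at the stabilization ordinal.
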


\begin{proof} We will define inductively the tower of von Neumann algebras $Q_\beta$, for all ordinals $\beta$, as follows. First let $Q_0=N$. Now, if $\beta$ is a successor ordinal then let $Q_\beta:= \mathcal P_{Q_{\beta-1},M}''\subseteq M$, the von Neumann algebra generated by $\mathcal P_{Q_{\beta-1},M}$. Notice that 
by Proposition \ref{subspace} this is $G$-invariant and satisfies $Q_{\beta-1}\subseteq Q_\beta$. Moreover by Theorem \ref{qnormalizer1}, in this case we also have that $vN(\mathcal {QN}^{(1)}(Q_\beta \rtimes_\sigma G\subseteq M\rtimes_\sigma G))\subseteq Q_{\beta+1}\rtimes_\sigma G$. Altogether, these give part 2 above.
If $\beta$ is a limit ordinal then let $Q_\beta:=\overline{\cup_{\gamma<\beta} Q_{\gamma}}^{\rm SOT}$. In this case, since all $Q_\gamma$ are $G$-invariant one can easily see that so is $Q_\beta$.  In particular, we also have that $\overline{\cup_{\gamma<\beta} Q_\gamma \rtimes_\sigma G}^{\rm SOT}  = Q_\beta\rtimes_\sigma G$. Now let $\alpha$ be the first ordinal where the chain $(Q_\beta)_\beta$ stabilizes, i.e, $Q_\alpha=Q_{\alpha+1}$. Altogether, the previous relations show the tower $(Q_\beta)_{\beta\leqslant \alpha}$ satisfies conditions 1.-3. in the statement. Moreover, since $\alpha$ stabilizes the tower we have that $Q_\alpha= \mathcal P_{Q_\alpha,M}''$ and by Theorem \ref{qnormalizer1} we get that $vN(\mathcal {QN}^{(1)}(Q_\alpha \rtimes_\sigma G\subseteq M\rtimes_\sigma G))= Q_{\alpha}\rtimes_\sigma G$. However, using the relative WAHP in the same way as in the beginning of the proof of Theorem \ref{quasinormalizer2}, this further gives 4.   \end{proof}

\begin{thm}\label{fztower2} Let $(N\subseteq M, G, \sigma,\tau)$ be an ergodic $W^*$-dynamical extension system where $M$ is abelian. Then one can find  an ordinal $\alpha$ and a $G$-invariant von Neumann subalgebra $N \subseteq Q_\beta \subseteq M$ for every $\beta\leqslant\alpha$ satisfying the following properties:  \begin{enumerate}
\item [1'.] For all $\beta\leqslant  \beta' \leqslant  \alpha$ we have the following inclusions of von Neumann algebras $N=Q_o\subseteq   Q_\beta \subseteq Q_{\beta'} \subseteq M$;
\item [2'.] For every successor ordinal $\beta+1 \leqslant  \alpha$ we have     $vN(\mathcal {QN}^{(1)}(Q_\beta \rtimes_\sigma G\subseteq M\rtimes_\sigma G))=\mathcal{QN}(Q_\beta \rtimes_\sigma G\subseteq M\rtimes_\sigma G)''=Q_{\beta+1}\rtimes_\sigma G$. Moreover, there is a net $(g^\beta_\lambda)_\lambda\subseteq G$ such that for every $x,y\in M\ominus Q_{\beta+1}$ we have 
$$\lim_{\lambda} \|E_{Q_\beta}(x\sigma_{g^\beta_\lambda}(y))\|_2=0.$$
\item [3'.] For every limit ordinal $\beta \leqslant \alpha$ we have $\overline{\cup_{\gamma<\beta} Q_\gamma}^{\rm SOT}  = Q_\beta$ and also $$\overline{\cup_{\gamma<\beta} Q_\gamma \rtimes_\sigma G}^{\rm SOT}  = Q_\beta\rtimes_\sigma G.$$
\item [4'.] There is a net $(g_\lambda)_\lambda\subseteq G$ such that for every $x, y \in M \ominus  Q_\alpha$ we have $$\lim_{\lambda} \|E_ {Q_\alpha} (x \sigma_{g_\lambda}(y))\|_2= 0.$$
\end{enumerate}

\end{thm}
\begin{proof} From Theorem \ref{fztower1} one can find a tower $(Q_\beta)_{\beta\leqslant \alpha}$ of von Neumann subalgebras $N\subseteq Q_\beta\subseteq M$ satisfying the properties 1.-4. listed in the conclusion. We will show these imply our statement. 
\vskip 0.07in 
\noindent Fix $\beta+1\leqslant \alpha$ any successor ordinal. Since $M$ is abelian we have $\mathcal {QN}(Q_\beta\subseteq M)''= M$  and using Theorem \ref{vNalgebra} we get that $\mathcal P_{Q_\beta,M}\subseteq M$ is a von Neumann subalgebra. Thus  $Q_{\beta+1}= \mathcal P_{Q_\beta,M}$  and by Theorem \ref{quasinormalizer2} and Corollary \ref{quasinorm-compact} we conclude that $vN(\mathcal {QN}^{(1)}(Q_\beta \rtimes_\sigma G\subseteq M\rtimes_\sigma G))=\mathcal {QN}(Q_\beta \rtimes_\sigma G\subseteq M\rtimes_\sigma G)''=Q_{\beta+1}\rtimes_\sigma G$. This gives the first part of 2'. The moreover part of 2' follows from Lemma \ref{lemma:WeaklyMixingSingular} and Theorem \ref{rwahp}. Finally, since $M$ is abelian and $u_\lambda$ is a unitary, in 4.\ we have that $\|E_{Q_\alpha}(xu_\lambda\sigma_{g_\lambda}(y))\|_2=\|u_\lambda E_{Q_\alpha}(x\sigma_{g_\lambda}(y))\|_2= \|E_{Q_\alpha}(x\sigma_{g_\lambda}(y))\|_2 $ which gives 4'.\end{proof}\vskip 0.06in 
\noindent In particular, the previous result yields the following picture of the classical Furstenberg-Zimmer tower as a sequence of iterated quasinormalizing algebras in the context of p.m.p. actions of countable groups and separable $\sigma$-algebras. This result was originally obtained by J. Peterson and the third author in the unpublished work \cite{CP11}.

\begin{cor}[\cite{CP11}]\label{fztower3} Let $G \ca X$ be an pmp ergodic action on a standard probability space $X$ and let $(G \ca X_\beta)_{\beta\leqslant \alpha}$ be the corresponding Furstenberg-Zimmer tower. Let $M = L^\infty(X)\rtimes_\sigma G$ and $M_\beta = L^\infty(X_\beta)\rtimes_\sigma G$ be the corresponding crossed product von Neumann algebras.  Then the following hold:  \begin{enumerate}
\item [1.] For all $\beta\leqslant  \beta' \leqslant  \alpha$ we have the following inclusions $L(G)=M_o\subseteq   M_\beta \subseteq M_{\beta'} \subseteq M_\alpha \subseteq M$;
\item [2.] For every successor ordinal $\beta+1 \leqslant  \alpha$ we have that    $vN(\mathcal {QN}^{(1)}(M_\beta\subseteq M))=\mathcal{QN}(M_\beta\subseteq M )''=M_{\beta+1}$. Moreover, there is a sequence $(g^\beta_n)_n\subseteq G$ such that for every $x,y\in L^\infty(X)\ominus L^\infty(X_{\beta+1})$ we have 
$$\lim_{n\rightarrow \infty} \|E_{L^\infty(X_\beta)}(x\sigma_{g^\beta_n}(y))\|_2=0.$$
\item [3.] For every limit ordinal $\beta \leqslant \alpha$ we have $\overline{\cup_{\gamma<\beta} L^\infty(Y_\gamma)}^{\rm SOT}  = L^\infty(Y_\beta)$ and also $\overline{\cup_{\gamma<\beta} M_\gamma}^{\rm SOT}  = M_\beta$.
\item [4.] There is an infinite sequence $(g_n)_n\subseteq G$ such that for every $x, y \in L^\infty(X) \ominus  L^\infty(Y_\alpha)$ we have  $$\lim_{n\rar \infty} \|E_ {L^\infty(Y_\alpha)} (x \sigma_{g_n}(y))\|_2= 0.$$
\end{enumerate}
\end{cor}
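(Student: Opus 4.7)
The plan is to apply Theorem \ref{fztower2} directly to the abelian $W^*$-dynamical extension system $(\mathbb{C} 1 \subseteq L^\infty(X), G, \sigma, \tau)$ (legitimate since the action is ergodic and $L^\infty(X)$ is abelian), and then identify the resulting tower $(Q_\beta)_{\beta \leqslant \alpha}$ of $G$-invariant subalgebras of $L^\infty(X)$ with the classical Furstenberg-Zimmer tower $(L^\infty(X_\beta))_{\beta \leqslant \alpha}$. Once this identification is in place, all four parts of the corollary follow at once by taking crossed products, since $Q_\beta \rtimes_\sigma G = L^\infty(X_\beta) \rtimes_\sigma G = M_\beta$.

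The reduction amounts to a transfinite induction proving $Q_\beta = L^\infty(X_\beta)$. The base case $Q_0 = \mathbb{C} 1 = L^\infty(X_0)$ uses ergodicity, and at limit ordinals both towers are SOT closures of unions of earlier stages, so the induction passes trivially. The substantive case is the successor step. By the construction in Theorem \ref{fztower1}, $Q_{\beta+1} = \mathcal P_{Q_\beta, L^\infty(X)}''$. Meanwhile the classical Furstenberg-Zimmer step defines $L^\infty(X_{\beta+1})$ as the maximal $G$-invariant intermediate subalgebra $L^\infty(X_\beta) \subseteq P \subseteq L^\infty(X)$ for which $(L^\infty(X_\beta) \subseteq P, G, \sigma, \tau)$ is compact in the classical sense of Definition \ref{compactabelian}. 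By Proposition \ref{same} this is equivalent to $\mathcal K_{L^\infty(X_\beta), P} = L^2(P)$, i.e.\ to $P = \mathcal P_{L^\infty(X_\beta), P}''$; and since $L^\infty(X)$ is abelian the hypothesis of Theorem \ref{vNalgebra} is automatic, so $\mathcal P_{L^\infty(X_\beta), L^\infty(X)}$ is already a $G$-invariant von Neumann subalgebra. Combining these two facts with maximality forces $L^\infty(X_{\beta+1}) = \mathcal P_{L^\infty(X_\beta), L^\infty(X)}'' = Q_{\beta+1}$.

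With the tower identification established, parts (1) and (3) follow directly from (1') and (3') of Theorem \ref{fztower2}. Part (2) combines the identification with Corollary \ref{quasinorm-compact} (applicable because $L^\infty(X)$ is abelian), which yields
\[ vN(\mathcal{QN}^{(1)}(M_\beta \subseteq M)) = \mathcal{QN}(M_\beta \subseteq M)'' = \mathcal P_{L^\infty(X_\beta), L^\infty(X)} \rtimes_\sigma G = M_{\beta+1}. \]
Part (4) is (4') of Theorem \ref{fztower2} itself: the unitaries $u_n \in \mathcal U(Q_\alpha)$ already fail to appear in the abelian statement, and in any case could be absorbed via the identity $\|E_{Q_\alpha}(x u_n \sigma_{g_n}(y))\|_2 = \|u_n E_{Q_\alpha}(x \sigma_{g_n}(y))\|_2 = \|E_{Q_\alpha}(x \sigma_{g_n}(y))\|_2$ that holds in any abelian overalgebra.

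The main obstacle is the successor step of the induction, namely making precise the claim that the classical maximal compact extension $L^\infty(X_{\beta+1})$ of $L^\infty(X_\beta)$ inside $L^\infty(X)$ coincides with $\mathcal P_{L^\infty(X_\beta), L^\infty(X)}''$. The two ingredients are Proposition \ref{same}, which aligns the two notions of compactness in the abelian setting, and Theorem \ref{vNalgebra}, which guarantees that the relatively almost periodic elements really form a subalgebra so that one can invoke the maximality defining the classical Furstenberg-Zimmer step. Since the genuinely analytical content has been deposited into those two results, the proof of the corollary itself is largely a structural bookkeeping exercise once the induction is set up.
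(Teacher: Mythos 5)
Your proposal is correct and follows exactly the route the paper intends: Corollary \ref{fztower3} is presented there as an immediate consequence of Theorem \ref{fztower2}, with the identification of the abstract tower $(Q_\beta)$ with the classical Furstenberg--Zimmer tower left implicit. You have usefully made that identification explicit — the successor step via Proposition \ref{same}, Theorem \ref{vNalgebra}, and the maximality characterization of $L^\infty(X_{\beta+1})$ is precisely the missing bookkeeping, and the rest (parts 1--4 from 1'--4', with part 2 also available directly from Corollary \ref{quasinorm-compact}) is as the paper intends.
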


\begin{rem}
An uncountable version of Furstenberg-Zimmer structure theorem has been recently obtained in \cite[Theorem 6.5]{Ja}. Theorem \ref{fztower2} recovers this tower through the description of iterated quasinormalizing algebras. Thus, Corollary \ref{fztower3} extends to the general case.
\end{rem}

\noindent We end with the following result, whose proof is very similar with the first result in this section.

\begin{thm}\label{fztower4} Let $(N\subseteq M, G, \sigma,\tau)$ be a $W^*$-dynamical extension system. Then one can find  an ordinal $\alpha$ and a $G$-invariant von Neumann subalgebra $N \subseteq Q_\beta \subseteq M$ for every $\beta\leqslant\alpha$ satisfying the following properties:  \begin{enumerate}
\item [1.] For all $\beta\leqslant  \beta' \leqslant  \alpha$ we have the following inclusions of von Neumann algebras $N=Q_o\subseteq   Q_\beta \subseteq Q_{\beta'} \subseteq M$;
\item [2.] For every successor ordinal $\beta+1 \leqslant  \alpha$ we have $Q_\beta\subseteq Q_{\beta+1}\subseteq \mathcal {QN}(Q_\beta\subseteq M)''$ and  $vN(\mathcal {QN}^{(1)}(Q_\beta \rtimes_\sigma G\subseteq M\rtimes_\sigma G))=Q_{\beta+1}\rtimes_\sigma G$. Moreover, there are nets $(g^\beta_\lambda)_\lambda\subseteq G$ and $(u_\lambda)_\lambda\subseteq \mathcal U (Q_\beta)$ so that for every $x,y\in M\ominus Q_{\beta+1}$,
$$\lim_{\lambda} \|E_{Q_\beta}(x u_\lambda\sigma_{g^\beta_\lambda}(y))\|_2=0.$$
\item [3.] For every limit ordinal $\beta \leqslant \alpha$ we have $\overline{\cup_{\gamma<\beta} Q_\gamma}^{\rm SOT}  = Q_\beta$ and  $\overline{\cup_{\gamma<\beta} Q_\gamma \rtimes_\sigma G}^{\rm SOT}  = Q_\beta\rtimes_\sigma G$.
\item [4.] Either $Q_\alpha= \mathcal {QN}(Q_\alpha\subseteq M)''$ or, there are nets $(g_\lambda)_\lambda\subseteq G$ and $(u_\lambda)_\lambda\subseteq \mathcal U(Q_\alpha)$ such that for every $x, y \in M \ominus  Q_\alpha$ we have $$\lim_{\lambda} \|E_ {Q_\alpha} (x u_\lambda\sigma_{g_\lambda}(y))\|_2= 0.$$
\end{enumerate}
\end{thm}

\begin{proof}We will define inductively the tower of von Neumann algebras $Q_\beta$, for all ordinals $\beta$, as follows. First let $Q_0=N$. Now, if $\beta$ is a successor ordinal then using Theorem \ref{quasinormalizer2} we define  $Q_{\beta-1} \subseteq Q_\beta\subseteq  \mathcal {QN}(Q_{\beta-1}\subseteq M)''$ as the unique $G$-invariant von Neumann algebra such that $vN(\mathcal {QN}^{(1)}(Q_{\beta-1} \rtimes_\sigma G\subseteq M\rtimes_\sigma G))=Q_{\beta}\rtimes_\sigma G$. If $\beta$ is a limit ordinal then let $Q_\beta:=\overline{\cup_{\gamma<\beta} Q_{\gamma}}^{\rm SOT}$. In this case, since all $Q_\gamma$ are $G$-invariant one can easily see that so is $Q_\beta$.  In particular, we also have that $\overline{\cup_{\gamma<\beta} Q_\gamma \rtimes_\sigma G}^{\rm SOT}  = Q_\beta\rtimes_\sigma G$. Now let $\alpha$ be the first ordinal where the chain $(Q_\beta)_\beta$ stabilizes, i.e., $Q_\alpha=Q_{\alpha+1}$. Altogether, the previous relations show that the tower $(Q_\beta)_{\beta\leqslant \alpha}$ satisfies conditions 1.-3. in the statement.  As before, the moreover part of 2 follows from Lemma \ref{lemma:WeaklyMixingSingular} and Theorem \ref{rwahp}. Finally, since $\alpha$ stabilizes the tower we have either $Q_\alpha= \mathcal {QN}(Q_\alpha\subseteq M)''$ or $vN(\mathcal {QN}^{(1)}(Q_\alpha \rtimes_\sigma G\subseteq M\rtimes_\sigma G))= Q_{\alpha}\rtimes_\sigma G$. However, using the relative WAHP from Theorem \ref{rwahp} in the same way as in the beginning of the proof of Theorem \ref{quasinormalizer2}, we have 4. in the statement.
\end{proof}

Finally using Theorem \ref{intsubalg4} we have the following.

\begin{thm}\label{fztower5}  Let $(N\subseteq M, G, \sigma,\tau)$ be a $W^*$-dynamical extension system, where $G$ is i.c.c. Then one can find  an ordinal $\alpha$ and a $G$-invariant von Neumann subalgebra $N \subseteq Q_\beta \subseteq M$ for every $\beta\leqslant\alpha$ satisfying the following properties:  \begin{enumerate}
\item [1.] For all $\beta\leqslant  \beta' \leqslant  \alpha$ we have the following inclusions of von Neumann algebras $N=Q_o\subseteq   Q_\beta \subseteq Q_{\beta'} \subseteq M$;
\item [2.] For every successor ordinal $\beta+1 \leqslant  \alpha$ we have   $\mathcal {QN}(Q_\beta \rtimes_\sigma G\subseteq M\rtimes _\sigma G)''=Q_{\beta+1}\rtimes_\sigma G$.

\item [3.] For every limit ordinal $\beta \leqslant \alpha$ we have $\overline{\cup_{\gamma<\beta} Q_\gamma}^{\rm SOT}  = Q_\beta$ and  $\overline{\cup_{\gamma<\beta} Q_\gamma \rtimes_\sigma G}^{\rm SOT}  = Q_\beta\rtimes_\sigma G$.
\item [4.] $Q_\alpha\rtimes_\sigma G= \mathcal {QN}_{M\rtimes_\sigma G}(Q_\alpha\rtimes_\sigma G)''$.
\end{enumerate}

\end{thm}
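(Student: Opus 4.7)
The plan is to build the tower $(Q_\beta)_{\beta\leqslant\alpha}$ by transfinite induction, closely mirroring the construction used in Theorem \ref{fztower4}, but now invoking part (1) of Theorem \ref{intsubalg4} at each successor stage. This is the natural tool: Theorem \ref{intsubalg4} directly identifies the (two-sided) quasinormalizing algebra of a crossed product $N \rtimes_\sigma G \subseteq M \rtimes_\sigma G$ associated to an i.c.c.\ group with a crossed product of the form $P \rtimes_\sigma G$ for a $G$-invariant intermediate von Neumann subalgebra $N \subseteq P \subseteq M$. So the transfinite tower, unlike those constructed in terms of $\mathcal{P}_{N,M}$ or one-sided quasinormalizers, closes up exactly at each successor step.

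Concretely, I would set $Q_0 = N$. For a successor ordinal $\beta + 1$, I would apply Theorem \ref{intsubalg4} to the $W^*$-dynamical extension system $(Q_\beta \subseteq M, G, \sigma, \tau)$ — which is legitimate because $Q_\beta$ is $G$-invariant by inductive hypothesis and $G$ is i.c.c.\ by assumption — to obtain a $G$-invariant von Neumann subalgebra $Q_\beta \subseteq Q_{\beta+1} \subseteq M$ with
\[
\mathcal{QN}_{M\rtimes_\sigma G}(Q_\beta \rtimes_\sigma G)'' \;=\; Q_{\beta+1}\rtimes_\sigma G,
\]
which supplies property 2. For a limit ordinal $\beta$, I would define $Q_\beta := \overline{\bigcup_{\gamma<\beta} Q_\gamma}^{\mathrm{SOT}}$. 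This is clearly $G$-invariant since each $Q_\gamma$ is, and the auxiliary identification $\overline{\bigcup_{\gamma<\beta} Q_\gamma \rtimes_\sigma G}^{\mathrm{SOT}} = Q_\beta \rtimes_\sigma G$ in property 3 is the standard one: expand elements of the right-hand side in Fourier series over $G$, note that each coefficient lies in $Q_\beta = \overline{\bigcup_\gamma Q_\gamma}^{\mathrm{SOT}}$, and approximate via Kaplansky together with normality of the conditional expectations onto the $Q_\gamma \rtimes_\sigma G$.

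Since the $Q_\beta$ form an increasing chain of von Neumann subalgebras of the \emph{fixed} algebra $M$, and the lattice of von Neumann subalgebras of $M$ is a set (not a proper class), the chain must eventually stabilize; let $\alpha$ be the first ordinal where $Q_\alpha = Q_{\alpha+1}$. By the defining property of the successor stage, this equality is precisely the statement
\[
\mathcal{QN}_{M\rtimes_\sigma G}(Q_\alpha \rtimes_\sigma G)'' \;=\; Q_\alpha \rtimes_\sigma G,
\]
which is property 4. Properties 1 and 3 are immediate from the construction.

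I do not anticipate a significant obstacle: the entire weight of the argument is carried by Theorem \ref{intsubalg4}, and the transfinite induction skeleton is essentially identical to that of Theorem \ref{fztower4}. The only minor point requiring care is the limit-ordinal identification $\overline{\bigcup_{\gamma<\beta} Q_\gamma \rtimes_\sigma G}^{\mathrm{SOT}} = Q_\beta \rtimes_\sigma G$, and the observation that the stabilization ordinal $\alpha$ exists (a purely set-theoretic consequence of the chain living inside the bounded lattice of von Neumann subalgebras of $M$). No analytic work beyond what is already recorded in Theorem \ref{intsubalg4} is needed.
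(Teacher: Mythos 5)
Your proposal is correct and is essentially the argument the paper intends: the paper states Theorem \ref{fztower5} without proof, prefaced only by ``using Theorem \ref{intsubalg4} we have the following,'' and the expected argument is precisely your transfinite induction following the skeleton of Theorem \ref{fztower4} with part (1) of Theorem \ref{intsubalg4} supplying the successor step and the stabilization ordinal giving property 4.
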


\vskip 0.04in
\noindent In connection to the results presented in this section we also want to mention in closing the following bold open problem on intermediate von Neumann subalgebras inside group measure space von Neumann algebras.     

\begin{opr} Let $G$ be a countable i.c.c.\ group and let $G \curvearrowright^{\alpha} (X,\mu)$ be a free, ergodic, p.m.p.\ action. Is it true that for every intermediate subalgebra $L(G)\subseteq N \subseteq L^{\infty}(X, \mu)\rtimes_\alpha G$ one can find a factor $G \curvearrowright^\beta (Y,\nu)$ of $G \curvearrowright^{\alpha} (X,\mu)$ such that $N= L^{\infty}(Y, \nu)\rtimes_\beta G$.
\end{opr}

\noindent As already mentioned in the prior sections, this problem has been answered positively when $\alpha$ is a compact ergodic action (see \cite{ChDa,JS19}). Unfortunately, very little is known beyond this case. For example, is this still true when the action $\alpha$ is a distal tower of length at least two?

\section{Approximation properties of the inclusion $L(G) \subseteq M \rtimes_\alpha G$} \label{section:approximation}
\nexer{\noindent In this section we consider $W^*$-dynamical systems in which the underlying von Neumann algebra is tracial. The setting will consist of a discrete group $G$, acting by trace preserving automorphisms on a finite von Neumann algebra with a fixed normal, faithful trace $\tau$.  Using Theorem \ref{QuasinormalizerCompactSystemTheorem}, we will relate the analytical structure of the inclusion $L(G) \subseteq M \rtimes_\alpha G$ to the dynamical properties of the action $\alpha$.  Recall the following definition, due to Popa \cite{Po}.

\begin{definition}\label{relative HAP} The finite von Neumann algebra $(N, \tau)$ is said to have \emph{property} $H$ $($or the \emph{Haagerup approximation property}$)$ \emph{relative to the von Neumann subalgebra} $B \subseteq N$ if there is a net of normal, $B$-bimodular, completely positive maps $\{\Phi_\lambda:N \rightarrow N\}_{\lambda\in\Lambda}$ with the following properties:
\begin{itemize}
    \item[$(i)$] $\tau\circ \Phi_\lambda\leqslant \tau$,  $\lambda\in\Lambda$.
    \item[$(ii)$] For each $z \in N$, $\lim_\lambda\norm{\Phi_\lambda (z)-z}_2 = 0$.
    \item[$(iii)$] Each induced operator $T_{\Phi_\lambda}$ on $L^2(N,\tau)$ has the property that for any $\eps>0,$ there is a projection $p \in \ip{N}{e_B}$ with finite trace such that $\norm{T_{\Phi_\lambda}(1-p)}<\eps.$
\end{itemize}
\end{definition}
\noindent The third condition above may be interpreted as ``compactness relative to $B$" and, in fact, implies that the $T_{\Phi_\lambda}$ are compact operators when the subalgebra $B$ is $\Cplx 1$.  Standard examples of inclusions with relative property $H$ include those of the form $B \subseteq B \overline{\otimes}P$ , where $P$ is a finite von Neumann algebra with the Haagerup approximation property, and crossed product inclusions $B \subseteq B \rtimes_\alpha \Gamma$, where $\Gamma$ is a discrete group with the Haagerup approximation property \cite{Po}. Ioana \cite{Io} proved that if $\Gamma$ is a discrete group acting by measure-preserving transformations $\sigma_\gamma$, $\gamma \in \Gamma$, on a probability space $(X,\mu)$, then the crossed product $L^\infty(X,\mu) \rtimes_\sigma \Gamma$ has property $H$ relative to the subalgebra $L(\Gamma)$ if and only if the action $\sigma$ is compact.  We extend Ioana's result to the case of a trace-preserving action of a discrete group on a general finite von Neumann algebra.  Our main result in this section -- which we combine with the results of the previous section for the reader's convenience -- is stated below as Theorem \ref{RelativeHAPTheorem}. In order to prove this result we will need several lemmas.

\nexer{\noindent The remainder of this section will concern the proof of Theorem \ref{RelativeHAPTheorem}.  Note that the first three conditions are equivalent by Theorem \ref{QuasinormalizerCompactSystemTheorem} and the discussion following it, and that condition (iv) implies condition (ii),  by Proposition 3.4 in \cite{Po}.  Thus, the proof will be complete when we show that conditions (i)-(iii) imply (iv).  Our strategy will be to construct a net of normal, completely positive maps $\phi_\lambda$ on $M$ which 
\begin{itemize}
    \item approximate the identity map on $M$ pointwise in $\norm{\cdot}_2$,
    \item  extend to compact operators on $L^2(M)$, and
    \item  commute with the automorphisms $\alpha_g$, $g \in G$.  
\end{itemize} The last condition will ensure that the maps lift to normal, completely positive $L(G)$-bimodule maps $\Phi_\lambda$ on $M \rtimes_\alpha G$, and the first two conditions will persist in such a way that the lifted maps satisfy conditions (ii) and (iii) of Definition \ref{relative HAP}.  The key to this step in the argument is the following general structural result for invariant completely positive maps, which we prove for completeness. }

\begin{lem} \label{cp extension} Let $\alpha$ be a trace-preserving action of a discrete group $G$ on a finite von Neumann algebra $M$.  Suppose that $\phi$ is a normal, completely positive $($resp., completely bounded$)$ map on $M$ such that $\phi \circ \alpha_g = \alpha_g \circ \phi$ for all $g \in G$.  Then there is a unique normal, completely positive $($resp., completely bounded$)$ extension $\Phi: M\rtimes_\alpha G \rightarrow M\rtimes_\alpha G$ of $\phi$ satisfying
\[ \Phi(u_gx u_h) = u_g\phi(x) u_h\]
for $x \in M$, $g,h \in G$.  In particular, $\Phi$ is an $L(G)$-bimodule map.  
\end{lem}
\begin{proof} In this situation, there is an isomorphism of $M\rtimes_\alpha G$ with a von Neumann subalgebra of ${\mathbf{B}}(L^2(M)) \overline{\otimes} L(G)$, given as follows: For $h \in G$, let $V_h$ denote the unitary operator on $L^2(M)$ which extends
the map $x \Omega \mapsto \alpha_h(x) \Omega$, and denote by $\lambda$ the left regular representation of $G$ on $\ell^2(G)$.  Then the map
\[ W(\eta \otimes \delta_g)= V_g \eta \otimes \delta_g, \quad \eta \in L^2(M),\, g \in G\]
extends to a unitary operator $W$ on $L^2(M) \otimes \ell^2(G)$, which satisfies
\[ W\pi(x)W^* = x \otimes I, \quad x \in M,\]
\[W u_g W^* = V_g \otimes \lambda_g, \quad g \in G.\]
It follows that $W (M \rtimes_\alpha G) W^*$ is a von Neumann subalgebra of ${\mathbf{B}}(L^2(M)) \overline{\otimes} L(G)$.  
\vskip 0.07in 
\noindent In order to extend $\phi$ to a normal c.p. map on $M\rtimes_\alpha G$, we must first extend it to $\mathbf{B}(L^2(M))$. Suppose  that $\phi: M \rightarrow M\subseteq \mathbf{B}(L^2(M))$ is a normal c.p. map.  Following the proof of Stinespring's theorem in \cite[Theorem 4.1]{Pau}, we see that there is a Hilbert space $\mathcal{K}$, a unital *-representation $\pi:M\to \mathbf{B}(\mathcal{K})$ and a bounded linear map $V:L^2(M)\to \mathcal{K}$ so that
\[\phi(x)=V^*\pi(x)V,\ \ \ x\in M,\]
and $\|V\|=\|\phi(1)\|^{1/2}$. Moreover, the normality of $\phi$ allows us to take $\pi$ to be normal. From \cite[Prop. 2.7.4]{Sak}, there are a Hilbert space $\mathcal{H}$, an amplification $x\mapsto x\otimes 1$ of the identity representation to $L^2(M)\otimes\mathcal{H}$, a projection $P\in M'\overline{\otimes}\mathbf{B}(\mathcal{H})$, and a unitary $U:\mathcal{K}\to {\mathrm{Ran}}\, P$ so that
\[\pi(x)=U^*(x\otimes 1)PU=U^*P(x\otimes 1)PU,\ \ \ x\in M.\]
Then
\[\psi(x)=V^*U^*P(x\otimes 1)PUV,\ \ \ x\in \mathbf{B}(L^2(M)),\]
defines a normal c.p. extension of $\phi$ to $\mathbf{B}(L^2(M))$.

The predual of ${\mathbf{B}}(L^2(M)) \overline{\otimes} L(G)$ is the operator space projective tensor product ${\mathbf{B}}(L^2(M))_{\ast}\hat{\otimes} A(G)$ (see, e.g. \cite{EfRu}) and the map $\psi \otimes 1$ on ${\mathbf{B}}(L^2(M)) \overline{\otimes} L(G)$ is dual to 
$\psi_{\ast} \otimes 1$, where $\psi_{\ast}:{\mathbf{B}}(L^2(M))_{\ast} \rightarrow {\mathbf{B}}(L^2(M))_{\ast}$ is given by 
\[(\psi_{\ast}\rho)(T) = \rho(\psi(T)), \quad \rho \in  {\mathbf{B}}(L^2(M))_{\ast},\ \ \ T\in {\mathbf{B}}(L^2(M)).\]
 Thus, $\psi \otimes 1$ is normal and c.p., and the composition $\Phi = \Ad W^* \circ (\phi \otimes 1) \circ \Ad W$ is a normal, c.p. map on $M\rtimes_\alpha G$ satisfying $\Phi(x u_g) = \phi(x)u_g$, for $g \in G$ and $x \in M$.  Then
 \begin{align*}
 \Phi(u_gx)&= \Phi(\alpha_g(x)u_g)=\phi(\alpha_g(x))u_g=u_gu_g^*\phi(\alpha_g(x))u_g\\
 &=u_g(\alpha_{g^{-1}}\circ \phi\circ\alpha_g)(x) =u_g\phi(x),\ \ \ x\in M,
 \end{align*}
 since $\phi\circ\alpha_g=\alpha_g\circ \phi$. Thus $\Phi$ is a normal c.p. $L(G)$-bimodule map on $M\rtimes_\alpha G$ which extends $\phi:M\to M$. Any such normal $L(G)$-bimodular extension $\Psi$ satisfies 
 \[\Psi(u_gxu_h)=u_g\phi(x)u_h=\Phi(u_gxu_h),\ \ \ x\in M,\ \ \ g,h\in G,\]
 and equality of $\Psi$ and $\Phi$ then follows from the normality of these maps. This establishes uniqueness.
 
 The proof of this result for completely bounded maps is almost identical, but with one complication. The representation 
 \begin{equation}\label{cb1}\phi(x)=V_1^*\pi(x)V_2,\ \ \ x\in M,\end{equation}
 requires the use of Arveson's extension theorem where normality of $\pi$ might be lost. However, if we decompose $\pi$ into its normal and singular parts $\pi_n+\pi_s$ and write \eqref{cb1} in matrix form as
 \begin{equation}\label{cb2}
 \phi(x)=(X_1^*,Y_1^*)\left(\begin{array}{cc}\pi_n(x)&0\\0&\pi_s(x)\end{array}\right)
 \left(\begin{array}{c}X_2\\Y_2\end{array}\right),\ \ \ x\in M,\end{equation}
 then any pair of vectors $(0,\xi)^T$ and $(0,\eta)^T$ will produce a normal linear functional on the left hand side of \eqref{cb2} and a singular one on the right hand side. Consequently $Y_1^*\pi_s(x)Y_2=0$, allowing us to work only with the normal representation $\pi_n$.
  \end{proof}

\vskip 0.03in 
\noindent 
We will also need the following two lemmas, which we restate in the present context.
\begin{lem}[\cite{BCM1}, Theorem 6.10]\label{Invariant Subspaces Lemma} Let $G$ be a discrete group, and $\alpha$ an ergodic, trace-preserving action of $G$ on a finite von Neumann algebra $(M,\tau)$.  Then any finite dimensional subspace $\mathcal{K} \subseteq L^2(M, \tau)$ which is invariant under the associated representation $V$ of $G$ on $L^2(M,\tau)$ is contained in $M \Omega_\tau$.  
\end{lem}

\begin{lem}[\cite{BCM2}, Theorem 4.7]\label{Kronecker Subalgebra Lemma} Let $G$ be a discrete group, and $\alpha$ an ergodic, trace-preserving action of $G$ on a finite von Neumann algebra $(M,\tau)$. If $\alpha$ is also compact, then $M$ is injective.  Furthermore, there is an upwardly directed family 
of finite dimensional $\alpha$-invariant subspaces of $M$ whose union is dense in $L^2(M,\tau)$.  
\end{lem}
\vskip 0.03in 
\noindent The following lemma will be necessary to establish $w^*$-continuity of certain maps $\phi_\lambda$ that will appear below.  We will then prove a lemma that will allow us to pass from the case of a separable predual to the general situation in Theorem \ref{RelativeHAPTheorem}. Below, the strong$^*$ topology will be denoted by SOT$^*$.

\begin{lem}\label{uniformbound}
Let $(M,\tau)$ be a finite von Neumann algebra and let $N$ be a finite dimensional von Neumann algebra. Let $\Gamma \subseteq {\mathbf{B}}(L^2(M))$ be an SOT$^*$-compact group of unitaries such that $\Ad \gamma(M)\subseteq M$ for $\gamma\in \Gamma$. Let $\{x_\delta\}_{\delta\in\Delta}$ be a uniformly bounded net converging to 0 in the $w^*$-topology, and let $T:M\to N$ be a $w^*$-continuous bounded map. Given $\eps >0$, there exists $\delta_0$ so that 
\begin{equation}
\|T(\gamma^* x_\delta \gamma)\| < \eps,\ \ \ \ \text{ for }\delta\geqslant \delta_0,\ \ \ \  \gamma\in \Gamma.
   \end{equation}
   \end{lem}
   
   \begin{proof}
   Since $N$ is finite dimensional, it is a subalgebra of a matrix factor so we may assume that $N= \mathbb{M}_k$ for some integer $k$. By scaling, we may assume that $\|x_\delta\|\leqslant 1$ for all $\delta$, and that $\|T\|\leqslant 1$. Now fix $\beta >0$ so small that $3k^2\beta <\eps$.
\vskip 0.07in 
\noindent For $1\leqslant i,j\leqslant k$, let $\theta_{i,j}(x)$ be the $(i,j)$ entry of $T(x)$ for $x\in M$. Each $\theta_{i,j} $ is a $w^*$-continuous contractive linear functional on $M$, so there exist vectors $\xi_{i,j},\eta_{i,j}\in L^2(M)$ so that $\|\xi_{i,j}\|, \|\eta_{i,j}\|\leqslant 1$ and
\begin{equation}
\theta_{i,j}(x)=\langle x\xi_{i,j},\eta_{i,j}\rangle,\ \ \ \ x\in M,\ \ \ \ 1\leqslant i,j\leqslant k.
\end{equation}  
For $\gamma\in \Gamma$ and $x\in M$,
\begin{equation}
\theta_{i,j}(\gamma^* x\gamma)=\langle x\gamma\xi_{i,j},\gamma \eta_{i,j}\rangle.
\end{equation}
By SOT$^*$-compactness of $\Gamma$,
\[\{\gamma\xi_{i,j}, \gamma\eta_{i,j}: \gamma\in \Gamma,\ 1\leqslant i,j\leqslant k\}\]
is  norm-compact  in the unit ball of $L^2(M)$, so we may choose a finite $\beta$-net $\{\omega_1,\ldots,\omega_r\}$ of vectors for this set. Now choose $\delta_0$ so that 
\begin{equation}\label{first}
|\langle x_\delta\omega_s, \omega_t\rangle |<\beta, \ \ \ \ 1\leqslant s,t\leqslant r,\ \ \ \ \delta\geqslant\delta_0,
\end{equation}
possible since $w^*-\lim_\delta x_\delta =0$.
\vskip 0.07in 
\noindent Now fix $\gamma\in \Gamma$ and a pair of integers $(i,j)$. Then choose $\omega_s,\omega_t$ so that
\begin{equation}\label{second}
\|\gamma\xi_{i,j}-\omega_s\|<\beta,\ \ \ \ \|\gamma\eta_{i,j}-\omega_t\|<\beta.
\end{equation}
Applying \eqref{first} and \eqref{second} to the equation
\begin{align}
\theta_{i,j}(\gamma^* x_\delta\gamma)&=\langle x_\delta \gamma \xi_{i,j}, \gamma \eta_{i,j}\rangle \notag \\
&=\langle x_\delta (\gamma \xi_{i,j} -\omega_s),\gamma\eta_{i,j}\rangle +
\langle x_\delta \gamma \omega_s, \gamma \eta_{i,j}-\omega_t\rangle +
\langle x_\delta \omega_s, \omega_t\rangle
\end{align}
leads to the estimate $|\theta_{i,j}(\gamma^* x_\delta \gamma)| <3\beta$ for $\delta \geqslant\delta_0$ and independent of the choices of $\gamma\in \Gamma$ and the pair $(i,j)$.
By summing the $k^2$ matrix entries, we obtain
\begin{equation}
\|T(\gamma^*x_\delta\gamma)\|<3k^2\beta<\eps,\ \ \ \ \ \ \ \delta\geqslant \delta_0,\ \ \gamma\in \Gamma,
\end{equation}
as required.\end{proof}

\begin{lem}\label{sepsubalg}
Let $M$ be a finite von Neumann algebra with a faithful normal trace $\tau$ and let a discrete group $G$ have a compact action $\alpha$ on $M$. Let $F$ be a finite subset of $M$. Then there exists an $\alpha$-invariant von Neumann subalgebra $N\subseteq M$ such that $F\subseteq N$ and $N$ has a separable predual.
\end{lem}

\begin{proof}
We will define inductively an increasing sequence $A_1\subseteq A_2\subseteq\ldots$ of separable unital C$^*$-algebras
and an increasing sequence $\mathcal{H}_1\subseteq \mathcal{H}_2\subseteq\ldots$ of separable closed subspaces of $L^2(M)$ with the following properties:
\begin{itemize}
 \item[{\rm{(i)}}]
 $F\subseteq A_1$ and $\Omega\in \mathcal{H}_1$.
 \medskip
 \item[{\rm{(ii)}}]
 $\mathcal{H}_n$ is an invariant subspace for $A_n$, $n\geqslant 1$.
 \medskip
 
 \item[{\rm{(iii)}}]
 For $a\in A_n$ and $g\in G$, $\alpha_g(a)\in \overline{A_{n+1}}^{SOT}$.
 
 \end{itemize}
 
 To begin the induction, let $A_1=$C$^*(F,1)$ and let $\mathcal{H}_1=\overline{\{a\Omega:a\in A_1\}}^{\|\cdot\|_2}$.
 Now suppose that $A_n$ and $\mathcal{H}_n$ have been constructed. Fix a countable norm dense set 
 $\{a_1,a_2,\ldots\}\in A_n$.

 Since the action is compact, we may choose a sequence $\{a_{i1},a_{i2},\ldots\}$ which is $\|\cdot\|_2$-dense in Orb$(a_i)$ for $i\geqslant 1$. We now define \[A_{n+1} = \mathrm{C}^*(A_n,\{a_{ij}\}_{i,j=1}^\infty)\]
 and 
 \[ \mathcal{H}_{n+1}=\overline{{\mathrm{span}}}^{\|\cdot\|_2}\{a\xi:a\in A_{n+1},\ \xi\in \mathcal{H}_n\}.\]
 Then (i) and (ii) are clearly satisfied and it remains to verify (iii).
 
 Fix $a\in A_n$ with $\|a\|<1$. Choose $\delta$ to satisfy $0<\delta<1-\|a\|$ and choose $a_i$ so that $\|a-a_i\|<\delta$. Then $\|a_i\|<1$. Fix $g\in G$. Then $\|\alpha_g(a)-\alpha_g(a_i)\|<\delta$ and there exists $a_{ij}\in 
 \overline{{\mathrm{Orb}}(a_i)}^{\|\cdot\|_2}$ so that $\|\alpha_g(a_i)-a_{ij}\|_2<\delta$. Thus $\|\alpha_g(a)-a_{ij}\|_2<2\delta$. Replacing $\delta$ by $\delta 2^{-m}$ for $m=1,2,3,\ldots$ successively, we obtain a sequence $\{b_m\}_{m=1}^\infty$ from $\overline{{\mathrm{Orb}}(a_i)}^{\|\cdot\|_2}$ so that $\lim_{m\to\infty}\|\alpha_g(a)-b_m\|_2=0$, and each $b_m$ is an $a_{ij}\in A_{n+1}$. Since $\{b_m\}_{m=1}^\infty$ is uniformly bounded, showing that this sequence converges strongly to $\alpha_g(a)$ only requires us to consider vectors in $M\Omega$. Accordingly
 let $x\in M$ be arbitrary. Then
 \begin{align*}\|(\alpha_g(a)-b_m)x\Omega\|_2&=
 \|(\alpha_g(a)-b_m)Jx^*J\Omega\|_2
 =\|Jx^*J(\alpha_g(a)-b_m)\Omega\|_2\\
 &\leqslant \|x^*\|\,\|\alpha_g(a)-b_m\|_2 \rightarrow 0 \text{ as }m\to\infty.
 \end{align*}
 Thus $\alpha_g(a)\in \overline{A_{n+1}}^{SOT}$, establishing (iii).
 
 Now let $N$ be the the von Neumann algebra generated by $\bigcup_{n=1}^\infty A_n$. From (iii), $N$ is $\alpha$-invariant. The Hilbert space $\mathcal{H}$ spanned by the $\mathcal{H}_n$'s is a separable $N$-invariant subspace of $L^2(M)$, and the restriction of $N$ to $\mathcal{H}$ is faithful since $\mathcal{H}$ contains the separating vector $\Omega$. Thus $N$ has a separable predual as required.
 \end{proof}

\nexer{\begin{Lemma}\label{lemma:closed kernel}
Let $X$ be a dual Banach space, and $\varphi$ a bounded linear functional on $X$.   Then $\varphi$ is $w^*$-continuous if and only if $\ker\varphi$ is closed.  
\end{Lemma}
\begin{proof}
One direction is obvious.  Suppose that $\ker\varphi$ is closed, but $\varphi$ is not $w^*$-continuous.  In particular, we may assume that there is a net $(x_\lambda)$ in $X$ converging to zero such that $\varphi(x_\lambda)$ does not converge to zero.  By passing to a subnet we may further assume that there exists $\eps>0$ such that $\abs{\varphi(x_\lambda)} \geq \eps$ for all $\lambda$.  

Let $x_0 \in X$ be such that $\varphi(x_0)=1$ and observe that $x_\lambda / \varphi(x_\lambda) - x_0 \in \ker\varphi$ for each $\lambda$.  Moreover, since the $\varphi(x_\lambda)$ are bounded away from zero, we have that $x_\lambda / \varphi(x_\lambda) \rightarrow 0$ in the $w^*$-topology.  Then also $x_\lambda / \varphi(x_\lambda) - x_0$ converges to $-x_0$ and, since $\ker\varphi$ is closed, $-x_0 \in \ker\varphi$, a contradiction.  Thus, $\varphi$ is $w^*$-continuous, completing the proof.    
\end{proof}

\begin{Lemma} \label{lemma:continuity} Let $X$ and $Y$ be dual Banach spaces with preduals $X_\ast$ and $Y_\ast$, and suppose $X_\ast$ is norm-separable.  Then, for a bounded linear operator $T: X \rightarrow Y$, the following conditions are equivalent:
\begin{itemize}
    \item [(i)] $T$ is $w^*$-continuous.
    \item [(ii)] If $(x_n)$ is a uniformly bounded sequence in $X$ with $w^*$-limit 0, then $T(x_n) \rightarrow 0$ in the $w^*$-topology.
\end{itemize}
\end{Lemma}
\begin{proof} That (i) implies (ii) is obvious.  To prove the converse, first consider the special case where $Y = \mathbb{C}$.  To reach a contradiction, suppose that $(ii)$ holds, but that $T$ is not $w^*$-continuous.  Since $T$ is a bounded linear functional, $w^*$-continuity is equivalent to $\ker T$ being closed, by Lemma \ref{lemma:closed kernel}.  Thus, $\ker T$ is not closed, so the intersection in $X$ of $\ker T$ with some ball $B_r$ of radius $r>0$ is not $w^*$-closed, by the Krein-Smulian theorem.  Then there exists a net $(x_\lambda)$ in $\ker T \cap B_r$ such that $x_\lambda$ converges to $x \notin \ker T \cap B_r$.  Since $x \in B_r$, we have $x \notin \ker T$.  

Fix a countable, norm dense set $\sett{\theta_i: i \geq 1}$ in the unit ball of $X_\ast$.  For each $i$, pick $x_{\lambda_i}$ such that 
\[\abs{\theta_j(x_{\lambda_i}-x)}<1/j, \quad 1 \leqslant j \leqslant i.\]
This produces a sequence $(x_{\lambda_i})$ in $\ker T \cap B_r$ such that $x_{\lambda_i}-x \rightarrow 0$ in the $w^*$-topology, so since $x_{\lambda_i}-x$ is uniformly bounded, $(ii)$ implies that $T(x_{\lambda_i}-x) \rightarrow 0$.  But then $T(x_{\lambda_i})=0$ for each $i$ implies $T(x)=0$, a contradiction.  This proves the special case of $(ii) \implies (i)$ where $Y=\Cplx$.  

For the general case, note that $T:X \rightarrow Y$ is $w^*$-continuous if and only if the composition $\rho \circ T:X \rightarrow \mathbb{C}$ is continuous for any $\rho \in Y_\ast$.  Thus, the general case follows by applying the special case to every such composition $\rho \circ T$. \end{proof}}

Let $\alpha$ be a trace preserving action of a discrete group $G$ on a finite von Neumann algebra $M$, and define a group of unitaries $\{V_g:g\in G\}\subseteq \mathbf{B}(L^2(M))$ by 
\begin{equation}\label{defofvg}
V_g(x\Omega)=\alpha_g(x)\Omega,\ \ \ x\in M.
\end{equation}
Let $\Gamma$ be the SOT$^*$-closure of this group where SOT$^*$ denotes the strong$^*$ topology. Then $\Gamma$ is a group of unitaries in $\mathbf{B}(L^2(M))$.
\begin{lem}\label{compactgamma}
Let $M$, $G$ and $\Gamma$ be as defined above. Further assume that $M$ has a separable predual and that the action of $G$ is compact. Then $\Gamma$ is SOT$^*$-compact.
\end{lem}

\begin{proof}
By hypothesis, $L^2(M)$ is norm separable so fix a dense sequence $\{\xi_i=m_i\Omega\}_{i=1}^\infty$ in the unit ball of $L^2(M)$ with $m_i\in M$. The strong$^*$ topology on the unit ball of $\mathbf{B}(L^2(M))$ is metrizable by the metric
\[d(s,t)=\sum_{n=1}^\infty (\|(s-t)\xi_n\|+\|(s^*-t^*)\xi_n\|)2^{-n},\ \ \ s,t\in \mathbf{B}(L^2(M)).\]
Then $\Gamma$ is an SOT$^*$-closed subset of a separable metric space, so it suffices to show that it is sequentially compact. We will extract an SOT$^*$-convergent subsequence from an arbitrary sequence $\{V_{g_i}:i\geqslant 1\}$.

Relabel this sequence as $\{u_{01},u_{02},u_{03},\ldots\}$, and note that $u_{0j}(m_1\Omega),u_{0j}^*(m_1\Omega)\in {\mathrm{Orb}}(m_1)\Omega$ for $j\geqslant 1$. This orbit has $\|\cdot\|_2$-compact closure, so there is a subsequence $\{u_{11},u_{12},u_{13},\ldots\}$ so that the sequences $\{u_{1j}(m_1\Omega)\}_{j=1}^\infty$
and $\{u_{1j}^*(m_1\Omega)\}_{j=1}^\infty$ are convergent in $\|\cdot\|_2$-norm. Repeating this argument, we obtain successive subsequences $\{u_{ij}\}_{j=1}^\infty$ for $i\geqslant 1$ so that the sequences
$\{u_{ij}(m_i\Omega)\}_{j=1}^\infty$
and $\{u_{ij}^*(m_i\Omega)\}_{j=1}^\infty$ are $\|\cdot\|_2$-convergent  for each $i\geqslant 1$. It is then easy to see that the diagonal subsequence $\{u_{ii}\}_{i=1}^\infty$ converges in the strong$^*$ topology.  Thus $\Gamma$ is SOT$^*$ sequentially compact, and so is compact.
\end{proof}

We now come to the main result of this section.

\begin{thm}\label{RelativeHAPTheorem} Let $G$ be a discrete group and $\alpha$ a trace-preserving, ergodic action of $G$ on a finite von Neumann algebra $(M,\tau)$.  Then the following conditions are equivalent:
\begin{itemize}
    \item [(i)] The action $\alpha$ is compact $($see Definition \ref{basicdef}$)$.
    \item [(ii)] $\mathcal{QN}(L(G) \subseteq M \rtimes_\alpha G)'' = M \rtimes_\alpha G$.
    \item [(iii)] The von Neumann algebra generated by $\mathcal{QN}^{(1)}(L(G) \subseteq M \rtimes_\alpha G)$ is $M \rtimes_\alpha G$.
    \item[(iv)] $M\rtimes_\alpha G$ has property H relative to the subalgebra $L(G)$.
\end{itemize}
\end{thm}

\begin{proof}
\noindent Note that the first three conditions are equivalent by Theorem \ref{QuasinormalizerCompactSystemTheorem} and the discussion following it, and that condition (iv) implies condition (ii),  by Proposition 3.4 in \cite{Po}.  Thus, the proof will be complete when we show that conditions (i)-(iii) imply (iv).  Our strategy will be to construct a net of normal, completely positive maps $\phi_\lambda$ on $M$ which 
\begin{itemize}
    \item approximate the identity map on $M$ pointwise in $\norm{\cdot}_2$,
    \item  extend to compact operators on $L^2(M)$, and
    \item  commute with the automorphisms $\alpha_g$, $g \in G$.  
\end{itemize}  
We  assume that conditions (i)-(iii) hold, and we then establish (iv).

By Lemma \ref{Kronecker Subalgebra Lemma}, $M$ is hyperfinite, so can be written as   $M = \overline{(\bigcup_{\lambda\in \Lambda} M_\lambda)}^{w^*}$, where the $M_\lambda$'s form an upwardly directed net of finite dimensional   $\ast$-subalgebras of $M$. Denote by $E_\lambda$ the trace-preserving conditional expectation of $M$ onto $M_\lambda$.  For each $g \in G$, we have a completely positive map $\phi_{\lambda,g}=\alpha_g \circ E_\lambda \circ \alpha_g^{-1}$ on $M$ which, by uniqueness, is equal to the trace-preserving conditional expectation of $M$ onto $\alpha_g(M_\lambda)$.  We note that these maps can be viewed as contractions on $L^2(M)$; this follows easily from the inequality $T(x)^*T(x)\leqslant T(x^*x)$ for any completely positive contraction $T$ on $M$. 

In order to make use of the earlier lemmas, we impose a temporary requirement that $M$ should have a separable predual. The general case will then be deduced from this special situation.

Let $\Gamma$ be the SOT$^*$-compact group of  Lemma \ref{compactgamma}, which is the SOT$^*$-closure of the set of operators $\{V_g:g\in G\}$ defined in \eqref{defofvg}.  For $\gamma \in \Gamma$, we define 
\[ \phi_{\lambda,\gamma}(x) = \gamma E_\lambda(\gamma^* x \gamma) \gamma^*,\,\,x \in M,\]
noting that this map coincides with $\phi_{\lambda,g}$ when $\gamma=V_g$.
Since $E_\lambda$ is normal and completely positive, these two properties pass to $\phi_{\lambda,\gamma}$.  For each $x \in M$ and vectors $\xi,\eta\in L^2(M)$, the scalar valued map 
$\gamma \mapsto \langle \phi_{\lambda,\gamma}(x)\xi,\eta\rangle$ is easily seen to be SOT$^*$-continuous on $\Gamma$, using the normality of $E_\lambda$. This enables us to define a map  $\phi_\lambda$ on $M$ by 
\begin{equation}\label{intdef}
 \ip{\phi_\lambda(x)\xi}{\eta} = \int_{\Gamma}\ip{\phi_{\lambda,\gamma}(x)\xi}{\eta}\,d\mu(\gamma), \quad \quad x \in M, \ \ \  \xi, \eta \in L^2(M),
 \end{equation}
 where $\mu$ is Haar measure on the compact group $\Gamma$.
 Then $\phi_\lambda$ maps $M$ into itself since, for $t \in M'$, we have 
\begin{align*}\ip{\phi_\lambda(x) t\xi}{\eta} & = \int_{\Gamma}\ip{t\phi_{\lambda,\gamma}(x)\xi}{\eta}\,d\mu(\gamma) 
= \int_{\Gamma}\ip{\phi_{\lambda,\gamma}(x)\xi}{t^*\eta}\,d\mu(\gamma)\\
&= \ip{\phi_\lambda(x) \xi}{t^* \eta} = \ip{ t \phi_\lambda(x) \xi}{\eta}, \end{align*}
for any such $x \in M,$ $\xi, \eta \in L^2(M)$,
showing that $\phi_\lambda(x)\in M''=M$.  Complete positivity of $\phi_\lambda$ follows from complete positivity of the maps $\phi_{\lambda,\gamma}$. Each $\phi_\lambda$ is trace-preserving and, further, $\phi_\lambda$ is a $\norm{\cdot}_2$-norm contraction, so has a bounded extension $T_{\phi_\lambda}: L^2(M) \rightarrow L^2(M)$.  To see that $\phi_\lambda$ is $w^*$-continuous, it suffices to consider a uniformly bounded net $\{x_\delta\}_{\delta\in \Delta}$ converging to 0 in the $w^*$-topology, by the Krein-Smulian theorem.  We apply Lemma \ref{uniformbound} to the finite rank map $E_\lambda$ to obtain an arbitrarily small bound on the integrand in \eqref{intdef}, showing that $\lim_\delta\langle \phi_\lambda(x_\delta) \xi,\eta\rangle =0$ for all $\xi,\eta\in L^2(M)$. Since the net $\{\phi_\lambda(x_\delta)\}_{\delta\in \Delta}$ is uniformly bounded, we conclude that $\phi_\lambda$ is  a normal, completely positive map on $M$.  Moreover, translation invariance of $\mu$ implies that $\phi_\lambda \circ \alpha_g = \alpha_g \circ \phi_\lambda$, for any $g \in G$.  

We now show that $\lim_\lambda\norm{\phi_\lambda(x)-x}_2 = 0$ for any $x \in M$.  By the density result of Lemma \ref{Kronecker Subalgebra Lemma}, it suffices to consider an element $x$ that lies in a finite dimensional $\alpha$-invariant subspace $X$ of $M$. By scaling, we may assume that $\|x\|_2=1$, and we now fix $\eps >0$. Choose $\lambda_0$ so that  $X \subseteq_{\eps/2} M_{\lambda}$ whenever $\lambda \geqslant \lambda_0$, meaning that 
\[ \sup_{y \in X,\,\norm{y}_2\leqslant 1} \mathrm{dist}_{\|\cdot\|_2}(y,M_\lambda)<\eps/2, \ \ \ \ \lambda\geqslant \lambda_0.\]
  Since $X$ is $\alpha$-invariant, it follows that
\[ \sup_{g \in G} \norm{\alpha_g^{-1}(x)-E_{\lambda}(\alpha_g^{-1}(x))}\leqslant \eps\]
whenever $\lambda \geqslant \lambda_0$.  Then, for all such $\lambda$ and any $g \in G$, we have
\[\norm{x-\phi_{\lambda,g}(x)}_2=\norm{x-\alpha_g \circ E_\lambda \circ \alpha_g^{-1}(x)}_2\leqslant\eps,\]
and the SOT$^*$-density of $\{V_g:g\in G\}$ in $\Gamma$ implies that $\|x-\phi_{\lambda,\gamma}(x)\|_2\leqslant \eps$ for $\lambda\geqslant\lambda_0$ and $\gamma\in \Gamma$.
Averaging over $\Gamma$ gives $\norm{x-\phi_\lambda(x)}_2 \leqslant \eps$ for  $\lambda\geqslant\lambda_0$, and this establishes that   $\lim_\lambda\norm{\phi_\lambda(x)-x}_2 = 0$ , as required.  
\vskip 0.07in 
\noindent Next, we show that the associated operator $T_{\phi_\lambda}$ on $L^2(M)$ is compact.  Let $\{z_j\}_{j=1}^\infty$ be a sequence in $L^2(M)$ converging weakly to zero.  By $\|\cdot\|_2$-density of $M$ in $L^2(M)$, we may assume that $z_j\in M$ for $j\geqslant 1$. Then, for any $\lambda$, $\lim_{j\to\infty}\|E_{\lambda}(z_j)\|_2=0$.  Similarly,  $\lim_{j\to\infty}\|E_{\alpha_g(M_\lambda)}(z_j)\|_2=0$  for any $g \in G$.  By the dominated convergence theorem, 
\begin{equation*}
\|T_{\phi_\gamma}(z_j)\|_2=\|\phi_\gamma(z_j)\|_2=\norm{\int_\Gamma \phi_{\lambda,\gamma}(z_j)d\mu(\gamma)}_2 \leqslant \int_\Gamma \norm{\phi_{\lambda,\gamma}(z_j)}_2 d\mu(\gamma) \longrightarrow 0
\end{equation*}
as $j \rightarrow \infty$, and so each $T_{\phi_\gamma}$ is compact.  Thus, we have produced a net $\{\phi_\lambda\}$ of completely positive maps on $M$ which commute with $\alpha_g,$ $g \in G$; approximate the identity map on $M$ in the $\|\cdot\|_2$, and extend to compact operators on $L^2(M)$. By Lemma \ref{cp extension}, these extend to a net $\{\Phi_\lambda\}$ of completely positive $L(G)$-bimodule maps on $M\rtimes_\alpha G$, given by $\Phi_\lambda(x) = \sum_{g\in G} \phi_\lambda(x_g) u_g$, for $x = \sum_{g\in G} x_g u_g \in M \rtimes_\alpha G$. 

To complete the proof of the separable predual case, we show that this sequence satisfies the properties  of Definition \ref{relative HAP}.  The first of these is clear because the maps that we have constructed are all trace-preserving. Since $\phi_\lambda(x) \rightarrow x$ in $\norm{\cdot}_2$ for each $x \in M$, we also have that  $\Phi_\lambda(m u_g) = \phi_\lambda(m)u_g \rightarrow x u_g$ in $\norm{\cdot}_2$ for any $m \in M$ and $g \in G.$ A standard approximation argument then shows that $\norm{\Phi_\lambda(x) - x}_2 \rightarrow 0$ for any $x = \sum_g x_g u_g \in M\rtimes_\alpha G$.  This proves that the $\Phi_\lambda$'s satisfy (ii).  

 We now show that (iii) is satisfied.  Note first that any finite dimensional $G$-invariant subspace of $M \Omega \subseteq L^2(M)$ may be associated to a finitely generated $L(G)$-module in $L^2(M\rtimes_\alpha G)$, as follows.  Let $X$ be such a subspace, and use Gram-Schmidt  to find an orthonormal basis $(m_1, \ldots , m_n)$ of $X$, with $m_i \in M$, $1 \leqslant i \leqslant n$.  Then the operators $p_i = m_i e_{L(G)} m_i^* \in \ip{M\rtimes_\alpha G}{e_{L(G)}}$ are mutually orthogonal projections, so that $\sum_{i=1}^n m_i e_{L(G)} m_i^* \in \ip{M\rtimes_\alpha G}{e_{L(G)}}$ is a projection, and its range is the right $L(G)$-module $\Hil_X$  in $L^2(M\rtimes_\alpha G)$  generated by the vectors $m_i \Omega$, $1\leqslant i \leqslant n$. 
Denote this projection by $p_{\Hil_X}$, and denote the orthogonal projection of $L^2(M)$ onto $X$ by $p_X$.  Let $U:L^2(M \rtimes_\alpha G) \rightarrow L^2(M) \otimes \ell^2(G)$ be the unitary given by $U(m u_g \Omega) = y \Omega \otimes \delta_g$, for $y \in M$ and $g \in G$.    Then, for any such $y$ and $g$, we have
\begin{align*}(p_x \otimes 1)(y\Omega \otimes \delta_g) & = \sum_{i=1}^n \tau(m_i^* y)m_i \Omega \otimes \delta_g = \sum_{i=1}^n m_i E_{L(G)}(m_i^* y)\Omega \otimes \delta_g\\
                                                        & = U\sum_{i=1}^n m_i E_{L(G)}(m_i^*y)u_g \Omega 
                                                         = U \sum_{i=1}^n m_i e_{L(G)} m_i^* (y u_g \Omega).\end{align*}
Thus, $p_x \otimes 1 = U p_{\Hil_X} U^*.$  Note also that $T_{\phi_\lambda}\otimes 1 = U T_{\Phi_\lambda} U^*$ for each $\lambda$.        
\vskip 0.07in 
\noindent To show that the $T_{\Phi_\lambda}$'s satisfy condition (iii) of Definition \ref{relative HAP}, fix $\lambda$, set $\eps > 0$ and choose a finite dimensional $G$-invariant subspace $X$ of $M \Omega$ with the property that $\norm{T_{\phi_\lambda}(1-p_X)}<\eps$, possible by compactness of $T_{\phi_\lambda}$ and Lemma \ref{Kronecker Subalgebra Lemma}.  As above, associate to $X$ an $L(G)$-module $\Hil_X = \overline{\sum_{i=1}^n m_i L(G)}$, and a finite-trace projection $p_{\Hil_X}=\sum_{i=1}^n m_i e_{L(G)} m_i^* \in \ip{M \rtimes_\alpha G}{e_{L(G)}}$.
 From our characterization of $T_{\Phi_\lambda}$, we then have
\begin{align*}\norm{T_{\Phi_\lambda}(1-p_{\Hil_X})} &= \norm{U T_{\Phi_\lambda} U^*U(1-p_{\Hil_X})U^*}=\norm{(T_{\phi_\lambda} \otimes 1)(1 \otimes 1 - p_X \otimes 1)}\\
& \leqslant \norm{T_{\phi_\lambda}(1-p_X)}<\eps.\end{align*}
This proves that the net $(T_{\Phi_\lambda})$ satisfies condition (iii) of Definition \ref{relative HAP}, completing the proof of the separable predual case.

We now consider the general case where there is no assumption of a separable predual. We form a net 
$\Lambda=\{(F,\varepsilon): F\subseteq M \text{ is finite and } \varepsilon >0\}$, and we order this by 
\[(F_1,\varepsilon_1)\leqslant (F_2,\varepsilon_2) \text{ if and only if }F_1\subseteq F_2\text{ and } \varepsilon_2\leqslant \varepsilon_1.\]

For $\lambda=(F,\varepsilon)\in \Lambda$, define $\phi_\lambda:M\to M$ as follows. By Lemma \ref{sepsubalg}, there exists an $\alpha$-invariant von Neumann subalgebra $N_F$ satisfying $F\subseteq N_F \subseteq M$ and $N_F$ has a separable predual. From our initial case, $L(G)\subseteq N_F\rtimes_\alpha G$ has the Haagerup approximation property, so there exists a normal c.p. $L(G)$-bimodule map $\psi_\lambda: 
N_F\rtimes_\alpha G \to N_F\rtimes_\alpha G$ with the following properties:
\begin{itemize}
\item[1.] $\tau\circ \psi_\lambda\leqslant \tau$.
\item[2.] $\|\psi_\lambda(z)-z\|_2 <\varepsilon$ for $z\in F$.
\item[3.] For $\delta >0$, there exists a projection $p\in \langle N_F\rtimes_\alpha G,e_{L(G)}\rangle$ such that 
Tr$(p)<\infty$ and $\|T_{\psi_\lambda}(1-p)\|<\delta$.
\end{itemize}

Then set $\phi_\lambda=\psi_\lambda \circ E_F$ where $E_F:M\rtimes_\alpha G\to N_F\rtimes_\alpha G$ is the conditional expectation. Then 1. and 2. hold for $\phi_\lambda$.  Moreover, 
$T_{\phi_\lambda}=T_{\psi_\lambda \circ E_F}=T_{\psi_\lambda}\circ e_{N_F\rtimes_\alpha G}$,
where $e_{N_F\rtimes_\alpha G}$ is the Jones projection. Given $\delta>0$, choose $p\in \langle
N_F\rtimes_\alpha G,e_{L(G)}\rangle$ for $T_{\psi_\lambda}$ as above, and note additionally that $p\in \langle
M\rtimes_\alpha G,e_{L(G)}\rangle$ and commutes with $e_{N_F\rtimes_\alpha G}$, so
\[\|T_{\phi_\lambda}(1-p)\|=\|T_{\psi_\lambda}(1-p)e_{N_F\rtimes_\alpha G}\|<\delta.\]
Thus condition 3. also holds, completing the proof.
\end{proof}}

\noindent In this section we consider $W^*$-dynamical systems in which the underlying von Neumann algebra is tracial. The setting will consist of a discrete group $G$, acting by trace-preserving automorphisms on a finite von Neumann algebra with a fixed normal, faithful trace $\tau$.  Using Theorem \ref{QuasinormalizerCompactSystemTheorem}, we will relate the analytical structure of the inclusion $L(G) \subseteq M \rtimes_\alpha G$ to the dynamical properties of the action $\alpha$.  Recall the following definition, due to Popa \cite{Po}.

\begin{definition}\label{relative HAP} The finite von Neumann algebra $(N, \tau)$ is said to have \emph{property} $H$ $($or the \emph{Haagerup approximation property}$)$ \emph{relative to the von Neumann subalgebra} $B \subseteq N$ if there is a net of normal, $B$-bimodular, completely positive maps $\{\Phi_\lambda:N \rightarrow N\}_{\lambda\in\Lambda}$ with the following properties:
\begin{itemize}
\item[(i)] $\tau\circ \Phi_\lambda\leqslant \tau$,  $\lambda\in\Lambda$.
    \item[(ii)] For each $z \in N$, $\lim_\lambda\norm{\Phi_\lambda (z)-z}_2 = 0$.
    \item[(iii)] Each induced operator $T_{\Phi_\lambda}$ on $L^2(N,\tau)$ has the property that for any $\eps>0,$ there is a projection $p \in \ip{N}{e_B}$ with finite trace such that $\norm{T_{\Phi_\lambda}(1-p)}<\eps.$
\end{itemize}
\end{definition}
\noindent The third condition above may be interpreted as ``compactness relative to $B$" and, in fact, implies that the $T_{\Phi_\lambda}$ are compact operators when the subalgebra $B$ is $\Cplx 1$.  Standard examples of inclusions with relative property $H$ include those of the form $B \subseteq B \overline{\otimes}P$ , where $P$ is a finite von Neumann algebra with the Haagerup approximation property, and crossed product inclusions $B \subseteq B \rtimes_\alpha \Gamma$, where $\Gamma$ is a discrete group with the Haagerup approximation property \cite{Po}. Ioana \cite{Io} proved that if $\Gamma$ is a discrete group acting by measure-preserving transformations $\sigma_\gamma$, $\gamma \in \Gamma$, on a probability space $(X,\mu)$, then the crossed product $L^\infty(X,\mu) \rtimes_\sigma \Gamma$ has property $H$ relative to the subalgebra $L(\Gamma)$ if and only if the action $\sigma$ is compact.  We extend Ioana's result to the case of a trace-preserving action of a discrete group on a general finite von Neumann algebra.  Our main result in this section -- which we combine with the results of the previous section for the reader's convenience -- is stated below as Theorem \ref{RelativeHAPTheorem}. In order to prove this result we will need several lemmas.

\nexer{\noindent The remainder of this section will concern the proof of Theorem \ref{RelativeHAPTheorem}.  Note that the first three conditions are equivalent by Theorem \ref{QuasinormalizerCompactSystemTheorem} and the discussion following it, and that condition (iv) implies condition (ii),  by Proposition 3.4 in \cite{Po}.  Thus, the proof will be complete when we show that conditions (i)-(iii) imply (iv).  Our strategy will be to construct a net of normal, completely positive maps $\phi_\lambda$ on $M$ which 
\begin{itemize}
    \item approximate the identity map on $M$ pointwise in $\norm{\cdot}_2$,
    \item  extend to compact operators on $L^2(M)$, and
    \item  commute with the automorphisms $\alpha_g$, $g \in G$.  
\end{itemize} The last condition will ensure that the maps lift to normal, completely positive $L(G)$-bimodule maps $\Phi_\lambda$ on $M \rtimes_\alpha G$, and the first two conditions will persist in such a way that the lifted maps satisfy conditions (ii) and (iii) of Definition \ref{relative HAP}.  The key to this step in the argument is the following general structural result for invariant completely positive maps, which we prove for completeness. }

\begin{lem} \label{cp extension} Let $\alpha$ be a trace-preserving action of a discrete group $G$ on a finite von Neumann algebra $M$.  Suppose that $\phi$ is a normal, completely positive $($resp., completely bounded$)$ map on $M$ such that $\phi \circ \alpha_g = \alpha_g \circ \phi$ for all $g \in G$.  Then there is a unique normal, completely positive $($resp., completely bounded$)$ extension $\Phi: M\rtimes_\alpha G \rightarrow M\rtimes_\alpha G$ of $\phi$ satisfying
\[ \Phi(u_gx u_h) = u_g\phi(x) u_h\]
for $x \in M$, $g,h \in G$.  In particular, $\Phi$ is an $L(G)$-bimodule map.  
\end{lem}
\begin{proof} 
The operators $\pi(x)$, $x\in M$, and $u_g$, $g\in G$, that generate the crossed product are easily checked to commute with $M'\otimes I$ and so $M\rtimes_\alpha G$ is a von Neumann subalgebra of $M\overline{\otimes}\mathbf{B}(\ell^2(G))$. In both cases $\phi:M\to M$ is completely bounded and so extends to a normal map $\Phi=\phi\otimes I: M\overline\otimes\mathbf{B}(\ell^2(G))\to M\overline\otimes\mathbf{B}(\ell^2(G))$. This is completely positive (resp. completely bounded) when $\phi$ is completely positive (resp. completely bounded). 

In $\mathbf{B}(\ell^2(G))$, let $e_{h,k}$ denote the rank-one matrix unit that takes $\delta_k$ to $\delta_h$ for $h,\,k\in G$. A simple calculation then gives $\lambda_ge_{h,k}=e_{gh,k}$ and $e_{h,k}\lambda_g=e_{h,g^{-1}k}$ for $g,\,h,\,k\in G$, where $\lambda$ is the left regular representation of $G$ on $\ell^2(G)$. Now
\[\pi(x)=\sum_{h\in G}\alpha_{h^{-1}}(x)\otimes e_{h,h},\ \ \ x\in M,\]
so
\[\pi(x)u_g=\sum_{h\in G} \alpha_{h^{-1}}(x)\otimes e_{h,g^{-1}h},\ \ \ x\in M,\ \ \ g\in G.\]
Thus
\begin{align*}\Phi(\pi(x)u_g)&=\sum_{h\in G}\phi(\alpha_{h^{-1}}(x))\otimes e_{h,g^{-1}h}\\
&=\sum_{h\in G}\alpha_{h^{-1}}(\phi(x))\otimes e_{h,g^{-1}h}=\pi(\phi(x))u_g,\ \ \ x\in M,\ \ \ g\in G.
\end{align*}
A similar calculation shows that 
\[\Phi(u_g\pi(x))=u_g\pi(\phi(x)),\ \ \ x\in M,\ \ \ g\in G,\]
so the normality of $\Phi$ shows that this map is an $L(G)$-bimodular extension of $\phi$ to $M\rtimes_\alpha G$. Normality and bimodularity clearly imply uniqueness of this extension.
\end{proof}

\vskip 0.03in 
\noindent The following lemma will be necessary to establish $w^*$-continuity of certain maps $\phi_\lambda$ that will appear below.  We will then prove a lemma that will allow us to pass from the case of a separable predual to the general situation in Theorem \ref{RelativeHAPTheorem}. Below, the strong$^*$ topology will be denoted by SOT$^*$.

\begin{lem}\label{uniformbound}
Let $(M,\tau)$ be a finite von Neumann algebra and let $N$ be a finite-dimensional von Neumann algebra. Let $\Gamma \subseteq {\mathbf{B}}(L^2(M))$ be an SOT$^*$-compact group of unitaries such that $\Ad (\gamma)(M)\subseteq M$ for $\gamma\in \Gamma$. Let $\{x_\delta\}_{\delta\in\Delta}$ be a uniformly bounded net converging to 0 in the $w^*$-topology, and let $T:M\to N$ be a $w^*$-continuous bounded map. Given $\eps >0$, there exists $\delta_0$ so that 
\begin{equation*}
\|T(\gamma^* x_\delta \gamma)\| < \eps,\ \ \ \ \text{ for }\delta\geqslant \delta_0,\ \ \ \  \gamma\in \Gamma.
   \end{equation*}
   \end{lem}
   
   \begin{proof}
   Since $N$ is finite-dimensional, it is a subalgebra of a matrix factor so we may assume that $N= \mathbb{M}_k$ for some integer $k$. By scaling, we may assume that $\|x_\delta\|\leqslant 1$ for all $\delta$, and that $\|T\|\leqslant 1$. Now fix $\beta >0$ so small that $3k^2\beta <\eps$.
\vskip 0.07in 
\noindent For $1\leqslant i,j\leqslant k$, let $\theta_{i,j}(x)$ be the $(i,j)$ entry of $T(x)$ for $x\in M$. Each $\theta_{i,j} $ is a $w^*$-continuous contractive linear functional on $M$, so there exist vectors $\xi_{i,j},\eta_{i,j}\in L^2(M)$ so that $\|\xi_{i,j}\|_2, \|\eta_{i,j}\|_2\leqslant 1$ and
\begin{equation*}
\theta_{i,j}(x)=\langle x\xi_{i,j},\eta_{i,j}\rangle,\ \ \ \ x\in M,\ \ \ \ 1\leqslant i,j\leqslant k.
\end{equation*}  
For $\gamma\in \Gamma$ and $x\in M$,
\begin{equation}\label{theta}
\theta_{i,j}(\gamma^* x\gamma)=\langle x\gamma\xi_{i,j},\gamma \eta_{i,j}\rangle.
\end{equation}
By SOT$^*$-compactness of $\Gamma$, the closure of 
\[\{\gamma\xi_{i,j}, \gamma\eta_{i,j}: \gamma\in \Gamma,\ 1\leqslant i,j\leqslant k\}\]
is  norm-compact  in the unit ball of $L^2(M)$, so we may choose a finite $\beta$-net $\{\omega_1,\ldots,\omega_r\}$ of vectors for this set. Now choose $\delta_0$ so that 
\begin{equation}\label{first}
|\langle x_\delta\omega_s, \omega_t\rangle |<\beta, \ \ \ \ 1\leqslant s,t\leqslant r,\ \ \ \ \delta\geqslant\delta_0,
\end{equation}
which is possible since $w^*-\lim_\delta x_\delta =0$.
\vskip 0.07in 
\noindent Now fix $\gamma\in \Gamma$ and a pair of integers $(i,j)$. Then choose $\omega_s,\omega_t$ so that
\begin{equation}\label{second}
\|\gamma\xi_{i,j}-\omega_s\|<\beta,\ \ \ \ \|\gamma\eta_{i,j}-\omega_t\|<\beta.
\end{equation}
Applying \eqref{first} and \eqref{second} to the equation \eqref{theta},
\begin{align*}
\theta_{i,j}(\gamma^* x_\delta\gamma)&=\langle x_\delta \gamma \xi_{i,j}, \gamma \eta_{i,j}\rangle \notag \\
&=\langle x_\delta (\gamma \xi_{i,j} -\omega_s),\gamma\eta_{i,j}\rangle +
\langle x_\delta \omega_s, \gamma \eta_{i,j}-\omega_t\rangle +
\langle x_\delta \omega_s, \omega_t\rangle
\end{align*}
leads to the estimate $|\theta_{i,j}(\gamma^* x_\delta \gamma)| <3\beta$ for $\delta \geqslant\delta_0$ and independent of the choices of $\gamma\in \Gamma$ and the pair $(i,j)$.
By summing the $k^2$ matrix entries, we obtain
\begin{equation*}
\|T(\gamma^*x_\delta\gamma)\|<3k^2\beta<\eps,\ \ \ \ \ \ \ \delta\geqslant \delta_0,\ \ \gamma\in \Gamma,
\end{equation*}
as required.\end{proof}

\begin{lem}\label{sepsubalg}
Let $M$ be a finite von Neumann algebra with a faithful normal trace $\tau$ and let a discrete group $G$ have a compact action $\alpha$ on $M$. For each finite subset $F\subseteq M$, there exists a unital norm-separable C$^*$-subalgebra $B_F\subseteq M$ with weak closure $M_F$ satisfying the following properties:
\begin{enumerate}
\item $F\subseteq B_F$.
\medskip
\item If $F_1\subseteq F_2$, then $B_{F_1}\subseteq B_{F_2}$ and $M_{F_1}\subseteq M_{F_2}$.
\medskip
\item  Each  $M_F$ has a norm-separable predual.
\medskip
\item Each $M_F$ is $\alpha$-invariant.
\end{enumerate}
\end{lem}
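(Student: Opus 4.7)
The plan is to construct $B_F$ via a monotone closure operation that mirrors the (now removed) earlier construction but is set up so that the output depends on $F$ in a functorial, monotone way. First, using the Axiom of Choice, fix once and for all, for every $x \in M$, a countable subset $D(x) \subseteq \operatorname{Orb}(x)$ that is $\|\cdot\|_2$-dense in $\overline{\operatorname{Orb}(x)}^{\|\cdot\|_2}$; this is possible because $\overline{\operatorname{Orb}(x)}^{\|\cdot\|_2}$ is compact metric, hence separable. Now define, for any countable $S \subseteq M$, the set $\Phi(S)$ to be the smallest subset of $M$ containing $S \cup \{1\} \cup \bigcup_{s \in S} D(s)$ that is closed under adjoints, products, and $\mathbb{Q}(i)$-linear combinations. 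Then $\Phi$ sends countable sets to countable sets and is monotone under inclusion.

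Given a finite set $F \subseteq M$, iterate: set $S_0 = F$, $S_{n+1} = \Phi(S_n)$, and $S_\infty = \bigcup_n S_n$. This is a countable unital $\ast$-algebra over $\mathbb{Q}(i)$ containing $F$, with the crucial property that $D(x) \subseteq S_\infty$ for every $x \in S_\infty$. Define $B_F = \overline{S_\infty}^{\|\cdot\|}$ (a unital norm-separable C$^*$-subalgebra of $M$) and $M_F = \overline{B_F}^{\mathrm{WOT}}$. Properties (1) and (2) are immediate: $F \subseteq S_0 \subseteq B_F$, and the monotonicity of $\Phi$ propagates inductively to $S_n$, $B_F$, and $M_F$. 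The separability of the predual of $M_F$ (property (3)) follows since $\overline{B_F \Omega_\tau}^{\|\cdot\|_2}$ is a norm-separable closed subspace of $L^2(M)$ invariant under $M_F$, and $\Omega_\tau$ is separating for $M_F \subseteq M$ (as $\tau$ is faithful); hence $M_F$ has a faithful normal representation on a separable Hilbert space.

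The $\alpha$-invariance of $M_F$, which is the key point in property (4), flows directly from the choice of the sets $D(x)$. For every $x \in S_\infty$ and $g \in G$, the element $\alpha_g(x)$ lies in $\overline{\operatorname{Orb}(x)}^{\|\cdot\|_2} = \overline{D(x)}^{\|\cdot\|_2} \subseteq \overline{S_\infty}^{\|\cdot\|_2}$. Since every element of $\operatorname{Orb}(x)$ has norm $\|x\|$, the approximating sequence is uniformly bounded, so the limit lies in $\overline{B_F}^{\mathrm{SOT}} = M_F$. The norm-isometry of each $\alpha_g$ extends this to $B_F$: given $b \in B_F$ and $\varepsilon > 0$, pick $s \in S_\infty$ with $\|b - s\| < \varepsilon$, so $\|\alpha_g(b) - \alpha_g(s)\| < \varepsilon$ with $\alpha_g(s) \in M_F$; letting $\varepsilon \to 0$ gives $\alpha_g(b) \in M_F$. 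Weak-continuity of $\alpha_g$ then yields $\alpha_g(M_F) \subseteq M_F$, and applying the same to $g^{-1}$ gives equality.

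The main subtlety — and the expected obstacle — is the strict $\alpha$-invariance of $B_F$ as a norm-closed C$^*$-algebra, since $\|\cdot\|_2$-density of $D(x)$ in $\operatorname{Orb}(x)$ does not upgrade to $\|\cdot\|$-density in general, and $\operatorname{Orb}(x)$ need not be $\|\cdot\|$-precompact. There are two natural ways to handle this. One is simply to interpret the $\alpha$-invariance of $B_F$ in the weaker but sufficient sense that $\alpha_g(B_F) \subseteq M_F$ for every $g \in G$, which is what is actually used in the proof of Theorem \ref{RelativeHAPTheorem}. The other is to enlarge the construction by closing under $\alpha$ directly: replace $B_F$ by the C$^*$-algebra generated by $\bigcup_{g \in G} \alpha_g(B_F)$ and verify norm-separability in the presence of the compact action. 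In either case, monotonicity and separability of the predual of $M_F$ are preserved, yielding the lemma as stated.
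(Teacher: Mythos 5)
Your proof is correct and follows essentially the same strategy as the paper's: both build a norm-separable generating set closed under adjoining countable $\|\cdot\|_2$-dense subsets of orbits, upgrade $\|\cdot\|_2$-convergence of the uniformly bounded orbit approximants to SOT-convergence to get $\alpha_g(B_F)\subseteq M_F$, and obtain the separable predual from the separable $M_F$-invariant subspace $\overline{B_F\Omega}^{\|\cdot\|_2}$ containing the separating vector $\Omega$; your global closure operator $\Phi$ simply replaces the paper's induction on the cardinality of $F$ as the device for securing monotonicity in $F$. The subtlety you flag about norm-invariance of $B_F$ is genuine but equally present in the paper's own argument, which likewise only establishes $\alpha_g(B_F)\subseteq M_F$, and your weaker reading is exactly what is used downstream in the proof of Theorem \ref{RelativeHAPTheorem}.
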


\begin{proof}
These algebras will be constructed by induction on the cardinality of the finite subsets, so we begin by constructing $B_F$ and $M_F$ for a fixed but arbitrary one-point set $F$. 
We will define inductively an increasing sequence $A_1\subseteq A_2\subseteq\ldots$ of separable unital C$^*$-algebras
and an increasing sequence $\mathcal{H}_1\subseteq \mathcal{H}_2\subseteq\ldots$ of separable closed subspaces of $L^2(M)$ with the following properties:
\begin{itemize}
 \item[{\rm{(i)}}]
 $F\subseteq A_1$ and $\Omega\in \mathcal{H}_1$.
 \medskip
 \item[{\rm{(ii)}}]
 $\mathcal{H}_n$ is an invariant subspace for $A_n$, $n\geqslant 1$.
 \medskip
 
 \item[{\rm{(iii)}}]
 For $a\in A_n$ and $g\in G$, $\alpha_g(a)\in \overline{A_{n+1}}^{\rm SOT}$.
 
 \end{itemize}
 
 To begin the induction, let $A_1$ be any separable unital C$^*$-algebra containing $F$ and let $\mathcal{H}_1=\overline{\{a\Omega:a\in A_1\}}^{\|\cdot\|_2}$.
 Now suppose that $A_n$ and $\mathcal{H}_n$ have been constructed. Fix a countable norm dense set 
 $\{a_1,a_2,\ldots\}\subseteq A_n$.

 Since the action is compact, we may choose a sequence $\{a_{i1},a_{i2},\ldots\}\subseteq\,$ Orb$(a_i)$ which is $\|\cdot\|_2$-dense in Orb$(a_i)$ for $i\geqslant 1$. We now define \[A_{n+1} = \mathrm{C}^*(A_n,\{a_{ij}\}_{i,j=1}^\infty)\]
 and 
 \[ \mathcal{H}_{n+1}=\overline{{\mathrm{span}}}^{\|\cdot\|_2}\{a\xi:a\in A_{n+1},\ \xi\in \mathcal{H}_n\}.\]
 Then (i) and (ii) are clearly satisfied and it remains to verify (iii).
 
 Fix $a\in A_n$ with $\|a\|<1$. Choose $\delta$ to satisfy $0<\delta<1-\|a\|$ and choose $a_i$ so that $\|a-a_i\|<\delta$. Then $\|a_i\|<1$. Fix $g\in G$. Then $\|\alpha_g(a)-\alpha_g(a_i)\|<\delta$ and there exists $a_{ij}\in  \text{Orb}(a_i)$
 so that $\|\alpha_g(a_i)-a_{ij}\|_2<\delta$. Thus $\|\alpha_g(a)-a_{ij}\|_2<2\delta$. Replacing $\delta$ by $\delta 2^{-m}$ for $m=1,2,3,\ldots$ successively, we obtain a sequence $\{b_m\}_{m=1}^\infty$ from $\{\mathrm{Orb}(a_i):i\geqslant 1\}$ so that $\lim_{m\to\infty}\|\alpha_g(a)-b_m\|_2=0$, and each $b_m$ is an $a_{ij}\in A_{n+1}$. Since $\{b_m\}_{m=1}^\infty$ is uniformly bounded, showing that this sequence converges strongly to $\alpha_g(a)$ only requires us to consider vectors in $M\Omega$. Accordingly
 let $x\in M$ be arbitrary. Then
 \begin{align*}\|(\alpha_g(a)-b_m)x\Omega\|_2&=
 \|(\alpha_g(a)-b_m)Jx^*J\Omega\|_2
 =\|Jx^*J(\alpha_g(a)-b_m)\Omega\|_2\\
 &\leqslant \|x^*\|\,\|\alpha_g(a)-b_m\|_2 \rightarrow 0 \text{ as }m\to\infty.
 \end{align*}
 Thus $\alpha_g(a)\in \overline{A_{n+1}}^{\rm SOT}$, establishing (iii).
 
 Now let $B_F$ be the C$^*$-algebra generated by $\bigcup_{n=1}^\infty A_n$, and let $M_F$ denote its weak closure. From (iii),  $M_F$ is $\alpha$-invariant. The Hilbert space $\mathcal{H}_F$ spanned by the $\mathcal{H}_n$'s is a separable $M_F$-invariant subspace of $L^2(M)$, and the restriction of $M_F$ to $\mathcal{H}_F$ is faithful since $\mathcal{H}_F$ contains the separating vector $\Omega$. Thus $M_F$ has a separable predual as required.
 
 Now suppose that $B_F$ and $M_F$ have been constructed to satisfy conditions (1)-(4) for all subsets $F$ of cardinality at most $n$. Consider a fixed but arbitrary subset $F$ of cardinality $n+1$, and list the subsets of $F$ of cardinality $n$ as $S_1,\ldots,S_{n+1}$. The construction of $B_F$ and $M_F$ is accomplished exactly as above, starting the induction by choosing $A_1$ to be the separable C$^*$-algebra generated by $\bigcup_{i=1}^{n+1}B_{S_i}$.  This guarantees that the  nesting properties of condition (2) are satisfied.
 \end{proof}

\vskip 0.03in 
\noindent 
We will also need the following two lemmas. These were first established under the assumption of a separable predual, so the proofs that we give will reduce the general situations to the separable predual cases.

\begin{lem}[\cite{BCM1}, Theorem 6.10]\label{Invariant Subspaces Lemma} Let $G$ be a discrete group, and $\alpha$ an ergodic, trace-preserving action of $G$ on a finite von Neumann algebra $(M,\tau)$.  Then any finite-dimensional subspace $\mathcal{K} \subseteq L^2(M, \tau)$ which is invariant under the associated representation $V$ of $G$ on $L^2(M,\tau)$ is contained in $M \Omega$.  
\end{lem}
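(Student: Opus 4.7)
The plan is to exploit the standard identification of $L^2(M, \tau)$ with the space of closed, densely-defined operators affiliated to $M$ whose image lies in $L^2(M,\tau)$: each $\xi \in L^2(M,\tau)$ corresponds to a unique such operator $L_\xi$ with $L_\xi \Omega = \xi$ and $\tau(L_\xi^* L_\xi) = \|\xi\|_2^2$, and the essential point is that $\xi \in M\Omega$ if and only if $L_\xi$ is bounded.  The strategy is to extract, from the finite-dimensional $V$-invariant subspace $\mathcal{K}$, a single $\alpha$-invariant positive element of $L^1(M)$ that dominates each $L_{\xi_i}^* L_{\xi_i}$, and then use ergodicity to force it to be a scalar.

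Concretely, fix an orthonormal basis $\xi_1, \ldots, \xi_n$ of $\mathcal{K}$ and set
\[ T := \sum_{i=1}^n L_{\xi_i}^* L_{\xi_i} \in L^1(M)_+, \]
so that $\tau(T) = n$.  A short change-of-basis computation (using that any two orthonormal bases of $\mathcal{K}$ are related by a unitary matrix, with the sum $\sum_i \bar{u}_{ij} u_{ik} = \delta_{jk}$ collapsing the resulting double sum) shows that $T$ depends only on $\mathcal{K}$, not on the chosen basis.  The heart of the argument is then to check that $T$ is $\alpha$-invariant.  One extends $\alpha_g$ canonically to affiliated operators by conjugation, $\alpha_g(L) := V_g L V_g^{-1}$, and since $V_g\Omega = \Omega$ (by $G$-invariance of $\tau$), one obtains $\alpha_g(L_\xi) = L_{V_g\xi}$.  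Because $\mathcal{K}$ is $V$-invariant, $\{V_g\xi_i\}_{i=1}^n$ is also an orthonormal basis of $\mathcal{K}$, and the basis-independence of $T$ gives
\[ \alpha_g(T) = \sum_{i=1}^n L_{V_g\xi_i}^* L_{V_g\xi_i} = T. \]

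Ergodicity now finishes the argument: the spectral projections $\chi_{[0,s]}(T)$ are $\alpha$-invariant elements of $M$, hence trivial, so $T$ is a scalar, and taking traces gives $T = n \cdot 1$.  Since each $L_{\xi_i}^* L_{\xi_i}$ is positive and their sum equals $n \cdot 1$, we have $L_{\xi_i}^* L_{\xi_i} \leqslant n \cdot 1$ for every $i$, so $L_{\xi_i}$ is bounded with $\|L_{\xi_i}\| \leqslant \sqrt{n}$.  Consequently $\xi_i = L_{\xi_i}\Omega \in M\Omega$, and $\mathcal{K} \subseteq M\Omega$.

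The main technical obstacle is the noncommutative-integration bookkeeping:  one must justify that the sums and products of unbounded affiliated operators appearing in the basis-change computation define honest elements of $L^1(M)$, and that the extension identity $\alpha_g(L_\xi) = L_{V_g\xi}$ is unambiguous across the relevant domains.  These points are routine but require care; they are cleanest to set up by working first with a common dense domain for the operators $L_{\xi_i}$ (for instance $M\Omega$ itself, on which each $L_{\xi_i}$ acts by a formal left multiplication) before invoking the trace and spectral calculus to pass to the closed-operator level.
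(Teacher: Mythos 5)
Your argument is correct, but it takes a genuinely different route from the one in the paper. The paper does not reprove the result from scratch: its proof is a reduction of the general (possibly non-separable-predual) case to the separable-predual case of the cited theorem of [BCM1]. Concretely, after replacing $\mathcal{K}$ by $\mathcal{K}+\mathcal{K}^*$, the paper takes the C$^*$-algebra generated by the rational spectral projections of the self-adjoint affiliated operators $L_{\xi}$ for $\xi$ in a dense subset of $\mathcal{K}_{\mathrm{s.a.}}$, shows its weak closure $N$ is an $\alpha$-invariant von Neumann subalgebra with separable predual containing $\mathcal{K}$ (using uniqueness of spectral resolutions to see that $\alpha_g$ permutes these projections), and then invokes the known separable case. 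Your proof, by contrast, is direct and self-contained: you form the basis-independent element $T=\sum_i L_{\xi_i}^*L_{\xi_i}\in L^1(M)_+$, observe that $V$-invariance of $\mathcal{K}$ makes $T$ an $\alpha$-invariant affiliated operator, and use ergodicity on its spectral projections to force $T=n\cdot 1$, whence each $L_{\xi_i}$ is bounded with $\|L_{\xi_i}\|\leqslant\sqrt{n}$. This is essentially the classical averaging argument for compact ergodic actions, and it buys you more than the paper's reduction does: no appeal to an external separable-predual theorem, and an explicit uniform operator-norm bound on an orthonormal basis of $\mathcal{K}$ in terms of $\dim\mathcal{K}$. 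What the paper's route buys is brevity given the cited result, plus a separable-reduction device (the $\alpha$-invariant separable subalgebra generated by spectral projections) that is reused for the other lemmas in that section. The technical points you flag --- that the sums and products of the $L_{\xi_i}$ live in $L^1(M)$ via noncommutative H\"older, that the change-of-basis identity holds there, and that $V_gL_\xi V_g^*=L_{V_g\xi}$ by uniqueness of the affiliated operator with prescribed value at $\Omega$ --- are indeed routine and do not hide a gap.
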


\begin{proof}
Replacing $\mathcal{K}$ by $\mathcal{K}+\mathcal{K}^*$, we may assume without loss of generality that $\mathcal{K}$ is self-adjoint. Let $\{\xi_i,\ i\geqslant 1\}$ be a norm dense set of vectors in $\mathcal{K}_{\mathrm{s.a.}}$. Associated to each $\xi_i$ is a possibly unbounded self-adjoint operator $L_{\xi_i}$ affiliated to $M$, (see \cite[Theorem B.4.1]{SiSm}), and so the spectral projections of $L_{\xi_i}$ lie in $M$. Let $A$ be the unital separable C$^*$-algebra generated by the spectral projections of each $\xi_i$ corresponding to the intervals $(-\infty,r)$ for all rationals $r$. Let $N$ be the strong closure of $A$, represented faithfully on $L^2(N,\tau)$. Then $\xi_i\in L^2(N,\tau)$ for $i\geqslant 1$, so $L^2(N,\tau)$ contains $\mathcal{K}_{\mathrm{s.a.}}$ and thus also $\mathcal{K}$. Since every $L_\xi$ for $\xi\in \mathcal{K}_{\mathrm{s.a.}}$ is affiliated to $N$, this algebra contains all spectral projections of the $L_\xi$'s. For a fixed $g\in G$, uniqueness of the spectral resolution shows that the spectral projections of $L_{V_g(\xi_i)}$ are all of the form $\alpha_g(p)$ where $p$ ranges over the set of spectral projections for $L_{\xi_i}$. Thus $\alpha_g$ maps $A$ into $N$ and so also maps $N$ into  $N$. This applies equally to $\alpha_{g^{-1}}$, and it follows that each $\alpha_g$ restricts to an automorphism of $N$. By Kaplansky density, $L^2(A,\tau)$ is equal to $L^2(N,\tau)$, and the latter space is thus separable. We conclude that $N$ is an $\alpha$-invariant von Neumann algebra containing $\mathcal{K}$ and having separable predual, so the result now follows from the separable predual case.
\end{proof}

\begin{rem}\label{notfinite}
    Note that arguing initially as in the proof of \cite[Theorem 6.10]{BCM1} it is easy to see that Lemma \ref{Invariant Subspaces Lemma} holds for the pair $(M,\rho)$, where $\rho$ is a faithful normal state on $M$ which is not assumed to be finite. In this case, $\mathcal{K}$ will be contained inside $M^\rho\Omega_\rho$. 
\end{rem}

\begin{lem}[\cite{BCM2}, Theorem 4.7]\label{Kronecker Subalgebra Lemma} Let $G$ be a discrete group, and $\alpha$ an ergodic, trace-preserving action of $G$ on a finite von Neumann algebra $(M,\tau)$. If $\alpha$ is also compact, then $M$ is injective.  Furthermore, there is an upwardly directed family 
of finite-dimensional $\alpha$-invariant subspaces of $M$ whose union is dense in $L^2(M,\tau)$.  
\end{lem}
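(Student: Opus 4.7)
The plan is to first reduce to the separable predual case using Lemma \ref{sepsubalg}, and then exploit the compact group structure supplied by Lemma \ref{compactgamma}. For the reduction, given any finite set $F \subseteq M$, the lemma produces an $\alpha$-invariant von Neumann subalgebra $M_F \subseteq M$ with separable predual containing $F$, admitting a trace-preserving conditional expectation from $M$ onto it. The $M_F$ form an upward directed system with $\bigcup_F M_F$ SOT-dense in $M$, and the compact action restricts to each $M_F$. Both conclusions transfer to the inductive limit: the directed family of finite-dimensional $\alpha$-invariant subspaces of $M$ arises as the union of the corresponding families in each $M_F$ (its $\|\cdot\|_2$-density in $L^2(M,\tau)$ following from weak density of $\bigcup_F M_F$ in $M$), and $M$ inherits injectivity from being the SOT-closure of a directed union of injective subalgebras carrying trace-preserving conditional expectations.

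Assuming henceforth that $M$ has separable predual, let $\Gamma$ be the SOT*-closure of the Koopman unitaries $\{V_g : g \in G\} \subseteq \mathbf{B}(L^2(M,\tau))$. By Lemma \ref{compactgamma}, $\Gamma$ is a compact metrizable group acting unitarily and SOT*-continuously on $L^2(M,\tau)$, and the action of $\Gamma$ on $M$ by conjugation extends the original $G$-action. Applying the Peter--Weyl theorem to this representation yields an orthogonal Hilbert space decomposition $L^2(M,\tau) = \bigoplus_{\pi \in \widehat{\Gamma}} H_\pi$ into isotypic components, with the algebraic direct sum $\|\cdot\|_2$-dense. Ergodicity of the $\Gamma$-action forces the multiplicity of each $\pi \in \widehat{\Gamma}$ to be finite --- indeed at most $\dim\pi$ --- a classical result from the structure theory of compact ergodic $W^*$-dynamical systems (H\o{}egh-Krohn--Landstad--St\o{}rmer, Wassermann). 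Consequently each $H_\pi$ is finite-dimensional and, being a finite-dimensional $G$-invariant subspace of $L^2(M,\tau)$, lies inside $M\Omega$ by Lemma \ref{Invariant Subspaces Lemma}. The finite direct sums $\bigoplus_{\pi \in F} H_\pi$ over finite $F \subseteq \widehat{\Gamma}$ then constitute the required upward directed family of finite-dimensional $\alpha$-invariant subspaces of $M$.

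For injectivity of $M$ in the separable case, I would again invoke the structure theory of compact ergodic actions: combining the finite-multiplicity assertion above with averaging by Haar measure on $\Gamma$, one produces an increasing net of finite-dimensional $\Gamma$-invariant $*$-subalgebras of $M$ with SOT-dense union (each built from the isotypic components up to a fixed level, closed off under the finite algebraic operations permitted by the finite-multiplicity bound). This exhibits $M$ as hyperfinite, hence injective by Connes's theorem. The main obstacle I anticipate is precisely the finite-multiplicity statement, which is the principal technical input we borrow from outside the excerpt; once granted, the rest of the proof reduces to a routine application of Peter--Weyl together with the direct limit reduction in the first paragraph.
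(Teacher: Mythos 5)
Your proof is correct, and your opening reduction is precisely the paper's entire proof: the authors invoke Lemma \ref{sepsubalg} to write $M$ as the weak closure of an upwardly directed net of $\alpha$-invariant subalgebras $M_F$ with separable preduals, apply the separable-predual case of the statement (which is exactly \cite[Theorem 4.7]{BCM2}, cited rather than reproved) to each $M_F$, and then observe that injectivity passes to the directed union and that finite sums of the finite-dimensional invariant subspaces of the $L^2(M_F,\tau)$'s are $\|\cdot\|_2$-dense in $L^2(M,\tau)$. Where you diverge is in your second and third paragraphs, which re-derive the separable case from scratch: passing to the SOT$^*$-compact group $\Gamma$ of Lemma \ref{compactgamma}, decomposing $L^2(M,\tau)$ via Peter--Weyl, importing the finite-multiplicity theorem for ergodic compact actions (H\o{}egh-Krohn--Landstad--St\o{}rmer) to see that each isotypic component is finite-dimensional, placing these components inside $M\Omega$ by Lemma \ref{Invariant Subspaces Lemma}, and assembling finite-dimensional invariant $*$-subalgebras to get hyperfiniteness. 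This is essentially the argument underlying the cited result, so nothing is wrong --- you have simply unpacked the black box the paper leaves closed. What the paper's route buys is brevity and a clean separation of the new content (the non-separable reduction) from the known content; what your route buys is self-containedness, at the cost of importing the finite-multiplicity theorem as an external input, which, as you correctly identify, is the one genuinely nontrivial ingredient outside the excerpt.
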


\begin{proof} By Lemma \ref{sepsubalg}, $M$ is the union of an upwardly directed net $\{M_F\}$ indexed by the finite subsets $F$ of $M$, and each $M_F$ is $\alpha$-invariant and has a separable predual. Applying the known separable predual case, each $M_F$ is injective and $L^2(M_F,\tau)$ has a dense subspace that is  the upwardly directed union of finite-dimensional $\alpha$-invariant subspaces. Then $M$ is injective, and finite sums of these finite-dimensional subspaces of the $L^2(M_F,\tau)$'s give the required $\alpha$-invariant finite-dimensional subspaces whose union is dense in $L^2(M,\tau)$.
\end{proof}

\nexer{\begin{Lemma}\label{lemma:closed kernel}
Let $X$ be a dual Banach space, and $\varphi$ a bounded linear functional on $X$.   Then $\varphi$ is $w^*$-continuous if and only if $\ker\varphi$ is closed.  
\end{Lemma}
\begin{proof}
One direction is obvious.  Suppose that $\ker\varphi$ is closed, but $\varphi$ is not $w^*$-continuous.  In particular, we may assume that there is a net $(x_\lambda)$ in $X$ converging to zero such that $\varphi(x_\lambda)$ does not converge to zero.  By passing to a subnet we may further assume that there exists $\eps>0$ such that $\abs{\varphi(x_\lambda)} \geq \eps$ for all $\lambda$.  

Let $x_0 \in X$ be such that $\varphi(x_0)=1$ and observe that $x_\lambda / \varphi(x_\lambda) - x_0 \in \ker\varphi$ for each $\lambda$.  Moreover, since the $\varphi(x_\lambda)$ are bounded away from zero, we have that $x_\lambda / \varphi(x_\lambda) \rightarrow 0$ in the $w^*$-topology.  Then also $x_\lambda / \varphi(x_\lambda) - x_0$ converges to $-x_0$ and, since $\ker\varphi$ is closed, $-x_0 \in \ker\varphi$, a contradiction.  Thus, $\varphi$ is $w^*$-continuous, completing the proof.    
\end{proof}

\begin{Lemma} \label{lemma:continuity} Let $X$ and $Y$ be dual Banach spaces with preduals $X_\ast$ and $Y_\ast$, and suppose $X_\ast$ is norm-separable.  Then, for a bounded linear operator $T: X \rightarrow Y$, the following conditions are equivalent:
\begin{itemize}
    \item [(i)] $T$ is $w^*$-continuous.
    \item [(ii)] If $(x_n)$ is a uniformly bounded sequence in $X$ with $w^*$-limit 0, then $T(x_n) \rightarrow 0$ in the $w^*$-topology.
\end{itemize}
\end{Lemma}
\begin{proof} That (i) implies (ii) is obvious.  To prove the converse, first consider the special case where $Y = \mathbb{C}$.  To reach a contradiction, suppose that $(ii)$ holds, but that $T$ is not $w^*$-continuous.  Since $T$ is a bounded linear functional, $w^*$-continuity is equivalent to $\ker T$ being closed, by Lemma \ref{lemma:closed kernel}.  Thus, $\ker T$ is not closed, so the intersection in $X$ of $\ker T$ with some ball $B_r$ of radius $r>0$ is not $w^*$-closed, by the Krein-Smulian theorem.  Then there exists a net $(x_\lambda)$ in $\ker T \cap B_r$ such that $x_\lambda$ converges to $x \notin \ker T \cap B_r$.  Since $x \in B_r$, we have $x \notin \ker T$.  

Fix a countable, norm dense set $\sett{\theta_i: i \geq 1}$ in the unit ball of $X_\ast$.  For each $i$, pick $x_{\lambda_i}$ such that 
\[\abs{\theta_j(x_{\lambda_i}-x)}<1/j, \quad 1 \leqslant j \leqslant i.\]
This produces a sequence $(x_{\lambda_i})$ in $\ker T \cap B_r$ such that $x_{\lambda_i}-x \rightarrow 0$ in the $w^*$-topology, so since $x_{\lambda_i}-x$ is uniformly bounded, $(ii)$ implies that $T(x_{\lambda_i}-x) \rightarrow 0$.  But then $T(x_{\lambda_i})=0$ for each $i$ implies $T(x)=0$, a contradiction.  This proves the special case of $(ii) \implies (i)$ where $Y=\Cplx$.  

For the general case, note that $T:X \rightarrow Y$ is $w^*$-continuous if and only if the composition $\rho \circ T:X \rightarrow \mathbb{C}$ is continuous for any $\rho \in Y_\ast$.  Thus, the general case follows by applying the special case to every such composition $\rho \circ T$. \end{proof}}

Let $\alpha$ be a trace-preserving action of a discrete group $G$ on a finite von Neumann algebra $M$, and define a group of unitaries $\{V_g:g\in G\}\subseteq \mathbf{B}(L^2(M))$ by 
\begin{equation}\label{defofvg}
V_g(x\Omega)=\alpha_g(x)\Omega,\ \ \ x\in M.
\end{equation}
Let $\Gamma$ be the SOT$^*$-closure of this group. Then $\Gamma$ is a group of unitaries in $\mathbf{B}(L^2(M))$.
\begin{lem}\label{compactgamma}
Let $M$, $G$ and $\Gamma$ be as defined above. Further assume that $M$ has a separable predual and that the action of $G$ is compact. Then $\Gamma$ is SOT$^*$-compact.
\end{lem}

\begin{proof}
By hypothesis, $L^2(M)$ is norm separable so fix a dense sequence $\{\xi_i=m_i\Omega\}_{i=1}^\infty$ in the unit ball of $L^2(M)$ with $m_i\in M$. The strong$^*$ topology on the unit ball of $\mathbf{B}(L^2(M))$ is metrizable by the metric
\[d(s,t)=\sum_{n=1}^\infty (\|(s-t)\xi_n\|+\|(s^*-t^*)\xi_n\|)2^{-n},\ \ \ s,t\in \mathbf{B}(L^2(M)).\]
Then $\Gamma$ is a SOT$^*$-closed subset of a separable metric space, so it suffices to show that it is sequentially compact. We will extract an SOT$^*$-convergent subsequence from an arbitrary sequence $\{V_{g_i}:i\geqslant 1\}$.

Relabel this sequence as $\{u_{01},u_{02},u_{03},\ldots\}$, and note that $u_{0j}(m_1\Omega),u_{0j}^*(m_1\Omega)\in {\mathrm{Orb}}(m_1)\Omega$ for $j\geqslant 1$. This orbit has $\|\cdot\|_2$-compact closure, so there is a subsequence $\{u_{11},u_{12},u_{13},\ldots\}$ so that the sequences $\{u_{1j}(m_1\Omega)\}_{j=1}^\infty$
and $\{u_{1j}^*(m_1\Omega)\}_{j=1}^\infty$ are convergent in $\|\cdot\|_2$-norm. Repeating this argument, we obtain successive subsequences $\{u_{ij}\}_{j=1}^\infty$ for $i\geqslant 1$ so that the sequences
$\{u_{ij}(m_i\Omega)\}_{j=1}^\infty$
and $\{u_{ij}^*(m_i\Omega)\}_{j=1}^\infty$ are $\|\cdot\|_2$-convergent  for each $i\geqslant 1$. It is then easy to see that the diagonal subsequence $\{u_{ii}\}_{i=1}^\infty$ converges in the strong$^*$ topology.  Thus $\Gamma$ is SOT$^*$ sequentially compact, and so is compact.
\end{proof}

We now come to the main result of this section.

\begin{thm}\label{RelativeHAPTheorem} Let $G$ be a discrete group and $\alpha$ a trace-preserving, ergodic action of $G$ on a finite von Neumann algebra $(M,\tau)$.  Then the following conditions are equivalent:
\begin{itemize}
    \item [(i)] The action $\alpha$ is compact $($see Definition \ref{basicdef}$)$.
    \medskip
    \item [(ii)] $\mathcal{QN}(L(G) \subseteq M \rtimes_\alpha G)'' = M \rtimes_\alpha G$.
    \medskip
    \item [(iii)] The von Neumann algebra generated by $\mathcal{QN}^{(1)}(L(G) \subseteq M \rtimes_\alpha G)$ is $M \rtimes_\alpha G$.
    \medskip
    \item[(iv)] $M\rtimes_\alpha G$ has property H relative to the subalgebra $L(G)$.
\end{itemize}
\end{thm}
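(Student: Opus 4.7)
The plan is to establish the equivalences as follows. Conditions (i), (ii), (iii) are equivalent by Theorem \ref{QuasinormalizerCompactSystemTheorem} combined with Lemma \ref{Kronecker Subalgebra Lemma}, since compactness of $\alpha$ is equivalent to $M_K = M$. The implication (iv) $\Rightarrow$ (ii) is a standard consequence of Popa's result that a relative Haagerup inclusion must be quasi-regular. The substantive work is to prove (i) $\Rightarrow$ (iv).

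First I would handle the case in which $M$ has separable predual. Since $\alpha$ is compact and ergodic, Lemma \ref{Kronecker Subalgebra Lemma} gives that $M$ is hyperfinite, so we can write $M$ as the SOT-closure of an upwardly directed net $\{M_\lambda\}$ of finite-dimensional subalgebras, with trace-preserving conditional expectations $E_\lambda : M \to M_\lambda$. These $E_\lambda$ do not commute with $\alpha$, so I would symmetrize them by averaging over the group of Koopman unitaries. Define $V_g \in \mathbf{B}(L^2(M))$ by $V_g(x\Omega) = \alpha_g(x)\Omega$ and let $\Gamma$ be the SOT$^*$-closure of $\{V_g : g \in G\}$. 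By Lemma \ref{compactgamma}, compactness of $\alpha$ together with separability of $L^2(M)$ makes $\Gamma$ an SOT$^*$-compact group of unitaries which normalizes $M$. Let $\mu$ denote normalized Haar measure on $\Gamma$ and set
\[
\phi_\lambda(x) = \int_\Gamma \gamma\, E_\lambda(\gamma^* x \gamma)\, \gamma^*\, d\mu(\gamma), \quad x \in M,
\]
defined weakly through vector functionals on $L^2(M)$. Commutation of the integrand with $M'$ will place $\phi_\lambda(x)$ inside $M$, complete positivity and trace-preservation are inherited from $E_\lambda$, and $w^*$-continuity (hence normality) will follow from Lemma \ref{uniformbound} applied to the finite-rank map $E_\lambda$. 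Left invariance of $\mu$ will yield $\phi_\lambda \circ \alpha_g = \alpha_g \circ \phi_\lambda$ for every $g \in G$.

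Next I would verify three key properties of $\phi_\lambda$: pointwise $\|\cdot\|_2$-convergence to the identity, compactness of the induced operator $T_{\phi_\lambda}$ on $L^2(M)$, and contractivity on the trace. For the first, by Lemma \ref{Kronecker Subalgebra Lemma} it suffices to test on a finite-dimensional $\alpha$-invariant subspace $X \subseteq M$; once $\lambda$ is large enough that $X$ is $\varepsilon$-close to $M_\lambda$ in $\|\cdot\|_2$, the uniform estimate $\|x - \gamma E_\lambda(\gamma^* x \gamma)\gamma^*\|_2 \le \varepsilon$ will hold first for $\gamma = V_g$ by $\alpha$-invariance of $X$, then for all $\gamma \in \Gamma$ by SOT$^*$-density, and will survive averaging over $\Gamma$. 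For compactness of $T_{\phi_\lambda}$, a weakly-null bounded sequence $\{z_j\}\subseteq M$ will satisfy $\|E_{\gamma M_\lambda \gamma^*}(z_j)\|_2 \to 0$ for each $\gamma$ with a uniform bound, so dominated convergence will give $\|\phi_\lambda(z_j)\|_2 \to 0$. By Lemma \ref{cp extension}, $\phi_\lambda$ lifts to a normal, completely positive, $L(G)$-bimodular map $\Phi_\lambda$ on $M \rtimes_\alpha G$ via $\Phi_\lambda(\sum_g x_g u_g) = \sum_g \phi_\lambda(x_g) u_g$, and standard approximation on finite Fourier sums will give $\|\Phi_\lambda(y) - y\|_2 \to 0$ for $y \in M \rtimes_\alpha G$. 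The relative compactness condition (iii) of Definition \ref{relative HAP} will come from the identification $L^2(M \rtimes_\alpha G) \cong L^2(M) \otimes \ell^2(G)$ under which $T_{\Phi_\lambda}$ corresponds to $T_{\phi_\lambda} \otimes I$: given an $\alpha$-invariant finite-dimensional subspace $X \subseteq M\Omega$ with orthonormal basis $m_1,\dots,m_n$, the finite-trace projection $p_{\mathcal{H}_X} = \sum_i m_i e_{L(G)} m_i^* \in \langle M \rtimes_\alpha G, e_{L(G)}\rangle$ corresponds to $p_X \otimes I$, and compactness of $T_{\phi_\lambda}$ directly yields $\|T_{\Phi_\lambda}(1 - p_{\mathcal{H}_X})\| < \varepsilon$ for a suitably chosen $X$.

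Finally, for the general case I would use Lemma \ref{sepsubalg}: $M$ is the union of an upwardly directed net $\{M_F\}$ of $\alpha$-invariant von Neumann subalgebras with separable preduals. On each $M_F$ the restricted action is still compact, so the separable case produces an $L(G)$-bimodular c.p.\ map $\psi_{\lambda}$ on $M_F \rtimes_\alpha G$ satisfying the approximation condition on a prescribed finite subset within $\varepsilon$ and the finite-trace compactness condition. Composing with the conditional expectation $E_F : M \rtimes_\alpha G \to M_F \rtimes_\alpha G$ yields $\Phi_\lambda := \psi_\lambda \circ E_F$; since $T_{\Phi_\lambda} = T_{\psi_\lambda} \circ e_{M_F \rtimes_\alpha G}$ and the Jones projection $e_{M_F \rtimes_\alpha G}$ commutes with any projection $p \in \langle M_F \rtimes_\alpha G, e_{L(G)}\rangle \subseteq \langle M \rtimes_\alpha G, e_{L(G)}\rangle$, the required estimate $\|T_{\Phi_\lambda}(1-p)\| < \delta$ transfers immediately. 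The main obstacle is the construction of $\phi_\lambda$ in the separable case: specifically, showing that the Haar-average against $\Gamma$ preserves $M$, is $w^*$-continuous, and still induces a compact operator on $L^2(M)$. The key technical device is Lemma \ref{uniformbound}, which exploits the $\mathrm{SOT}^*$-compactness of $\Gamma$ together with the finite-dimensionality of the range of $E_\lambda$.
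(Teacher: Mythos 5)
Your proposal is correct and follows essentially the same route as the paper's proof: the equivalence of (i)--(iii) via Theorem \ref{QuasinormalizerCompactSystemTheorem}, the implication (iv)$\Rightarrow$(ii) from Popa, and the construction of $G$-equivariant normal c.p.\ maps by Haar-averaging the expectations $E_\lambda$ over the SOT$^*$-compact closure $\Gamma$ of the Koopman unitaries, followed by the lift via Lemma \ref{cp extension} and the reduction of the general case to separable preduals through Lemma \ref{sepsubalg}. All the key steps and supporting lemmas you cite are exactly those used in the paper.
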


\begin{proof}
\noindent Note that the first three conditions are equivalent by Theorem \ref{QuasinormalizerCompactSystemTheorem} and the discussion following it, and that condition (iv) implies condition (ii),  by Proposition 3.4 in \cite{Po}.  Thus, the proof will be complete when we show that conditions (i)-(iii) imply (iv).  Our strategy will be to construct a net of normal, completely positive maps $\phi_\lambda$ on $M$ which 
\begin{itemize}
    \item approximate the identity map on $M$ pointwise in $\|\cdot\|_2$,
    \item  extend to compact operators on $L^2(M)$, and
    \item  commute with the automorphisms $\alpha_g$, $g \in G$.  
\end{itemize}  
We  assume that conditions (i)-(iii) hold, and we then establish (iv).

By Lemma \ref{Kronecker Subalgebra Lemma}, $M$ is hyperfinite, so can be written as   $M = \overline{(\bigcup_{\lambda\in \Lambda} M_\lambda)}^{w^*}$, where the $M_\lambda$'s form an upwardly directed net of finite-dimensional   $\ast$-subalgebras of $M$. Denote by $E_\lambda$ the trace-preserving conditional expectation of $M$ onto $M_\lambda$.  For each $g \in G$, we have a completely positive map $\phi_{\lambda,g}=\alpha_g \circ E_\lambda \circ \alpha_g^{-1}$ on $M$ which, by uniqueness, is equal to the trace-preserving conditional expectation of $M$ onto $\alpha_g(M_\lambda)$.  We note that these maps can be viewed as contractions on $L^2(M)$; this follows easily from the inequality $T(x)^*T(x)\leqslant T(x^*x)$ for any completely positive contraction $T$ on $M$. 

In order to make use of the earlier lemmas, we impose a temporary requirement that $M$ should have a separable predual. The general case will then be deduced from this special situation.

Let $\Gamma$ be the SOT$^*$-compact group of  Lemma \ref{compactgamma}, which is the SOT$^*$-closure of the set of operators $\{V_g:g\in G\}$ defined in \eqref{defofvg}.  For $\gamma \in \Gamma$, we define 
\[ \phi_{\lambda,\gamma}(x) = \gamma E_\lambda(\gamma^* x \gamma) \gamma^*,\,\,x \in M,\]
noting that this map coincides with $\phi_{\lambda,g}$ when $\gamma=V_g$.
Since $E_\lambda$ is normal and completely positive, these two properties pass to $\phi_{\lambda,\gamma}$.  For each $x \in M$ and vectors $\xi,\eta\in L^2(M)$, the scalar valued map 
$\gamma \mapsto \langle \phi_{\lambda,\gamma}(x)\xi,\eta\rangle$ is easily seen to be SOT$^*$-continuous on $\Gamma$, using the normality of $E_\lambda$. This enables us to define a map  $\phi_\lambda$ on $M$ by 
\begin{equation}\label{intdef}
 \ip{\phi_\lambda(x)\xi}{\eta} = \int_{\Gamma}\ip{\phi_{\lambda,\gamma}(x)\xi}{\eta}\,d\mu(\gamma), \quad \quad x \in M, \ \ \  \xi, \eta \in L^2(M),
 \end{equation}
 where $\mu$ is left Haar measure on the compact group $\Gamma$.
 Then $\phi_\lambda$ maps $M$ into itself since, for $t \in M'$, we have 
\begin{align*}\ip{\phi_\lambda(x) t\xi}{\eta} & = \int_{\Gamma}\ip{t\phi_{\lambda,\gamma}(x)\xi}{\eta}\,d\mu(\gamma) 
= \int_{\Gamma}\ip{\phi_{\lambda,\gamma}(x)\xi}{t^*\eta}\,d\mu(\gamma)\\
&= \ip{\phi_\lambda(x) \xi}{t^* \eta} = \ip{ t \phi_\lambda(x) \xi}{\eta}, \end{align*}
for any such $x \in M,$ $\xi, \eta \in L^2(M)$,
showing that $\phi_\lambda(x)\in M''=M$.  Complete positivity of $\phi_\lambda$ follows from complete positivity of the maps $\phi_{\lambda,\gamma}$. Each $\phi_\lambda$ is trace-preserving and, further, $\phi_\lambda$ is a $\norm{\cdot}_2$-norm contraction, so has a bounded extension $T_{\phi_\lambda}: L^2(M) \rightarrow L^2(M)$.  To see that $\phi_\lambda$ is $w^*$-continuous, it suffices to consider a uniformly bounded net $\{x_\delta\}_{\delta\in \Delta}$ converging to 0 in the $w^*$-topology, by the Krein-Smulian theorem.  We apply Lemma \ref{uniformbound} to the finite-rank map $E_\lambda$ to obtain an arbitrarily small bound on the integrand in \eqref{intdef}, showing that $\lim_\delta\langle \phi_\lambda(x_\delta) \xi,\eta\rangle =0$ for all $\xi,\eta\in L^2(M)$. Since the net $\{\phi_\lambda(x_\delta)\}_{\delta\in \Delta}$ is uniformly bounded, we conclude that $\phi_\lambda$ is  a normal, completely positive map on $M$.  Moreover, translation invariance of $\mu$ implies that $\phi_\lambda \circ \alpha_g = \alpha_g \circ \phi_\lambda$, for any $g \in G$.  

We now show that $\lim_\lambda\norm{\phi_\lambda(x)-x}_2 = 0$ for any $x \in M$.  By the density result of Lemma \ref{Kronecker Subalgebra Lemma}, it suffices to consider an element $x$ that lies in a finite-dimensional $\alpha$-invariant subspace $X$ of $M$. By scaling, we may assume that $\|x\|_2=1$, and we now fix $\eps >0$. Choose $\lambda_0$ so that  $X \subseteq_{\eps/2} M_{\lambda}$ whenever $\lambda \geqslant \lambda_0$, meaning that 
\[ \sup_{y \in X,\,\norm{y}_2\leqslant 1} \mathrm{dist}_{\|\cdot\|_2}(y,M_\lambda)<\eps/2, \ \ \ \ \lambda\geqslant \lambda_0.\]
  Since $X$ is $\alpha$-invariant, it follows that
\[ \sup_{g \in G} \norm{\alpha_g^{-1}(x)-E_{\lambda}(\alpha_g^{-1}(x))}_{2}\leqslant \eps\]
whenever $\lambda \geqslant \lambda_0$.  Then, for all such $\lambda$ and any $g \in G$, we have
\[\norm{x-\phi_{\lambda,g}(x)}_2=\norm{x-\alpha_g \circ E_\lambda \circ \alpha_g^{-1}(x)}_2\leqslant\eps,\]
and the SOT$^*$-density of $\{V_g:g\in G\}$ in $\Gamma$ implies that $\|x-\phi_{\lambda,\gamma}(x)\|_2\leqslant \eps$ for $\lambda\geqslant\lambda_0$ and $\gamma\in \Gamma$.
Averaging over $\Gamma$ gives $\norm{x-\phi_\lambda(x)}_2 \leqslant \eps$ for  $\lambda\geqslant\lambda_0$, and this establishes that   $\lim_\lambda\norm{\phi_\lambda(x)-x}_2 = 0$, as required.  
\vskip 0.07in 
\noindent Next, we show that the associated operator $T_{\phi_\lambda}$ on $L^2(M)$ is compact.  Let $\{z_j\}_{j=1}^\infty$ be a sequence in $L^2(M)$ converging weakly to zero.  By $\|\cdot\|_2$-density of $M$ in $L^2(M)$, we may assume that $z_j\in M$ for $j\geqslant 1$. Then, for any $\lambda$, $\lim_{j\to\infty}\|E_{\lambda}(z_j)\|_2=0$.  Similarly,  $\lim_{j\to\infty}\|E_{\alpha_g(M_\lambda)}(z_j)\|_2=0$  for any $g \in G$.  By the dominated convergence theorem, 
\begin{equation*}
\|T_{\phi_\lambda}(z_j)\|_2=\|\phi_\lambda(z_j)\|_2=\norm{\int_\Gamma \phi_{\lambda,\gamma}(z_j)d\mu(\gamma)}_2 \leqslant \int_\Gamma \norm{\phi_{\lambda,\gamma}(z_j)}_2 d\mu(\gamma) \longrightarrow 0
\end{equation*}
as $j \rightarrow \infty$, and so each $T_{\phi_\lambda}$ is compact.  Thus, we have produced a net $\{\phi_\lambda\}$ of completely positive maps on $M$ which commute with $\alpha_g,$ $g \in G$, approximate the identity map on $M$ in the $\|\cdot\|_2$, and extend to compact operators on $L^2(M)$. 

By Lemma \ref{cp extension}, these extend to a net $\{\Phi_\lambda\}$ of completely positive $L(G)$-bimodule maps on $M\rtimes_\alpha G$, given by $\Phi_\lambda(x) = \sum_{g\in G} \phi_\lambda(x_g) u_g$, for $x = \sum_{g\in G} x_g u_g \in M \rtimes_\alpha G$. 

To complete the proof of the separable predual case, we show that this net satisfies the properties  of Definition \ref{relative HAP}.  The first of these is clear because the maps that we have constructed are all trace-preserving. Since $\phi_\lambda(x) \rightarrow x$ in $\norm{\cdot}_2$ for each $x \in M$, we also have that  $\Phi_\lambda(m u_g) = \phi_\lambda(m)u_g \rightarrow m u_g$ in $\norm{\cdot}_2$ for any $m \in M$ and $g \in G.$ A standard approximation argument then shows that $\norm{\Phi_\lambda(x) - x}_2 \rightarrow 0$ for any $x = \sum_g x_g u_g \in M\rtimes_\alpha G$.  This proves that the $\Phi_\lambda$'s satisfy (ii).  

 We now show that (iii) is satisfied.  Note first that any finite-dimensional $G$-invariant subspace of $M \Omega \subseteq L^2(M)$ may be associated to a finitely generated $L(G)$-module in $L^2(M\rtimes_\alpha G)$, as follows.  Let $X$ be such a subspace, and use Gram-Schmidt  to find an orthonormal basis $(m_1, \ldots , m_n)$ of $X$, with $m_i \in M$, $1 \leqslant i \leqslant n$.  Then the operators $p_i = m_i e_{L(G)} m_i^* \in \ip{M\rtimes_\alpha G}{e_{L(G)}}$ are mutually orthogonal projections, so that $\sum_{i=1}^n m_i e_{L(G)} m_i^* \in \ip{M\rtimes_\alpha G}{e_{L(G)}}$ is a projection, and its range is the right $L(G)$-module $\Hil_X$  in $L^2(M\rtimes_\alpha G)$  generated by the vectors $m_i \Omega$, $1\leqslant i \leqslant n$. 
Denote this projection by $p_{\Hil_X}$, and denote the orthogonal projection of $L^2(M)$ onto $X$ by $p_X$.  Let $U:L^2(M \rtimes_\alpha G) \rightarrow L^2(M) \otimes \ell^2(G)$ be the unitary given by $U(y u_g \Omega) = y \Omega \otimes \delta_g$, for $y \in M$ and $g \in G$.    Then, for any such $y$ and $g$, we have
\begin{align*}(p_X \otimes 1)(y\Omega \otimes \delta_g) & = \sum_{i=1}^n \tau(m_i^* y)m_i \Omega \otimes \delta_g = \sum_{i=1}^n m_i E_{L(G)}(m_i^* y)\Omega \otimes \delta_g\\
                                                        & = U\sum_{i=1}^n m_i E_{L(G)}(m_i^*y)u_g \Omega 
                                                         = U \sum_{i=1}^n m_i e_{L(G)} m_i^* (y u_g \Omega).\end{align*}
Thus, $p_X \otimes 1 = U p_{\Hil_X} U^*.$  Note also that $T_{\phi_\lambda}\otimes 1 = U T_{\Phi_\lambda} U^*$ for each $\lambda$.        
\vskip 0.07in 
\noindent To show that the $T_{\Phi_\lambda}$'s satisfy condition (iii) of Definition \ref{relative HAP}, fix $\lambda$, set $\eps > 0$ and choose a finite-dimensional $G$-invariant subspace $X$ of $M \Omega$ with the property that $\norm{T_{\phi_\lambda}(1-p_X)}<\eps$, possible by compactness of $T_{\phi_\lambda}$ and Lemma \ref{Kronecker Subalgebra Lemma}.  As above, associate to $X$ an $L(G)$-module $\Hil_X = \overline{\sum_{i=1}^n m_i L(G)}$, and a finite-trace projection $p_{\Hil_X}=\sum_{i=1}^n m_i e_{L(G)} m_i^* \in \ip{M \rtimes_\alpha G}{e_{L(G)}}$.
 From our characterization of $T_{\Phi_\lambda}$, we then have
\begin{align*}\norm{T_{\Phi_\lambda}(1-p_{\Hil_X})} = \norm{U T_{\Phi_\lambda} U^*U(1-p_{\Hil_X})U^*}&=\norm{(T_{\phi_\lambda} \otimes 1)(1 \otimes 1 - p_X \otimes 1)}\\
& =\norm{T_{\phi_\lambda}(1-p_X)}<\eps.\end{align*}
This proves that the net $(T_{\Phi_\lambda})$ satisfies condition (iii) of Definition \ref{relative HAP}, completing the proof of the separable predual case.

We now consider the general case where there is no assumption of a separable predual. We form a net 
$\Lambda=\{(F,\varepsilon): F\subseteq M \text{ is finite and } \varepsilon >0\}$, and we order this by 
\[(F_1,\varepsilon_1)\leqslant (F_2,\varepsilon_2) \text{ if and only if }F_1\subseteq F_2\text{ and } \varepsilon_2\leqslant \varepsilon_1.\]

For $\lambda=(F,\varepsilon)\in \Lambda$, define $\phi_\lambda:M\to M$ as follows. By Lemma \ref{sepsubalg}, there exists an $\alpha$-invariant von Neumann subalgebra $M_F$ satisfying $F\subseteq M_F \subseteq M$ and $M_F$ has a separable predual. Here we use $e_{L(G),F}$ for the Jones projection arising from the inclusion $L(G)\subseteq M_F\rtimes_\alpha G$, to distinguish it from $e_{L(G)}$ for the inclusion
$L(G)\subseteq M\rtimes_\alpha G$.
From our initial case, $L(G)\subseteq M_F\rtimes_\alpha G$ has the Haagerup approximation property, so there exists a normal completely positive $L(G)$-bimodule map $\psi_\lambda: 
M_F\rtimes_\alpha G \to M_F\rtimes_\alpha G$ with the following properties:
\begin{itemize}
\item[1.] $\tau\circ \psi_\lambda= \tau$.
\medskip
\item[2.] $\|\psi_\lambda(z)-z\|_2 <\varepsilon$ for $z\in F$.
\medskip
\item[3.] For $\delta >0$, there exists a projection $p\in \langle M_F\rtimes_\alpha G,e_{L(G),F}\rangle$ such that 
Tr${}_F(p)<\infty$ and $\|T_{\psi_\lambda}(1-p)\|<\delta$.
\medskip
\end{itemize} 
Then set $\phi_\lambda=\psi_\lambda \circ E_F$ where $E_F:M\rtimes_\alpha G\to M_F\rtimes_\alpha G$ is the conditional expectation. Then 1. and 2. hold for $\phi_\lambda$.  Moreover, 
$T_{\phi_\lambda}=T_{\psi_\lambda \circ E_F}=T_{\psi_\lambda}\circ e_{M_F\rtimes_\alpha G}$,
where $e_{M_F\rtimes_\alpha G}$ is the Jones projection.

Given $\delta>0$, choose $p\in \langle
M_F\rtimes_\alpha G,e_{L(G),F}\rangle$ for $T_{\psi_\lambda}$ as above, and consider the inclusion \[L(G)\subseteq M_F\rtimes_\alpha G\subseteq \mathcal{M}:=\overline{\text{span}(M_F\rtimes_\alpha G)e_{L(G)}M_F\rtimes_\alpha G}^{w*}\subseteq \langle M\rtimes_\alpha G, e_{L(G)}\rangle.\] The canonical semifinite trace Tr on $\langle M\rtimes_\alpha G, e_{L(G)}\rangle$ restricts to a semifinite trace on $\mathcal{M}$ such that the hypotheses of \cite[Theorem 4.3.15]{SiSm} are satisfied. It follows that the association $xe_{L(G),F}y\mapsto xe_{L(G)}y$, $x,y\in M_F\rtimes_\alpha G$ extends to a trace-preserving isomorphism $\pi$ from $\langle M_F\rtimes_\alpha G, e_{L(G),F}\rangle$ to its image inside $\langle M\rtimes_\alpha G, e_{L(G)}\rangle$.

Then $\pi(p)\in \langle M\rtimes_\alpha G,e_{L(G)}\rangle$ is a projection such that Tr$(\pi(p))<\infty$ and commutes with $e_{M_F\rtimes_\alpha G}$, so $e_{M_F\rtimes_\alpha G}\pi(p)=e_{M_F\rtimes_\alpha G}\pi(p)e_{M_F\rtimes_\alpha G}=p$. Consequently, 
\[\|T_{\phi_\lambda}(1-\pi(p))\|=\|T_{\psi_\lambda}e_{M_F\rtimes_\alpha G}(1-\pi(p))\|=\| T_{\psi_\lambda}(1-p)\|<\delta.\]
Thus condition 3. also holds, completing the proof.
\end{proof}

$$ $$
\textbf{Acknowledgements}: The authors are grateful to the two anonymous referees for their numerous comments and suggestions which improved greatly the overall exposition of our paper.

\noindent JC acknowledges the support of Simons Collaboration Grant \#319001. His work on this project was also supported by the National Science Foundation while working as a Program Director in the Division of Mathematical Sciences. Any opinions, findings, conclusions
or recommendations expressed in this material are those of the authors and do not necessarily reflect
the views of the National Science Foundation.  IC has been supported by the NSF Grants  FRG-DMS-1854194 and DMS-2154637.  KM acknowledges the support of the grant SB22231267MAETWO008573 (IOE Phase II). RS acknowledges the support of Simons Collaboration Grant \#522375.


\begin{thebibliography}{9999}
\bibitem[A]{A} I. Agol, \emph{The virtual Haken conjecture}. With an appendix by I. Agol, D. Groves, and J. Manning. Doc. Math. 18 (2013), 1045-1087.





\bibitem[AnPo]{AnPo} C. Anantharaman, S. Popa, \textit{An introduction to II$_1$ factors,} preprint.


\bibitem[AuEiTa]{AuEiTa11} T. Austin, T. Eisner, T. Tao, \textit{Nonconventional ergodic averages and multiple recurrence von Neumann dynamical systems}, Pacific J. Math. 250 (2011), no. 1, 1-60.


\bibitem[BaCaMu1]{BCM1} J. Bannon, J. Cameron, K. Mukherjee, \textit{On Noncommutative Joinings}, Int. Math. Res. Not. (IMRN) 15 (2018), 4734-4779. 

\bibitem[BaCaMu2]{BCM2} J. Bannon, J. Cameron, K. Mukherjee, \textit{Noncommutative Joinings II}, Groups Geom. Dyn. 15(2) (2021), 553-575.






\bibitem[Bo]{Bo} F. Boca, \textit{On the method of constructing irreducible finite index subfactors of Popa,} Pacific J. Math. 161 (1993), no. 2, 201-231.


\bibitem[CaFaMu]{CaFaMu}J. Cameron, J. Fang, K. Mukherjee, \textit{Mixing subalgebras of finite von Neumann algebras,} New York J. Math. 19 (2013) 343-366.


\bibitem [CaKlSkVoWa]{CaKlSkVoWa}M. Caspers, M. Klisse, A. Skalski, G. Vos, M. Wasilewski, \textit{Relative Haagerup property for arbitrary von Neumann algebras}, Adv. Math. 421 (2023), Paper No. 109017.


\bibitem[C06]{C06} I. Chifan, \textit{On the normalizing algebra of a MASA in a $\rm{II}_1$ Factor}, 	arXiv:math/0606225, (2006).

\bibitem[ChDa]{ChDa} I. Chifan, S. Das, \textit{Rigidity results for von Neumann algebras arising from mixing extensions of proﬁnite actions of groups on probability spaces}, Math. Ann. 378 (2020), 907-995.

\bibitem[CDK]{CDK19} I. Chifan, S. Das, K. Khan, {\it Some Applications of Group Theoretic Rips Constructions to the Classification of von Neumann Algebras},  Anal. PDE 16 (2023), no. 2, 433-476.

\bibitem[CDHK]{CDHK20} I. Chifan, S. Das, C. Houdayer, K. Khan, \textit{Examples of property T factors with trivial fundamental group}, to appear in Amer. J. Math, arXiv:2003.11729.

\bibitem[CIOS1]{CIOS1} I. Chifan, A. Ioana, D. Osin, B. Sun, {\it Wreath-like product groups and rigidity of their von
Neumann algebras}, Ann. of Math. 198 (2023), no. 3, 1261-1303.

\bibitem[CIOS2]{CIOS3} I. Chifan, A. Ioana, D. Osin,  B. Sun \textit{ Wreath-like product groups and rigidity of their von Neumanns III: Outer automorphisms}, Preprint 2023, arXiv:2304.07457.


\bibitem[CP]{CP11} I. Chifan, J. Peterson, \textit{On approximation properties of probability measure space actions}, Preprint 2011.

\bibitem[Ch]{Ch} M. Choda, \textit{Group factors of the Haagerup type,} Proc. Japan Acad. Ser. A Math. Sci. 59 (1983), 174-177.

\bibitem[DGO]{DGO}
F. Dahmani, V. Guirardel,  D. Osin, \textit{Hyperbolically embedded subgroups and rotating families in groups acting on hyperbolic spaces},   Memoirs Amer. Math. Soc.  245 (2017), no. 1156.


\bibitem[FaGaSm]{FaGaSm}J. Fang, M. Gao, R.R. Smith, \textit{The relative weak asymptotic homomorphism property for inclusions of finite von Neumann algebras,} Internat. J. Math. 22 (2011), no. 7, 991-1011.


\bibitem[Fu]{Fu77}H. Furstenberg, \textit{Ergodic behavior of diagonal measures and a theorem of Szemeredi on arithmetic progression}s, J. Analyse Math. 31 (1977), 204-256.

\nexer{\bibitem[Gl]{Gl}E. Glasner, \textit{Ergodic theory via joinings}, Mathematical
Surveys and Monographs\textbf{\ }101, American Mathematical Society,
Providence, RI, 2003.}

\bibitem[GrWi]{GrWi} P. Grossman, A. Wiggins, \textit{Strong singularity for subfactors,} Bull. Lond. Math. Soc. 42 (2010), 607-620.



\bibitem[Ha]{Ha}U. Haagerup, \textit{An example of a nonnuclear C
$^*$-algebra which has the metric approximation property},
Invent. Math. 50 (1978/79), no. 3, 279-293.



\bibitem[HW]{HW}
F. Haglund, D.T.  Wise, \emph{Special cube complexes},  Geom. Funct. Anal. 17 (2008), no. 5, 1551-1620.


\bibitem[IT]{IT20}T. Ibarlucia, T. Tsankov, \textit{A model-theoretic approach to rigidity of strongly ergodic, distal actions } Ann. Sci. Éc. Norm. Supér. (4)  54 (2021), no. 3, 751-777.

\bibitem[Io]{Io} A. Ioana, \textit{Cocycle superrigidity for profinite actions of Property (T) groups,}
Duke Math. J. 157 (2011), no. 2, 337-367.



\nexer{\bibitem[Iz]{Iz}M. Izumi, \textit{Non-commutative Poisson boundaries and compact quantum group actions}, Adv. Math. 169 (2002), no. 1, 1-57.}


\bibitem[Ja]{Ja}A. Jamneshan, \textit{An uncountable Furstenberg–Zimmer structure theory,} Ergodic Theory Dynam. Systems, 43 (2023), 2404-2436. 


\bibitem[JaSp]{JaSp}A. Jamneshan, P. Spaas, \textit{On compact extensions of tracial $W^*$-dynamical systems,} Preprint 2022, arxiv:2209.10238.



\bibitem[JiSk]{JS19}Y. Jiang, A. Skalski, \textit{Maximal subgroups and von Neumann subalgebras with the
Haagerup property},  Groups Geom. Dyn. 15 (2021), no. 3, 849-892.


\bibitem[JoSt]{JoSt}P. Jolissaint, Y. Stalder. \textit{Strongly singular MASAs and mixing actions in finite von Neumann algebras,} Ergodic Theory Dynam. Systems 28 (2008), no. 6, 1861-1878.




\bibitem[Ka]{Ka} R.R. Kallman, \textit{A generalization of free action}, Duke Math. J. 36 (1969), 781-789.

\nexer{\bibitem[LPT]{LPT}M.  Lema\'{n}czyk, F. Parreau, J. P. Thouvenot, \textit{Gaussian automorphisms whose ergodic self--joinings are Gaussian,} Fund. Math. 164 
(2000), 253-293.

\bibitem[MP]{MP}K. Mukherjee, I. Patri,
\textit{Automorphisms of Compact Quantum Groups},  Proc. Lond. Math. Soc. (3) 116 (2017), 330-377.

\bibitem[NePa]{NePa}D. Newton, W. Parry,
\textit{On a factor automorphism of normal dynamical system}, Ann. Math. Statist. 37 (1966), 1528-1533.
}

\bibitem[NeSt]{NeSt}S. Neshveyev, E. St\o rmer,
\textit{Ergodic theory and maximal abelian subalgebras of the hyperfinite factor}, J. Funct. Anal. 195(2) (2002), 239-261.



\bibitem[Ni]{Ni}O.A. Nielsen, 
\textit{Maximal abelian subalgebras of hyperfinite factors, II}, J. Funct. Anal. 6 (1970), 192-202.


\bibitem[Pac]{Pac0}J. Packer
\textit{Point spectrum of ergodic abelian group actions and the corresponding group-measure factors,}
Pacific J. Math. 119 (1985), no. 2, 381-405.




\bibitem[Pop1]{Po}S. Popa, \textit{On a class of type II$_1$ factors with Betti numbers invariants,} Ann. of Math. 163 (2006), no. 3, 809-899.

\nexer{\bibitem[RoSi]{RoSi}V. A. Rokhlin and Y. Sinai, \textit{Constructions and properties of invariant measureable partitions,} Dokl. Akad. Nauk. SSSR. 141 (1966), 1038--1041.}



\nexer{\bibitem[ST]{ST}
J. L. Sauvageot and J. P. Thouvenot, 
\textit{Une nouvelle d{\' e}finition de l'entropie
dynamique des syst{\` e}mes non commutatifs}, Comm. Math. Phys.
145 (1992), 411–423.}

\bibitem[Pop2]{Po03} S. Popa, \emph{Strong rigidity of $\textrm{II}_1$ factors arising from malleable actions of $w$-rigid groups I}, Invent. Math. 165  (2006), no. 2, 369-408.

\bibitem[Pop3]{Po04} S. Popa, \emph{Strong rigidity of $\textrm{II}_1$ factors arising from malleable actions of $w$-rigid groups II},
Invent. Math. 165 (2006), no. 2, 409-452.

\bibitem[Pop4]{Po05} S. Popa, \emph{Cocycle and orbit equivalence superrigidity for malleable actions of  w-rigid groups}, Invent. Math. 170 (2007), no. 2, 243-295.

\bibitem[Pop5]{Po06a} S. Popa, \emph{Deformation and rigidity for group actions and von Neumann algebras}, International Congress of Mathematicians. Vol. I, 445-477, Eur. Math. Soc., Z\"urich, 2007.

\bibitem[Pop6]{Po06b} S. Popa, \emph{On the superrigidity of malleable actions with spectral gap}, J. Amer. Math. Soc. 21 (2008), no. 4, 981-1000.


\bibitem[Pop7]{Po94} S. Popa,
\textit{Classification of amenable subfactors of type II,} Acta Math. 172 (1994), no. 2, 163–255.

 

\bibitem[RoSiSm]{RoSiSm}A.G. Robertson, A.M. Sinclair, R.R. Smith,
\textit{Strong singularity for subalgebras of finite factors,} Internat. J. Math. 14 (2003), no. 3, 235-258.


\bibitem[Sk]{Sk}
{C. Skau}, \emph{Finite Subalgebras of a von Neumann Algebra}, J. Funct. Anal. 25 (1977), 211-235.


\bibitem[SiSm]{SiSm}A.M. Sinclair, R.R. Smith, \textit{Finite von Neumann algebras and masas,} London Math. Soc. Lecture Notes Series 351, Cambridge University Press, 2008.

\bibitem[SiSmWhWi]{SiSmWhWi}A.M. Sinclair, R.R. Smith, S.A. White, A.D. Wiggins, \textit{Strong singularity of singular masas in II$_1$ factors}, Illinois J. Math. 49 (2005), no. 2, 325-343.





\nexer{\bibitem[St]{St}{S. \c{S}tr\v{a}til\v{a}}, \textit{Modular Theory in Operator Algebras}, 
Tonbridge Walls: Abacus Press; Bucharest: Editura Academiei, (1981).
}
\bibitem[StZs]{StZs}
{S. \c{S}tr\v{a}til\v{a},  L. Zsid\'{o}}, \emph{Lectures on von {N}eumann algebras}, Revision of the 1975 original, Translated from the Romanian by Silviu Teleman., {Editura Academiei, Bucharest; Abacus Press, Tunbridge Wells}, (1979).



\bibitem[Wis]{Wis}
D. T. Wise, \emph{Cubulating small cancellation groups}, Geom. Funct. Anal. 14 (2004), no. 1, 150-214.

\bibitem[Zi]{Zi76}R. J. Zimmer, \textit{Extensions of ergodic group actions}, Illinois J. Math.  20 (1976), no. 3, 373-409.
\end{thebibliography}
\end{document}